\numberwithin{equation}{section}
\newtheorem{theorem}{Theorem}[section]
\newtheorem{corollary}[theorem]{Corollary}
\newtheorem{lemma}[theorem]{Lemma}
\newtheorem{proposition}[theorem]{Proposition}
\theoremstyle{definition}
\newtheorem{definition}[theorem]{Definition}
\newtheorem{remark}[theorem]{Remark}
\newtheorem*{thma}{Theorem A}
\newtheorem*{thmb}{Theorem B}
\newtheorem*{thmC}{Theorem C}
\newcommand{\ep}{\varepsilon}
\newcommand{\lam}[1]{\textcolor{blue}{#1}}
\title[Stacked invasion waves 
in a competition-diffusion model]
      {Stacked invasion waves in a competition-diffusion model with three species}
\thanks{{K.-Y. Lam}: Department of Mathematics, Ohio State University, Columbus, OH 43210, USA}
\thanks{{Q. Liu, S. Liu}: Institute for Mathematical Sciences, Renmin University of China, Beijing 100872, China.}
\thanks{Email: lam.184@math.osu.edu (K.-Y. Lam), liuqian0519@ruc.edu.cn (Q. Liu),\\
 liushuangnqkg@ruc.edu.cn (S. Liu)}
\author[King-Yeung Lam et al.]
{King-Yeung Lam, \ \ Qian Liu, \ \ Shuang Liu}
 \subjclass[2010]{35K58, 35B40, 35D40.  
 }
 \keywords{Hamilton-Jacobi  equations,  viscosity solution, spreading speed, non-cooperative system, three-species competition system, reaction-diffusion equations.
 }
\begin{document}
\maketitle
\begin{abstract}
We investigate the spreading properties of a  three-species competition-diffusion system, which is non-cooperative.
We apply the Hamilton-Jacobi approach, due to Freidlin, Evans and Souganidis,
to establish upper and lower estimates of spreading speed for the slowest species, in terms of the spreading speeds of two faster species. 
The estimates we obtained are sharp in some situations.
The spreading speed is being characterized as the free boundary point of the viscosity solution for certain variational inequality cast in the space of speeds. 
To the best of our knowledge, this is the first theoretical result on three-species competition system in unbounded domains.
\end{abstract}




\section{Introduction}\label{S1}

Biological invasion (or spreading) is a fundamental and long-standing subject in ecology \cite{Shigesada_1997}. Mathematical studies have so far been focused on the single-species and two-species models, where the order-preserving property of the underlying dynamics was exploited to identify the speeds of the invasive species.
In this paper, we consider the diffusive Lotka-Volterra system consisting of three competing species,
which is not order-preserving. After suitable non-dimensionalization, the system reads
\begin{equation}\label{eq:1-1}
\left\{
\begin{array}{cc}
\partial_t u_1-d_1\partial_{xx}u_1=r_1u_1(1-u_1-a_{12}u_2 - a_{13}u_3)& \text{ in }(0,\infty)\times \mathbb{R},\\
\partial_t u_2-\partial_{xx}u_2=u_2(1-a_{21} u_1- u_2 - a_{23}u_3)& \text{ in }(0,\infty)\times \mathbb{R},\\
\partial_t u_3-d_3\partial_{xx}u_3=r_3u_3(1-a_{31} u_1- a_{32} u_2 -u_3) & \text{ in }(0,\infty)\times \mathbb{R},\\
u_i(0,x)=u_{i,0}(x) & \text{ on } \mathbb{R},\,i=1,2,3,
\end{array}
\right .
\end{equation}
where 
$u_i(t,x)$ represents the population density of the $i$-th competing species at time $t$ and location $x$. The positive constants $d_i$ and $r_i$ denote the diffusion coefficient and intrinsic growth rate of $u_i$ (we may assume $d_2=r_2=1$ by scaling the variables $x$ and $t$), and positive constant $a_{ij}$ is the competition coefficient of species $u_j$ to $u_i$.  
We will determine a class of solutions where
each competing species invades from left to right with a different speed; 
see Figure \ref{figure3}. A necessary condition is the competitive hierarchy:
\begin{equation}\label{eq:compactorder1}
d_3r_3 < 1 < d_1r_1, \quad a_{21} < 1 < a_{12},\quad \text{ and }\quad a_{31}+a_{32}<1,
\end{equation}
which says that the species $u_1,u_2,u_3$ are ordered from fastest to slowest, and that  
$u_2$ can competitively exclude $u_1$ in the absence of $u_3$, but both will eventually be invaded by $u_3$. We will assume \eqref{eq:compactorder1} holds throughout this paper.

%
%
%
\subsection{Spreading Speeds of the First Two Species}

When $u_3 \equiv 0$, system \eqref{eq:1-1} reduces to the two-species competition system
\begin{equation}\label{eq:2sp}
\left\{
\begin{array}{cc}
\partial_t u_1-d_1\partial_{xx}u_1=r_1u_1(1-u_1-a_{12}u_2)& \text{ in }(0,\infty)\times \mathbb{R},\\
\partial_t u_2-\partial_{xx}u_2=u_2(1-a_{21} u_1- u_2)& \text{ in }(0,\infty)\times \mathbb{R},\\
u_i(0,x) = u_{i,0}(x) & \text{ on }\mathbb{R},\, i=1,2.
\end{array}
\right .
\end{equation}
When $a_{21} < 1 < a_{12}$ (i.e. the second species is competitively superior to the first one), and that $u_{1,0}$ and $1-u_{2,0}$ are both nonnegative, compactly supported and bounded from above by $1$, a classical spreading result due to Li et al. \cite{LWL_2005} says that there exists $\hat{c}_{\rm LLW} \in [2\sqrt{1-a_{21}},2]$ such that $u_2$ invades into the territory of $u_1$ with speed $\hat{c}_{\rm LLW}$ in the following sense:
\begin{equation}\label{qq:1.3}
    \begin{cases}
    \lim\limits_{t\to\infty} \sup\limits_{x > (\hat{c}_{\rm LLW} + \eta)t} (|u_1(t,x)-1| + |u_2(t,x)|)=0,\\
    \lim\limits_{t\to\infty} \sup\limits_{0\leq x < (\hat{c}_{\rm LLW} - \eta)t} (|u_1(t,x)| + |u_2(t,x)-1|)=0.
    \end{cases}
\end{equation}
Furthermore, $\hat{c}_{\rm LLW}$ coincides with the minimal wave speed for the existence of a traveling wave solution of \eqref{eq:2sp} connecting $(1,0)$ and $(0,1)$. A linearization of \eqref{eq:2sp} at the equilibrium $(1,0)$ that is being invaded, shows that $\hat{c}_{\rm LLW} \geq 2\sqrt{1-a_{21}}$.

When $\hat{c}_{\rm LLW} = 2\sqrt{1-a_{21}}$, we say that the spreading speed $\hat{c}_{\rm LLW}$ is linearly determined. In this case, the resulting invasion wave is a pulled wave, in the sense that the invading population is fueled by the growth of population at the leading edge of the front.
When 
$\hat{c}_{\rm LLW} > 2\sqrt{1-a_{21}}$, we say that the spreading speed $\hat{c}_{\rm LLW}$ is nonlinearly determined. In this case, the resulting invasion wave is a pushed wave, in the sense that the expansion is pushed by all components of the population. Thus a pushed wave is a mechanism to speed up the invasion of an compactly supported population. A signature of a pushed wave is its fast exponential decay at $x=\infty$ \cite{Alhasanat2019,Roques2015}.


Yet another mechanism of speed enhancement takes effect when the two species are invading an open habitat. Namely, when both $u_{1,0}$ and $u_{2,0}$ are compactly supported. 
This question was raised by Shigesada and Kawasaki \cite{Shigesada_1997} as they considered the invasion of two or more tree species into the North American continent at the end of the last ice age (approximately 16,000 years ago) \cite{Davis1981}. The case of two competing species
was first considered by Lin and Li \cite{Lin2012}, and is completely solved in \cite{Girardin_2019} for compactly supported initial data via a delicate construction of super- and sub-solutions. See also \cite{Salako2020} for the existence of entire solutions which are stacked waves as $t\to\infty$.
Specializing to the two-species system, \eqref{eq:compactorder1} becomes
\begin{equation}\label{qq:1.5}
    d_1r_1 >1 \quad \text{  and } \quad a_{21} < 1 < a_{12}.
\end{equation}
The first condition says that, in the absence of competition, $u_1$ spreads faster than $u_2$. The second condition says that $u_2$ is competitively superior to $u_1$.
\begin{theorem}[\cite{Girardin_2019}]
Assume \eqref{qq:1.5}.
Let $(u_i)_{i=1}^2$ be any solution of \eqref{eq:2sp} 
with compactly supported initial data which are non-negative and non-trivial. Then for each small $\eta>0$, the following spreading results hold:
\begin{equation*}
    \begin{cases}
    \lim\limits_{t\to\infty} \sup\limits_{x > (c_1 + \eta)t} (|u_1(t,x)| + |u_2(t,x)|)=0,\\
    \lim\limits_{t\to\infty} \sup\limits_{(c_2 + \eta)t<x < (c_1 - \eta)t} (|u_1(t,x)-1| + |u_2(t,x)|)=0,\\
    \lim\limits_{t\to\infty} \sup\limits_{0\leq x < (c_2 - \eta)t} (|u_1(t,x)| + |u_2(t,x)-1|)=0.
    \end{cases}
\end{equation*}
Here the spreading speeds are given by 
$c_1 = 2\sqrt{d_1r_1}$ and 
$$
c_2 = \begin{cases}
\max\left\{\hat{c}_{\rm LLW}, \frac{c_1}{2} - \sqrt{a_{21}} + \frac{1-a_{21}}{c_1/2 - \sqrt{a_{21}}} \right\} &\text{ if }c_1 < 2(\sqrt{a_{21}} + \sqrt{1-a_{21}}),\\
\hat{c}_{\rm LLW} &\text{otherwise},
\end{cases}
$$
where $\hat{c}_{\rm LLW} \in [2\sqrt{1-a_{21}},2]$ is the spreading speed given by \eqref{qq:1.3}.
\end{theorem}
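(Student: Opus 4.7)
\medskip
\noindent\textbf{Plan of proof.}
The plan is to establish four one-sided bounds: matched upper and lower estimates on the spreading speeds of $u_1$ and $u_2$. The main tools are (i) scalar Fisher--KPP comparison for $u_1$; (ii) the classical Li--Weinberger--Lewis (LLW) two-species result \eqref{qq:1.3} applied in the region where $u_1$ has already stabilized near $1$; and (iii) carefully tuned exponential super- and subsolutions that encode the anomalous acceleration of $u_2$ by the leading tail of $u_1$.

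\medskip

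\emph{Spreading of $u_1$.} Since $u_2\ge 0$, $u_1$ is a subsolution of the scalar KPP equation $\partial_t v-d_1\partial_{xx}v=r_1 v(1-v)$, so classical KPP theory with compactly supported data yields the upper estimate $u_1(t,x)\to 0$ on $\{x>(c_1+\eta)t\}$ with $c_1=2\sqrt{d_1r_1}$. For the matching lower estimate on $(c_2+\eta)t<x<(c_1-\eta)t$, one uses the upper bound on $u_2$ (below) to deduce $u_2\le\delta$ there for large~$t$; then $u_1$ satisfies a slightly perturbed KPP inequality, and a standard compactly supported KPP subsolution propagates at any speed arbitrarily close to $c_1$.

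\medskip

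\emph{Upper bound for $u_2$.} This is the crux: the two alternatives in the definition of $c_2$ correspond to two independent supersolutions, and $u_2$ is dominated by the smaller. For the \emph{LLW branch}, fix $\delta>0$ small and choose $T_0$ so that $u_1(T_0,\cdot)\ge 1-\delta$ on a large interval $[-L,L]$ containing $\mathrm{supp}\,u_{2,0}$; a two-species comparison (using that \eqref{eq:2sp} is order-preserving on $(u_1,-u_2)$) against the classical LLW initial-value problem yields, in the limit $\delta\downarrow 0$, $u_2(t,x)\to 0$ on $\{x>(\hat c_{\rm LLW}+\eta)t\}$. For the \emph{anomalous branch}, set $\mu:=c_1/2-\sqrt{a_{21}}$, which lies in $(0,\sqrt{1-a_{21}})$ exactly when $c_1<2(\sqrt{a_{21}}+\sqrt{1-a_{21}})$; then $c_2^{\rm anom}:=\mu+(1-a_{21})/\mu$ satisfies the $u_2$-dispersion $c\mu=\mu^2+(1-a_{21})$ at $u_1\equiv 1$. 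One constructs a supersolution
\[
\bar u_2(t,x)=\min\bigl\{1,\; A\,e^{-\mu(x-(c_2^{\rm anom}+\eta)t)}\bigr\},
\]
for which the supersolution inequality reduces, where $u_1\ge 1-O(\eta)$, to the dispersion relation above. In the tail region $x\gg c_1 t$ where $u_1$ is small, $\bar u_2$ must be patched with a second exponential matching the KPP decay rate $\sqrt{r_1/d_1}$ of the $u_1$ tail; the particular value $\mu=c_1/2-\sqrt{a_{21}}$ is precisely the one making the patched supersolution consistent across the moving interface $x\approx c_1 t$.

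\medskip

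\emph{Lower bound for $u_2$; main obstacle.} The LLW lower bound $c_2\ge\hat c_{\rm LLW}$ follows by the same restart argument at $T_0$ together with an order-preserving comparison from below. For the anomalous lower bound $c_2\ge c_2^{\rm anom}$, one builds a subsolution of the form
\[
\underline u_2(t,x)=\varepsilon_0\bigl(e^{-\mu(x-(c_2^{\rm anom}-\eta)t)}-e^{-\mu'(x-(c_2^{\rm anom}-\eta)t)}\bigr)_+,
\]
with $\mu<\mu'$ close, $\varepsilon_0>0$ small, whose sub-inequality is verified using $u_1\le 1$ and whose initial positivity is seeded by short-time parabolic regularization of $u_{2,0}$. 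The main technical difficulty is the matched-asymptotic construction at the moving front $x=c_1 t$: the exponent $\mu=c_1/2-\sqrt{a_{21}}$ arises from a two-component eigenvalue problem coupling the $u_1$-tail to $u_2$-growth, and verifying that the patched super- and subsolutions remain valid in a neighborhood of this interface (where the effective growth rate of $u_2$ transitions from $1$ to $1-a_{21}$) is the most delicate step of the entire argument.
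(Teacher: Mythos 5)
Your outline correctly identifies the general strategy used in \cite{Girardin_2019} (competitive comparison plus exponential super/sub-solutions patched across the moving interface $x\approx c_1t$), but two of its four half-estimates are genuinely broken. First, the upper bound for $u_2$: since $c_2=\max\{\hat{c}_{\rm LLW},\hat{s}_{\rm nlp}(c_1)\}$ in the regime $c_1<2(\sqrt{a_{21}}+\sqrt{1-a_{21}})$, the speed of $u_2$ is the \emph{larger} of the two candidate speeds, so $u_2$ cannot be ``dominated by the smaller'' of two supersolutions. In particular your LLW branch, which concludes $u_2\to 0$ on $\{x>(\hat{c}_{\rm LLW}+\eta)t\}$, is false whenever $\hat{s}_{\rm nlp}(c_1)>\hat{c}_{\rm LLW}$ (e.g.\ $c_1\searrow 2$, where $c_2\to 2>\hat{c}_{\rm LLW}$); it would contradict the very acceleration the theorem asserts. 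The restart comparison also cannot be set up: to bound $u_2$ from above in the competitive order you must simultaneously carry a \emph{lower} barrier for $u_1$ valid on all of $\mathbb{R}$, and $u_1(T_0,\cdot)$ is exponentially small beyond its own front (and tends to $0$, not $1$, on $\mathrm{supp}\,u_{2,0}$, since $u_2$ is the superior competitor there), so a barrier $v_1\ge 1-\delta$ is not admissible. The correct upper bound must instead control $u_2$ in the zone $x>c_1t$, where its effective growth rate is $\approx 1$, and the critical matching exponent there is $c_1/2=\sqrt{d_1r_1}$ for $u_2$'s own equation (a degenerate matching requiring corrections), not the $u_1$-tail rate $\sqrt{r_1/d_1}$; the two coincide only when $d_1=1$, which is not assumed in \eqref{qq:1.5}.

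Second, the anomalous lower bound. The differential inequality for $\varepsilon_0(e^{-\mu\xi}-e^{-\mu'\xi})_+$ with $\mu$ slightly below $c_1/2-\sqrt{a_{21}}<\sqrt{1-a_{21}}$ is indeed satisfied using $u_1\le 1$, but the ordering at the restart time fails: for compactly supported $u_{2,0}$, $u_2(T_0,\cdot)$ has Gaussian decay in $x$ at every finite time, so it can never dominate a tail decaying like $e^{-\mu x}$ on all of $\mathbb{R}$. The slowly decaying tail that drives the nonlocally pulled front is created \emph{dynamically} by the region ahead of the $u_1$-front, where $u_1\approx 0$ and $u_2$ grows at rate $\approx 1$; making this quantitative (e.g.\ the estimate $u_2(t,c_1t)\ge \exp\{-(\mu_0+o(1))t\}$ with $\mu_0=(\tfrac{c_1}{2}-\sqrt{a_{21}})(c_1-\hat{s}_{\rm nlp})$, as recalled in Section \ref{S1}, or the global time-dependent subsolution of \cite{Girardin_2019}) is precisely the heart of the proof and is absent from your sketch. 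Finally, your constructions only address linearly determined rates; when $\hat{c}_{\rm LLW}>2\sqrt{1-a_{21}}$ (pushed LLW front) the super/sub-solutions near the $u_2$-front must be built from the LLW traveling wave profile itself, as is done in \cite{Girardin_2019}, not from pure exponentials.
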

Observe that $c_1>2$, by \eqref{qq:1.5}. When $c_1=2\sqrt{d_1r_1}  \searrow 2$ (e.g. by varying $r_1$), the speed of the second species approaches $2$, which is larger than $\hat{c}_{\rm LLW}$. This novel mechanism of speed enhancement was first discovered by Holzer and Scheel \cite{Holzer2014} when $a_{12} =0$ (in such case \eqref{eq:2sp} is decoupled). In \cite{Girardin_2019}, it is called a ``nonlocally pulled wave": It is ``nonlocal" since $c_2$ depends on $c_1$, and it is considered a kind of pulled wave since it is of slow decay (see \cite{LLL2019} for further discussion). 
The weak competition case (i.e. $0 < a_{12}, a_{21} <1$) was subsequently  considered in \cite{LLL2019,LLL20192} via obtaining large deviations type estimates. 
An important observation is that the faster moving front can influence the slower moving front, but not vice versa. This enables us to estimate each invasion front separately, from the fastest to the slowest.  
In contrast to \cite{Girardin_2019} where all the speeds are determined at once by a single pair of global super- and sub-solutions, this new point of view 
opens the door to analyzing more general non-cooperative systems.



In this paper, we are interested in the spreading dynamics of the three-species competition system \eqref{eq:1-1}, with initial data satisfies one of the following conditions. 
\begin{itemize}
\item [{$\rm{(H_{\infty})}$}] For $i=1,2,3$, $u_{i,0}\in C(\mathbb{R}; [0,1])$  is non-trivial and  has compact support.
  \item [{$\rm(H_{\lambda})$}] For $i=1,2$, $u_{i,0}\in C(\mathbb{R}; [0,1])$  is non-trivial and  has compact support, and the initial data $u_{3,0}\in C(\mathbb{R}; [0,1])$   satisfies $u_{3,0}(x)>0$ for all $x \in \mathbb{R}$, and 
  $$0 < \liminf\limits_{x \to \infty} e^{\lambda x} u_{3,0}(x) \leq \limsup\limits_{x \to \infty} e^{\lambda x} u_{3,0}(x) <\infty  \text{ for some }\lambda \in (0,\infty).$$
  \end{itemize}
  
  To facilitate our discussion, we introduce the maximal and minimal spreading speeds for each of $u_i$ as follows  (see, e.g. \cite[Definition 1.2]{Hamel2012}, for  related concepts for a single species):
\begin{equation*}
\begin{cases}
\smallskip
\overline{c}_i=\inf{\{c>0~|~\limsup \limits_{t\rightarrow \infty}\sup\limits_{x>ct} u_i(t,x)=0\}},\\
\smallskip
\underline{c}_i=\sup{\{c>0~~|\liminf \limits_{t\rightarrow \infty}\inf\limits_{ct-1<x<ct} u_i(t,x)>0\}}, \\
 \end{cases}
\text{ for }i=1,2,3.
\end{equation*}
Note that $\overline{c}_i \geq \underline{c}_i$.  Furthermore, the species $u_i$ has a spreading speed $c$ in the sense of \cite{Aronson_1975, Aronson_1978} if and only if 
$c=\overline{c}_i = \underline{c}_i$. Different from the spreading speed, these maximal and minimal speeds are well defined {\it a priori}, and are more amenable for estimation.
%

Let us assume, without loss of generality, that $u_3$ is the slowest species. By the observation that the slower front does not affect the faster fronts, the spreading speeds of the two faster species can be determined based on
%
%
\cite{Girardin_2019,LLL2019,LLL20192}. 
To state the theorem, we  
define the nonlocally pulled wave speed:
\begin{equation}\label{qq:1.7a}
\hat{s}_{\rm nlp}(c_1) := 
\begin{cases}
\frac{c_1}{2} - \sqrt{a_{21}} + \frac{1-a_{21}}{\frac{c_1}{2} - \sqrt{a_{21}}} & \text{ if }c_1\leq 2 (\sqrt {a_{21}} + \sqrt{1-a_{21}}),\\
2\sqrt{1-a_{21}} &\text{ otherwise.}
\end{cases}
\end{equation}
(Note that $\hat{s}_{\rm nlp} \in [2\sqrt{1-a_{21}},2]$.)
\begin{theorem}\label{thm:suff1.2}
Assume that \eqref{eq:compactorder1} holds and that $\hat{c}_{\rm LLW}=2\sqrt{1-a_{21}}$ (i.e. $\hat{c}_{\rm LLW}$ is linearly determined). Let $(u_i)_{i=1}^3$ be a solution to \eqref{eq:1-1}, such that one of the following conditions hold:
\begin{itemize}
    \item[{\rm(i)}] $({\rm H}_\infty)$ holds and $2\sqrt{d_3r_3} < \hat{s}_{\rm nlp}(2\sqrt{d_1r_1})$; or 
    \item[{\rm(ii)}] $({\rm H}_\lambda)$ holds for some $\lambda\in(0,\infty)$, and $\sigma_3(\lambda) < \hat{s}_{\rm nlp}(2\sqrt{d_1r_1})$, where 
    \begin{equation}\label{eq:sigma3}
    \sigma_3(\lambda):= \begin{cases}
    \smallskip
    d_3 \lambda + \frac{r_3}{\lambda} &\text{if }\,0 < \lambda < \sqrt{r_3/d_3},\\
    2\sqrt{d_3 r_3} &\text{if } \,\lambda \geq \sqrt{r_3/d_3}.
    \end{cases}
    \end{equation}
\end{itemize}
Then, letting $c_1=2\sqrt{d_1r_1}$ and $c_2 = \hat{s}_{\rm nlp}(2\sqrt{d_1r_1})$, we have $c_1 > c_2 > \sigma_3(\lambda) \ge\overline{c}_3$. Furthermore, 
the spreading dynamics of the first two species satisfy, for each $\eta>0$ small,
 \begin{equation}\label{eq:H.c1c2}
\begin{cases}
\lim\limits_{t\rightarrow \infty} \sup\limits_{ x>(c_1+\eta) t} (|u_1(t,x)|+|u_2(t,x)|)=0, \\
\lim\limits_{t\rightarrow \infty} \sup\limits_{(c_2+\eta) t< x<(c_1-\eta) t} (|u_1(t,x)-1|+|u_2(t,x)|)=0, \\
\lim\limits_{t\rightarrow \infty} \sup\limits_{( \overline{c}_{3} + \eta)t< x<(c_2-\eta) t} (|u_1(t,x)|+|u_2(t,x)-1|)=0,\\ 
\lim\limits_{t\rightarrow \infty} \sup\limits_{x > (\overline{c}_3 + \eta)t} |u_3(x,t)| = 0.
\end{cases}
\end{equation}
\end{theorem}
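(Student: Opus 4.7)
My plan is to exploit the one-way hierarchy -- slower species do not affect faster ones -- and to work downward from the slowest species $u_3$. Once a sharp upper bound on the spreading of $u_3$ is secured, the pair $(u_1,u_2)$ satisfies an exponentially small perturbation of the two-species system \eqref{eq:2sp} in the region ahead of the $u_3$-front, and the four statements in \eqref{eq:H.c1c2} follow from Theorem~1.1 together with a perturbation argument. The bound on $\overline{c}_3$ is straightforward: since $u_1, u_2 \ge 0$, the third equation of \eqref{eq:1-1} gives $\partial_t u_3 - d_3 \partial_{xx} u_3 \le r_3 u_3 (1-u_3)$, so parabolic comparison dominates $u_3$ by the scalar Fisher--KPP solution with initial data $u_{3,0}$. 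Under $({\rm H}_\infty)$ this yields $\overline{c}_3 \le 2\sqrt{d_3 r_3}$ by Aronson--Weinberger, and under $({\rm H}_\lambda)$ the standard exponential ansatz $e^{-\lambda(x - \sigma_3(\lambda) t)}$ as a super-solution yields $\overline{c}_3 \le \sigma_3(\lambda)$. Combined with $c_1 = 2\sqrt{d_1 r_1} > 2 \ge \hat s_{\rm nlp}(c_1) = c_2$ and the standing gap hypothesis $\sigma_3(\lambda) < c_2$, this gives the ordering $c_1 > c_2 > \sigma_3(\lambda) \ge \overline{c}_3$ and the fourth line of \eqref{eq:H.c1c2}.

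For the remaining three estimates, fix a small $\eta > 0$ and $\sigma_\ast \in (\sigma_3(\lambda), c_2)$. Step~1 gives $u_3(t,x) \le C e^{-\delta t}$ uniformly on $\{x \ge \sigma_\ast t\}$ for some $C, \delta > 0$. The outer upper estimates on $\{x > (c_1 + \eta) t\}$ come from individual Fisher--KPP comparisons ($u_1$ dominated at speed $c_1$, and $u_2$ at speed $2 \le c_1$). For the intermediate bound $u_2 \to 0$ on $\{x > (c_2 + \eta) t\}$, I would transfer the Girardin nonlocal-pulled-wave super-solution -- roughly $\min\{1, A e^{-\mu(x - c_2 t)}\}$ with $\mu$ determined by the linearization of the second equation at $(1,0)$ -- to the perturbed system: the extra term $-a_{23} u_2 u_3 \le 0$ in the $u_2$-equation is favorable, so the super-solution property persists on $\{x \ge \sigma_\ast t\}$. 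The interior lower bounds are obtained by squeezing. In the fast wedge $(c_2 + \eta) t < x < (c_1 - \eta) t$, both $u_2$ and $u_3$ are $o(1)$, and $u_1$ satisfies $\partial_t u_1 - d_1 \partial_{xx} u_1 \ge r_1 u_1(1 - \eta - u_1)$; combined with a Fisher--KPP lower sub-solution at speed $c_1$, a Berestycki--Hamel stability argument forces $u_1 \to 1$. In the slow wedge $(\overline{c}_3 + \eta) t < x < (c_2 - \eta) t$, $u_3 \to 0$ and Girardin's sub-solution for $u_2$ (a translated $\hat{c}_{\rm LLW}$-traveling wave) drives $u_2 \to 1$, after which competitive exclusion from $a_{12} > 1$ yields $u_1 \to 0$.

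The delicate point is the robustness of the nonlocal pulled-wave speed $c_2 = \hat s_{\rm nlp}(c_1)$ under the exponentially-decaying-in-$t$ perturbation from $u_3$. The Girardin super- and sub-solutions rely on a tight resonance between the exponential tail of the $u_1$-front at speed $c_1$ and the linearization of $u_2$ at $(1,0)$, and an additive perturbation could in principle shift $c_2$. The hypothesis $\sigma_3(\lambda) < c_2$ is exactly the safety margin that lets the decay of $u_3$ on $\{x \ge \sigma_\ast t\}$ dominate any correction to $c_2$, so that a slight translation of the Girardin ansatz absorbs the perturbation. Matching the perturbed sub/super-solutions across the boundary $x = \sigma_\ast t$ with the trivial bound $u_i \le 1$ on the interior side will require a careful cutoff, and this is the main technical hurdle I anticipate.
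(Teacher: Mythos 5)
Your preliminary steps are fine: the comparison $\partial_t u_3 - d_3\partial_{xx}u_3 \le r_3u_3(1-u_3)$ does give $\overline{c}_3 \le \sigma_3(\lambda)$, hence the ordering $c_1>c_2>\sigma_3(\lambda)\ge \overline{c}_3$ and the fourth line of \eqref{eq:H.c1c2}, and the philosophy ``$u_3$ is an exponentially small, sign-favorable or absorbable perturbation ahead of its own front'' is exactly the point of view of the paper (which does not re-prove this statement at all, but obtains it as a direct application of \cite[Theorem 7.1]{LLL20192}). The genuine gap is in the part you treat as routine transfer of the two-species results. For the third line of \eqref{eq:H.c1c2} you propose a sub-solution built from a translated $\hat{c}_{\rm LLW}$-traveling wave. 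That only forces $u_2\to 1$ on $\{x<(\hat{c}_{\rm LLW}-\eta)t\}$, whereas whenever $2<c_1<2(\sqrt{a_{21}}+\sqrt{1-a_{21}})$ one has $c_2=\hat{s}_{\rm nlp}(c_1)>2\sqrt{1-a_{21}}=\hat{c}_{\rm LLW}$, so the band $\hat{c}_{\rm LLW}t\lesssim x<(c_2-\eta)t$ — precisely the nonlocally pulled enhancement that the theorem asserts — is left unproven. Capturing this enhancement is the entire difficulty of \cite{Girardin_2019,LLL2019,LLL20192}: it requires either the delicate globally coupled super-/sub-solution pair of \cite{Girardin_2019}, whose sub-solution is not a translated wave, or the large-deviation/Hamilton--Jacobi estimate of $u_2$ along the ray $x=c_1t$ (the quantity $\mu_0$ in the introduction), and neither is something a ``slight translation'' argument supplies.

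The upper bound sketch has the same defect. The ansatz $\min\{1,Ae^{-\mu(x-c_2t)}\}$ with $\mu$ determined by the linearization of the $u_2$-equation at $(1,0)$ is not a super-solution ahead of the $u_1$-front: for $x>c_1t$ one has $u_1\approx 0$, the effective growth rate of $u_2$ is $1$ rather than $1-a_{21}$, and the required inequality $\mu c_2\ge d_2\mu^2+1=\mu^2+1$ forces $c_2\ge 2$, which fails since $c_2=\hat{s}_{\rm nlp}(c_1)<2$ for every $c_1>2$. The actual construction must use two different exponential rates matched across $x\approx c_1t$, and it is exactly this matching that produces the formula \eqref{qq:1.7a} for $\hat{s}_{\rm nlp}$; without it your ``fast wedge'' argument (which assumes $u_2=o(1)$ for $x>(c_2+\eta)t$) has no starting point. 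So while the strategy is the right one, the two ingredients you defer — the nonlocally pulled lower bound and the two-rate super-solution (or, equivalently, the rate-function estimates of \cite{LLL2019,LLL20192}) — are the substance of the theorem, not technical hurdles to be absorbed by a cutoff near $x=\sigma_*t$; the honest short proof is simply to verify the hypotheses of \cite[Theorem 7.1]{LLL20192}, which is what the authors do.
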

The proof of Theorem \ref{thm:suff1.2} is a direct application of \cite[Theorem 7.1]{LLL20192} and is thus omitted.
Therefore, we can reduce the problem into determining the speed of the slowest species. 
\begin{remark}\label{rmk:1--2a}
By \cite[Theorem 2.1]{Lewis_2002}, 
a sufficient condition for $\hat{c}_{\rm LLW} = 2\sqrt{1-a_{21}}$ is $d_1 =1$,  $a_{21}< 1 < a_{12}$, and $a_{21}a_{12} <\max\{1,2(1-a_{21})\}$. 
(See also \cite{Alhasanat2019,Huang_2010}.) However, the linear determinacy assumption was added only for simplicity purpose. In fact, it is possible to remove the assumption, by replacing $c_2=\max\{\hat{s}_{\rm nlp}, \hat{c}_{\rm LLW}\}$ in the conclusions. 
\end{remark}

\begin{definition}\label{def:h3}
Given $c_1,c_2\in (0,\infty)$ and $\lambda \in (0,\infty]$. We say that $({\rm H}_{c_1,c_2,\lambda})$ holds if \begin{itemize} \smallskip
    \item [{\rm (i)}] $c_1>c_2 > \sigma_3(\lambda)$,
    \smallskip
    \item [{\rm (ii)}] the solution $(u_i)_{i=1}^3$ of \eqref{eq:1-1} has initial condition satisfying $({\rm H}_\lambda)$, and
    \smallskip
    \item [{\rm (iii)}] the spreading conditions \eqref{eq:H.c1c2} hold.
\end{itemize}   
\end{definition}
The conclusion of Theorem \ref{thm:suff1.2} can be rephrased as $({\rm H}_{c_1,c_2,\lambda})$ being satisfied for 
$$c_1=2\sqrt{d_1r_1}, \quad c_2 = \hat{s}_{\rm nlp}(c_1), \quad \text{ and  for some }\quad  \lambda \in (0,\infty].
$$

%

Note that $\sigma_3(\lambda)$, defined in \eqref{eq:sigma3}, is an upper bound of the spreading speed of $u_3$ when it has exponential decay $\lambda$. Thus 
{\rm (i)}
means that the three species are ordered from the fastest to the slowest. 

\subsection{The Spreading Speed of the Third Species}

Hereafter we will work under the assumption that $({\rm H}_{c_1,c_2,\lambda})$ holds for some $c_1,c_2,\lambda$, and proceed to prove
%
upper and lower bounds of the spreading speed $c_3$ of the third species in terms of  spreading speeds of the first two species. Furthermore, we will show that these estimates are sharp in case the invasion wave of $u_3$ is nonlocally pulled.

To this end, we introduce the speed $s_{\rm nlp} = s_{\rm nlp}(c_1,c_2,\lambda)$ as a free boundary point of the 
viscosity solution of a variational inequality. 
\begin{definition}\label{def:snlp}
 For given  $c_1>c_2>0$ and $\lambda\in(0,\infty]$, let $\rho_{\rm nlp}:[0,\infty)\rightarrow[0,\infty)$ be the unique viscosity solution of the following variational inequality:
\begin{align}\label{eq:w3nlp}
\left\{\begin{array}{l}
\smallskip
\min\{\rho-s\rho'+d_3|\rho'|^2+\mathcal{R}(s),\rho\}=0 \,\,\text{ in } (0,\infty),\\
\rho(0)=0,\quad\lim\limits_{s\to\infty}\frac{\rho(s)}{s}=\lambda,
\end{array}
\right.
\end{align}
where $\mathcal{R}(s)=r_3(1-a_{31}\chi_{\{c_2\leq s\leq c_1\}}-a_{32}\chi_{\{s\leq c_2\}})$ and $\chi_S$ is the indicator function of the set $S$. 
 See Definition \ref{defviscosity} for the definitions of viscosity solutions. 
We define the speed $s_{{\rm nlp}}=s_{{\rm nlp}}(c_1,c_2,\lambda)
$ 
 as the free boundary point given by
\begin{equation}\label{charact_snlp}
 s_{{\rm nlp}}
 = \sup \{s:\rho_{{\rm nlp}}(s) = 0\}.
\end{equation}
\end{definition}

\begin{remark}
The quantity $s_{\rm nlp}$ is well-defined since $\rho_{\rm nlp}(s)$ is non-negative and non-decreasing in $s$ (see Lemma \ref{lemma_w3nlp}). 
In the special case when $a_{31}=a_{32}=0$ so that species $u_3$ can spread as a single species, it is not difficult to see that 
$$
\rho_{\rm nlp}(s) = \max\left\{ \frac{s^2}{4d_3} - r_3,0\right\} \quad \text{ and }\quad s_{\rm nlp} = 2\sqrt{d_3 r_3}
$$
when $u_{3,0}$ is compactly supported, i.e. $\lambda = \infty$; and that
$$
\rho_{\rm nlp}(s) = \max\left\{ \lambda\left(s - d_3 \lambda - \frac{r_3}{\lambda}  \right),0\right\} \quad \text{ and }\quad s_{\rm nlp} = d_3 \lambda + \frac{r_3}{\lambda}
$$
when $\lambda \in [0,\sqrt{r_3/d_3})$. This recovers the classical Fisher-KPP (locally pulled) wave speed for the single species with compactly supported initial data, and the result of \cite{Uchiyama1978} when the exponential decay rate $\lambda$ is subcritical.
\end{remark}

\begin{remark}\label{rmk:1.3}
The following results will be proved in Lemma \ref{lem:betamu3} and Proposition \ref{charactsnlpc}.
\begin{itemize}
    \item [{\rm(i)}] $2\sqrt{d_3 r_3(1-a_{32})} \leq s_{\rm nlp} \leq \sigma_3(\lambda)$, where $\sigma_3(\lambda)$ is defined in \eqref{eq:sigma3}. 
    
    \smallskip
    \item [{\rm(ii)}] 
    If $a_{31} < a_{32}$ and $2\sqrt{d_3 r_3} < c_2 < c_1 < 2\sqrt{d_3 r_3}(\sqrt{a_{32}} + \sqrt{1-a_{32}})$, then 
    $$s_{\rm nlp} 
    > 2\sqrt{d_3 r_3(1-a_{32})}.$$
\end{itemize}
\end{remark}

We also introduce the speed $c_{\rm LLW}$, which is due to Kan-on \cite{Kan-on1997}.
\begin{definition}\label{c3LLW}
Let $c_{\rm LLW}$ be the minimal speed of traveling wave solutions (i.e. $(u,v)=(\varphi(x-ct),\psi(x-ct)$) of 
\begin{equation}\label{eq:1-2'}
\left\{
\begin{array}{ll}
\partial_t   u-\partial_{xx} u=  u(1-a_{21}-  u-a_{23}v)& \text{ in }(0,\infty)\times \mathbb{R},\\
\partial _t   v-d_3\partial_{xx}  v=r_3  v(1-a_{32}  u-  v)& \text{ in }(0,\infty)\times \mathbb{R},\\
\end{array}
\right .
\end{equation}
such that $\lim\limits_{\xi\rightarrow \infty} (\varphi, \psi)(\xi) = (1-a_{21},0)$ and $\lim\limits_{\xi\rightarrow -\infty} (\varphi, \psi)(\xi) = (u^*,v^*)$,
where $(u^*,v^*)$ is the unique stable constant equilibrium such that $v^*>0$.
 \end{definition}
\begin{remark}\label{rem1.3}
It is well-known that $c_{\rm LLW} \in [2\sqrt{d_3 r_3(1-a_{32}(1-a_{21}))}, 2\sqrt{d_3r_3}]$.
Furthermore, $c_{\rm LLW} = 2\sqrt{d_3r_3(1-a_{32}(1-a_{21}))}$ if (see Lemma \ref{lem:1estamoverc30} and \cite[Theorem 2.1]{Lewis_2002})
\begin{equation*}\label{qq:1.12}
d_3 \geq \frac{1}{2}, \quad   a_{32}(1-a_{21}) < 1 < \frac{a_{23}}{1-a_{21}}, \quad \text{ and }\quad a_{32}a_{23} <1.
\end{equation*}
\end{remark}


We can now state the main theorem of this paper.

\begin{thma}\label{thm:1-1}
 \textit{Assume the hierarchy condition \eqref{eq:compactorder1}, and, in addition,
\begin{equation}\label{eq:1--1b}
d_1=1\quad \text{ and }\quad a_{31}a_{12} \leq a_{32}.
\end{equation} 
Let $(u_i)_{i=1}^3$  be any solution of \eqref{eq:1-1} such that $({\rm H}_{c_1,c_2,\lambda})$ holds for some $c_1>c_2>0$ and $\lambda \in (0,\infty]$. 
Then the maximal and minimal speeds $\overline{c}_3, \underline{c}_3$ can be estimated as follows. 
\begin{equation}\label{estamitec3}
  2\sqrt{d_3r_3(1-a_{31} - a_{32})}
  \leq\underline{c}_3\leq\overline{c}_3\leq\max\{s_{{\rm nlp}}(c_1,c_2,\lambda),c_{\rm LLW}\}<c_2.
\end{equation}
Furthermore,  for each $\eta>0$  small, the following spreading results hold:
\begin{equation}
\begin{cases}
\lim\limits_{t\rightarrow \infty} \sup\limits_{ x>(c_1+\eta) t} (|u_1(t,x)|+|u_2(t,x)|+|u_3(t,x)|)=0, \\
\lim\limits_{t\rightarrow \infty} \sup\limits_{(c_2+\eta) t< x<(c_1-\eta) t} (|u_1(t,x)-1|+|u_2(t,x)|+|u_3(t,x)|)=0, \\
\lim\limits_{t\rightarrow \infty} \sup\limits_{( \overline{c}_{3}+\eta)t< x<(c_2-\eta) t} (|u_1(t,x)|+|u_2(t,x)-1|+|u_3(t,x)|)=0,\\
\liminf\limits_{t \rightarrow \infty} \inf\limits_{0 \leq  x < (\underline{c}_3 -\eta)t} u_3(t,x)>0. 
 \end{cases} \label{eq:spreadingly}
\end{equation}
If, in addition, $ s_{{\rm nlp}}(c_1,c_2,\lambda)
\geq c_{\rm LLW}$, then the spreading speed of $u_3$  is 
fully determined by
\begin{equation}\label{eq:spreadings0}
\underline{c}_3=\overline{c}_3=s_{{\rm nlp}}(c_1,c_2,\lambda).
\end{equation} }
\end{thma}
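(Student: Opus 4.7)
The plan is to follow the Hamilton-Jacobi / WKB framework of Freidlin, Evans, and Souganidis, but adapted to the non-cooperative system \eqref{eq:1-1}. First I would parabolically rescale $u_i^\epsilon(t,x)=u_i(t/\epsilon, x/\epsilon)$ for $i=1,2,3$ and study the WKB transform $w_3^\epsilon := -\epsilon \log u_3^\epsilon$, which satisfies
\begin{equation*}
\partial_t w_3^\epsilon - \epsilon d_3 \partial_{xx} w_3^\epsilon + d_3|\partial_x w_3^\epsilon|^2 = r_3\bigl(1 - a_{31} u_1^\epsilon - a_{32} u_2^\epsilon - u_3^\epsilon\bigr).
\end{equation*}
The hypothesis $({\rm H}_{c_1,c_2,\lambda})$ provides precisely the information needed so that the right-hand side converges, away from the rays $\{x=c_1 t\}$ and $\{x=c_2 t\}$, to $\mathcal{R}(x/t)$. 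Under the self-similar ansatz $w_3^\epsilon(t,x)\to t\rho(x/t)$, the half-relaxed limits are viscosity sub- and super-solutions of the variational inequality \eqref{eq:w3nlp}; comparison with $\rho_{\rm nlp}$ (to be proven in Lemma \ref{lemma_w3nlp}) and the free-boundary characterization \eqref{charact_snlp} then yield the upper bound $\overline{c}_3 \le s_{\rm nlp}$. The term $c_{\rm LLW}$ in \eqref{estamitec3} reflects the possibility of a pushed wave into the competitively excluded state $(1-a_{21}, 0)$; in that regime I would instead construct an explicit super-solution from the Kan-on traveling wave of \eqref{eq:1-2'}, propagated ahead of the expected $u_3$-front, which is admissible because $u_1$ is exponentially small behind its own front by \eqref{eq:H.c1c2}.

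For the linear lower bound $\underline{c}_3 \ge 2\sqrt{d_3 r_3(1-a_{31}-a_{32})}$, I would apply the parabolic maximum principle to obtain $u_1, u_2 \le 1$ globally, which turns the third equation of \eqref{eq:1-1} into the scalar KPP differential inequality
\begin{equation*}
\partial_t u_3 - d_3 \partial_{xx} u_3 \ge r_3 u_3 (1 - a_{31} - a_{32} - u_3),
\end{equation*}
and then invoke the classical Aronson-Weinberger spreading theorem, handling the cases $({\rm H}_\infty)$ and $({\rm H}_\lambda)$ separately (in the latter case the a priori exponential decay already ensures a positive lower bound on the spreading speed). The full spreading dynamics \eqref{eq:spreadingly} in the wedges separating the three fronts would then follow from $({\rm H}_{c_1,c_2,\lambda})$ combined with the upper bound $\overline{c}_3 < c_2$ and a standard self-similar argument that shows $u_2 \to 1$ and $u_3 \to 0$ on $\{(\overline{c}_3+\eta)t < x < (c_2-\eta)t\}$.

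The heart of the matter, and the main obstacle, is the sharp lower bound $\underline{c}_3 \ge s_{\rm nlp}$ under the assumption $s_{\rm nlp} \ge c_{\rm LLW}$, needed for \eqref{eq:spreadings0}. For this I would construct a sub-solution of $u_3$ built from a smoothed, truncated version of $\exp(-t\rho_{\rm nlp}(x/t)/\epsilon)$ times a fixed positive test function, and verify the sub-solution property pointwise; non-cooperativity prevents any direct comparison against the full three-species system, so the construction must absorb the loss terms $a_{31}u_1$ and $a_{32}u_2$ via quantitative pointwise estimates on $u_1, u_2$ coming from \eqref{eq:H.c1c2}. This is exactly where the technical hypotheses \eqref{eq:1--1b} enter: equal diffusivities $d_1=d_2=1$ make the $(u_1,u_2)$ subsystem amenable to cooperative super-/sub-solutions with matched exponential tails, and the algebraic inequality $a_{31}a_{12}\le a_{32}$ ensures that the reduced loss $a_{31}u_1$ felt by $u_3$ in the wedge $\{c_2 t < x < c_1 t\}$ is controlled by $a_{32}u_2$ in the wedge $\{x < c_2 t\}$, so that the effective reaction rate of the sub-solution is no smaller than the piecewise-constant $\mathcal{R}(s)$ in \eqref{eq:w3nlp}. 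An Evans-Souganidis-type viscosity comparison, applied to $-\epsilon \log u_3^\epsilon$ and the smoothed $t\rho_{\rm nlp}(x/t)$, then pins down $\underline{c}_3 \ge s_{\rm nlp}$ and closes the proof.
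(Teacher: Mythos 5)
Your overall framework (WKB transform, half-relaxed limits, comparison in the space of speeds, scalar KPP comparison for the crude lower bound, and \eqref{eq:H.c1c2} for the wedge dynamics) matches the paper's, and the elementary parts of your plan are fine. But the upper bound $\overline{c}_3\leq\max\{s_{\rm nlp},c_{\rm LLW}\}$ cannot be obtained by the single comparison you describe, and this is the genuine gap. First, the rate function $\mathcal{R}(x/t)$ you want on the right-hand side presupposes knowing where $u_2\to 1$, i.e.\ it presupposes an upper bound on $\overline{c}_3$ itself; the paper breaks this circularity with a bootstrap, introducing $\rho^\mu_{\rm nlp}$ for $\mu\in[0,1]$ and a continuity argument over the sets $\mathcal{E}$ and $\mathcal{D}_\mu$ (Propositions \ref{prop:overc3}--\ref{prop:estamoverc3}). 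Second, $c_{\rm LLW}$ does not enter through a stand-alone comparison with the Kan-on wave ``in the pushed regime'': as the paper notes, \eqref{eq:1-2'} always overestimates $c_3$ when $a_{21}>0$. Instead, the subsystem comparison (Lemma \ref{lem:appen1} inside Lemma \ref{lemma:5-4}) is fed the exponential decay rate of $u_3$ along rays $x=\hat{c}t$, which itself comes from the HJ estimate, and the two ingredients are iterated; your sketch has no mechanism for this two-way coupling. Third, the super-solution property of $w_{3,*}$ across the discontinuity of the Hamiltonian at $x=c_2t$ is exactly where a ``gap'' in $a_{31}u_1+a_{32}u_2$ during the takeover of $u_1$ by $u_2$ could speed up $u_3$; ruling this out is Proposition \ref{thm:u1v1111}, and this is the actual place where \eqref{eq:1--1b} ($d_1=1$, $a_{31}a_{12}\leq a_{32}$) is used (see Remark \ref{rmk:3.21}).

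Relatedly, you have the role of \eqref{eq:1--1b} backwards: you invoke it to control the losses in your sub-solution for the sharp lower bound, but the lower bound in the paper (Proposition \ref{underc3}) needs no such hypothesis, because sub-solution properties survive the a.e.\ argument thanks to the convexity of the Hamiltonian. Moreover, your claimed sub-solution with reaction rate $\mathcal{R}(s)$ would give $\underline{c}_3\geq s_{\rm nlp}$ unconditionally, which is false in general: behind the front (for $x/t$ below $\max\{s_{\rm nlp},c_{\rm LLW}\}$) one only knows $u_1\leq 1$, so the correct rate function is $\underline{\mathcal{R}}$ in \eqref{eq:sub-solutionw3}, whose free boundary $\underline{\beta}_3$ may be strictly smaller than $s_{\rm nlp}$; equality holds precisely when $s_{\rm nlp}\geq c_{\rm LLW}$, and even then the argument uses the already-established upper bound $\overline{c}_3\leq\beta_3$ (Remark \ref{rmk:u1upper}) to justify where $u_1$ is negligible. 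So the logical order is: upper bound first (via the $\mu$-bootstrap and the no-gap lemma), then the lower bound with $\underline{\mathcal{R}}$, then Lemma \ref{lem:underlinec} to convert $w_3^*=0$ into uniform positivity of $u_3$; your proposal inverts this structure and, as written, the key steps would not close.
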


\begin{remark}
\mbox{}

\begin{itemize}
\item[{\rm(i)}] The condition \eqref{eq:1--1b} is needed to ensure (see Proposition \ref{thm:u1v1111})
$$
\lim\limits_{t\rightarrow \infty} \inf\limits_{(c_2-\eta) t< x<(c_2+\eta) t} (a_{31}u_1(t,x)+a_{32}u_2(t,x))\geq \min\{a_{31},a_{32}\},
$$
which says that there is no ``gap" for $u_3$ to exploit when $u_2$ is taking over $u_1$. See Remark \ref{rmk:3.21} for further discussions.

\item[{\rm(ii)}] The condition $s_{\rm nlp}(c_1,c_2,\lambda) \geq c_{\rm LLW}$ is always satisfied for some $\lambda \in (0,\infty]$.

\end{itemize}
See also Corollary \ref{corollary1.5} and Proposition  \ref{coro:1estamoverc001} for two instances when all the hypotheses of {Theorem A}, including $({\rm H}_{c_1,c_2,\lambda})$, can be verified. 
\end{remark}

We also determine the asymptotic profile of $u_3$ in the final zone $\{(t,x):x < \underline{c}_3 t\}$. 
and give explicit formulas of $s_{\rm nlp}(c_1,c_2,\lambda)$ by solving \eqref{eq:w3nlp}. These are contained in Section \ref{S5} and Appendix \ref{appendix_C}, respectively.

\begin{thmb}
\label{afterunderc3}
 \textit{Let $(u_i)_{i=1}^3$  be any solution of \eqref{eq:1-1} with the initial data 
$u_{3,0} \not\equiv 0$. Suppose that
 $a_{13},a_{23}>1$ and \eqref{eq:compactorder1} hold. Then $\underline{c}_3 \geq 2\sqrt{d_3 r_3} \sqrt{1-a_{31}-a_{32}}$, and  for each small $\eta>0$,
\begin{equation}\label{spreadings}
\lim\limits_{t\to\infty}\sup\limits_{0<x<(\underline{c}_3-\eta)t}(|u_1(t,x)|+|u_2(t,x)|+|u_3(t,x)-1|)=0.
\end{equation}}
\end{thmb}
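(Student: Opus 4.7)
The proof splits into two parts: (i) the Fisher--KPP type lower bound on $\underline{c}_3$, and (ii) the convergence $(u_1,u_2,u_3) \to (0,0,1)$ behind the spreading cone of $u_3$, powered by the strong-competition hypothesis $a_{13},a_{23} > 1$ combined with $a_{31}+a_{32} < 1$.

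For (i), the maximum principle yields $0 \leq u_i \leq 1$ for every $i$, hence
$$
\partial_t u_3 - d_3 \partial_{xx} u_3 \;\geq\; r_3\, u_3\,(1 - a_{31} - a_{32} - u_3) \quad \text{in }(0,\infty)\times\mathbb{R}.
$$
Since $1 - a_{31}-a_{32} > 0$ by \eqref{eq:compactorder1}, comparing $u_3$ to the scalar Fisher--KPP solution with the same non-trivial initial datum $u_{3,0}$ and invoking the classical Aronson--Weinberger theorem yields the lower bound $\underline{c}_3 \geq 2\sqrt{d_3 r_3 (1-a_{31}-a_{32})}$, together with the uniform interior bound
$$
\liminf_{t\to\infty}\inf_{0\leq x\leq(\underline{c}_3-\eta)t} u_3(t,x) \;\geq\; 1-a_{31}-a_{32} \quad \text{for every small }\eta > 0.
$$

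For (ii), I would run a monotone bootstrap between lower bounds on $u_3$ and upper bounds on $u_1,u_2$, shrinking the wedge slightly at each step along a finite increasing sequence $\eta_k \nearrow \eta$. Starting from $\alpha_3^{(0)} := 1 - a_{31} - a_{32}$, suppose $u_3 \geq \alpha_3^{(k)}$ uniformly on $\{0\leq x\leq(\underline{c}_3-\eta_k)t\}$ for all $t$ large. Then parabolic comparison in the $u_i$ equations ($i=1,2$) against the effective logistic problem $\partial_t v - d_i\partial_{xx}v = r_i v(1 - a_{i3}\alpha_3^{(k)} - v)$ yields, on the smaller wedge corresponding to $\eta_{k+1}$, the upper bound $\limsup u_i \leq \alpha_i^{(k+1)} := \max\{0,\,1-a_{i3}\alpha_3^{(k)}\}$. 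Feeding these back into the $u_3$ equation and repeating the KPP-type comparison produces the improved lower bound $u_3 \geq \alpha_3^{(k+1)} := 1 - a_{31}\alpha_1^{(k+1)} - a_{32}\alpha_2^{(k+1)}$. The induced non-decreasing map
$$
T(\alpha_3) \;:=\; 1 - a_{31}\max\{0,1-a_{13}\alpha_3\} - a_{32}\max\{0,1-a_{23}\alpha_3\}
$$
is piecewise affine, and a direct check using $a_{13},a_{23} > 1$ and $a_{31}+a_{32}<1$ shows $T(\alpha_3) > \alpha_3$ on $[0, \max\{1/a_{13},1/a_{23}\})$ (in the linear regime $T$ is affine with positive constant $1-a_{31}-a_{32}$, while at $\alpha_3 = \max\{1/a_{13},1/a_{23}\}$ both $\max$'s vanish and $T(\alpha_3)=1$). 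Hence the iterates $\alpha_3^{(k)}$ strictly increase and cross $\max\{1/a_{13},1/a_{23}\}$ in finitely many steps, at which point $\alpha_3^{(k+1)}=1$ and the bootstrap terminates with $(u_1,u_2,u_3)\to(0,0,1)$ uniformly on $\{0<x<(\underline{c}_3-\eta)t\}$, which is the claimed \eqref{spreadings}.

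The principal obstacle is the rigorous execution of this bootstrap on the expanding moving region. At each step the comparison must be carried out with time-dependent sub- and super-solutions that dominate $u_i$ on the parabolic boundary of the wedge: the lateral boundary $x = (\underline{c}_3-\eta_k)t$ is controlled by the sharp KPP bound from Step (i), and the left boundary $x=0$ is handled via global boundedness together with the local stability of $(0,0,1)$ for the reaction ODE. Because the analysis of $T$ gives termination in finitely many steps, the cumulative wedge-loss $\sum(\eta_{k+1}-\eta_k) \leq \eta$ is automatically accommodated within the prescribed tolerance.
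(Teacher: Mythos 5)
Your overall architecture is the same as the paper's: part (i) is exactly the paper's argument (compare $u_3$ with the scalar KPP equation $\partial_t u = d_3\partial_{xx}u + r_3u(1-a_{31}-a_{32}-u)$, as in \eqref{eq:u3kpp1}), and part (ii) is the same bootstrap between lower bounds for $u_3$ and upper bounds for $u_1,u_2$, driven by the same recursion constant $a_{31}a_{13}+a_{32}a_{23}$ as in \eqref{recursion}. Two remarks on the combinatorial part: your claim that the iterates of the piecewise-affine map $T$ cross $\max\{1/a_{13},1/a_{23}\}$ in \emph{finitely many} steps does not follow from strict monotonicity alone; you need the case analysis the paper makes explicit (if the slope on an affine piece is $\geq 1$ the increments are bounded below; if it is $<1$ the fixed point of that piece lies beyond its right endpoint because $a_{13},a_{23}>1$), applied on each of the finitely many pieces. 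This is fixable and in spirit identical to the paper's Step 3; the paper then finishes differently (it only drives $u_1$ to $0$ via the recursion and handles $u_2\to 0$, $u_3\to 1$ in one stroke by a two-species entire-solution/ODE argument), whereas your symmetric iteration would also work once made rigorous.

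The genuine gap is in how you justify each bootstrap step on the expanding cone, which you yourself flag as ``the principal obstacle.'' A direct parabolic comparison of $u_i$ ($i=1,2$) with the effective logistic problem on the wedge $\{0\leq x\leq(\underline{c}_3-\eta_k)t\}$ fails at the lateral boundary: just ahead of the $u_3$-front the solution has $u_1\approx 1$ or $u_2\approx 1$, so on $x=(\underline{c}_3-\eta_k)t$ the values of $u_i$ are of order one and are \emph{not} dominated by a super-solution at level $\max\{0,1-a_{i3}\alpha_3^{(k)}\}$; the ``sharp KPP bound from Step (i)'' controls $u_3$ there, not $u_1,u_2$. Likewise, at $x=0$ you cannot invoke ``local stability of $(0,0,1)$,'' since nothing beyond $0\leq u_i\leq 1$ is known near $x=0$ before the theorem is proved — that is precisely the conclusion. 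To close this you would have to construct genuinely inhomogeneous super-solutions (equal to $1$ on the parabolic boundary, relaxing to the reduced carrying capacity in the interior) and prove interior decay of the boundary influence, which is essentially a Liouville-type statement. The paper avoids the moving boundary altogether: in its Steps 1, 2 and 4 it takes sequences $(t_n,x_n)$ with $x_n/t_n\to c$ strictly inside the cone, uses parabolic estimates and compactness to pass to an entire solution on $\mathbb{R}^2$, and compares that limit with the spatially homogeneous ODE, seeded by the uniform lower bound $u_3\geq\delta$ from \eqref{positiveu3}. Without this (or an equivalent super-solution construction), the iteration lemmas underpinning your map $T$ are not established, so the proof as written is incomplete at its central step.
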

 The assumptions \eqref{eq:compactorder1} and $a_{13},a_{23}>1$ mean that the species $u_3$ is a strong competitor to species $u_1,u_2$, and hence  eventually invades  and drives $u_1,u_2$ to extinction. It is illustrated by  numerics in Subsection \ref{S1.3} that the condition  $a_{13}, a_{23}>1$ is likely optimal to ensure \eqref{spreadings}.

The following result gives an explicit formula for the speed $s_{\rm nlp}(c_1,c_2,\lambda)$. The proof is presented in Appendix \ref{appendix_C}.
\begin{thmC}
\label{charactsnlp}
\textit{Let $s_{{\rm nlp}}
$ be defined by \eqref{charact_snlp} for given $c_1>c_2>0$ and $\lambda\in(0,\infty]$. Then
 \begin{equation}\label{eq:c_3nlp}
s_{{\rm nlp}}=
\begin{cases}
\medskip
d_3\lambda_{{\rm nlp}1}+\frac{r_3(1-a_{32})}{\lambda_{{\rm nlp}1}} & \text{for } \zeta_1>\frac{c_2}{2{d_3}}, a_{31}<a_{32},  \text{ and } \lambda_{{\rm nlp}1}\leq\sqrt{\frac{r_3(1-a_{32})}{d_3}},
\\
\medskip
d_3\lambda_{{\rm nlp}2}+\frac{r_3(1-a_{32})}{\lambda_{{\rm nlp}2}} &
\begin{cases}
\medskip
\text { for } \zeta_1\leq \frac{c_2}{2{d_3}},  \lambda_{{\rm nlp}2}\leq\sqrt{\frac{r_3(1-a_{32})}{d_3}} \text{ and }\\
\begin{array}{l}
\medskip
{\rm(i)}\,   a_{31}<a_{32}\,  \text{ or }\\
{\rm(ii)}\,  a_{31}\geq a_{32} \text{ and } \zeta_1+\zeta_2<\frac{c_2}{d_3},
\end{array}
\end{cases}\\
2\sqrt{d_3r_3(1-a_{32})}  &\text{otherwise},
\end{cases}
\end{equation}
where
\begin{equation}\label{lambda3}
\begin{cases}
\smallskip
\zeta_1 =
\begin{cases}
\frac{c_1}{2d_3} - \sqrt{\frac{r_3a_{31}}{d_3}} &\text{ for }\lambda\geq \frac{c_1}{2d_3},\\
\frac{c_1}{2d_3}- \frac{\sqrt{(c_1 - 2d_3\lambda)^2 + 4d_3r_3a_{31}}}{2d_3} &\text{ for } \lambda< \frac{c_1}{2d_3}{,}
\end{cases}\\
\zeta_2=\frac{c_2}{2d_3}+\sqrt{\frac{r_3(a_{31}-a_{32})}{d_3}},\\
 \smallskip
\lambda_{{\rm nlp}1}=\frac{c_2}{2d_3}-\sqrt{\frac{r_3(a_{32}-a_{31})}{d_3}},\\ 
\lambda_{{\rm nlp}2}=\frac{c_2-\sqrt{\left(c_2-2d_3\zeta_1\right)^2+4d_3r_3(a_{32}-a_{31})}}{2d_3}. 
\end{cases}
\end{equation}}
\end{thmC}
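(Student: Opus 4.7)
The plan is to solve the variational inequality \eqref{eq:w3nlp} explicitly by exploiting the piecewise-constant structure of $\mathcal{R}$, and then read off $s_{\rm nlp}$ as the left endpoint of the positivity set of $\rho_{\rm nlp}$. First, on each of the three open intervals $(c_1,\infty)$, $(c_2,c_1)$, $(0,c_2)$, where $\mathcal{R}\equiv R$ for $R\in\{r_3,\,r_3(1-a_{31}),\,r_3(1-a_{32})\}$ respectively, I would observe that on $\{\rho>0\}$ the equation reduces to the stationary Hamilton--Jacobi equation
\begin{equation*}
d_3(\rho')^2 - s\rho' + \rho + R = 0,
\end{equation*}
which admits two classes of explicit non-decreasing smooth solutions: the linear family $\rho(s)=\alpha(s-s_*)$ with $s_*=d_3\alpha+R/\alpha$, and the \emph{parabolic envelope} $\rho(s)=s^2/(4d_3)-R$ corresponding to vanishing discriminant (slope $s/(2d_3)$). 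Any viscosity solution must locally be one of these branches or a concatenation, and it is convex on $\{\rho>0\}$, selecting the smaller root of the quadratic for $\rho'$.

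Next I would construct $\rho_{\rm nlp}$ by propagating from $s=\infty$ leftward, matching continuously across $s=c_1$ and $s=c_2$. On $(c_1,\infty)$ the prescribed asymptotic slope $\lambda$ selects the linear branch $\rho(s)=\lambda s - d_3\lambda^2 - r_3$ when $\lambda \le c_1/(2d_3)$ and the parabolic branch (truncated by the tangent line at $s=c_1$) when $\lambda > c_1/(2d_3)$. In either case the resulting slope $\zeta_1$ at $s=c_1^{-}$ is obtained from the quadratic matching $d_3\alpha^2 - c_1\alpha + (c_1\lambda - d_3\lambda^2 - r_3a_{31})=0$, giving exactly the two cases in the definition of $\zeta_1$ in \eqref{lambda3}. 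The solution then continues as the linear branch $\zeta_1(s-c_1)+\rho(c_1)$ through the middle region and is matched across $s=c_2$ by a second quadratic, producing a candidate slope on $(0,c_2)$ equal to $\lambda_{\rm nlp 1}$ or $\lambda_{\rm nlp 2}$ depending on whether $\zeta_1 >c_2/(2d_3)$ and on the sign of $a_{31}-a_{32}$. Finally, wherever this candidate linear slope $\alpha$ stays below $\sqrt{r_3(1-a_{32})/d_3}$ (so the root $d_3\alpha+r_3(1-a_{32})/\alpha$ exceeds $2\sqrt{d_3r_3(1-a_{32})}$), the free boundary is $s_{\rm nlp}=d_3\alpha+r_3(1-a_{32})/\alpha$; otherwise the solution on $(0,c_2)$ saturates onto the parabolic envelope $s^2/(4d_3)-r_3(1-a_{32})$, whose zero gives $s_{\rm nlp}=2\sqrt{d_3r_3(1-a_{32})}$. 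The three lines of \eqref{eq:c_3nlp} then correspond exactly to these scenarios, and the auxiliary quantity $\zeta_2$ encodes the compatibility condition that ensures the candidate slope remains small enough to avoid entering the parabolic regime in the middle region when $a_{31}\ge a_{32}$.

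The main obstacle is justifying the matching at the two discontinuities $s=c_1$ and $s=c_2$ of $\mathcal{R}$, where the Hamiltonian jumps. The clean concatenation above is heuristic; I would need to verify rigorously, using the viscosity formulation (Definition~\ref{defviscosity}) together with the general properties of $\rho_{\rm nlp}$ established in Lemma~\ref{lemma_w3nlp} (non-negativity, monotonicity, boundary behavior), that the piecewise-defined function is both a viscosity sub- and supersolution across each interface. The standard test-function argument reduces this to checking an appropriate inequality between the one-sided slopes $\rho'(c_i^{-})$ and $\rho'(c_i^{+})$ relative to the Rankine--Hugoniot-type relation from the two values of $\mathcal{R}$; in our convex setting this amounts to selecting the smaller admissible slope at each interface, which is precisely what the quadratic matching does. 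Uniqueness of $\rho_{\rm nlp}$, together with continuity of the construction in $\lambda$ and in the parameters, then pins down $s_{\rm nlp}$ as in \eqref{eq:c_3nlp}. The rest is bookkeeping: verifying algebraically, in each listed regime, that the matched slope $\lambda_{\rm nlp i}$ indeed satisfies $\lambda_{\rm nlp i}\le \sqrt{r_3(1-a_{32})/d_3}$ (so the linear branch reaches zero before the parabola intervenes) and computing $s_{\rm nlp}=d_3\lambda_{\rm nlp i}+r_3(1-a_{32})/\lambda_{\rm nlp i}$ in the first two cases, and $s_{\rm nlp}=2\sqrt{d_3r_3(1-a_{32})}$ in the third.
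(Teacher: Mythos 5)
Your proposal follows essentially the same route as the paper's own proof (Propositions \ref{prop:A} and \ref{prop:B} in Appendix \ref{appendix_C}, combined with Proposition \ref{charactsnlpc}): one writes down explicit piecewise linear/parabolic candidates with slopes $\zeta_1$, $\lambda_{\rm nlp1}$, $\lambda_{\rm nlp2}$, checks the viscosity sub-/super-solution inequalities with test functions at the interface points $s=c_2$, $s=s_{\rm nlp}$ (and $s=2d_3\zeta_1$), invokes the uniqueness and comparison results of Section \ref{sec:prelim}, and settles the ``otherwise'' case by a sub-solution bound together with the universal lower bound $s_{\rm nlp}\ge 2\sqrt{d_3r_3(1-a_{32})}$. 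The only point your sketch glosses over is that when $\zeta_1>\frac{c_2}{2d_3}$ the solution in the middle region is not purely linear but switches to the parabola $\frac{s^2}{4d_3}-r_3(1-a_{31})$ on $(c_2,2d_3\zeta_1)$, which is precisely why the matched slope below $c_2$ is $\lambda_{\rm nlp1}$ rather than $\lambda_{\rm nlp2}$ — consistent with the case assignment you state.
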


We briefly discuss the difference of this work with our previous work \cite{Girardin_2019,LLL2019,LLL20192}. 
In \cite{Girardin_2019}, the two-species system \eqref{eq:2sp} generates a monotone dynamical system, so that the result can be obtained by constructing a single pair of weak super- and sub-solutions, and applying comparison principle in the entire domain $\mathbb{R} \times (0,\infty)$. In \cite{LLL2019,LLL20192}, by analyzing the Hamilton-Jacobi equations which are satisfied by the limit of the rate function $w_2(t,x) = \lim\limits_{\epsilon \to 0} -\epsilon \log u_2\left(\frac{t}{\epsilon}, \frac{x}{\epsilon}\right)$, we obtained large deviation type estimates for $u_2$ along the ray $\{(t,x): x =c_1t\}$, which in the case of compactly supported initial data says
\begin{equation*}
u_2(c_1 t, t) = \exp\left(  -(\mu_0  + o(1))t \right), \quad \text{ with }\,\,\,\mu_0 = \left( \frac{c_1}{2} - \sqrt{a_{21}}\right)(c_1 - \hat{s}_{\rm nlp}),
\end{equation*}
where $c_1 = 2\sqrt{d_1 r_1}$ and  $\hat{s}_{\rm nlp}$ is given in \eqref{qq:1.7a}. Hence, we can restrict the equation \eqref{eq:2sp} into the sectorial domain $\{(x,t):\, 0 \leq x \leq c_1 t\}$ with the boundary conditions
\begin{equation*}
(u,v)(0,t) \to (1,0),\quad (u,v)(c_1 t, t) \to (0,1), \quad \text{ and }\quad u(c_1 t,t) \sim e^{-\mu_0 t},
\end{equation*}
for $t \gg 1$. Hence, only one comparison with the traveling wave solution was enough to determine the speed $c_2$.

The main difficulty of treating the three-species system \eqref{eq:1-1}, in connection with the spreading speed of the slowest species $u_3$, is the lack of monotonicity of the full system. Our first idea is to use the subsystem \eqref{eq:1-2'} between the second and the third species to estimate $c_3$ from above. However, \eqref{eq:1-2'} is non-optimal, as we have set $u_1 \equiv 1$, whereas it ought to hold that $u_1 \approx \chi_{\{c_2 t < x < c_1 t\}}$ in the ``correct" system. In fact, whenever $a_{21}>0$, 
the traveling wave solutions of \eqref{eq:1-2'} always overestimate $c_3$. Similarly, $a_{31}>0$ causes trouble when we try to estimate $c_3$ from below. (When either one of them is sufficiently small, we can determined $c_3$ exactly, see  Corollary \ref{corollary1.5} and Proposition  \ref{coro:1estamoverc001}.)

Our second idea is to estimate the rate function  $w_3(t,x) = \lim\limits_{\epsilon \to 0} -\epsilon\log u_3\left(\frac{t}{\epsilon}, \frac{x}{\epsilon}\right)$ directly, to show that  $w_3(t,x) >0$ for $ x/t 
> \max\{c_{\rm LLW},s_{\rm nlp}\}$,
which is equivalent to $\overline{c}_3 \leq\max\{c_{\rm LLW},s_{\rm nlp}\}.$ While this cannot be achieved by a single comparison, we can leverage the second species $u_2$ to control the first species $u_1$, and use an iterative method to improve the estimate step by step. 
{Theorem A} further implies that the estimate \eqref{estamitec3} is sharp in case $s_{{\rm nlp}}
\geq c_{\rm LLW}$.

A particular instance when we can completely determine the speed of $u_3$ occurs when $a_{21}$ is small while the other parameters $d_i,r_i,a_{ij}$ are fixed and satisfy
 \begin{equation}\label{eq:suffc1}
d_1=1,\quad a_{12} >1, \quad a_{32} \leq \frac{1}{2}, \quad a_{31} \leq \frac{a_{32}}{a_{12}},
\end{equation}
 \begin{equation}\label{eq:suffc2}
d_3 \geq\frac{1}{2},\quad a_{13} >1, \quad 1 < a_{23} <\frac{1}{a_{32}}, 
\end{equation}
 \begin{equation}\label{eq:suffc3}
\frac{1}{\sqrt{d_3}(\sqrt{a_{32}} + \sqrt{1-a_{32}})}< \sqrt{r_3} < \frac{1}{\sqrt{d_3}},\quad  
1 <\sqrt{r_1} <  \sqrt{d_3r_3}(\sqrt{a_{32}} + \sqrt{1-a_{32}}).
\end{equation}
\begin{remark}\label{rem1.2}

We claim that the set of parameters satisfying \eqref{eq:suffc1}-\eqref{eq:suffc3} is nonempty. Indeed, one can choose $d_i,r_i,a_{ij}$ in the following order: fix  $d_1, a_{12}, a_{32}, a_{31}$ by \eqref{eq:suffc1}, then fix  $d_3, a_{13}, a_{23}$ by \eqref{eq:suffc2}, finally fix $r_1$ and $r_3$ by \eqref{eq:suffc3}. 
\end{remark}

\begin{corollary}\label{corollary1.5}
Fix all coefficients, except for $a_{21}$, to satisfy  \eqref{eq:suffc1}-\eqref{eq:suffc3}. Then there exists $\delta>0$ such that for all $a_{21} \in [0,\delta)$, any solution $(u_i)_{i=1}^3$  of \eqref{eq:1-1} with compactly supported initial data, 
 satisfies  \eqref{eq:spreadingly},  \eqref{eq:spreadings0},and  \eqref{spreadings} with
 \begin{equation*}
c_1 = 2\sqrt{d_1 r_1}, \quad c_2 = 2\sqrt{1-a_{21}},\quad \overline{c}_3=\underline{c}_3 = s_{\rm nlp}(c_1,c_2,\infty).
\end{equation*}
\end{corollary}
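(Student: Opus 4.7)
My plan is to verify, for $a_{21}$ sufficiently close to $0$, all hypotheses of Theorem A and Theorem B, then invoke them to conclude. First, I check the hierarchy \eqref{eq:compactorder1} and produce $({\rm H}_{c_1,c_2,\infty})$ via Theorem \ref{thm:suff1.2}. From \eqref{eq:suffc1}--\eqref{eq:suffc3}: $d_1r_1=r_1>1$ and $d_3r_3<1$ come from \eqref{eq:suffc3}; $a_{12}>1$ from \eqref{eq:suffc1}; $a_{21}<1$ once $\delta<1$; and $a_{31}+a_{32}\leq a_{32}(1+1/a_{12})\leq \tfrac12(1+1/a_{12})<1$ using $a_{31}\leq a_{32}/a_{12}$, $a_{32}\leq 1/2$, and $a_{12}>1$. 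By Remark \ref{rmk:1--2a}, $\hat{c}_{\rm LLW}=2\sqrt{1-a_{21}}$ holds for small $a_{21}$ since $a_{21}a_{12}\to 0<2(1-a_{21})$. Since $c_1=2\sqrt{r_1}>2$ is fixed while $2(\sqrt{a_{21}}+\sqrt{1-a_{21}})\to 2$ as $a_{21}\to 0^+$, the second branch of \eqref{qq:1.7a} applies, so $\hat{s}_{\rm nlp}(c_1)=2\sqrt{1-a_{21}}$. As $d_3r_3<1$, the inequality $2\sqrt{d_3r_3}<\hat{s}_{\rm nlp}(c_1)$ holds for small $a_{21}$, and Theorem \ref{thm:suff1.2}(i) yields $({\rm H}_{c_1,c_2,\infty})$ with $c_1=2\sqrt{r_1}$ and $c_2=2\sqrt{1-a_{21}}$.

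Next I apply Theorems A and B. Condition \eqref{eq:1--1b} holds by \eqref{eq:suffc1} ($d_1=1$ and $a_{31}a_{12}\leq a_{32}$), so Theorem A applies and yields \eqref{eq:spreadingly} together with $\overline{c}_3\leq\max\{s_{\rm nlp},c_{\rm LLW}\}$. To strengthen this to \eqref{eq:spreadings0}, I show $s_{\rm nlp}(c_1,c_2,\infty)\geq c_{\rm LLW}$ for small $a_{21}$. Remark \ref{rmk:1.3}(ii) applies because $a_{31}<a_{32}$ (from $a_{31}\leq a_{32}/a_{12}$ with $a_{12}>1$), $2\sqrt{d_3r_3}<c_2$ (shown above), and $c_1<2\sqrt{d_3r_3}(\sqrt{a_{32}}+\sqrt{1-a_{32}})$ (right half of \eqref{eq:suffc3}); hence $s_{\rm nlp}>2\sqrt{d_3r_3(1-a_{32})}$ strictly. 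Simultaneously, \eqref{eq:suffc1}--\eqref{eq:suffc2} supply the three conditions of Remark \ref{rem1.3} ($d_3\geq 1/2$; $a_{32}(1-a_{21})<1<a_{23}/(1-a_{21})$; and $a_{32}a_{23}<1$) for small $a_{21}$, giving $c_{\rm LLW}=2\sqrt{d_3r_3(1-a_{32}(1-a_{21}))}$. At $a_{21}=0$ this reduces to $2\sqrt{d_3r_3(1-a_{32})}<s_{\rm nlp}$, and both sides depend continuously on $a_{21}$, so the strict inequality persists on $[0,\delta)$ for some $\delta>0$; this gives \eqref{eq:spreadings0}. Finally, since $a_{13},a_{23}>1$ by \eqref{eq:suffc2} and \eqref{eq:compactorder1} has been verified, Theorem B applies and yields \eqref{spreadings}.

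The delicate step is proving $s_{\rm nlp}(c_1,c_2,\infty)>c_{\rm LLW}$ at $a_{21}=0$ and that this strict inequality persists for $a_{21}\in[0,\delta)$. The strictness rests on combining the sharp lower bound of Remark \ref{rmk:1.3}(ii) with the explicit formula in Remark \ref{rem1.3}. The perturbation argument needs continuous dependence of the free boundary point $s_{\rm nlp}$ on the parameter $c_2$, which should reduce to the stability of viscosity solutions of \eqref{eq:w3nlp} under perturbations of the piecewise-constant coefficient $\mathcal{R}(s)$; this should follow from standard perturbation theory but must be invoked carefully.
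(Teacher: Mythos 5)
Your proposal follows the paper's own proof of Corollary \ref{corollary1.5} essentially step by step: verify \eqref{eq:compactorder1} and \eqref{eq:1--1b} from \eqref{eq:suffc1}--\eqref{eq:suffc3}, use Remark \ref{rmk:1--2a} and \eqref{qq:1.7a} to get $\hat{c}_{\rm LLW}=\hat{s}_{\rm nlp}(c_1)=2\sqrt{1-a_{21}}$ for small $a_{21}$, invoke Theorem \ref{thm:suff1.2}(i) to obtain $({\rm H}_{c_1,c_2,\infty})$, apply Theorem A, establish $s_{\rm nlp}>c_{\rm LLW}$ via Remark \ref{rmk:1.3}(ii) combined with the linear determinacy $c_{\rm LLW}=2\sqrt{d_3r_3(1-a_{32}(1-a_{21}))}$ from Lemma \ref{lem:1estamoverc30}, and finish with Theorem B. The one point where your justification is not quite adequate as stated is the persistence of the strict inequality for $a_{21}\in(0,\delta)$: stability of viscosity solutions does give local uniform convergence of $\rho_{\rm nlp}$ as $c_2=2\sqrt{1-a_{21}}$ varies, but the free boundary point $s_{\rm nlp}=\sup\{s:\rho_{\rm nlp}(s)=0\}$ is in general only \emph{upper} semicontinuous under such convergence, while your continuity argument needs the lower bound $\liminf_{a_{21}\to 0^+}s_{\rm nlp}(a_{21})\geq s_{\rm nlp}(0)$. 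The gap closes immediately with the tools already in the paper: since $a_{31}<a_{32}$, decreasing $c_2$ increases $\mathcal{R}$ in \eqref{eq:w3nlp} pointwise, so by comparison (Corollary \ref{corA3rho}) $\rho_{\rm nlp}$ decreases and $s_{\rm nlp}(c_1,2\sqrt{1-a_{21}},\infty)\geq s_{\rm nlp}(c_1,2,\infty)>2\sqrt{d_3r_3(1-a_{32})}$ uniformly in $a_{21}$, which dominates $c_{\rm LLW}(a_{21})=2\sqrt{d_3r_3(1-a_{32}(1-a_{21}))}$ once $a_{21}$ is small; alternatively the explicit formula of Theorem C, or the $a_{21}$-independent lower bound $\underline{s}_{\rm nlp}$ constructed in Step 2 of the proof of Proposition \ref{charactsnlpc}, yields the same uniformity. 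To be fair, the published proof is equally terse here (it simply ``further assumes'' \eqref{eq:suffc4} for small $a_{21}$), so this is a refinement of a shared soft spot rather than a departure from the paper's route.
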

\begin{proof}
By $a_{31} < a_{32}$ (from \eqref{eq:suffc1}) and  the latter part of \eqref{eq:suffc3}, we can apply 
Remark \ref{rmk:1.3}(ii) (to be proved in Proposition \ref{charactsnlpc})
to show that $s_{\rm nlp} > 2\sqrt{d_3r_3(1-a_{32})}$, where $s_{\rm nlp} = s_{\rm nlp}(2\sqrt{d_1r_1},2\sqrt{1-a_{21}},\infty).$
By taking $a_{21}$ sufficiently small, we can further assume 
\begin{equation}\label{eq:suffc4}
a_{12}a_{21}< 1, 
\quad 2\sqrt{d_3r_3(1-a_{32}(1-a_{21}))} < s_{\rm nlp},
\end{equation}
and
\begin{equation}\label{eq:suffc5}
a_{32} (1-a_{21}) < 1 < \frac{a_{23}}{1-a_{21}},\quad \sqrt{d_3r_3} < \sqrt{1-a_{21}},\quad \sqrt{a_{21}}+\sqrt{1-a_{21}} < \sqrt{r_1}.
\end{equation}

Observe next that \eqref{eq:compactorder1} and \eqref{eq:1--1b} are a consequences of $a_{21} \in (0,1)$  
and \eqref{eq:suffc1}. 

 We claim that $\hat{c}_{\rm LLW} = 2\sqrt{1-a_{21}} = \hat{s}_{\rm nlp}(2\sqrt{d_1r_1})$. The first equality follows from $d_1=1$, $a_{21} < 1 < a_{12} $, and $a_{12}a_{21}<1$ as  in Remark \ref{rmk:1--2a}. 
 The second equality is due to $d_1=1$, the latter part of \eqref{eq:suffc5}, and \eqref{qq:1.7a}. Combining with the middle part of \eqref{eq:suffc5}, we have
 \begin{equation*}
 2\sqrt{d_3r_3} < 2\sqrt{1-a_{21}} = \hat{s}_{\rm nlp}(2\sqrt{d_1 r_1}) \quad \text{ and }\quad 
\hat{c}_{\rm LLW} = 2\sqrt{1-a_{21}}.
\end{equation*}
Hence, we may apply Theorem \ref{thm:suff1.2} to conclude that 
$({\rm H_{c_1,c_2,\lambda}})$ holds with 
$$
c_1 = 2\sqrt{d_1r_1},\quad c_2 = \max\{\hat{s}_{\rm nlp},\hat{c}_{\rm LLW}\} = 2\sqrt{1-a_{21}}, \quad \lambda = \infty.
$$

Having verified $({\rm H}_{c_1,c_2,\lambda})$, \eqref{eq:compactorder1}, and  \eqref{eq:1--1b}, we can apply {Theorem A}. Assuming 
\begin{equation}\label{qq:2.0}
s_{\rm nlp}(c_1,c_2,\infty) \geq c_{\rm LLW},
\end{equation}
then the spreading speed of the third species can be uniquely determined as $s_{\rm nlp}(c_1,c_2,\infty)$. Since also $a_{13}>1$ and $a_{23} >1$, we apply {Theorem B} to yield that $(u_1,u_2,u_3) \approx (0,0,1)$ in the final zone after the invasion of $u_3$.

It remains to show \eqref{qq:2.0}. To this end, we first claim that $c_{\rm LLW} = 2\sqrt{d_3 r_3} \sqrt{1-a_{32}(1-a_{21})}$. This follows since  $d_3>1/2$, $a_{32} (1-a_{21}) < 1 < \frac{a_{23}}{1-a_{21}}$, and $a_{32}a_{23}<1$. See Lemma \ref{lem:1estamoverc30}. 

Combining $c_{\rm LLW} = 2\sqrt{d_3 r_3} \sqrt{1-a_{32}(1-a_{21})}$ with the 
latter part of \eqref{eq:suffc4}, we deduce \eqref{qq:2.0}. This concludes the proof.
\end{proof}
The conditions on coefficients in Corollary \ref{corollary1.5} can be further relaxed, if we allow $u_3$ to have exponential decay.
\begin{proposition}\label{coro:1estamoverc001}
Assume the hierarchy condition \eqref{eq:compactorder1} and condition \eqref{eq:1--1b} hold, and 
\begin{equation}\label{eq:prop1.14a}
d_3 \geq \frac{1}{2}, \quad a_{12}a_{21} < 1, \quad 2\sqrt{d_3 r_3} < \hat{s}_{\rm nlp}(2\sqrt{d_1r_1}), \quad a_{13}>1, \quad a_{23}>1. 
\end{equation}
Then there exist $\lambda \in (0,\infty]$ and $\delta>0$ such that for any solution of \eqref{eq:1-1} with 
initial data satisfying $({\rm H}_{\lambda})$ and $a_{31}\in [0,\delta)$,  the conclusions \eqref{eq:spreadingly}, 
\eqref{eq:spreadings0}, and  \eqref{spreadings} hold with
$$
c_1 = 2\sqrt{d_1 r_1},\quad c_2 = \hat{s}_{\rm nlp}(c_1), \quad \overline{c}_3 = \underline{c}_3= s_{\rm nlp}(c_1,c_2,\lambda),
$$
where $\hat{s}_{\rm nlp}(c_1)$ is given by \eqref{qq:1.7a}.
\end{proposition}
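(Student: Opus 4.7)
The plan is to reduce Proposition \ref{coro:1estamoverc001} to Theorem A and Theorem B by invoking Theorem \ref{thm:suff1.2}(ii) to certify the hypothesis $({\rm H}_{c_1,c_2,\lambda})$, and then choosing $\lambda$ and $\delta$ so as to activate the sharp speed identity in Theorem A. For the first step, since $d_1=1$, $a_{21}<1<a_{12}$, and $a_{12}a_{21}<1$, Remark \ref{rmk:1--2a} gives linear determinacy $\hat{c}_{\rm LLW}=2\sqrt{1-a_{21}}$; and because $2\sqrt{d_3r_3}<\hat{s}_{\rm nlp}(2\sqrt{d_1r_1})$, the set $\{\lambda\in(0,\infty]:\sigma_3(\lambda)<\hat{s}_{\rm nlp}(c_1)\}$ is non-empty. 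Fixing any such $\lambda$, Theorem \ref{thm:suff1.2}(ii) provides $({\rm H}_{c_1,c_2,\lambda})$ for $c_1=2\sqrt{d_1r_1}$ and $c_2=\hat{s}_{\rm nlp}(c_1)$. Combined with \eqref{eq:compactorder1} and \eqref{eq:1--1b}, Theorem A then applies and yields the first three lines of \eqref{eq:spreadingly}, together with the bounds $2\sqrt{d_3r_3(1-a_{31}-a_{32})}\leq\underline{c}_3\leq\overline{c}_3\leq\max\{s_{\rm nlp}(c_1,c_2,\lambda),c_{\rm LLW}\}$; the fourth line of \eqref{eq:spreadingly} is delivered simultaneously, and Theorem B (whose hypotheses $a_{13},a_{23}>1$ and \eqref{eq:compactorder1} are assumed) then yields the final-zone profile \eqref{spreadings}.

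The main obstacle is to upgrade the upper bound to the sharp identity $\overline{c}_3=\underline{c}_3=s_{\rm nlp}$ of \eqref{eq:spreadings0}, which by the final clause of Theorem A reduces to verifying $s_{\rm nlp}(c_1,c_2,\lambda)\geq c_{\rm LLW}$ for $a_{31}\in[0,\delta)$. Since $c_{\rm LLW}$ is independent of $a_{31}$ and bounded by $2\sqrt{d_3r_3}<c_2$ (Remark \ref{rem1.3}), and since $s_{\rm nlp}$ depends continuously on $a_{31}$ via stability of viscosity solutions to \eqref{eq:w3nlp}, it is enough to exhibit a value of $\lambda$ for which the \emph{strict} inequality $s_{\rm nlp}(c_1,c_2,\lambda)|_{a_{31}=0}>c_{\rm LLW}$ holds. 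To this end, let $\lambda_-$ denote the smaller root of $d_3\lambda+r_3/\lambda=c_2$, so that $\sigma_3(\lambda_-)=c_2$. For $\lambda$ slightly above $\lambda_-$ (and below $c_2/(2d_3)$), the second case of Theorem C applies at $a_{31}=0$ and gives
\[
s_{\rm nlp}(c_1,c_2,\lambda)\big|_{a_{31}=0}=d_3\lambda_{{\rm nlp}2}+\frac{r_3(1-a_{32})}{\lambda_{{\rm nlp}2}},\qquad \lambda_{{\rm nlp}2}=\frac{c_2-\sqrt{(c_2-2d_3\lambda)^2+4d_3r_3a_{32}}}{2d_3}.
\]
As $\lambda\to\lambda_-^+$, $\lambda_{{\rm nlp}2}$ converges to the smaller root $\mu^*$ of $d_3\mu^2-c_2\mu+r_3(1-a_{32})=0$; at $\mu=\mu^*$ one has $d_3\mu+r_3(1-a_{32})/\mu=c_2$, so $s_{\rm nlp}(c_1,c_2,\lambda)|_{a_{31}=0}\to c_2>c_{\rm LLW}$. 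Continuity in $\lambda$ then gives the strict inequality for $\lambda>\lambda_-$ close enough to $\lambda_-$, and continuity in $a_{31}$ supplies the desired $\delta>0$.

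With $s_{\rm nlp}(c_1,c_2,\lambda)\geq c_{\rm LLW}$ secured, the final clause of Theorem A delivers \eqref{eq:spreadings0}, i.e.\ $\overline{c}_3=\underline{c}_3=s_{\rm nlp}(c_1,c_2,\lambda)$, which together with the conclusions already extracted from Theorem A and Theorem B completes the proof plan.
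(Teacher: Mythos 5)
Your proposal is correct and takes essentially the same route as the paper: certify $({\rm H}_{c_1,c_2,\lambda})$ through Remark \ref{rmk:1--2a} and Theorem \ref{thm:suff1.2}(ii), choose $\lambda$ just above the smaller root $\lambda_-$ of $\sigma_3(\lambda)=c_2$ so that $s_{\rm nlp}(c_1,c_2,\lambda)\big|_{a_{31}=0}$ is close to $c_2>c_{\rm LLW}$, then conclude by continuity in $\lambda$ and in $a_{31}$ together with Theorems A and B. The only cosmetic difference is that you evaluate the limit via the explicit formula of Theorem C, whereas the paper exhibits the explicit piecewise-linear viscosity solution at the critical value $\overline\lambda=\lambda_-$ and then perturbs, both arguments resting on the same continuity facts (Lemma \ref{lemma_snlp} and continuous dependence on $a_{31}$) at the same level of detail.
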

Apart from the smallness of $a_{31}$, here we need only the technical conditions $d_3 \geq 1/2$, \eqref{eq:1--1b}, and $a_{21}a_{12} <1$. All other conditions on coefficients are natural. The proof of Proposition \ref{coro:1estamoverc001} is postponed to Section \ref{S6}.

\subsection{Numerical simulations 
}\label{S1.3}

In this subsection, 
we present some 
 numerical results of system \eqref{eq:1-1} with compactly supported initial data to illustrate our main findings. 

In the first numerical result, we simulate the speed of $u_3$ to  illustrate {Theorem A}.  
 The parameters in \eqref{eq:1-1}, except for $a_{21}$, are fixed by $r_1=1.08$,  $d_1=1$,  $r_3=1.1$, $d_3=0.6$, $a_{12}=1.2$,  $a_{31}=0.1$, $a_{13}=1.1$,  $a_{32}=0.4$, $a_{23}=1.1$.
 It is straightforward to verify that \eqref{eq:suffc1}-\eqref{eq:suffc3} are satisfied.

 First, we take $a_{21}=0.01$ and then $c_{\rm LLW}= 2\sqrt{d_3 r_3} \sqrt{1-a_{32}(1-a_{21})}=1.2628$, since $c_{\rm LLW}$ is linearly determined for the chosen parameters. 
 We use the second-order finite difference schemes to discretize $[x, t]$ domain  and use the Explicit Euler scheme to solve
 system  \eqref{eq:1-1} numerically. Noting that for $a_{21}=0.01$ small, the spreading speed of $u_3$ can be fully determined by
$c_3=s_{{\rm nlp}}>c_{\rm LLW}$ (Corollary \ref{corollary1.5}). This is in agreement with the numerical result in  Figure \ref{figure1''} and illustrates that the estimate \eqref{estamitec3} in {Theorem A} is sharp in this case. However, if we take $a_{21}=0.5$ so large that Corollary \ref{corollary1.5} is  not applicable, then it is shown in Figure \ref{figure1''} that the case $c_3<c_{\rm LLW}$ could happen, where 
 $c_{\rm LLW}= 2\sqrt{d_3 r_3} \sqrt{1-a_{32}(1-a_{21})}=1.4533$. This suggests that the estimate \eqref{estamitec3} is not necessarily sharp in all situations and it cannot be expected that $c_3=\max\{s_{\rm nlp}, c_{\rm LLW}\}$. 
 It remains open to determine the spreading speed of $u_3$ for the case $s_{\rm nlp}<c_{\rm LLW}$. An improved estimate for the lower bound of $\underline{c}_3$  is provided in  Proposition \ref{underc3}, which is given as the free boundary point of a  variational inequality associated with $c_{\rm LLW}$.


 \begin{figure}[http!!]
\centering
\includegraphics[height=2.35in]{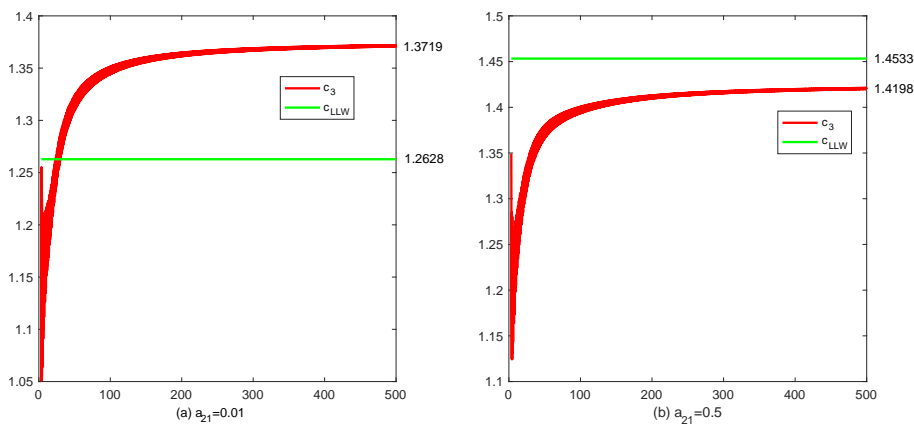}
\caption{\small Approximate speed of $u_3$ with  the initial value $u_1(0,x)=u_2(0,x)=u_3(0,x)=\chi_{[0,10]}$, and with (a) $a_{21}=0.01$, and (b) $a_{21}=0.5$, where other parameters are chosen by $r_1=1.08$, $d_1=1$, $r_3=1.1$, $d_3=0.6$, $a_{12}=1.2$, $a_{31}=0.1$, $a_{32}=0.4$, $a_{13}=1.1$, $a_{23}=1.1.$  
}\label{figure1''}
  \end{figure}

Our next  numerical result illustrates that the condition $a_{13},a_{23}>1$ in {Theorem B} is  optimal to guarantee \eqref{spreadings}. The asymptotic behaviors of the solution for system \eqref{eq:1-1} are illustrated in Figure \ref{figure3} for the four cases: \rm{(a)} $a_{31}>1$ and $a_{32}<1$, \rm{(b)} $a_{31}<1$ and $a_{32}>1$, {\rm{(c)}} $a_{31}<1$ and $a_{32}<1$, {\rm{(d)}}  $a_{31}>1$ and $a_{32}>1$. 
Note that {Theorem A} is independent of $a_{31}$ and $a_{32}$, and our choices of parameters in the simulation satisfy the hypotheses of {Theorem A}.
It is shown in Figure \ref{figure3} that for the case {\rm{(d)}} when $a_{13}=a_{23}=1.1>1$, the solutions of \eqref{eq:1-1} behave as predicted by {Theorem B}, i.e. species $u_1$ and $u_2$ are driven to extinction  behind the spreading of $u_3$.
However,  once $a_{13}>1$ and  $a_{23}<1$ even though they are closed  to $1$, it is shown in   Figure \ref{figure3}{\rm{(a)}} that species  $u_2$ and $u_3$ may coexist in the final zone $\{(t,x):x < \underline{c}_3 t\}$, and similarly species  $u_1$ and $u_3$ may coexist when $a_{13}<1$ and  $a_{23}>1$; see Figure \ref{figure3}{\rm{(b)}}. Interestingly, when $a_{13}<1$ and  $a_{23}<1$, all the three species may coexist and studying 
the general propagating terraces in the final zone is beyond the scope of this paper.

\begin{figure}[http!!]
\includegraphics[height=4.4in]{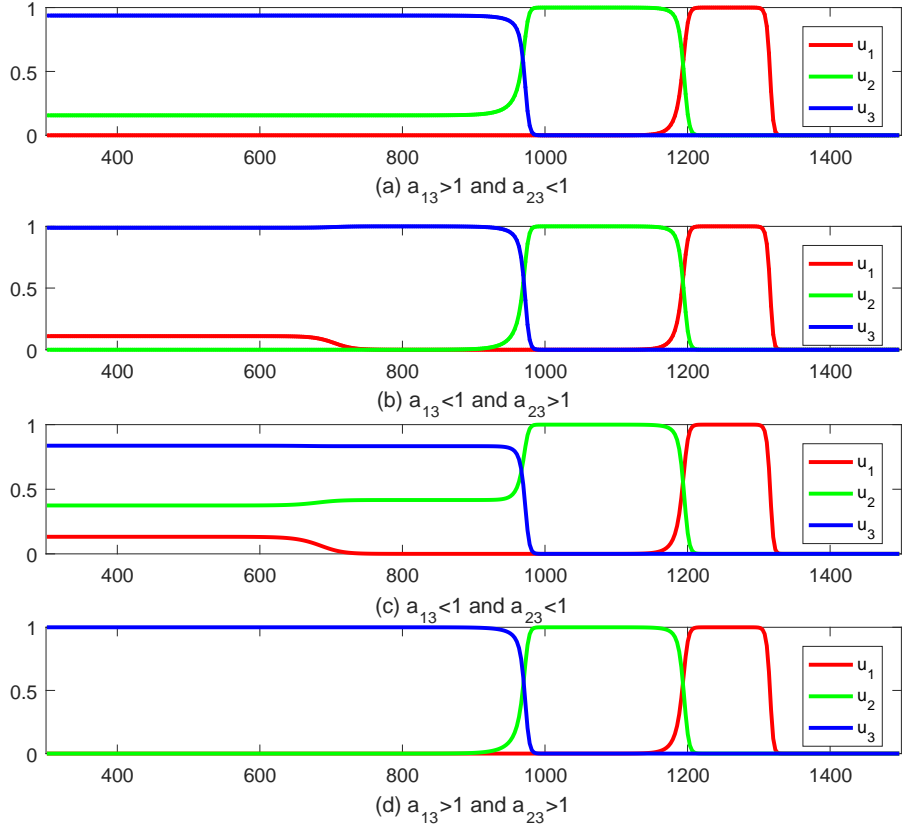}
  \caption{\small Asymptotic behaviors of solutions of \eqref{eq:1-1} with different $a_{13}$ and $a_{23}$, where the other parameters are chosen by $r_1=1.08$, $d_1=1$, $r_3=1.1$, $d_3=0.6$, $a_{12}=1.2, a_{21}=0.3, a_{31}=0.1, a_{32}=0.4$. In case \rm{(a)},  $a_{13}=1.1$ and $a_{23}=0.9$; In case \rm{(b)}, $ a_{13}=0.9$ and $a_{23}=1.1$; In case \rm{(c)}, $ a_{13}=0.5$ and $a_{23}=0.7$; In case \rm{(d)}, $ a_{13}=1.1$ and $ a_{23}=1.1$.  
  The initial value is set as $u_{1,0}=u_{2,0}=u_{3,0}=\chi_{[0,10]}$.
 }\label{figure3}
 \vspace{-0.2cm}
\end{figure}

\subsection{Related results} 
The pioneering works due to Aronson and Weinberger \cite{Aronson_1975, Aronson_1978}
established  the spreading speed for a single-species model with monotone nonlinearity, which coincides with the minimal speed of traveling waves.  Weinberger later introduced  a powerful method based on recursion to determine spreading speed for single-species models \cite{Weinberger_1982}, which is subsequently generalized to systems of equations \cite{Lui_1989} and to general monotone dynamical systems framework \cite{LWL_2005,Liang_2007}. 
A closely related work is \cite{Iida_2011} concerning cooperative systems
with equal diffusion coefficients, where the existence of stacked fronts was also studied.

For a single-species model with non-monotone nonlinearity, Thieme showed in  \cite{Thieme_1979} that the spreading speeds can still be obtained by constructing  monotone representation of the nonlinearity. This idea was used in \cite{WKS_2009} to establish spreading speeds for a partially cooperative system, which is a non-cooperative system that can be controlled from above and from below by cooperative systems. 
See also \cite{Wang_2011} where results on general partially cooperative systems,
%
in the spirit of those in \cite{LWL_2005} were obtained.
Besides, by Schauder's fixed-point theorem, Girardin \cite{Girardin2_2018, Girardin1_2018} established a number of general results on spreading speeds and traveling waves for non-cooperative systems with the property that the linearization at the trivial equilibrium is cooperative.

General non-cooperative systems, such as predator-prey systems, 
cannot always be controlled by cooperative systems.
Due to the lack of comparison principles, much less is known about the spreading properties for such type of systems.
Recent work due to Ducrot et al. \cite{Ducrot_preprint} investigated certain  predator-prey systems by means of  ideas in persistence theory \cite{{ST_2011}} in dynamical systems; see also \cite{Ducrot_2013,Ducrot_2016} for related results. For integro-difference models, the spreading properties were considered by Hsu and Zhao \cite{HZ_2008} and Li \cite{Li_2018}, where the linearly determined speeds were obtained under appropriate assumptions.

For  the three-species systems, 
In the special case when $a_{12}=a_{21}=0$, $a_{13},a_{23}>1$ and $a_{31}+a_{32}<1$, \eqref{eq:1-1}  can be transformed into a cooperative system, and Guo et al. \cite{GWWW2015} studied the minimal speed of traveling waves in this setting. Therein they applied the monotone iteration scheme to give some conditions on the parameters such that the minimal speed of traveling waves connecting $(1,1,0)$ to $(0,0,1)$ is linearly determined. See also 
\cite{Hou_2017} and \cite{Liu2015} for similar spreading results in two other such three-species models, and 
\cite{Chen_2018} for the existence of entire solutions for the monotone system, which behaved as two traveling fronts moving towards each other from both sides of $x$-axis.

To the best of our knowledge, however, much less is known about the spreading properties for systems of equations for which the comparison principle does not hold, except for the recent works \cite{CT2020, Ducrot_preprint,Xiao_2019}. 
In particular,
the rigorous analysis of spreading properties of fully coupled three-species competition system \eqref{eq:1-1}, has never been carried out before.  It is this knowledge gap that has motivated the research in this paper.

\subsection{Organization of this paper}

In Section \ref{sec:prelim}, we introduce and recall some comparison principles for a class of variational inequality. This inequality is cast in the space of speeds, and its solutions are to be understood in the viscosity sense. 
In Section \ref{Section2}, we derive upper and lower estimates of the spreading speed of the slowest species under hypothesis $({\rm H}_{c_1,c_2,\lambda})$, which is sharp in some situations. The main result of this paper, i.e. {Theorem A}, is proved in Sections \ref{sec:prelim} and \ref{Section2}, which are self-contained. The readers who are only interested in the main ideas of the paper can focus their attention here.

In Section \ref{S5}, we determine the convergence to homogeneous state in the wake of the invasion wave of the third species and prove {Theorem B}. In Section \ref{S6}, we derive Remark \ref{rmk:1.3} and prove Proposition \ref{coro:1estamoverc001}, 
which lead to sufficient conditions for the full determination of spreading speeds. Finally, we include some useful lemmas in 
Appendix  \ref{appendixlemmas} and  establish  Lemma \ref{lemA1rho} and Proposition \ref{lemA3rho} in Appendix \ref{sec:B}. 

\section{Preliminaries}\label{sec:prelim}\label{comparison}

In the introduction, there are various quantities including $s_{\rm nlp}$ defined via the viscosity solutions to some variational inequalities. We briefly explain the connection of such quantities to the spreading speeds of the population here. Suppose $u_\ep(t,x)$ is a solution to the rescaled Fisher-KPP equation
$$
\partial_t u_\ep = \ep\hat{d} \partial_{xx} u_\ep +\frac{1}{\ep} u_\ep (r(t,x) - u_\ep), 
$$
where $\hat{d}>0$ is a constant and $r(t,x)$ is a bounded function. 
It was observed in \cite{Evans_1989, Freidlin_1985} that 
the propagation phenomena is well described by the rate function $w_\ep(t,x) = -\ep \log u_\ep(t,x)$. Moreover, the local uniform limit $w(t,x) = \lim\limits_{\ep \to 0} w_\ep(t,x)$, if it exists, satisfies the following first order Hamilton-Jacobi equation in the viscosity sense:
\begin{equation}\label{qq:weq}
\min\left\{\partial_t w(t,x) + \hat{d}|\partial_x w(t,x)|^2 + r(t,x),\, w(t,x)\right\}=0.
\end{equation}
 Roughly speaking, the population density $u_\ep(t,x)$ is exponentially small when $w(t,x)>0$, while $u_\ep(t,x)$ is bounded below  by some positive number when $w(t,x) =0$. See Lemma \ref{lem:underlinec} for the precise statement of the latter claim.

In the special case $r(t,x) = \hat{\mathcal{R}}\left( \frac{x}{t}\right)$ (i.e. it depends only on $x/t$), then the limit $w(t,x)$ can be represented in the form $t\rho\left( \frac{x}{t}\right)$ for some continuous function $\rho$ (see Remark \ref{rmk:w1w3} for details). In such an event, it is convenient to work on $\rho$, which satisfies a reduced equation cast in the space of speed $s=x/t$, which is one-dimensional. (See Proposition \ref{lemA3rho} for the proof.) 
%
%
%
%
%
%
%
\begin{align}\label{hatrho00}
\min\{\rho(s)-s\rho'(s)+\hat d|\rho'(s)|^2+\hat{\mathcal{R}}(s),\rho(s)\}=0. 
\end{align}
Now, if there is $\hat{s}>0$ such that
$$
\rho(s) = 0 \quad \text{ for }\, 0 < s < \hat{s},\quad \text{ and }\quad \rho(s)>0 \quad \text{ for }\,  s > \hat{s},
$$
Then the population is exponentially zero in $\{(t,x):\, x/t > \hat{s}\}$, and is bounded from below in $\{(t,x):\, x/t < \hat{s}\}$, i.e. it spreads at speed $\hat{s}$ in the sense of  \cite{Aronson_1975, Aronson_1978}.   
This motivates the definition of spreading speed in terms of the free boundary point of \eqref{hatrho00} in this paper.  

Next, we give the definitions of viscosity super- and sub-solutions associated with \eqref{hatrho00} (see \cite[Sect. 6.1]{BarlesLect}). 
The corresponding definitions for \eqref{qq:weq} are similar and are given in \cite[Appendix A]{LLL20192}, where a comparison principle is established. For our purposes, we will henceforth assume that the function $\hat{\mathcal{R}}(s)$ is bounded and piecewise
Lipschitz continuous.

\begin{definition}\label{defviscosity}
We say that a  lower semicontinuous function $\hat\rho$ is  a  viscosity super-solution of \eqref{hatrho00} if $\hat\rho\geq 0$, and for all test functions $\phi\in C^1$, if $s_0$ is a strict local minimum point of $\hat\rho-\phi$, then
 $$\hat\rho(s_0)-s_0\phi'(s_0)+\hat d|\phi'(s_0)|^2+\hat{\mathcal{R}}^*(s_0)\geq 0.$$
We say that a upper semicontinuous  function $\hat\rho$ is a  viscosity sub-solution of \eqref{hatrho00} if for any test function $\phi\in C^1$,  if $s_0$  is a strict local maximum point of $\hat\rho-\phi$ such that $\hat\rho(s_0) >0$, then
  $$\hat\rho(s_0)-s_0\phi'(s_0)+\hat d|\phi'(s_0)|^2+\hat{\mathcal{R}}_*(s_0)\leq 0.$$
Finally, $\hat\rho$ is a  viscosity solution  of \eqref{hatrho00} 
if and only if $\hat\rho$ is a viscosity super- and  sub-solution. 
\end{definition}

The functions $\hat{\mathcal{R}}^*$ and $\hat{\mathcal{R}}_*$ appeared above  denote respectively the upper semicontinuous  and lower semicontinuous  envelope of $\hat{\mathcal{R}}$, i.e. 
$$\hat{\mathcal{R}}^*(s)=\limsup_{s'\to s}\hat{\mathcal{R}}(s')\quad \text{ and } \quad \hat{\mathcal{R}}_*(s)=\liminf_{s'\to s}\hat{\mathcal{R}}(s').$$

\begin{lemma}\label{lem:wrhoviscosity}
Let $c_b \in (0,\infty]$ be given. A function $\rho(s)$ is a viscosity sub-solution (resp. super-solution) of \eqref{hatrho00} in the interval $(0,c_b)$ if and only if $w(t,x) = t \rho\left( \frac{x}{t}\right)$ is a viscosity sub-solution (resp. super-solution) of 
\begin{equation}\label{eq:hatw00}
\min\left\{\partial_t w+\hat d|\partial_xw|^2+\hat{\mathcal{R}}\left(x/t 
\right),w\right\}=0 
\end{equation}
in the domain $\{(t,x): 0 < x < c_b t\}$.
\end{lemma}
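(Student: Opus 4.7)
The plan is to exploit the scale invariance of the ansatz $w(t,x)=t\rho(x/t)$, which is $1$-homogeneous in $(t,x)$, and of the equation \eqref{eq:hatw00}, invariant under $(t,x)\mapsto(\alpha t,\alpha x)$. I would first record the classical case: if $\rho\in C^1$, then $w\in C^1$ on $\{t>0\}$ with $\partial_t w=\rho(s)-s\rho'(s)$ and $\partial_x w=\rho'(s)$ at $s=x/t$, so \eqref{eq:hatw00} at $(t,x)$ is pointwise \eqref{hatrho00} at $s$, and $w\ge 0\Leftrightarrow\rho\ge 0$. The content of the lemma is then to lift this to the viscosity framework by transferring test functions. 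I will detail the sub-solution direction; the super-solution case is strictly analogous (strict max $\to$ strict min, $\hat{\mathcal{R}}_*\to\hat{\mathcal{R}}^*$, and the positivity condition is dropped).

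For $\rho\Rightarrow w$, I would start from $\Phi\in C^1$ with $(t_0,x_0)$ a strict local max of $w-\Phi$ and $w(t_0,x_0)>0$; set $s_0=x_0/t_0$. Because $(\alpha,s)\mapsto(\alpha t_0,\alpha t_0 s)$ is a local diffeomorphism near $(1,s_0)$ and $w$ is $1$-homogeneous,
\begin{equation*}
H(\alpha,s):=\alpha t_0\rho(s)-\Phi(\alpha t_0,\alpha t_0 s)
\end{equation*}
attains a strict local max at $(1,s_0)$. Fixing $\alpha=1$ and defining $\phi(s):=\Phi(t_0,t_0 s)/t_0$ then makes $\rho-\phi$ have a strict local max at $s_0$ with $\rho(s_0)>0$, and the sub-solution property of $\rho$ gives $\rho(s_0)-s_0\phi'(s_0)+\hat d|\phi'(s_0)|^2+\hat{\mathcal{R}}_*(s_0)\le 0$ with $\phi'(s_0)=\partial_x\Phi(t_0,x_0)$. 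Fixing $s=s_0$ instead, $\alpha\mapsto H(\alpha,s_0)=\alpha w(t_0,x_0)-\Phi(\alpha t_0,\alpha x_0)$ has a max at $\alpha=1$, and vanishing of its derivative supplies the identity $\rho(s_0)=\partial_t\Phi(t_0,x_0)+s_0\partial_x\Phi(t_0,x_0)$. Substituting recasts the previous inequality as the sub-solution inequality for $w$ at $(t_0,x_0)$.

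For $w\Rightarrow\rho$, given $\phi\in C^1$ with $s_0$ a strict local max of $\rho-\phi$ and $\rho(s_0)>0$, I would normalize $\phi(s_0)=\rho(s_0)$ and set $\Phi(t,x):=t\phi(x/t)+\varepsilon(t-1)^2$ with $\varepsilon>0$ small. The role of the quadratic penalty is crucial: without it, $w-t\phi(x/t)=t(\rho(x/t)-\phi(x/t))$ is constant along the whole ray $\{x/t=s_0\}$, so the extremum would fail to be strict; with it, $(1,s_0)$ becomes a strict local max of $w-\Phi$ with $w(1,s_0)=\rho(s_0)>0$. Invoking the sub-solution property of $w$ and computing $\partial_t\Phi(1,s_0)=\phi(s_0)-s_0\phi'(s_0)$, $\partial_x\Phi(1,s_0)=\phi'(s_0)$ immediately yields the sub-solution inequality for $\rho$.

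The main (mild) obstacle is exactly this degeneracy of $w-t\phi(x/t)$ along rays $\{x/t=s_0\}$ through the origin: in the converse direction it prevents a naive choice of test function from being strictly extremal, which I address via the $\varepsilon(t-1)^2$ perturbation; in the forward direction the same degeneracy is exploited, via the slice $s=s_0$, to extract the Euler-type identity linking $\partial_t\Phi$, $\partial_x\Phi$, and $\rho$. Everything else is routine continuity and scaling.
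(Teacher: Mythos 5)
Your proposal is correct and follows essentially the same route as the paper: the forward direction uses the scaling slice $\tau\mapsto \tau t_0\rho(s_0)-\Phi(\tau t_0,\tau x_0)$ to obtain the Euler identity $\rho(s_0)=\partial_t\Phi+s_0\partial_x\Phi$ together with the test function $\phi(s)=\Phi(t_0,t_0s)/t_0$, and the converse direction uses the same $(t-1)^2$-type penalty to restore strictness of the maximum along the ray $\{x/t=s_0\}$. The two arguments coincide in all essentials, so there is nothing further to reconcile.
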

\begin{proof}
Let $\rho(s)$ be a viscosity sub-solution of \eqref{hatrho00} in $(0,c_b)$. Let us verify that $w(t,x)= t\rho\left(\frac{x}{t}\right)$ is a viscosity sub-solution of \eqref{eq:hatw00}.
Suppose that ${w}-\varphi$ attains a strict local maximum at point $(t_*,x_*)$ such that ${w}(t_*,x_*) >0$ for any test function $\varphi\in C^1$.
Since ${w}(t,x) = t {\rho}(\frac{x}{t})$, we deduce that ${\rho}(\frac{x_*}{t_*}) >0$ and $\tau \mapsto \tau t_* {\rho}(\frac{x_*}{t_*})-\varphi(\tau t_*, \tau x_*)$ has a strict local maximum at $\tau=1$, so that letting $s_* =x_*/{t_*}$  we have
\begin{equation}\label{equality1}
  t_* {\rho}\left(s_*\right)-t_*\partial_t\varphi(t_*,x_*)-x_*\partial_x\varphi(t_*,x_*) =0.
\end{equation}
Set
$\phi(s):=\varphi(t_*,s t_*)/t_*.$
It can be verified that $\rho(s)-\phi(s)$ takes a strict local maximum point $s=s_*$ and $\rho(s_*)>0$. Moreover, by \eqref{equality1}  we arrive at
\begin{equation*}
\partial_x \varphi(t_*,x_*)=\phi'(s_*) \quad\text{ and }\quad \partial_t \varphi(t_*,x_*)=\rho(s_*)-s_*\phi'(s_*).
\end{equation*}
Hence at the point $(t_*, x_*)$, direct calculation yields 
\begin{align*}
\partial_t \varphi+\hat d\left|\partial_x \varphi\right|^2+\hat{\mathcal{R}}_*\left(x_*/t_* 
\right) 
 &= \rho(s_*)-s_*\phi'(s_*)+\hat{d}|\phi'(s_*)|^2+\hat{\mathcal{R}}_*\left(s_*\right)\leq 0,
\end{align*}
where the last inequality holds since $\rho$ is a viscosity sub-solution of \eqref{hatrho00} with $\phi(s)$ being the test function.
Hence ${w}$ is a viscosity sub-solution of \eqref{eq:hatw00}. 

Conversely, let $w(t,x) = t\rho\left(\frac{x}{t}\right)$ be a viscosity sub-solution of \eqref{qq:weq}. Choose  any test function $\phi\in C^1$ such that $\rho(s) - \phi(s)$ attains a strict local maximum at $s_*$ such that $\rho(s_*)>0$. Without loss of generality, we may assume $\rho(s_*)-\phi(s_*)=0$. 
Then $w(t,x) -t\phi\left(\frac{x}{t}\right)-(t-1)^2= t\rho\left(\frac{x}{t}\right)-t\phi\left(\frac{x}{t}\right)-(t-1)^2$ attains a  strict local maximum at $(s_*,1)$. Hence, by the definition of $w(t,x)$ being a sub-solution, we deduce that
$$
\phi(s_*) - s_* \phi'(s_*) + |\phi'(s_*)|^2 + \hat{\mathcal{R}}_*(s_*) \leq 0,
$$
which implies that  $\rho$ is a viscosity sub-solution of \eqref{hatrho00}.

The proof of the equivalence for viscosity super-solutions is similar and is omitted. 
\end{proof}

We now present a comparison result associated with \eqref{hatrho00}. 
\begin{lemma}\label{lemA2rho}
Fix any $c_{b}\in(0,\infty]$. Let  $\overline{\rho}$ and  $\underline{\rho}$ be a pair of viscosity super- and sub-solutions of \eqref{hatrho00} in the interval $(0,c_b)$ with the boundary conditions
\begin{equation}\label{condition1'}
  \underline{\rho}(0)\leq \overline{\rho}(0), \quad \limsup\limits_{s\to c_{b}}\frac{\underline{\rho}(s)}{s}\leq \liminf\limits_{s\to c_{b}}\frac{\overline{\rho}(s)}{s},\quad\text{ and }\quad \lim_{s \to c_b} \frac{\underline{\rho}(s)}{s} < \infty.
\end{equation}
Then we have
$
\overline{\rho} \geq \underline{\rho}$ in $[0, c_{b}].$
\end{lemma}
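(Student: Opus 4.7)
The plan is to establish this comparison result via the classical doubling-of-variables technique, suitably adapted to the variational-inequality structure of \eqref{hatrho00} and the piecewise-Lipschitz nature of $\hat{\mathcal{R}}$.

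I would first argue by contradiction: suppose $M := \sup_{[0,c_b]}(\underline{\rho} - \overline{\rho}) > 0$. The hypothesis $\underline{\rho}(0) \leq \overline{\rho}(0)$ rules out the left endpoint. For the right endpoint, writing $\underline{\rho}(s) - \overline{\rho}(s) = s\bigl(\underline{\rho}(s)/s - \overline{\rho}(s)/s\bigr)$ and combining the slope condition in \eqref{condition1'} with $\lim_{s\to c_b}\underline{\rho}(s)/s < \infty$ yields $\limsup_{s\to c_b}(\underline{\rho}(s) - \overline{\rho}(s)) \leq 0$, whether $c_b$ is finite or not. Consequently the positive supremum must be attained at some interior point $s_* \in (0, c_b)$, with $\underline{\rho}(s_*) \geq M > 0$.

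Next I would invoke penalized doubling of variables: for $\alpha \gg 1$ and $\varepsilon > 0$ small, set
\begin{equation*}
\Phi_{\alpha,\varepsilon}(s, r) := \underline{\rho}(s) - \overline{\rho}(r) - \tfrac{\alpha}{2}(s-r)^2 - \varepsilon(s^2 + r^2),
\end{equation*}
where the $\varepsilon$-term keeps the maximizer $(s_\alpha, r_\alpha)$ in a compact subset of $(0, c_b)^2$. Standard viscosity arguments give $\alpha(s_\alpha - r_\alpha)^2 \to 0$ and, along a subsequence, $(s_\alpha, r_\alpha) \to (s_*, s_*)$. Since $\underline{\rho}(s_\alpha) \to \underline{\rho}(s_*) \geq M > 0$, the sub-solution inequality applies at $s_\alpha$ with test function $\phi_1(s) = \tfrac{\alpha}{2}(s - r_\alpha)^2 + \varepsilon s^2 + \mathrm{const}$, and the super-solution inequality applies at $r_\alpha$ with test function $\phi_2(r) = -\tfrac{\alpha}{2}(s_\alpha - r)^2 - \varepsilon r^2 + \mathrm{const}$. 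With $p_\alpha := \alpha(s_\alpha - r_\alpha) + 2\varepsilon s_\alpha$ and $q_\alpha := \alpha(s_\alpha - r_\alpha) - 2\varepsilon r_\alpha$, subtracting yields
\begin{equation*}
\underline{\rho}(s_\alpha) - \overline{\rho}(r_\alpha) \leq \alpha(s_\alpha - r_\alpha)^2 + 2\varepsilon(s_\alpha^2 + r_\alpha^2) - \hat{d}\bigl(p_\alpha^2 - q_\alpha^2\bigr) + \hat{\mathcal{R}}^*(r_\alpha) - \hat{\mathcal{R}}_*(s_\alpha).
\end{equation*}
The sub-solution inequality additionally forces $|p_\alpha|$ bounded uniformly in $\alpha, \varepsilon$ (coercivity of the quadratic gradient term), which renders $p_\alpha^2 - q_\alpha^2 = O(\varepsilon)$. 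Passing $\alpha \to \infty$ and then $\varepsilon \to 0$ therefore delivers $M \leq \hat{\mathcal{R}}^*(s_*) - \hat{\mathcal{R}}_*(s_*)$, which vanishes at every continuity point of $\hat{\mathcal{R}}$ and produces the desired contradiction $M \leq 0$.

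The main obstacle will be the case in which $s_*$ happens to coincide with one of the finitely many jump points of $\hat{\mathcal{R}}$, so that the residual $\hat{\mathcal{R}}^*(s_*) - \hat{\mathcal{R}}_*(s_*)$ is strictly positive. To resolve this I would perturb $\underline{\rho}$ into a strict sub-solution via a modification of the form $\underline{\rho}_\delta := \underline{\rho} - \delta g$, where $g$ is a nonnegative smooth function with $g(0) = 0$ and $g(s) - s g'(s) + \hat{d}|g'(s)|^2 \geq 1$ (for example $g(s) = \hat{d}^{-1}s$ after rescaling by the jump height); this yields a sub-solution inequality with an $\Omega(\delta)$ spare margin large enough to absorb the bounded jump. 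Applying the doubling argument above to the pair $(\underline{\rho}_\delta, \overline{\rho})$ produces $\underline{\rho}_\delta \leq \overline{\rho}$, and sending $\delta \to 0$ completes the proof. Alternatively, one may partition $[0, c_b]$ along the jump points of $\hat{\mathcal{R}}$, apply the continuous-coefficient comparison on each open interval, and glue via the semicontinuity of $\underline{\rho}, \overline{\rho}$ at the jump points.
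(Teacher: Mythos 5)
Your doubling-of-variables strategy breaks down at exactly the two points where the real difficulty of this lemma sits. First, at the jump points of $\hat{\mathcal{R}}$: your penalized argument only yields $M \leq \hat{\mathcal{R}}^*(s_*) - \hat{\mathcal{R}}_*(s_*)$, and when $s_*$ is a discontinuity this right-hand side is a fixed positive constant, while your proposed repair $\underline{\rho}_\delta = \underline{\rho} - \delta g$ creates a spare margin of size only $O(\delta)$; an $O(\delta)$ strictness cannot absorb an $O(1)$ jump, so sending $\delta \to 0$ gives nothing. (Moreover, subtracting $\delta g$ does not obviously preserve the sub-solution property, since the Hamiltonian is convex but not linear in $\rho'$; the standard device is a convex combination with a strict sub-solution, not a subtraction.) Your fallback --- splitting $[0,c_b]$ at the jump points and gluing --- needs the ordering $\underline{\rho} \leq \overline{\rho}$ at each interior jump point as boundary data for the subinterval comparisons, and that ordering is part of what is to be proved, so the gluing is circular. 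Comparison for a Hamiltonian that is merely piecewise Lipschitz in $s$ genuinely requires additional structure; this is precisely why the paper, for $c_b<\infty$, simply invokes the comparison theorem of Tourin \cite[Theorem 2]{Tourin_1992}, which is tailored to this situation.

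Second, the case $c_b=\infty$ is not localized by your argument. The claim that \eqref{condition1'} forces $\limsup_{s\to\infty}(\underline{\rho}(s)-\overline{\rho}(s)) \leq 0$ is false: two affine functions with the same slope, differing by a positive constant, satisfy the slope conditions while their difference stays positive; hence the supremum $M$ may be approached only as $s\to\infty$, and $(s_\alpha,r_\alpha)$ need not converge to an interior point $s_*$. Relatedly, your uniform bound on $|p_\alpha|$ fails: the coercivity estimate from the sub-solution inequality gives only $|p_\alpha|\leq s_\alpha/\hat d + C$, which is not uniform in $\varepsilon$ once the maximizers drift to infinity, so $p_\alpha^2-q_\alpha^2$ is no longer $O(\varepsilon)$; the unbounded coefficient $s$ in the transport term $-s\rho'$ is exactly what breaks the standard structural hypotheses for comparison on unbounded domains. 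The paper sidesteps both obstacles by the change of variables $\overline{w}(t,x)=t\overline{\rho}(x/t)$, $\underline{w}(t,x)=t\underline{\rho}(x/t)$: Lemma \ref{lem:wrhoviscosity} shows these are super-/sub-solutions of the time-dependent problem \eqref{eq:hatw00}, the conditions \eqref{condition1'} become initial and lateral boundary orderings, and the conclusion follows from the comparison principle of \cite[Theorem A.1]{LLL20192}. A self-contained proof along your lines would essentially have to reprove those two ingredients.
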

\begin{proof}
If $(0,c_{b})$ is a bounded interval, then Lemma \ref{lemA2rho} is a direct consequence of \cite[Theorem 2]{Tourin_1992}. It remains to consider the case $c_{b}=\infty$. 
Define
$$
\overline{w}(t,x):=t \overline\rho\left(\frac{x}{t}\right) \quad \text{  and 
} \quad \underline{w}(t,x):=t \underline\rho\left(\frac{x}{t}\right).$$
We will apply the comparison principle \cite[Theorem A.1]{LLL20192} to derive the corresponding result for our reduced equation here. 
By Lemma \ref{lem:wrhoviscosity}, $\overline{w}$ and $\underline{w}$ is a pair of super- and sub-solutions of \eqref{eq:hatw00}. It remains to verify the boundary conditions $ \underline{w}(t,0)\leq \overline{w}(t,0)$ and $\underline{w}(0,x)\leq \overline{w}(0,x)$ for $t> 0,\,x>0$, 
which follows by the following calculations.
\begin{equation}\label{eq:wt0}
\underline{w}(t,0)=t\underline{\rho}(0)\leq t\overline{\rho}(0)=\overline{w}(t,0) \quad \text{for } t\geq 0,
\end{equation}
and
\begin{equation}
\begin{split}
\underline{w}(0,x)\leq\limsup_{t\to 0}\left[t\underline{\rho}\left(\frac{x}{t}\right)\right]&=x\limsup_{s\to \infty}\frac{\underline{\rho}(s)}{s}\leq x\liminf_{s\to \infty}\frac{\overline{\rho}(s)}{s}\\
&=\liminf_{t\to 0}\left[t\overline{\rho}\left(\frac{x}{t}\right)\right]\leq\overline{w}(0,x),
\end{split}
\end{equation}
where we used \eqref{condition1'}. Therefore, we apply \cite[Theorem A.1]{LLL20192} to deduce
 $ \overline{w}\geq \underline{w}$ in $[0,\infty)\times[0,\infty)$, which implies that 
 $\overline{\rho}(s)\geq \underline{\rho}(s) $ for $ s\in[0,\infty)$.
The proof is now complete.
\end{proof}

Hereafter, we let $\hat{\mathcal{R}}(s)=\hat r-g(s)$ for some constant $\hat r>0$ and  $g:[0,\infty)\to\mathbb{R}$ such that
\begin{itemize}
  \item [{$\rm(H_{g})$}]  The function $g$ is  nonnegative, bounded, and piecewise Lipschitz continuous, and  $\mathrm{spt}\, g\subset\left[0, c_{g}\right]$ for some
  $ c_{g}\geq\hat{d}(\hat\lambda\wedge \sqrt{\hat{r}/\hat{d}})+\frac{\hat{r}}{\hat\lambda\wedge \sqrt{\hat{r}/\hat{d}}}$.
\end{itemize}
 Namely,  we consider 
\begin{align}\label{hatrho0}
\left\{\begin{array}{l}
\smallskip
\min\{\rho-s\rho'+\hat d|\rho'|^2+\hat r-g(s),\rho\}=0 \,\,\text{ in } (0,\infty),\\
\rho(0)=0,\quad\lim\limits_{s\to\infty}\frac{\rho(s)}{s}=\hat{\lambda}.
\end{array}
\right.
\end{align}
We mention that {$\rm(H_{g})$} is always satisfied for the variational inequalities derived in this paper. The next result is related to the finite speed of propagation 
for Hamilton-Jacobi equations. 


\begin{lemma}\label{lemA1rho}
Assume that {$\rm(H_{g})$} holds. Then for any $\hat{\lambda}\in(0,\infty]$,
there exists  a unique viscosity solution $\hat\rho$ of \eqref{hatrho0}, and it follows that 
\begin{itemize}
\item[{\rm{(a)}}] if $\hat\lambda\leq \frac{c_{g}}{2\hat{d}}$, then
$\hat\rho(s)=\hat\lambda s-(\hat d\hat\lambda^2+\hat r)$ for $s\geq c_{g};$
\item[{\rm{(b)}}] if $\hat\lambda> \frac{c_{g}}{2\hat{d}}$, then
$$\hat\rho(s)=
\begin{cases}
\medskip
\hat\lambda s-(\hat d\hat\lambda^2+\hat r)&\text{for }\,s\geq 2\hat{d}\hat\lambda ,\\
\frac{s^2}{4\hat d}-\hat r&\text{for }\, c_{g}\leq s<2\hat{d}\hat\lambda.
\end{cases}
$$
\end{itemize}
\end{lemma}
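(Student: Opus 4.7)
The plan is to first establish uniqueness via the comparison principle in Lemma \ref{lemA2rho}, then construct an explicit viscosity solution matching the stated formulas on $[c_g,\infty)$ and extend it to $[0,c_g]$ by a Perron-type argument.

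I would start with uniqueness: any two viscosity solutions of \eqref{hatrho0} share the boundary data $\rho(0)=0$ and $\lim_{s\to\infty}\rho(s)/s=\hat\lambda$, so applying Lemma \ref{lemA2rho} twice (with $c_b=\infty$ and with the roles of super-/sub-solution reversed) yields equality.

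For the explicit formula on $[c_g,\infty)$, where $g\equiv 0$, the variational inequality reduces to $\min\{\rho-s\rho'+\hat d|\rho'|^2+\hat r,\rho\}=0$. Direct substitution shows that both the linear function $\rho_{\rm lin}(s):=\hat\lambda s-(\hat d\hat\lambda^2+\hat r)$ and the envelope $\rho_{\rm env}(s):=s^2/(4\hat d)-\hat r$ classically satisfy $\rho-s\rho'+\hat d|\rho'|^2+\hat r=0$. In case (a) I would define $\hat\rho:=\rho_{\rm lin}$ on $[c_g,\infty)$; in case (b) I would define $\hat\rho:=\rho_{\rm env}$ on $[c_g,2\hat d\hat\lambda]$ and $\hat\rho:=\rho_{\rm lin}$ on $[2\hat d\hat\lambda,\infty)$. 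A quick calculation shows that both value and slope match at $s=2\hat d\hat\lambda$ (giving $\hat d\hat\lambda^2-\hat r$ and $\hat\lambda$, respectively), so the concatenation is $C^1$ across the junction and hence a viscosity solution on $(c_g,\infty)$. Non-negativity on $[c_g,\infty)$ follows from the elementary inequality $\hat d\mu+\hat r/\mu\geq 2\sqrt{\hat d\hat r}$, applied with $\mu=\hat\lambda\wedge\sqrt{\hat r/\hat d}$: hypothesis $(H_g)$ then forces $c_g\geq 2\sqrt{\hat d\hat r}$, which gives $\rho_{\rm env}(c_g)\geq 0$; a parallel computation using $(H_g)$ with $\mu=\hat\lambda$ in case (a) yields $\rho_{\rm lin}(c_g)\geq 0$.

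To complete the construction, I would extend $\hat\rho$ to $[0,c_g]$ as a viscosity solution of the variational inequality on the bounded interval with Dirichlet data $0$ at $s=0$ and $\hat\rho(c_g)$ from the explicit formula; existence will follow from Perron's method, using $\rho\equiv 0$ as the trivial sub-solution and a suitable affine function as a super-solution, exploiting the boundedness and piecewise Lipschitz regularity of $g$ guaranteed by $(H_g)$. The main technical obstacle I anticipate is verifying that the spliced function is a viscosity solution globally \emph{across} $s=c_g$; this will require working with the upper and lower semicontinuous envelopes $\hat{\mathcal R}^*,\hat{\mathcal R}_*$ from Definition \ref{defviscosity} in order to absorb the possible jump of $g$ at $s=c_g$, which is routine but delicate. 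The prescribed slope condition $\hat\rho(s)/s\to\hat\lambda$ is immediate from the linear piece, and uniqueness via Lemma \ref{lemA2rho} then identifies the construction as the unique global viscosity solution.
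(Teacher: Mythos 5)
Your plan works for $\hat\lambda\in(0,\infty)$, and there it is a genuinely different route from the paper's: you build the solution explicitly on $[c_g,\infty)$ (the $C^1$ matching at $s=2\hat d\hat\lambda$ is correct) and then invoke uniqueness, whereas the paper never constructs the solution at all — it cites \cite[Theorem 2]{Crandall_1986} for existence and instead sandwiches \emph{any} solution between an explicit global sub-solution $\underline\rho_1$ (the $g\equiv 0$ solution) and an explicit global super-solution $\overline\rho_1$ (an affine piece on $[0,c_g)$ built from $g_{\max}$, glued to the same formulas for $s\geq c_g$), which coincide on $[c_g,\infty)$. That device also sidesteps the splicing issue you flag at $s=c_g$: in the paper only the \emph{super-solution} test for $\overline\rho_1$ has to be checked at the interface (using $(\hat r-g_{\max}\chi_{\{0<s<c_g\}})^*=\hat r$ there), while your Perron-plus-gluing construction would additionally require verifying both viscosity inequalities for the glued function across $c_g$, which is not automatic and is left unresolved in your sketch.

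The genuine gap is the case $\hat\lambda=\infty$, which the lemma explicitly includes and which is the case actually used for compactly supported data. Your uniqueness step — ``apply Lemma \ref{lemA2rho} twice with the roles reversed'' — is not available there: condition \eqref{condition1'} requires $\lim_{s\to c_b}\underline\rho(s)/s<\infty$, and when both competing solutions satisfy $\rho(s)/s\to\infty$ this hypothesis fails, so the comparison lemma cannot be invoked directly; likewise your linear piece $\rho_{\rm lin}(s)=\hat\lambda s-(\hat d\hat\lambda^2+\hat r)$ is meaningless for $\hat\lambda=\infty$. The paper devotes a separate step to this case: it compares with sub-solutions $\underline\rho_\lambda$ of finite slope $\lambda$ and lets $\lambda\to\infty$ to get the lower bound $\hat\rho(s)\geq s^2/(4\hat d)-\hat r$ on $[c_g,\infty)$, and for the upper bound it uses super-solutions $\overline\rho_{\epsilon,s_0}$ on bounded intervals $(0,s_0)$ carrying the barrier $\epsilon/(s_0-s)$ (so that $\liminf_{s\to s_0}\overline\rho_{\epsilon,s_0}(s)/s=\infty$ makes \eqref{condition1'} trivially hold), then sends $\epsilon\to0$ and $s_0\to\infty$; uniqueness on $[0,c_g]$ follows afterwards by comparison on the bounded interval. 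Without an argument of this type your proof does not cover $\hat\lambda=\infty$, so as written the proposal proves a strictly weaker statement.
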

It is well-known that the viscosity solution $\hat{\rho}$ to \eqref{hatrho0} exists and is unique \cite[Theorem 2]{Crandall_1986}. 
Set $\hat{w}(t,x):= t \hat{\rho}\left(\frac{x}{t}\right)$. By Lemma \ref{lem:wrhoviscosity},  $\hat{w}(t,x)$ is a viscosity solution of \eqref{eq:hatw00} in $(0,\infty)\times (0,\infty)$ with the boundary condition $w(t,0) = 0$ 
and the initial condition $w(0,x) = h_{\hat\lambda}(x)$, where 
$$
h_{\hat\lambda}(x) = \hat\lambda x \,\,\text{ when }0<\hat\lambda< \infty,\,\,\text{ and }\,\,
h_\infty(x) = \begin{cases}
0 &\text{ for }x=0, \\
\infty &\text{ for }x>0. 
\end{cases}
$$
In either case, one can use dynamic programming principle (see, e.g. \cite[Theorem 1]{Freidlin_1985} or \cite[Theorem 5.1]{Evans_1989}) to deduce $\hat{w}(t,x)=\max\{J(x,t),0\}$ with
\begin{equation*}
J(t,x) = \inf_{\gamma} \left\{ \int_0^t \left[\frac{|\dot{\gamma}(s)|^2}{4d_3} - \hat{r} + g(\gamma(s)) \right]\mathrm{d}s + h_{\hat\lambda}(\gamma(0))\right\},
\end{equation*}
where the infimum is taken over all absolutely continuous paths $\gamma:[0,t] \to [0,\infty)$ such that $\gamma(t)=x$. Therefore, one can obtain the above assertions (a) and (b) by direct calculations as in \cite[Appendix B]{LLL2019}, which says that the viscosity solution  $\hat{\rho}(s)$ restricted to the interval $[c_g,\infty)$ does not depend on $g$. Alternatively, one may proceed by constructing simple super- and sub-solutions and applying Lemma \ref{lemA2rho}. Since the proof is straightforward but tedious, we will present it in the Appendix \ref{sec:B}. 

\begin{proposition}\label{lemA3rho}
Assume that {$\rm(H_{g})$} holds.  For each $\hat{\lambda}\in(0,\infty]$, let $\tilde{w}(t,x)$ be a viscosity super-solution (resp. sub-solution) of \eqref{qq:weq} with $r = \hat{r}-g(x/t)$.
 Then 
$\tilde{w}(t,x)\geq t\hat\rho\left(\frac{x}{t}\right)$ (resp. $\tilde{w}(t,x)\leq t\hat\rho\left(\frac{x}{t}\right)$) in $(0,\infty)\times(0,\infty)$,
where $\hat\rho$ defines the unique viscosity solution of \eqref{hatrho0}.
\end{proposition}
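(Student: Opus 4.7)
The plan is to convert Proposition \ref{lemA3rho} into the comparison-principle framework already established in Lemma \ref{lemA2rho} and its time-dependent counterpart \cite[Theorem A.1]{LLL20192}, with $\hat{w}(t,x) := t\hat{\rho}(x/t)$ serving as the explicit reference solution. By Lemma \ref{lemA1rho} the viscosity solution $\hat{\rho}$ of \eqref{hatrho0} exists and is unique, and by Lemma \ref{lem:wrhoviscosity} (applied with $c_b=\infty$) the self-similar function $\hat{w}$ is a viscosity solution of \eqref{qq:weq} with source $r(t,x)=\hat{r}-g(x/t)$ throughout the cone $(0,\infty)\times(0,\infty)$. Its boundary trace is $\hat{w}(t,0)=0$, and the far-field condition $\lim_{s\to\infty}\hat{\rho}(s)/s=\hat{\lambda}$ translates into $\lim_{t\to 0^+}\hat{w}(t,x)/x=\hat{\lambda}$ for $x>0$.

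For the super-solution inequality $\tilde{w}\geq \hat{w}$, I would apply \cite[Theorem A.1]{LLL20192} in the quarter-plane to the pair $(\tilde{w},\hat{w})$. The lateral boundary compatibility at $x=0$ is automatic, since the obstacle structure $\min\{\cdot,w\}=0$ forces any viscosity super-solution to satisfy $\tilde{w}\geq 0=\hat{w}(t,0)$. When $\hat{\lambda}\in(0,\infty)$, the initial trace of $\hat{w}$ is the linear function $x\mapsto\hat{\lambda}x$, and the comparison principle applies directly. When $\hat{\lambda}=\infty$, the initial trace is singular, so I would instead use the family of viscosity sub-solutions $\underline{\rho}_{\lambda}$ constructed in Lemma \ref{lemA1rho} (indexed by finite $\lambda$); pushing them to the time-dependent level via Lemma \ref{lem:wrhoviscosity} gives sub-solutions $t\underline{\rho}_{\lambda}(x/t)$, obtain $\tilde{w}(t,x)\geq t\underline{\rho}_{\lambda}(x/t)$ by the comparison principle, and then let $\lambda\to\infty$ using $\underline{\rho}_{\lambda}\nearrow\hat{\rho}$ locally uniformly on $[c_g,\infty)$ (together with the explicit formulas supplied by Lemma \ref{lemA1rho}).

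The sub-solution inequality $\tilde{w}\leq \hat{w}$ is handled symmetrically. For each $\epsilon>0$ and $s_0>c_g$, the super-solutions $\overline{\rho}_{\epsilon,s_0}$ from the construction in Lemma \ref{lemA1rho} yield, via Lemma \ref{lem:wrhoviscosity}, viscosity super-solutions $t\overline{\rho}_{\epsilon,s_0}(x/t)$ of \eqref{qq:weq} on the truncated cone $\{0<x<s_0 t\}$; the blow-up $\liminf_{s\to s_0}\overline{\rho}_{\epsilon,s_0}(s)/s=\infty$ supplies the boundary condition needed to run \cite[Theorem A.1]{LLL20192}, giving $\tilde{w}(t,x)\leq t\overline{\rho}_{\epsilon,s_0}(x/t)$ on that cone. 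Sending $\epsilon\to 0$ and $s_0\to\infty$ recovers $\tilde{w}\leq\hat{w}$.

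The main obstacle, as in the proof of Lemma \ref{lemA1rho}, is the initial-data layer at $t=0^+$ when $\hat{\lambda}=\infty$, where $\hat{w}$ is unbounded on every punctured neighborhood of the origin. The resolution is exactly the sandwiching via $\underline{\rho}_{\lambda}$ and $\overline{\rho}_{\epsilon,s_0}$, which transfers the delicate behavior onto one-dimensional profiles that were already analyzed in Lemma \ref{lemA1rho}; the finite-speed-of-propagation condition $\mathrm{spt}\, g\subset[0,c_g]$ in $(\mathrm{H}_g)$ is precisely what makes this localization work, because it forces $\hat{\rho}$ to coincide with the parameter-free quadratic $s^2/(4\hat d)-\hat r$ for $s\geq c_g$ and thereby decouples the large-speed behavior from the obstacle region.
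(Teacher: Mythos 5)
Your overall strategy is the same as the paper's: handle $\hat\lambda\in(0,\infty)$ by a direct application of \cite[Theorem A.1]{LLL20192} (after Lemma \ref{lem:wrhoviscosity} certifies $t\hat\rho(x/t)$ as a viscosity solution), and handle $\hat\lambda=\infty$ by sandwiching with the explicit one-dimensional profiles $\underline\rho_\lambda$ and $\overline\rho_{\epsilon,s_0}$ from the proof of Lemma \ref{lemA1rho}, transferred to the time-dependent setting and compared via \cite[Theorem A.1]{LLL20192}. However, as written there is a genuine gap in the region $\{0\le x\le c_g t\}$. The approximating profiles agree with $\hat\rho$ in the limit only on $[c_g,\infty)$: one has $\lim_{\lambda\to\infty}\underline\rho_\lambda(s)=\max\{s^2/(4\hat d)-\hat r,\,0\}$, which can be strictly below $\hat\rho(s)$ on $(0,c_g)$ because $g\ge 0$ lowers the reaction and hence raises $\hat\rho$ there, while $\lim_{\epsilon\to0,\,s_0\to\infty}\overline\rho_{\epsilon,s_0}(s)=\hat\lambda_1 s-(\hat d\hat\lambda_1^2+\hat r-g_{\max})$ on $[0,c_g)$, which in general strictly exceeds $\hat\rho(s)$ since it is built from the crude bound $g_{\max}$ rather than $g$ itself. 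Consequently your limiting statements (``let $\lambda\to\infty$'' and ``sending $\epsilon\to0$ and $s_0\to\infty$ recovers $\tilde w\le\hat w$'') only prove the desired inequalities in the outer cone $\{x\ge c_g t\}$; they do not yield $\tilde w\ge t\hat\rho(x/t)$, nor $\tilde w\le t\hat\rho(x/t)$, where the obstacle $g$ is active.

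The missing step, which is exactly how the paper finishes, is a second comparison on the bounded cone $\{(t,x):0<x<c_g t\}$. From the outer-cone estimate you know the trace on the ray $x=c_g t$: $\tilde w(t,c_g t)\ge t\hat\rho(c_g)$ in the super-solution case (resp. $\le$ in the sub-solution case), and the obstacle structure gives the compatibility $\tilde w(t,0)\ge 0$ (resp. the assumed boundary condition $\tilde w(t,0)\le 0$). On this cone $t\hat\rho(x/t)$ is the viscosity solution of the Dirichlet problem with data $0$ at $x=0$ and $t\hat\rho(c_g)$ at $x=c_g t$ (this is \eqref{boundedproblem} in the paper, obtained from Lemma \ref{lem:wrhoviscosity} with $c_b=c_g$), so one further application of \cite[Theorem A.1]{LLL20192} (equivalently, Lemma \ref{lemA2rho} on the bounded interval $(0,c_g)$) closes the argument in the inner region. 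With that addition your proof coincides with the paper's; without it, the conclusion on $(0,\infty)\times(0,\infty)$ is not established.
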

\begin{proof}
By Lemma \ref{lem:wrhoviscosity}, $t\hat\rho\left(\frac{x}{t}\right)$ is a viscosity solution of \eqref{qq:weq} with $r = \hat{r}-g(x/t)$. Hence, the conclusion follows directly from \cite[Theorem A.1]{LLL20192} for the case $\hat{\lambda}\in(0,\infty)$. The case $\hat{\lambda}=\infty$ will be established by an approximation argument, and is deferred to the Appendix \ref{sec:B}.
\end{proof}

\begin{corollary}\label{corA3rho}
Assume  {$\rm(H_{g})$}.  Fix any $\hat{\lambda}\in(0,\infty]$. Let $\tilde{\rho}$ be a viscosity super-solution (resp. sub-solution) of \eqref{hatrho0}.
 Then 
$\tilde{\rho}(s)\geq \hat\rho\left(s\right)$ (resp. $\tilde{\rho}(s)\leq\hat\rho\left(s\right)$) for $s\in(0,\infty)$,
where $\hat\rho$ is the unique viscosity solution of \eqref{hatrho0}.
\end{corollary}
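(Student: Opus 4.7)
The plan is to deduce Corollary \ref{corA3rho} from Proposition \ref{lemA3rho} by lifting the one-dimensional variational inequality \eqref{hatrho0} to the self-similar form on the $(t,x)$-quadrant via the change of variables $w(t,x)=t\rho(x/t)$, then invoking the comparison that has already been established in the time-dependent setting.

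Concretely, suppose first that $\tilde{\rho}$ is a viscosity super-solution of \eqref{hatrho0}. I would set $\tilde{w}(t,x):=t\tilde{\rho}(x/t)$ on $(0,\infty)\times(0,\infty)$. Applying Lemma \ref{lem:wrhoviscosity} with $c_b=\infty$ immediately upgrades $\tilde{\rho}$ to a viscosity super-solution $\tilde{w}$ of the time-dependent problem \eqref{eq:hatw00} on the entire quadrant, i.e., of \eqref{qq:weq} with $r(t,x)=\hat r - g(x/t)$. I then invoke Proposition \ref{lemA3rho} to conclude
$$
\tilde{w}(t,x) \ \geq\ t\,\hat\rho(x/t) \qquad \text{for all } (t,x)\in(0,\infty)\times(0,\infty).
$$
Specializing to $t=1$ yields $\tilde{\rho}(s)\geq \hat\rho(s)$ for all $s>0$. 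The sub-solution case is entirely symmetric: $\tilde w$ becomes a viscosity sub-solution of \eqref{eq:hatw00}, and Proposition \ref{lemA3rho} reverses the inequality.

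The only mildly delicate point is to check that the boundary and initial data implicit in being a super-solution (resp.\ sub-solution) of \eqref{hatrho0} lift correctly to the data expected by Proposition \ref{lemA3rho}. For a super-solution, the condition $\tilde{\rho}(0)\geq 0$ translates to $\tilde{w}(t,0)=t\tilde{\rho}(0)\geq 0$, while $\liminf_{s\to\infty}\tilde{\rho}(s)/s\geq \hat\lambda$ translates into $\liminf_{t\to 0^+}\tilde{w}(t,x)/x\geq \hat\lambda$, matching the corresponding initial/boundary super-solution conditions for the time-dependent problem with datum $h_{\hat\lambda}$; the sub-solution case is analogous.

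I do not foresee any substantial obstacle: the heart of the argument, namely the dictionary between $\rho$ and $w$, has been established in Lemma \ref{lem:wrhoviscosity}, and the comparison in $(t,x)$-variables is the content of Proposition \ref{lemA3rho}. The corollary is essentially the packaging of these two results, the only verification being the compatibility of boundary data noted above.
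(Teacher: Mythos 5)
Your proposal is correct and is exactly the paper's argument: the paper also proves Corollary \ref{corA3rho} by setting $\tilde{w}(t,x)=t\tilde{\rho}(x/t)$, invoking Lemma \ref{lem:wrhoviscosity} to see that $\tilde w$ is a super-/sub-solution of \eqref{qq:weq} with $r=\hat r-g(x/t)$, and then applying Proposition \ref{lemA3rho} and restricting to $t=1$. Your additional check of how the boundary/growth conditions of \eqref{hatrho0} lift to the time-dependent data is consistent with what the paper leaves implicit.
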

\begin{proof}
It is a direct consequence of Proposition \ref{lemA3rho} by noting that $\tilde{w}(t,x):=t\tilde{\rho}(x/t)$ defines a viscosity super-solution (resp. sub-solution) of  \eqref{qq:weq} with $r = \hat{r}-g(x/t)$; see Lemma \ref{lem:wrhoviscosity}.
\end{proof}

\section{Proof of {Theorem A}}\label{Section2}

This section is devoted to the proof of the main result, namely, {Theorem A}. We will define some notations in Subsection \ref{subsect:3.1}. Then the estimates for $\overline{c}_3$ and $\underline{c}_3$ are proved in Subsections \ref{S3} and \ref{S4}, respectively. 
We assume, throughout this entire section, that $d_i, r_i, a_{ij}$ and the initial conditions $u_{i,0}$ are fixed in such a way that  $({\rm H}_{c_1,c_2,\lambda})$ holds for some $c_1>c_2$ and $\lambda \in (0,\infty]$ (see Definition \ref{def:h3}). 
We apply the idea of large deviation and introduce a small parameter $\epsilon$ via the following scaling:
\begin{equation}\label{scaling}
u^\epsilon_i(t,x)=u_i\left(\frac{t}{\epsilon},\frac{x}{\epsilon}\right), \quad i=1,2,3.
\end{equation}
Under the scaling above, we may rewrite the equation of $u_3$ in \eqref{eq:1-1} as
 \begin{align*}
\left \{
\begin{array}{ll}
\smallskip
\partial_t u_3^\epsilon=\epsilon \partial_{xx} u_3^\epsilon+\frac{r_3}{\epsilon}u_3^{\epsilon}(1-a_{31}u_1^\epsilon-a_{32}u_2^\epsilon-u_3^\epsilon) & \mathrm{in}~ (0,\infty)\times\mathbb{R},\\
u_3^\epsilon(0,x)=u_{3,0}(\frac{x}{\epsilon})  & \mathrm{on}~ \mathbb{R}.
\end{array}
\right.
\end{align*}

As discussed in the beginning of Section \ref{sec:prelim}, one can obtain the asymptotic behavior of $u_3^\epsilon$ as $\epsilon\rightarrow 0$ by considering the WKB-transformation, which is given by
\begin{equation}\label{eq:w3epsilon}
w_3^\epsilon(t,x)=-\epsilon\log{u_3^\epsilon(t,x)}{\color{red},}
\end{equation}
and satisfies the following equation.
\begin{equation}\label{eq:w3}
\left \{
\begin{array}{ll}
\smallskip
\partial_tw_3^\epsilon-\epsilon \partial_{xx} w_3^\epsilon+|\partial_x w_3^\epsilon|^2+r_3(1-a_{31}u_1^\epsilon-a_{32}u_2^\epsilon-u_3^\epsilon)=0 & \mathrm{in}~ (0,\infty)\times(0,\infty),\\
\smallskip
w_3^\epsilon(0,x)=-\epsilon\log{u_{3,0}(\frac{x}{\epsilon})}  & \text{on} ~[0,\infty),\\
w_3^\epsilon(t,0)=-\epsilon\log{u_{3}^\epsilon(t,0)}  & \text{on} ~[0,\infty).\\
\end{array}
\right.
\end{equation}

A link between $w_3^\epsilon$ and $u_3^\epsilon$ as $\epsilon \to 0$ is the following result, which is originally due to \cite{Evans_1989}. 
\begin{lemma}\label{lem:underlinec}
Let $K, K'$ be any compact sets such that $K \subset {\rm Int}\,K' \subset K'.$ If
$w_3^\epsilon \to 0$ uniformly on $K'$ as $\epsilon \to 0$, then
 \begin{equation*}
\liminf_{\epsilon\to 0} \inf_{K}
 u_3^\epsilon \geq 1-a_{31}\limsup_{\epsilon\to 0} \sup_{K'}u_1^\epsilon-a_{32}\limsup_{\epsilon\to 0} \sup_{K'}u_2^\epsilon.
\end{equation*}
In particular, $$\liminf_{\epsilon\to 0} \inf_{K}
 u_3^\epsilon \geq 1- a_{31} - a_{32} >0.$$
\end{lemma}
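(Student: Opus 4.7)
The plan is to argue by contradiction via a parabolic blow-up at a near-minimum point, then derive a contradiction from the strong maximum principle for the rescaled equation, which on its natural scale is a standard Lotka--Volterra system. Let $M_i := \limsup_{\epsilon \to 0} \sup_{K'} u_i^\epsilon$ for $i = 1, 2$, $L := 1 - a_{31} M_1 - a_{32} M_2$, and suppose toward a contradiction that $\liminf_{\epsilon \to 0} \inf_K u_3^\epsilon < L$. Fix an intermediate compact $K_+$ with $K \subset \mathrm{Int}\, K_+ \subset K_+ \subset \mathrm{Int}\, K'$; then $\liminf_{\epsilon \to 0} \inf_{K_+} u_3^\epsilon \leq \liminf_{\epsilon \to 0} \inf_K u_3^\epsilon < L$. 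Choose $\epsilon_n \to 0$ and minimizers $(t_n, x_n) \in K_+$ realizing the infimum, so that $u_3^{\epsilon_n}(t_n, x_n) = \inf_{K_+} u_3^{\epsilon_n} \to u^\ast < L$. Passing to a subsequence, $(t_n, x_n) \to (t^\ast, x^\ast)$, and by enlarging $K_+$ slightly if needed we may assume $(t^\ast, x^\ast) \in \mathrm{Int}\, K_+$.

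\textbf{Parabolic blow-up.} I would introduce the natural diffusive rescaling
\[
\tilde u_i^n(s, y) := u_i^{\epsilon_n}\!\bigl( t_n + \epsilon_n s,\, x_n + \sqrt{\epsilon_n}\, y \bigr), \qquad i = 1, 2, 3,
\]
which, on any fixed compact subset of $\mathbb{R}^2$ and $n$ large, stays inside $K_+$. Then $\tilde u_3^n$ satisfies the regular-scale PDE $\partial_s \tilde u_3^n - \partial_{yy} \tilde u_3^n = r_3 \tilde u_3^n (1 - a_{31} \tilde u_1^n - a_{32} \tilde u_2^n - \tilde u_3^n)$ with bounded data, and parabolic De Giorgi--Moser combined with interior Schauder yields a subsequential limit $\tilde U_3 \in C^{1 + \alpha/2,\, 2 + \alpha}_{\mathrm{loc}}(\mathbb{R}^2)$ solving the limiting equation classically. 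The minimizer property gives $\tilde U_3 \geq u^\ast$ on $\mathbb{R}^2$ with $\tilde U_3(0,0) = u^\ast$, while the definition of $M_i$ yields $\tilde U_i(0, 0) \leq M_i$ for $i = 1, 2$. Since $(0, 0)$ is an interior minimum of $\tilde U_3$, one has $\partial_s \tilde U_3(0, 0) = 0$ and $\partial_{yy} \tilde U_3(0, 0) \geq 0$, hence
\[
0 \geq \partial_s \tilde U_3(0,0) - \partial_{yy} \tilde U_3(0,0) = r_3 u^\ast \bigl( 1 - a_{31} \tilde U_1(0,0) - a_{32} \tilde U_2(0,0) - u^\ast\bigr) \geq r_3 u^\ast (L - u^\ast),
\]
which is strictly positive whenever $u^\ast > 0$, the desired contradiction.

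\textbf{Main obstacle.} The delicate case is $u^\ast = 0$, for which the interior minimum argument is vacuous. The standard remedy is to use instead the renormalized blow-up $\hat u_3^n(s, y) := u_3^{\epsilon_n}(t_n + \epsilon_n s, x_n + \sqrt{\epsilon_n} y) / u_3^{\epsilon_n}(t_n, x_n)$, which satisfies $\hat u_3^n(0, 0) \equiv 1$ and $\hat u_3^n \geq 1$ by the minimizer property; the rescaled PDE has a linearized reaction bounded below by $r_3(1 - a_{31} M_1 - a_{32} M_2) > 0$ in the limit, and the interior minimum contradiction closes as before. Extracting a locally uniformly convergent subsequence of $\hat u_3^n$ is the crucial technical step, and it is precisely here that $w_3^\epsilon \to 0$ uniformly on $K'$ is genuinely used: the uniform smallness of the rate function translates, via a parabolic Harnack-type control of $\log u_3^{\epsilon_n}$, into uniform local boundedness of the normalized sequence. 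Finally, the ``in particular'' claim follows from the main inequality with the trivial bounds $M_1, M_2 \leq 1$ together with the hierarchy \eqref{eq:compactorder1}.
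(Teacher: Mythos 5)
Your blow-up scheme has a genuine gap precisely at its load-bearing step, the claim that the blow-up point is an \emph{interior} minimum of the limit profile. The point $(t_n,x_n)$ minimizes $u_3^{\epsilon_n}$ only over $K_+$, so after rescaling the inequality $\tilde u_3^n\geq u_3^{\epsilon_n}(t_n,x_n)$ is available only at rescaled points whose macroscopic image lies in $K_+$; nothing prevents the minimizers from sitting on $\partial K_+$ for every $n$, in which case the limit is bounded below by $u^\ast$ only on a half-space, the minimum sits on its boundary, and the first/second-derivative test at $(0,0)$ gives nothing. ``Enlarging $K_+$ slightly'' does not repair this, because minimality is tied to the set you minimized over: over the enlarged set the minimizer can again land on the new boundary. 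The gap is not a removable technicality: in your case $u^\ast>0$ the hypothesis $w_3^\epsilon\to 0$ on $K'$ is never used, yet the conclusion is false without it (think of a compact set lying just ahead of an invasion front of $u_3$: there $u_3^\epsilon\to 0$, the minimizers are on the ``forward'' boundary, and the blow-up limit is an exponential profile with no interior minimum). So the step you pass over by fiat is exactly where the hypothesis has to act. In the case $u^\ast=0$ the mechanism you invoke does not supply it either: $\sup_{K'}w_3^\epsilon\to 0$ only controls oscillations of $\log u_3^\epsilon$ by $o(1)/\epsilon$, i.e.\ ratios of size $e^{o(1)/\epsilon}$, which is useless for compactness of $\hat u_3^n$; a genuine unit-scale Harnack inequality for the linear equation with bounded zeroth-order coefficient holds with or without the hypothesis (and only controls earlier times), so again the hypothesis does no work, while the interior-minimum step inherits the same boundary defect. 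A further, fixable, slip: with $t_n+\epsilon_n s$, $x_n+\sqrt{\epsilon_n}\,y$ the diffusion does not survive at unit size (one gets $\partial_s\tilde u_3^n-\epsilon_n d_3\partial_{yy}\tilde u_3^n=r_3\tilde u_3^n(\cdots)$); the blow-up that recovers the original system uses $x_n+\epsilon_n y$, and the limit equation carries $d_3$.

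For comparison, the paper does not prove the lemma in-house: it invokes \cite[Lemma 3.1]{LLL20192}, which goes back to the Evans--Souganidis device \cite{Evans_1989} of working directly with the smooth viscous Hamilton--Jacobi equation satisfied by $w_3^\epsilon$: one looks at a maximum point over $K'$ of $w_3^\epsilon-\phi$ for a quadratic test function $\phi$ centered at the point of interest, and it is exactly the hypotheses $w_3^\epsilon\geq 0$ and $\sup_{K'}w_3^\epsilon\to 0$ that force this maximum point to be interior to $K'$ and to converge to the center; evaluating the $w_3^\epsilon$-equation there yields $r_3 u_3^\epsilon\geq r_3\bigl(1-a_{31}u_1^\epsilon-a_{32}u_2^\epsilon\bigr)-o(1)$ at such points. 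That is the legitimate counterpart of the interior-minimum property you assumed for free, and if you want to salvage your argument you should run the contradiction at the level of $w_3^\epsilon$ (or otherwise use the hypothesis to force interiority) rather than at the level of minimizers of $u_3^\epsilon$.
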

\begin{proof}
The proof is analogous to \cite[Lemma 3.1]{LLL20192} and we omit the details.
\end{proof}

Next, we apply the the half-relaxed limit method, due to Barles and Perthame \cite{BP1987}, to 
pass to the (upper and lower) limits of $w_3^\epsilon$. More precisely, we define
\begin{equation}\label{eq:w_star3}
w_3^*(t,x):=\hspace{-.5cm}\limsup\limits_{\scriptsize \begin{array}{c}\epsilon \to 0\\ (t',x') \to (t,x)\end{array}} \hspace{-.5cm}w_3^\epsilon (t',x')\,\,\, \, \mathrm{and}\, \, \,\,w_{3,*}(t,x):=\hspace{-.5cm}\liminf\limits_{\scriptsize \begin{array}{c}\epsilon \to 0\\ (t',x') \to (t,x)\end{array}} \hspace{-.5cm} w_3^\epsilon (t',x').
\mathfrak{\mathfrak{}}\end{equation}
\begin{remark}\label{rmk:w1w3}
Let $w_3^*$ and $w_{3,*}$ be defined in \eqref{eq:w_star3}. Then for any $ c\in\mathbb{R}$,
\begin{equation}\label{eq:w_star300}
  w_3^*(t,ct)=tw_3^*(1,c) \,\,\text{ and }\,\, w_{3,*}(t,ct)=tw_{3,*}(1,c). 
\end{equation}
Indeed, by 
\eqref{scaling} and \eqref{eq:w3epsilon}, the first equality in \eqref{eq:w_star300} is due to the following observation:
\begin{equation*}
\begin{split}
  tw_3^*(1,c)&=-t\hspace{-.5cm}\limsup\limits_{\scriptsize \begin{array}{c}\epsilon \to 0\\ (t',x') \to (1,c)\end{array}} \hspace{-.5cm}\left[\epsilon\log{u_3\left(\frac{t'}{\epsilon},\frac{x'}{\epsilon}\right)}\right]\\
  &=-\hspace{-.5cm}\limsup\limits_{\scriptsize \begin{array}{c}\epsilon \to 0\\ (t'',x'') \to (t,ct)\end{array}} \hspace{-.5cm}\left[(\epsilon t)\log{u_3\left(\frac{t''}{\epsilon t},\frac{x''}{\epsilon t}\right)}\right]=w_3^*(t,ct),
  \end{split}
\end{equation*}
where $(t'',x'')=(t',x')t$. The second equality in \eqref{eq:w_star300} follows by the same argument.
\end{remark}

The following lemma, for which the proof can be found in \cite[Lemma 3.2]{LLL2019} and \cite[Lemma 3.2]{LLL20192}, says that $w^*_3$ and $w_{3,*}$ are well-defined.
\begin{lemma}\label{lem:3-1}
Let $({\rm H}_{c_1,c_2,\lambda})$ hold for some $\lambda \in (0,\infty]$ and let $w_3^\epsilon$ be the solution of \eqref{eq:w3}.
\begin{itemize}
\item[{\rm(i)}] If ${\lambda\in(0,\infty)}$, then there exists some constant $Q>0$ independent of $\epsilon$  such that 
\begin{equation*}
  \max\{\lambda x- Q(t+\epsilon),0\} \leq {w}_3^{\epsilon}(t,x)
\leq\lambda x+Q(t+\epsilon)\,\,\text{ for }\,\,(t,x) \in [0,\infty)\times [0,\infty);
\end{equation*}
\item[{\rm(ii)}]  If $\lambda=\infty$, 
then  for each  compact subset $K\subset[(0,\infty)\times [0,\infty)]$, there is some constant $Q(K)$  independent of $\epsilon $ such that for any $(t,x) \in K$,
$$0 \leq {w}_3^{\epsilon}(t,x)\leq Q(K) \,\, \text{ for }\,\,\epsilon \in (0, 1/Q(K)].
$$
\end{itemize}
\end{lemma}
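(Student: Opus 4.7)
The plan is to bound $w_3^\epsilon = -\epsilon\log u_3^\epsilon$ on $[0,\infty)\times[0,\infty)$ by constructing explicit comparators, first for $u_3$ itself via the scalar maximum principle (the full three-species system is not monotone, but each individual equation becomes a scalar semilinear heat equation once the other two components are treated as given nonnegative coefficients), and then for $w_3^\epsilon$ via a super-solution of the Hamilton-Jacobi equation \eqref{eq:w3}. The first step is the a priori bound $0\leq u_i\leq 1$ for $i=1,2,3$: for each $i$, the constant $1$ is a super-solution of the scalar equation for $u_i$ because the reaction $r_i(1-a_{ij}u_j-a_{ik}u_k-u_i)$ is nonpositive at $u_i=1$; combined with $u_{i,0}\leq 1$, the scalar parabolic maximum principle yields $u_i\leq 1$, and in particular $w_3^\epsilon\geq 0$ throughout.

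For the remainder of the lower bound in case (i), namely $w_3^\epsilon\geq \lambda x-Q(t+\epsilon)$, I would exploit $\partial_t u_3-d_3\partial_{xx}u_3\leq r_3 u_3$ (immediate from $u_1,u_2\geq 0$) and compare $u_3$ with $\bar u(t,x)=A e^{-\lambda x+(r_3+d_3\lambda^2)t}$, a classical solution of $\partial_t\bar u-d_3\partial_{xx}\bar u=r_3\bar u$. Taking $A=\max\{1,\sup_{X\geq 0}u_{3,0}(X)e^{\lambda X}\}$, which is finite by $(\mathrm{H}_\lambda)$, and using $u_{3,0}\leq 1\leq A e^{-\lambda X}$ for $X\leq 0$, we obtain $u_{3,0}\leq\bar u(0,\cdot)$ on all of $\mathbb{R}$; scalar comparison then gives $u_3\leq\bar u$. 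Rescaling via $T=t/\epsilon,\, X=x/\epsilon$ is exactly the desired lower bound.

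For the upper bound $w_3^\epsilon\leq\lambda x+Q(t+\epsilon)$, I would compare $w_3^\epsilon$ with the explicit ansatz $\omega(t,x)=\lambda x+Q(t+\epsilon)$ on the quarter plane $[0,\infty)\times[0,\infty)$. A direct computation shows $\omega$ is a classical super-solution of \eqref{eq:w3} as soon as $Q+\lambda^2\geq r_3(a_{31}+a_{32})$, and the initial inequality $\omega(0,x)\geq w_3^\epsilon(0,x)$ reduces to the pointwise bound $u_{3,0}(X)\geq c_\ast e^{-\lambda X}$ on $X\geq 0$, which is immediate from $(\mathrm{H}_\lambda)$, continuity, and positivity. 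The nontrivial step is the boundary inequality $w_3^\epsilon(t,0)\leq Q(t+\epsilon)$. To obtain it I would use $u_i\leq 1$ to deduce $\partial_t u_3-d_3\partial_{xx}u_3\geq -r_3(a_{31}+a_{32})u_3$, so by scalar comparison $u_3(T,X)\geq e^{-CT}(G_T\ast u_{3,0})(X)$ with heat kernel $G_T$; evaluating at $X=0$ with $u_{3,0}(Y)\geq c_\ast e^{-\lambda Y}$ on $Y\geq 0$ and the standard Gaussian tail $1-\Phi(z)\gtrsim z^{-1}e^{-z^2/2}$ yields
$$
u_3(T,0)\;\geq\; c_\ast e^{-CT} e^{d_3 T\lambda^2}\bigl[1-\Phi\bigl(\sqrt{2d_3 T}\lambda\bigr)\bigr]\;\geq\; \frac{C_1 e^{-CT}}{1+\sqrt{T}},
$$
which suffices after the bookkeeping described below. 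Case (ii) follows the same recipe: since $u_{3,0}\geq c_0\chi_{[A_0,B_0]}$ by continuity and nontriviality, the heat kernel estimate gives $u_3(T,X)\geq c_0(B_0-A_0)e^{-CT}(4\pi d_3 T)^{-1/2}e^{-(|X|+B_1)^2/(4 d_3 T)}$, which, on any compact $K\subset(0,\infty)\times[0,\infty)$ where $t$ is bounded away from zero and $x$ is bounded, produces a uniform upper bound for $w_3^\epsilon$ in $\epsilon$ sufficiently small.

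The main obstacle is the $1/\sqrt{T}$ prefactor from the Gaussian tail: after taking $-\epsilon\log$ and substituting $T=t/\epsilon$, it contributes a term $\sim (\epsilon/2)\log(t/\epsilon)$ to $w_3^\epsilon(t,0)$, which is not obviously $O(\epsilon)+O(t)$. One must separately treat the regimes $t\geq\epsilon$ and $t<\epsilon$, exploiting the elementary inequality $\epsilon\log(t/\epsilon)\leq t$ in the former and the triviality of the latter, in order to confirm that the logarithmic correction is absorbed into $Q(t+\epsilon)$ with $Q$ independent of $\epsilon$. Once this estimate is in place, classical parabolic comparison on the quarter plane closes the argument.
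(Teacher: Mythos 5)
Your proof is correct: nonnegativity of $w_3^\epsilon$ from $u_3\le 1$, the exponential supersolution $A e^{-\lambda X+(r_3+d_3\lambda^2)T}$ giving the lower bound, the heat-kernel/Gaussian-tail lower bound for $u_3$ at $x=0$ (with the split $t\ge\epsilon$ versus $t<\epsilon$ correctly absorbing the $\tfrac{\epsilon}{2}\log(t/\epsilon)$ correction into $Q(t+\epsilon)$), and the quarter-plane comparison with $\omega=\lambda x+Q(t+\epsilon)$ are all sound, as is the compact-support computation for case (ii). The paper itself omits the proof and refers to \cite[Lemma 3.2]{LLL2019} and \cite[Lemma 3.2]{LLL20192}, and your argument is essentially the same comparison-principle and heat-kernel estimate used there.
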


\begin{remark}\label{rmk:w1w20}
By comparison, we see that
$\lim_{t\to\infty} u_3(t,0)\geq (1-a_{31}-a_{32})/2>0,$
so that definition \eqref{eq:w_star3} implies
$w_3^*(t,0)=w_{3,*}(t,0)=0$ for $t\geq 0$. 
Also, if $\lambda \in (0,\infty)$, then  one can take $t=0$ and let $\epsilon \to 0$ in Lemma \ref{lem:3-1}{\rm(i)} to deduce
$w^*_3(0,x)=w_{3,*}(0,x)= \lambda  x$ for $x\geq0$.
\end{remark}

\subsection{Definitions and Preliminaries}\label{subsect:3.1}
Recall 
that, throughout this section, the assumption 
$({\rm H}_{c_1,c_2,\lambda})$ holds for some $c_1>c_2$ and $\lambda \in (0,\infty]$ (see Definition \ref{def:h3}). 
We proceed to define several quantities based on the parameters
$d_3,\, r_3, \,a_{21},\, a_{31}, \,a_{32},\, c_{\rm{LLW}}, 
\,c_1, \,c_2, \,\lambda.$ 
We list the objects, and where they are defined in Table \ref{tab:1} for quick reference later.

 \begin{table}[!htbp]
\centering
\caption{List of Auxiliary Objects}
\label{tab:1}
\vspace{-.2cm}
\centering
\footnotesize
\begin{tabular}{|c|c|c|c|}
\hline
\cline{1-4}
Object (s)&Defined ~in& Used in & Property \\
\hline
\cline{1-4}
$\alpha_3$& Section \ref{secbetamu3} & Section \ref{S3},\,\ref{S4},\,\ref{S5},\,\ref{S6} & $\alpha_3=2\sqrt{d_3r_3}$\\
$c_{\rm{LLW}}$& Definition \ref{c3LLW} & Section \ref{S3},\,\ref{S4},\,\ref{S6} &$ c_{\rm{LLW}}\leq \alpha_3$\\
$w_3^*, w_{3,*}$&  Section \ref{Section2}, \eqref{eq:w_star3}& Section \ref{Section2} &Remark \ref{rmk:w1w3}\\
$\rho^\mu_{\rm{nlp}}$& Section \ref{rhomu} & Section \ref{S3} & Lemma \ref{w3nllp}\\
$s^\mu_{{\rm nlp}}$&Section \ref{snlpmu}& Section \ref{S3} & \eqref{charact_wnlpmu}  \\ 
$\beta_3^\mu$& Section \ref{secbetamu3} & Section \ref{subsection5_1} &Lemma \ref{lem:betamu3}\\
$s^\mu(\hat{c})$& Section \ref{smuhatc} & Section \ref{subsection5_1} &
Lemma \ref{lemma:shatc}\\
$\nu^{\mu}_2(\hat c),\nu^{\mu}_3$& Section \ref{smuhatc}& Section \ref{subsection5_1} &Remark \ref{rmk:wmu3}\\
$\mathcal{E}$&Section \ref{subsection5_1}, \eqref{E} &  Proposition \ref{prop:estamoverc3} & Proposition \ref{prop:overc3} 
\\
$\rho_{\rm{nlp}}$ & Proposition \ref{underc3} & Section \ref{S4},\,\ref{S6} & $\rho_{\rm{nlp}}=\rho^1_{\rm{nlp}}$ for $\mu=1$ in \eqref{eq:w3nlpmu}\\
$s_{\rm{nlp}}$&Section \ref{S1}, \eqref{charact_snlp} & Section  \ref{subsection5_2},\,\ref{S4},\,\ref{S6} &Proposition \ref{charactsnlpc}\\ 
$\beta_3$& Section  \ref{subsection5_2} & Section  \ref{subsection5_2},\,\ref{S4}  &$\beta_3 = \max\{c_{\rm LLW}, s_{{\rm{nlp}}}\}$\\
$\underline{\beta}_3$& Section \ref{S1}, \eqref{definitionunderbata}& Section \ref{S4} &Proposition \ref{underc3}\\
\hline
\cline{1-4}
\end{tabular}
\end{table}

\subsubsection{Definition of $\rho^\mu_{{\rm nlp}}(s)$ for $\mu \in [0,1]$}\label{rhomu}

For each $\mu\in[0,1]$, we define function $\rho^\mu_{{\rm nlp}}:[0,\infty)\times [0,\infty)\rightarrow[0,\infty)$ as the unique viscosity solution of the variational inequality
\begin{align}\label{eq:w3nlpmu}
\left\{\begin{array}{l}
\smallskip
\min\{\rho-s\rho'+d_3|\rho'|^2+\mathcal{R}^\mu(s),\rho\}=0 \,\,\text{ in } (0,\infty),\\
\rho(0)=0,\quad\lim\limits_{s\to\infty}\frac{\rho(s)}{s}=\lambda,
\end{array}
\right.
\end{align}
where $\mathcal{R}^\mu(s)=r_3(1-\mu a_{31}\chi_{\{c_2< s\leq c_1\}}-a_{32}\chi_{\{s\leq c_2\}})$, and $\lambda\in(0,\infty]$ is given in $({\rm H}_{c_1,c_2,\lambda})$. 
The existence and uniqueness of $\rho^\mu_{{\rm nlp}}$ is guaranteed by  Lemma \ref{lemA1rho}. 
(When $\mu=0$, the species $u_1$ and $u_3$ do not compete. 
We will be using it as a starting case to bootstrap to $\mu=1$.)

\begin{lemma}\label{lemma_w3nlp}
For any $\mu\in[0,1]$, $\rho^\mu_{{\rm nlp}}(s)$ is Lipschitz continuous and  non-decreasing  with respect to $\mu\in [0,1]$, and is non-decreasing with respect to $s\in [0,\infty)$. Moreover,
\begin{equation}\label{eq:ssnlp}
\{s \geq 0: \rho^\mu_{\rm nlp}(s) = 0\} = [0,s_0]
\end{equation}for some $s_0 \geq 2\sqrt{d_3r_3(1-a_{31}-a_{32})}$ depending on  $\mu$ and $c_1, c_2$,  which are given in $({\rm H}_{c_1,c_2, \lambda})$.
\end{lemma}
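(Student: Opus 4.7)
The plan is to combine Lemma \ref{lemA1rho} (which provides existence, uniqueness, and explicit far-field formulas) with the comparison principle Corollary \ref{corA3rho} to establish all the claims. First, I would verify hypothesis $(H_g)$ for $g^\mu(s) := r_3[\mu a_{31}\chi_{\{c_2 < s \leq c_1\}} + a_{32}\chi_{\{s \leq c_2\}}]$ with $\hat r = r_3$, $\hat d = d_3$, $c_g = c_1$; the required inequality reduces to $c_1 \geq \sigma_3(\lambda)$, which is guaranteed by the hypothesis $c_1 > c_2 > \sigma_3(\lambda)$ in Definition \ref{def:h3}. Lemma \ref{lemA1rho} then yields existence, uniqueness, and explicit linear or quadratic formulas on $[c_1, \infty)$, giving Lipschitz continuity there. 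For $0 \leq s \leq c_1$, the coercivity of the Hamiltonian in $\rho'$ produces an a priori Lipschitz bound (one may for instance bracket $\rho^\mu_{\rm nlp}$ between the explicit super- and sub-solutions $\overline\rho_1$ and $\underline\rho_1$ used in the proof of Lemma \ref{lemA1rho}), so $\rho^\mu_{\rm nlp}$ is Lipschitz on $[0, \infty)$.

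For monotonicity in $\mu$, observe that $\mathcal{R}^{\mu_1}(s) - \mathcal{R}^{\mu_2}(s) = r_3 a_{31}(\mu_2 - \mu_1)\chi_{\{c_2 < s \leq c_1\}} \geq 0$ whenever $\mu_1 \leq \mu_2$, hence $\rho^{\mu_1}_{\rm nlp}$ is a viscosity sub-solution of the $\mu_2$-equation, and Corollary \ref{corA3rho} gives $\rho^{\mu_1}_{\rm nlp} \leq \rho^{\mu_2}_{\rm nlp}$. For monotonicity in $s$, at any a.e.\ differentiability point inside $\{\rho^\mu_{\rm nlp} > 0\}$, the derivative satisfies the quadratic $d_3(\rho')^2 - s\rho' + (\rho + \mathcal{R}^\mu) = 0$. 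Since $\rho \geq 0$ and $\mathcal{R}^\mu(s) \geq 0$ (using $a_{31}, a_{32} \in (0,1)$ from \eqref{eq:compactorder1}), both roots $\rho' = \bigl(s \pm \sqrt{s^2 - 4d_3(\rho + \mathcal{R}^\mu)}\bigr)/(2d_3)$ are non-negative; on the interior of the (closed) set $\{\rho^\mu_{\rm nlp} = 0\}$ we trivially have $(\rho^\mu_{\rm nlp})' = 0$. Thus $(\rho^\mu_{\rm nlp})' \geq 0$ a.e.\ on $(0, \infty)$, and by Lipschitz continuity $\rho^\mu_{\rm nlp}$ is non-decreasing in $s$.

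Since $\rho^\mu_{\rm nlp}$ is continuous, non-negative, and non-decreasing with $\rho^\mu_{\rm nlp}(0) = 0$ and strictly positive for large $s$ (by the explicit formula from Lemma \ref{lemA1rho}), its zero set is of the form $[0, s_0]$. To obtain the lower bound $s_0 \geq 2\sqrt{d_3 r_3 (1 - a_{31} - a_{32})}$, set $\tilde r := r_3(1-a_{31}-a_{32})$ and verify that $\bar\rho(s) := \max\{s^2/(4d_3) - \tilde r, \, 0\}$ is a viscosity super-solution of \eqref{eq:w3nlpmu}: on $\{s > 2\sqrt{d_3 \tilde r}\}$ a direct computation gives $\bar\rho - s\bar\rho' + d_3|\bar\rho'|^2 + \mathcal{R}^\mu = \mathcal{R}^\mu(s) - \tilde r \geq 0$ (using $\mathcal{R}^\mu(s) \geq \tilde r$ pointwise, valid because $a_{32} \leq a_{31}+a_{32}$ and $\mu a_{31} \leq a_{31}+a_{32}$), while the kink at $s = 2\sqrt{d_3 \tilde r}$ and the trivial equality $\bar\rho(0) = 0$ are handled by the standard viscosity test. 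Since $\lim_{s \to \infty} \bar\rho(s)/s = \infty \geq \lambda$, Corollary \ref{corA3rho} yields $\rho^\mu_{\rm nlp} \leq \bar\rho$, whence $\rho^\mu_{\rm nlp} \equiv 0$ on $[0, 2\sqrt{d_3 \tilde r}]$. The main technical obstacle is making the monotonicity-in-$s$ argument fully rigorous at the non-smooth points of the viscosity solution; this is resolved by invoking Lipschitz continuity (a.e.\ differentiability of Lipschitz functions) together with the observation that both branches of the quadratic are non-negative.
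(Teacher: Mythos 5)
The main gap is that you have proved the wrong Lipschitz statement. In Lemma \ref{lemma_w3nlp} the phrase ``Lipschitz continuous and non-decreasing with respect to $\mu\in[0,1]$'' refers to regularity in the parameter $\mu$: the paper's proof establishes the two-sided bound $0\le \rho^{\mu_2}_{\rm nlp}(s)-\rho^{\mu_1}_{\rm nlp}(s)\le r_3a_{31}(\mu_2-\mu_1)$ for $0\le\mu_1\le\mu_2\le 1$, by first invoking Lemma \ref{lemA1rho} with $c_g=c_1$ to see that the two solutions coincide on $[c_1,\infty)$, and then comparing on the bounded interval $(0,c_1)$, where $\rho^{\mu_2}_{\rm nlp}$ and $\rho^{\mu_2}_{\rm nlp}-r_3a_{31}(\mu_2-\mu_1)$ are viscosity super- and sub-solutions of the $\mu_1$-problem with boundary datum $\rho(c_1)=\rho^{\mu_2}_{\rm nlp}(c_1)$. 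Your proposal only gives the one-sided inequality $\rho^{\mu_1}_{\rm nlp}\le\rho^{\mu_2}_{\rm nlp}$ and replaces the Lipschitz-in-$\mu$ claim by Lipschitz continuity in $s$, which is a different (and here unclaimed) property. This is not a cosmetic omission: the continuity of $\mu\mapsto\rho^\mu_{\rm nlp}(c_2)$ is precisely what the lemma is cited for later — it makes the set $\mathcal{E}$ closed and allows choosing $\mu_\sharp>\mu_{\hspace{-.5pt}M}$ close to $\mu_{\hspace{-.5pt}M}$ in the bootstrap of Proposition \ref{prop:estamoverc3} — so without it the downstream argument collapses. The fix is short and within your own framework: since $\mathcal{R}^{\mu_1}-\mathcal{R}^{\mu_2}\le r_3a_{31}(\mu_2-\mu_1)$, the function $\rho^{\mu_2}_{\rm nlp}-r_3a_{31}(\mu_2-\mu_1)$ is a viscosity sub-solution of the $\mu_1$-equation (its boundary values do not exceed those of $\rho^{\mu_1}_{\rm nlp}$), and comparison (Lemma \ref{lemA2rho} or Corollary \ref{corA3rho}) yields the upper bound; but as written the proposal does not prove the statement of the lemma.

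The remaining parts are essentially correct and largely parallel the paper. Your super-solution $\max\{s^2/(4d_3)-r_3(1-a_{31}-a_{32}),0\}$ and the comparison via Corollary \ref{corA3rho} for the lower bound on $s_0$ are exactly the paper's Step 3, and your monotonicity in $\mu$ matches the easy half of its Step 1. For monotonicity in $s$, the paper argues more simply: at a hypothetical interior local maximum with $\rho^\mu_{\rm nlp}(s_0)>0$, testing with a constant gives $0\ge\rho^\mu_{\rm nlp}(s_0)+\mathcal{R}^\mu(s_0)$, contradicting $\mathcal{R}^\mu\ge0$; no Lipschitz regularity in $s$ is needed. Your a.e.-differentiability route can be made to work, but note two loose ends: bracketing $\rho^\mu_{\rm nlp}$ between Lipschitz barriers does not by itself give Lipschitz continuity — the correct mechanism is the coercivity bound $\rho-s\rho'+d_3|\rho'|^2\le0$ in the viscosity sense together with the local Lipschitz estimate the paper cites elsewhere — and points of $\{\rho^\mu_{\rm nlp}=0\}$ that are not interior to the zero set must be handled by the standard fact that a Lipschitz function has vanishing derivative a.e.\ on each of its level sets.
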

\begin{proof}
\noindent {\bf Step 1.}  We prove the continuity and monotonicity of $\rho^\mu_{{\rm nlp}}$ with respect to $\mu$.
 Given any $0\leq\mu_1\leq \mu_2\leq 1$, let $\rho^{\mu_1}_{{\rm nlp}}$ and $\rho^{\mu_2}_{{\rm nlp}}$ be the viscosity solutions of \eqref{eq:w3nlpmu} with $\mu=\mu_1$ and $\mu=\mu_2$, respectively. It suffices to show that
\begin{equation}\label{goal0}
 0 \leq {\rho^{\mu_2}_{{\rm nlp}}(s)-\rho^{\mu_1}_{{\rm nlp}}(s)} \leq r_3a_{31}(\mu_2-\mu_1)\,\,\text{ for any } s\in[0,\infty).
\end{equation}

To this end, we first apply Lemma \ref{lemA1rho} with 
 $c_{g}=c_1$ to deduce that
$\rho^{\mu_1}_{{\rm nlp}}(s)=\rho^{\mu_2}_{{\rm nlp}}(s)$ for $s\in[c_1,\infty).$
It remains to prove \eqref{goal0} for $s\in[0,c_1]$. In such a case, $\rho^{\mu_1}_{{\rm nlp}}$ defines the unique viscosity solution of the problem
\begin{align}\label{eq:rhobound}
\left\{\begin{array}{l}
\smallskip
\min\{\rho-s\rho'+d_3|\rho'|^2+\mathcal{R}^{\mu_1}(s),\rho\}=0 \,\,\text{ in } (0,c_1),\\
\rho(0)=0,\quad\rho(c_1)=\rho^{\mu_2}_{{\rm nlp}}(c_1).
\end{array}
\right.
\end{align}
It is straightforward to check that ${\rho^{\mu_2}_{{\rm nlp}}}$ and {$\rho^{\mu_2}_{{\rm nlp}}-r_3a_{31}(\mu_2-\mu_1)$} are, respectively, viscosity super- and sub-solutions of \eqref{eq:rhobound}. Since the boundary conditions can be verified readily, by comparison arguments in Lemma \ref{lemA2rho},  we can deduce \eqref{goal0}. 

\smallskip
\noindent {\bf Step 2.} We show that $\rho^\mu_{{\rm nlp}}$ is non-decreasing in $s \in [0,\infty)$.
Suppose to the contrary that there exists some $s_0\in (0,\infty)$ such that $\rho^\mu_{{\rm nlp}}-0$ attains a local maximum at $s_0$ and $\rho^\mu_{{\rm nlp}}(s_0)>0$. By definition of viscosity solutions (see Definition \ref{defviscosity} and \cite[Proposition 3.1]{BarlesLect}), we have
$$0 \geq \rho^\mu_{{\rm nlp}}(s_0)-s_0\cdot 0+d_3|0|^2+\mathcal{R}^\mu(s_0)=\rho^\mu_{{\rm nlp}}(s_0)+\mathcal{R}^\mu(s_0),$$
which is a contradiction to $\mathcal{R}^\mu\geq0$. Step 2 is completed.

\smallskip
\noindent {\bf Step 3.} We show that $\rho^\mu_{\rm nlp}(s) \leq \max\{ \frac{s^2}{4d_3} - r_3(1-a_{31} - a_{32}),0\}$ for $s \in [0,\infty)$.

Observe that $\overline\rho_2(s):=  \max\{ \frac{s^2}{4d_3} - r_3(1-a_{31} - a_{32}),0\}$ is continuous, nonnegative,  and 
a classical super-solution for \eqref{eq:rhobound} whenever $s \not \in 2\sqrt{d_3r_3(1-a_{31}-a_{32})}$.  Let $\phi\in C^1(0,\infty)$ be any test function such that  $\overline{\rho}_2 - \phi$ attains a strict local minimum at  $\hat{s} = 2\sqrt{d_3r_3(1-a_{31}-a_{32})}$.
Then at $s={\hat{s}}$, direct calculation yields
\begin{align*}
\overline{\rho}_2(\hat s)-\hat s\phi' +d_3|\phi'|^2 + (\mathcal{R}^\mu)^*(\hat s)&=  -\hat s\phi' +d_3 | \phi'|^2 +  r_3(1 -a_{32})\\
&\geq  d_3\left[\phi'-\sqrt{\tfrac{r_3(1-a_{31}-a_{32})}{d_3}}\right]^2 \geq 0.
\end{align*}
Therefore, $\overline{\rho}_2$ defined above is a viscosity super-solution of \eqref{eq:rhobound}.
Observing also that
$$
\rho^\mu_{\rm nlp}(0) \leq \overline\rho_2(0) \quad\text{  and }\quad  \lim_{s\to\infty} \frac{\rho^\mu_{\rm nlp}(s)}{s} = \lambda \leq \infty =  \lim_{s\to\infty} \frac{\overline\rho_2(s)}{s},
$$
we apply comparison principle in Corollary \ref{corA3rho} to complete Step 3.

Finally, since $\rho^\mu_{\rm nlp}$ is nonnegative, non-decreasing in $s$ and $\rho^\mu_{\rm nlp}(0) = 0$, we deduce that \eqref{eq:ssnlp} holds for some $s_0 \geq 2\sqrt{d_3r_3(1-a_{31}-a_{32})} >0$.
\end{proof}

\subsubsection{Definition of $s^\mu_{\rm nlp}$ for $\mu \in [0,1]$}\label{snlpmu}
For each for $\mu \in [0,1]$, we define the speed $s^\mu_{{\rm nlp}}$ by
\begin{equation}\label{charact_wnlpmu}
  s^\mu_{{\rm nlp}}:= \sup \{s:\rho^\mu_{{\rm nlp}}(s) = 0\}.
\end{equation}
By Lemma \ref{lemma_w3nlp}, we  have $s^\mu_{{\rm nlp}}\in[2\sqrt{d_3r_3(1-a_{31}-a_{32})} ,\infty)$, and is non-increasing in $\mu$.
\subsubsection{Definition of $\alpha_3$ and $\beta^\mu_3$}\label{secbetamu3}
We define 
\begin{equation}\label{def:betamu}
\alpha_3 := 2\sqrt{d_3 r_3} \quad \text{ and }\quad  \beta^\mu_3:=\max\{s^\mu_{ {\rm nlp}},c_{\rm LLW}\} \quad \text{ for }\,\,\mu \in [0,1],
\end{equation}
where $c_{\rm LLW}$ is defined in Definition \ref{c3LLW}.

\begin{lemma}\label{lem:betamu3}
Let $\beta^\mu_3$ be defined by \eqref{def:betamu}. 
Then 
  $\beta^\mu_3 \leq \sigma_3<c_2$ for all $\mu \in [0,1]$.
\end{lemma}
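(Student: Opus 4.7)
The plan is to handle the strict inequality and the nonstrict upper bound separately. The strict inequality $\sigma_3(\lambda)<c_2$ is immediate from condition (i) of $({\rm H}_{c_1,c_2,\lambda})$. Since $\beta_3^\mu=\max\{s_{\rm nlp}^\mu,c_{\rm LLW}\}$, the bound $\beta_3^\mu\leq \sigma_3(\lambda)$ splits into the two independent estimates $c_{\rm LLW}\leq\sigma_3(\lambda)$ and $s_{\rm nlp}^\mu\leq\sigma_3(\lambda)$, which I would prove in turn.

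For the bound $c_{\rm LLW}\leq \sigma_3(\lambda)$, I would invoke Remark \ref{rem1.3} (specifically the classical fact $c_{\rm LLW}\leq 2\sqrt{d_3r_3}=\alpha_3$) and then combine it with the elementary inequality $\sigma_3(\lambda)\geq 2\sqrt{d_3r_3}$, which follows from AM--GM applied to the first branch of \eqref{eq:sigma3} when $\lambda<\sqrt{r_3/d_3}$, and which is an equality when $\lambda\geq\sqrt{r_3/d_3}$.

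For the bound $s_{\rm nlp}^\mu\leq\sigma_3(\lambda)$, my plan is to compare $\rho^\mu_{\rm nlp}$ with the pure Fisher--KPP rate function $\hat\rho$, defined as the unique viscosity solution of
\begin{equation*}
\min\{\rho-s\rho'+d_3|\rho'|^2+r_3,\rho\}=0 \text{ in }(0,\infty),\quad \rho(0)=0,\quad \lim_{s\to\infty}\frac{\rho(s)}{s}=\lambda.
\end{equation*}
Applying Lemma \ref{lemA1rho} with $g\equiv 0$, $\hat r=r_3$, $\hat d=d_3$, $\hat\lambda=\lambda$, and choosing $c_g=\sigma_3(\lambda)$ (which satisfies $({\rm H}_g)$ with equality), a direct computation shows $\hat\rho(c_g)=0$; then the monotonicity of $\hat\rho$ in $s$, established exactly as in Step 2 of the proof of Lemma \ref{lemma_w3nlp}, gives $\{\hat\rho=0\}=[0,\sigma_3(\lambda)]$. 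Since the pointwise bound $\mathcal{R}^\mu(s)\leq r_3$ enlarges the zeroth-order term of the variational inequality, Definition \ref{defviscosity} makes it immediate that $\rho^\mu_{\rm nlp}$ is a viscosity super-solution of the pure KPP problem above. Corollary \ref{corA3rho} then yields $\rho^\mu_{\rm nlp}\geq \hat\rho$ pointwise, whence $\{\rho^\mu_{\rm nlp}=0\}\subseteq [0,\sigma_3(\lambda)]$, and $s_{\rm nlp}^\mu\leq\sigma_3(\lambda)$ follows from the definition \eqref{charact_wnlpmu}.

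The only substantive step is the comparison with $\hat\rho$; this is also where the asymptotic condition $\lim_{s\to\infty}\rho^\mu_{\rm nlp}(s)/s=\lambda=\lim_{s\to\infty}\hat\rho(s)/s$ is used to verify the hypotheses of Corollary \ref{corA3rho}. I do not anticipate further obstacles, since both the explicit form of $\hat\rho$ on $[0,\infty)$ and the super-solution verification are routine consequences of the machinery established in Section \ref{sec:prelim}.
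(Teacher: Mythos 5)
Your proof is correct and follows essentially the same route as the paper's: the chain $c_{\rm LLW}\leq\alpha_3\leq\sigma_3(\lambda)<c_2$ is obtained identically, and for $s^{\mu}_{\rm nlp}\leq\sigma_3(\lambda)$ the paper compares with the very same pure Fisher--KPP rate function (its $\underline{\rho}_2$), merely applying Corollary \ref{corA3rho} in the mirror direction (viewing $\underline{\rho}_2$ as a viscosity sub-solution of \eqref{eq:w3nlpmu} rather than $\rho^{\mu}_{\rm nlp}$ as a super-solution of the pure KPP problem --- both give $\hat\rho\leq\rho^{\mu}_{\rm nlp}$), and then invokes Lemma \ref{lemA1rho} with $c_g=\sigma_3(\lambda)$, $g\equiv 0$ exactly as you do. One small touch-up: monotonicity of $\hat\rho$ together with $\hat\rho(\sigma_3(\lambda))=0$ only yields $[0,\sigma_3(\lambda)]\subseteq\{\hat\rho=0\}$, whereas the inclusion you actually need, namely $\hat\rho(s)>0$ for $s>\sigma_3(\lambda)$, should instead be read off from the explicit formulas (a)--(b) of Lemma \ref{lemA1rho} on $[c_g,\infty)$, which is precisely how the paper concludes.
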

\begin{proof}
Recall that $\sigma_3=d_3(\lambda \wedge \sqrt{r_3/d_3})+\frac{r_3}{\lambda \wedge\sqrt{r_3/d_3}}$. 
Since $c_{\rm LLW} \leq \alpha_3$ (see Definition \ref{c3LLW}) and $\sigma_3<c_2$ (see 
{\rm(i)} in Definition \ref{def:h3}),
it follows that $c_{\rm{LLW}}\leq\alpha_3\leq\sigma_3<c_2$. 

It remains to show $s^\mu_{ {\rm nlp}}\leq\sigma_3$.
To this end, we define ${\underline{\rho}_2}$ as the unique viscosity solution of
\begin{align}\label{eq:underlinrho1}
\left\{\begin{array}{l}
\smallskip
\min\{\rho-s\rho'+d_3|\rho'|^2+r_3,\rho\}=0 \,\,\text{ in } (0,\infty),\\
\rho(0)=0,\quad\lim\limits_{s\to\infty}\frac{\rho(s)}{s}=\lambda,
\end{array}
\right.
\end{align}
which is clearly a viscosity sub-solution of \eqref{eq:w3nlpmu}. We apply Corollary \ref{corA3rho} to deduce that
\begin{equation}\label{comparisonresult11}
  {\underline{\rho}_2}(s)\leq \rho^\mu_{ {\rm nlp}}(s) \quad \quad\text{for}\,\, s\in (0,\infty).
\end{equation}

We first consider the case $\lambda> \sqrt{r_3/d_3}$. In this case, $\sigma_3 = 2\sqrt{d_3 r_3}$ and $\lambda >
\frac{\sigma_3}{2d_3}$. A direct application of Lemma \ref{lemA1rho} for \eqref{eq:underlinrho1} with $c_g=\sigma_3$ and $g=0$ yields
$$
\underline{\rho}_2(s)=\frac{s^2}{4d_3}-r_3 = \frac{s^2 - (\sigma_3)^2}{4d_3} \quad \text{ for }s >\sigma_3\text{ and }s \approx \sigma_3.
$$ 
Thus, by \eqref{comparisonresult11} we arrive at $\rho^\mu_{\rm nlp}(s) \geq \underline{\rho}_2(s)>0$ for $s > \sigma_3$ such that $s \approx \sigma_3$. The definition of $s^\mu_{ {\rm nlp}}$  in \eqref{charact_wnlpmu} implies $s^\mu_{ {\rm nlp}}\leq\sigma_3$  as desired. 

It remains to consider the case $\lambda \leq \sqrt{r_3/d_3}$. In this case, 
$\sigma_3=d_3\lambda +\frac{r_3}{\lambda }$ and $\lambda \leq \frac{\sigma_3}{2d_3}$. We apply Lemma \ref{lemA1rho} for \eqref{eq:underlinrho1}  with $c_g=\sigma_3$ and $g=0$ 
again to deduce that 
$$
\underline{\rho}_2(s) 
=\lambda \left[s-(d_3\lambda +\frac{r_3}{\lambda })\right] = \lambda(s - \sigma_3) \quad \text{ for }s >\sigma_3\text{ and }s \approx \sigma_3.
$$
Hence, $\rho^\mu_{\rm nlp}(s) \geq \underline{\rho}_2(s)>0$ for $s > \sigma_3$ such that $s \approx \sigma_3$, which implies $s^\mu_{\rm nlp} \leq \sigma_3$.
%
\end{proof}

\begin{remark}\label{rmk:defbeta3}
Let $\mu=1$ in \eqref{eq:w3nlpmu}, \eqref{charact_wnlpmu} and \eqref{def:betamu}. It is easily seen that
$$\rho^1_{ {\rm nlp}}=\rho_{ {\rm nlp}},\,\,\,\, s^1_{ {\rm nlp}}=s_{{\rm nlp}}(c_1,c_2,\lambda)\,\,\,\text{ and }\,\,\,\beta^1_3=\beta_3:=\max\{c_{\rm LLW}, s_{\rm nlp}(c_1,c_2,\lambda)\},$$ 
where $\rho_{ {\rm nlp}}$ and $s_{{\rm nlp}}(c_1,c_2,\lambda)$ are defined in \eqref{eq:w3nlp} and \eqref{charact_snlp}. With this in mind, we drop the superscript $1$ in the notations $w^1_{ {\rm nlp}}$, $s^1_{ {\rm nlp}}$ and $\beta^1_3$ when we consider the case $\mu=1$.
\end{remark}

\begin{lemma}\label{w3nllp}
For any $\mu\in[0,1]$, let $\rho^\mu_{{\rm nlp}}$ be the unqiue viscosity solution of \eqref{eq:w3nlpmu}. If  $s^{\mu}_{{\rm nlp}}>\alpha_3\sqrt{1-a_{32}}$, then
\begin{equation*}
  \rho^\mu_{{\rm nlp}}(c_2)= \lambda^\mu_{{\rm nlp}}(c_2- s^\mu_{{\rm nlp}}),
\end{equation*}
where  $\lambda^\mu_{{\rm nlp}}=\frac{s^\mu_{{\rm nlp}} -\sqrt{(s^\mu_{{\rm nlp}})^2-\alpha_3^2(1-a_{32})}}{2d_3}$ and ${s}^\mu_{{\rm nlp}}$ is defined by \eqref{charact_wnlpmu}.
\end{lemma}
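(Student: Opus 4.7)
The plan is to exploit the fact that on the interval $(0, c_2)$, the reaction simplifies to the constant $\mathcal R^\mu(s) = r_3(1-a_{32})$. By Lemma~\ref{lem:betamu3} we have $s^\mu_{\rm nlp} \leq \beta^\mu_3 < c_2$, and Lemma~\ref{lemma_w3nlp} combined with \eqref{charact_wnlpmu} gives $\rho^\mu_{\rm nlp} \equiv 0$ on $[0, s^\mu_{\rm nlp}]$ and $\rho^\mu_{\rm nlp}(s) > 0$ for $s \in (s^\mu_{\rm nlp}, c_2]$. Consequently, on the open interval $(s^\mu_{\rm nlp}, c_2)$ the viscosity problem reduces to the classical first-order equation
\[
\rho(s) - s\rho'(s) + d_3(\rho'(s))^2 + r_3(1-a_{32}) = 0,
\]
whose associated quadratic $q(p) := d_3 p^2 - s^\mu_{\rm nlp} p + r_3(1-a_{32})$ has two distinct positive real roots $\lambda^\mu_{\rm nlp}$ and $\lambda_+ := (s^\mu_{\rm nlp} + \sqrt{(s^\mu_{\rm nlp})^2 - \alpha_3^2(1-a_{32})})/(2d_3)$, by the hypothesis $s^\mu_{\rm nlp} > \alpha_3\sqrt{1-a_{32}}$, with $q < 0$ strictly between them.

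The heart of the argument is to pin down the right derivative of $\rho^\mu_{\rm nlp}$ at the free boundary $s^\mu_{\rm nlp}$. Let $\ell := \liminf_{s \downarrow s^\mu_{\rm nlp}} \rho^\mu_{\rm nlp}(s)/(s - s^\mu_{\rm nlp})$. Since $\rho^\mu_{\rm nlp}$ has left-derivative $0$ at $s^\mu_{\rm nlp}$, its sub-differential at this convex corner equals $[0, \ell]$. The viscosity super-solution inequality at $s^\mu_{\rm nlp}$ then asserts that $q(p) \geq 0$ for every $p \in [0, \ell]$, which forces $\ell \leq \lambda^\mu_{\rm nlp}$ since $q$ is strictly negative on $(\lambda^\mu_{\rm nlp}, \lambda_+)$. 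On the other hand, by taking $s \downarrow s^\mu_{\rm nlp}$ in the classical equation, any one-sided limit of $\rho^\mu_{\rm nlp}{}'(s)$ (which exists by the $C^1$ theory of HJ equations with convex Hamiltonian in the positivity region) must be a root of $q$, and hence equals $\lambda^\mu_{\rm nlp}$ or $\lambda_+$. Combined with $\ell \leq \lambda^\mu_{\rm nlp}$, the right derivative must equal $\lambda^\mu_{\rm nlp}$.

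Finally I would propagate this information along characteristics. Differentiating the classical equation in $s$ yields $\rho''(s)(2d_3 \rho'(s) - s) = 0$, so $\rho'$ is constant (the linear branch) unless it reaches the parabolic envelope $\rho'(s) = s/(2d_3)$. Since $\lambda^\mu_{\rm nlp} \leq \sqrt{r_3(1-a_{32})/d_3}$ gives $\lambda^\mu_{\rm nlp} \leq s^\mu_{\rm nlp}/(2d_3) \leq s/(2d_3)$ for every $s \geq s^\mu_{\rm nlp}$, the linear branch stays strictly below the parabolic envelope and no branch-switching can occur on $(s^\mu_{\rm nlp}, c_2)$. Hence $\rho^\mu_{\rm nlp}(s) = \lambda^\mu_{\rm nlp}(s - s^\mu_{\rm nlp})$ throughout $[s^\mu_{\rm nlp}, c_2]$, and evaluation at $s = c_2$ gives the lemma.

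The main technical obstacle will be rigorously establishing the $C^1$ regularity of $\rho^\mu_{\rm nlp}$ on $(s^\mu_{\rm nlp}, c_2)$ together with the existence of the one-sided derivative at $s^\mu_{\rm nlp}^+$, both of which are used tacitly in the characteristic argument. As a cleaner alternative if that regularity proves delicate, one can verify directly that $\overline\rho(s) := \lambda^\mu_{\rm nlp}(s - s^\mu_{\rm nlp})_+$ is itself a viscosity solution of the reduced variational inequality on $(0, c_2)$---the only nontrivial step being the super-solution check at the corner $s^\mu_{\rm nlp}$, which amounts to $q \geq 0$ on $[0, \lambda^\mu_{\rm nlp}]$, while the sub-solution property is trivial away from the corner and unnecessary at the corner since $\overline\rho(s^\mu_{\rm nlp}) = 0$---and then invoke uniqueness via Lemma~\ref{lemA2rho} or Corollary~\ref{corA3rho}, paired with a matching super-solution extension beyond $c_2$ to pin down the boundary value $\rho^\mu_{\rm nlp}(c_2)$.
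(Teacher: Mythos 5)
Both of your routes aim at the right picture (the solution really is the line of slope $\lambda^\mu_{\rm nlp}$ on $[s^\mu_{\rm nlp},c_2]$, consistent with the explicit formulas in Appendix \ref{appendix_C}), but neither is a complete argument. The characteristics route rests on $C^1$ (indeed piecewise $C^2$) regularity of $\rho^\mu_{\rm nlp}$ in its positivity set; a viscosity solution of $\rho-s\rho'+d_3|\rho'|^2+r_3(1-a_{32})=0$ is only known to be locally Lipschitz here, and the identity $\rho''(2d_3\rho'-s)=0$ together with the ``no branch switching'' step is precisely the statement that needs the regularity you have not established -- you flag this yourself. The fallback route is circular as stated: verifying that $\overline\rho(s)=\max\{\lambda^\mu_{\rm nlp}(s-s^\mu_{\rm nlp}),0\}$ is a viscosity solution of the reduced problem on $(0,c_2)$ (your corner check $q\ge 0$ on $[0,\lambda^\mu_{\rm nlp}]$ is correct) does not identify it with $\rho^\mu_{\rm nlp}$, because Lemma \ref{lemA2rho} requires matching data at $s=c_2$ -- the very value to be computed -- while Corollary \ref{corA3rho} requires a super-/sub-solution defined on all of $(0,\infty)$ with the growth $\lambda$ at infinity. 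The ``matching extension beyond $c_2$'' is not constructed and is not harmless: continuing the same line past $c_2$ violates the sub-solution inequality when $a_{31}<a_{32}$ (and the super-solution inequality when $a_{31}>a_{32}$), and the extension must also satisfy $\lim\rho(s)/s=\lambda$; done honestly, this essentially reproduces the multi-piece constructions of Propositions \ref{prop:A} and \ref{prop:B}, far more than the lemma needs.

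The paper's device, which is the idea missing from your plan, is to let the free-boundary definition \eqref{charact_wnlpmu} of $s^\mu_{\rm nlp}$ play the role of the unknown boundary datum, via contradiction. Suppose $\rho^\mu_{\rm nlp}(c_2)>\lambda^\mu_{\rm nlp}(c_2-s^\mu_{\rm nlp})$. By continuity of the hinge point one finds $\hat s\in(\alpha_3\sqrt{1-a_{32}},\,s^\mu_{\rm nlp})$ with $\rho^\mu_{\rm nlp}(c_2)>\nu_0(\hat s)(c_2-\hat s)$, where $\nu_0(\hat s)$ is the smaller root of $d_3\nu^2-\hat s\,\nu+r_3(1-a_{32})=0$. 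Then $\rho^\mu_{\rm nlp}$ is a viscosity super-solution on $(0,c_2)$ of the constant-reaction problem with boundary values $0$ at $s=0$ and $\nu_0(\hat s)(c_2-\hat s)$ at $s=c_2$, while $\max\{\nu_0(\hat s)(s-\hat s),0\}$ is a sub-solution (in fact a solution) of the same problem; comparison on the bounded interval (Lemma \ref{lemA2rho}) yields $\rho^\mu_{\rm nlp}\ge\max\{\nu_0(\hat s)(s-\hat s),0\}$, so the zero set $[0,s^\mu_{\rm nlp}]$ of $\rho^\mu_{\rm nlp}$ is contained in $[0,\hat s]$, contradicting $\hat s<s^\mu_{\rm nlp}$. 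The reverse inequality is symmetric, taking $\hat s$ slightly above $s^\mu_{\rm nlp}$. Note how the contradiction hypothesis itself supplies the boundary value at $c_2$ needed for the comparison; your corner computation reappears in checking the hinge barriers, so your plan can be repaired by recasting it in this contradiction form rather than by regularity or by a global extension.
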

\begin{proof}
We only prove $\rho^\mu_{{\rm nlp}}(c_2)\leq \lambda^\mu_{{\rm nlp}}(c_2- s^\mu_{{\rm nlp}})$ in detail, and then $ \rho^\mu_{{\rm nlp}}(c_2)\geq \lambda^\mu_{{\rm nlp}}(c_2- {s^\mu_{{\rm nlp}}})$ follows from a similar argument. To this end, let us argue by contradiction, by assuming
$
 \rho^\mu_{{\rm nlp}}(c_2)>\lambda^\mu_{{\rm nlp}}(c_2- s^\mu_{{\rm nlp}}). 
$
By continuity there exists some $\hat{s}\in(\alpha_3\sqrt{1-a_{32}}, s^\mu_{{\rm nlp}} )$ such that
\begin{equation}\label{inequalityw3c}
  \rho^\mu_{{\rm nlp}}(c_2)> \nu_0(\hat{s})\cdot(c_2- \hat{s}),
\end{equation}
where  $\nu_0(\hat{s})=\frac{1}{2d_3}(\hat{s} -\sqrt{\hat{s}^2-\alpha_3^2(1-a_{32})})$. Note that \eqref{inequalityw3c} holds due to $\nu_0(\hat{s})\to\lambda^\mu_{{\rm nlp}}$  as $\hat s\to s^\mu_{{\rm nlp}}$.
By \eqref{inequalityw3c}, we can check 
$ \rho^\mu_{{\rm nlp}}$ is a viscosity super-solution of
\begin{align}\label{asubsolution}
\left\{
\begin{array}{ll}
\medskip
\min\{\rho-s\rho'+d_3|\rho'|^2+r_3(1-a_{32}),\rho\}=0 &\text{for}~ s\in(0,c_2),\\
\rho(0)=0, \,\,\,\, \rho(c_2)=\nu_0(\hat{s})\cdot(c_2- \hat{s}).
\end{array}
\right.
\end{align}
Define $\underline{\rho}_{\hat{s}}(s):=\max\{\nu_0(\hat{s}) \cdot(s- \hat{s}),0\}$.
It is straightforward to verify that $\underline{\rho}_{\hat{s}}$ is a viscosity sub-solution of \eqref{asubsolution}.  (It is in fact a viscosity solution of \eqref{asubsolution}.) By  Lemma \ref{lemA2rho} again, we have
$\rho^\mu_{{\rm nlp}}(s)\geq \underline{\rho}_{\hat{s}}(s)$ for $s\in[0, c_2]$. 
Therefore, we deduce that
$$[0,s^{\mu}_{\rm nlp}]= \{s:\rho^\mu_{{\rm nlp}}(s) = 0\}\subset\{s:\underline{\rho}_{\hat{s}}(s) = 0\}=[0, \hat{s}],$$
where the first equality follows from the definition \eqref{charact_wnlpmu} of $s^{\mu}_{\rm nlp}$. This implies  that $s^\mu_{{\rm nlp}}\leq \hat{s}$, a contradiction
to $\hat{s}\in(\alpha_3\sqrt{1-a_{32}}, s^\mu_{{\rm nlp}} )$. Lemma \ref{w3nllp} is thus proved.
\end{proof}

\subsubsection{Definition of $\underline{\rho}^\mu_{\ell}(s)$ for $\mu \in [0,1]$ and $\ell >0$}
For given $\ell>0$, we define
\begin{equation}\label{eq:nu1}
\nu_{1}(\ell):=\frac{1}{2d_3}\left(\ell + \sqrt{\ell^2 - \alpha_3^2(1-a_{32})}\right).
\end{equation}
 \begin{lemma}\label{lemma:sub-solution}
 For any $\mu\in[0,1]$ and $\ell',\ell$ such that $0<\ell<{\ell'}\leq c_2$, the function $\underline{\rho}^\mu_{\ell}: [\ell,\ell']\rightarrow[0,\infty)$ defined by
$\underline{\rho}^\mu_{\ell}(s):= \min\{  \rho^\mu_{\rm nlp}(s), \, \nu_{1}(\ell) \cdot (s-\ell)\}$
is a viscosity sub-solution of
 \begin{align}\label{underwell}
\left \{
\begin{array}{ll}
\medskip
\min\{\rho-s\rho'+d_3|\rho'|^2+r_3\left(1-a_{32}\right), \rho\}=0 &\text{in } ({\ell},{\ell'}),\\
\rho({{\ell}})=0,\,\,\quad\,\,\rho({\ell'})= \rho^\mu_{{\rm nlp}}({\ell'}),
\end{array}
\right.
\end{align}
where $\nu_1(\ell)$ is given by \eqref{eq:nu1}.
Furthermore, if $\nu_1(\ell) \cdot  (\ell' - \ell)\geq \rho^\mu_{{\rm nlp}}({\ell'})$, then $\underline{\rho}^{\mu}_{\ell}$ defines  the unique viscosity solution of \eqref{underwell}.
  \end{lemma}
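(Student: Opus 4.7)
The plan is to verify that $\underline{\rho}^\mu_\ell$ satisfies both the viscosity sub-solution criterion on $(\ell,\ell')$ and the corresponding boundary conditions of \eqref{underwell}. The key structural observation is that, since $(\ell,\ell') \subseteq (0,c_2]$, the reaction term $\mathcal{R}^\mu(s)$ from \eqref{eq:w3nlpmu} reduces to the constant $r_3(1-a_{32})$ on this interval. Consequently $\rho^\mu_{\rm nlp}$ is itself a viscosity solution of the PDE in \eqref{underwell} restricted to $(\ell,\ell')$. Next, I would verify by direct substitution that $\psi(s) := \nu_1(\ell)(s-\ell)$ is a classical solution of the same PDE on $[\ell,\infty)$: since $\nu_1(\ell)$ is by definition \eqref{eq:nu1} the larger root of $d_3 p^2 - \ell p + r_3(1-a_{32}) = 0$, one obtains
\[
\psi(s) - s\psi'(s) + d_3|\psi'(s)|^2 + r_3(1-a_{32}) = -\nu_1(\ell)\ell + d_3\nu_1(\ell)^2 + r_3(1-a_{32}) = 0.
\]

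For the viscosity sub-solution property, I would perform a case analysis on any test function $\phi \in C^1$ for which $\underline{\rho}^\mu_\ell - \phi$ attains a strict local maximum at $s_0 \in (\ell,\ell')$ with $\underline{\rho}^\mu_\ell(s_0) > 0$. If $\rho^\mu_{\rm nlp}(s_0) < \psi(s_0)$, then by continuity $\underline{\rho}^\mu_\ell \equiv \rho^\mu_{\rm nlp}$ in a neighborhood of $s_0$, and the desired inequality follows from the sub-solution property of $\rho^\mu_{\rm nlp}$. If $\rho^\mu_{\rm nlp}(s_0) > \psi(s_0)$, then $\underline{\rho}^\mu_\ell \equiv \psi$ locally; since $\psi \in C^1$ we have $\phi'(s_0) = \nu_1(\ell)$, and the inequality reduces to the classical identity above. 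In the crossing case $\rho^\mu_{\rm nlp}(s_0) = \psi(s_0)$, I would exploit that $p \mapsto F(s_0,\underline{\rho}^\mu_\ell(s_0),p) := \underline{\rho}^\mu_\ell(s_0) - s_0 p + d_3 p^2 + r_3(1-a_{32})$ is strictly convex in $p$, so the sub-solution set $\{p : F \le 0\}$ is a convex interval. This set contains $\nu_1(\ell) = \psi'(s_0)$ and also contains $D^+\rho^\mu_{\rm nlp}(s_0)$ (from the sub-solution property of $\rho^\mu_{\rm nlp}$), and hence contains the convex hull of these, which in turn contains $D^+ \underline{\rho}^\mu_\ell(s_0) \ni \phi'(s_0)$.

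The boundary conditions are immediate: $\underline{\rho}^\mu_\ell(\ell) = \min\{\rho^\mu_{\rm nlp}(\ell), 0\} = 0$ by the nonnegativity of $\rho^\mu_{\rm nlp}$ established in Lemma \ref{lemma_w3nlp}, and $\underline{\rho}^\mu_\ell(\ell') \le \rho^\mu_{\rm nlp}(\ell')$ by definition, consistent with the sub-solution condition. For the \emph{furthermore} assertion, the hypothesis $\nu_1(\ell)(\ell'-\ell) \ge \rho^\mu_{\rm nlp}(\ell')$ ensures $\underline{\rho}^\mu_\ell(\ell') = \rho^\mu_{\rm nlp}(\ell')$ exactly, so the boundary data of \eqref{underwell} match. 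To promote $\underline{\rho}^\mu_\ell$ to a full viscosity solution, I would verify its super-solution property using the standard (and much simpler) fact that the minimum of two viscosity super-solutions is a viscosity super-solution: at any test point, the local minimum of $\underline{\rho}^\mu_\ell - \phi$ is inherited by whichever of $\rho^\mu_{\rm nlp}$, $\psi$ realizes $\underline{\rho}^\mu_\ell(s_0)$, and the conclusion follows. Uniqueness is then immediate from the comparison principle in Lemma \ref{lemA2rho}.

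The principal technical obstacle is the crossing-point case in the sub-solution verification: computing or controlling the superdifferential $D^+\underline{\rho}^\mu_\ell(s_0)$ at a corner where the two functions coincide, and exploiting the convexity of the Hamiltonian in the gradient variable to place this superdifferential inside the sub-solution set. All remaining steps are either direct computation, application of prior lemmas, or standard viscosity solution manipulations.
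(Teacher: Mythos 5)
Most of your outline runs parallel to the paper: the reaction is indeed constant equal to $r_3(1-a_{32})$ on $(\ell,\ell')\subset(0,c_2]$, the line $\psi(s)=\nu_1(\ell)(s-\ell)$ is a classical solution because $\nu_1(\ell)$ is the larger root of $d_3p^2-\ell p+r_3(1-a_{32})=0$, the boundary data are as you say, the ``furthermore'' part is exactly the paper's argument (minimum of two viscosity super-solutions is a super-solution, then comparison gives uniqueness). The divergence is in the sub-solution verification: the paper does not do a pointwise corner analysis. It notes that $\rho^\mu_{\rm nlp}$ and $\psi$ are locally Lipschitz (via \cite[Proposition 1.14]{IshiiLect}), hence differentiable a.e.\ and satisfying the inequality a.e.; the minimum is then Lipschitz and satisfies the inequality a.e., and convexity of the Hamiltonian in $\rho'$ upgrades an a.e.\ Lipschitz sub-solution to a viscosity sub-solution by \cite[Proposition 5.1]{Bardi1997}. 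This completely bypasses the crossing-point issue you single out.

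Your own treatment of the crossing case has a genuine gap. The assertion that the sub-level set $\{p:F(s_0,\underline\rho^\mu_\ell(s_0),p)\le 0\}$ ``contains the convex hull of $D^+\rho^\mu_{\rm nlp}(s_0)\cup\{\nu_1(\ell)\}$, which in turn contains $D^+\underline\rho^\mu_\ell(s_0)$'' is unjustified, and the second containment is false in general precisely when $D^+\rho^\mu_{\rm nlp}(s_0)=\emptyset$, i.e.\ when $\rho^\mu_{\rm nlp}$ has a convex (V-shaped) corner at the crossing point. Concretely, if $\rho^\mu_{\rm nlp}$ had one-sided derivatives $a<b$ at $s_0$ with $\nu_1(\ell)<a$, then near $s_0$ the minimum equals $\rho^\mu_{\rm nlp}$ on the left and $\psi$ on the right, so $D^+\underline\rho^\mu_\ell(s_0)=[\nu_1(\ell),a]$, while your convex hull degenerates to $\{\nu_1(\ell)\}$; the inequality at slopes in $(\nu_1(\ell),a]$ does not follow from anything you have listed, since the sub-solution property of $\rho^\mu_{\rm nlp}$ is vacuous at such a corner and its super-solution property gives the inequality with the wrong sign. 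Nothing in your proposal excludes convex corners of $\rho^\mu_{\rm nlp}$ (it does have one at the free boundary $s^\mu_{\rm nlp}$, where the value is zero; that particular point is harmless, but it shows corners are not abstractly ruled out). The clean repair is the paper's route above (a.e.\ inequality for the Lipschitz minimum plus convexity of the Hamiltonian), or, if you insist on the pointwise analysis, you must control the one-sided slopes of $\rho^\mu_{\rm nlp}$ at the crossing point by limits of its a.e.\ derivatives from the side where the minimum equals $\rho^\mu_{\rm nlp}$ --- which is essentially the same a.e.\ argument in disguise.
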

\begin{proof}
 We first verify that $\underline{\rho}^{\mu}_{\ell}$ is a viscosity sub-solution of \eqref{underwell}. First, it is easy to see that $\rho^\mu_{\rm nlp}(s)$ and $\nu_1(\ell)(s-\ell)$ are viscosity solutions of the first equation of \eqref{underwell} and satisfies \eqref{underwell} wherever they are differentiable. 
  They are both Lipschitz continuous (as they satisfy $\rho - s \rho' + d_3 |\rho'|^2 \leq 0$ in viscosity sense so that their Lipschitz bounds are bounded locally \cite[Proposition 1.14]{IshiiLect}). By Rademacher's theorem, they are both differentiable a.e., and hence satisfy the first equation of \eqref{underwell} a.e.. Hence, $\underline{\rho}^{\mu}_{\ell}$ is also Lipschitz continuous and satisfies the first equation of \eqref{underwell} a.e.. By the convexity of the Hamiltonian, we can apply \cite[Proposition 5.1]{Bardi1997} to conclude that it is in fact a viscosity sub-solution of the first equation of \eqref{underwell}. Since it is clear that the boundary conditions are satisfied, $\underline{\rho}^{\mu}_{\ell}$ is a viscosity sub-solution of \eqref{underwell}.

If $\nu_1(\ell)\cdot({\ell'}- {{\ell}})\geq \rho^\mu_{{\rm nlp}}({\ell'})$,
then $\rho^\mu_{{\rm nlp}}$ and $\nu_1(\ell)\cdot(s-{{\ell}})$ are both viscosity super-solution of \eqref{underwell}. Upon taking their minimum, the resulting function $\underline{\rho}^{\mu}_{\ell}$ is also a viscosity super-solution of \eqref{underwell} (see, e.g. \cite[Proof of Theorem 7.1]{BarlesLect}). Since $\underline{\rho}^{\mu}_{\ell}$ is already a  sub-solution, it is therefore a viscosity solution. Finally, the uniqueness 
follows from Lemma \ref{lemA1rho}.
%
\end{proof}

\subsubsection{Definition of $s^\mu(\hat{c})$, $\nu^{\mu}_2(\hat{c})$, and $\nu^{\mu}_3$}\label{smuhatc}
For given $\mu \in [0,1]$ and $\hat{c} \in (\beta^\mu_3, c_2]$, we define
\begin{equation}\label{eq:smu}
s^{\mu}(\hat{c}):= 
\begin{cases}
\smallskip
d_3\nu^{\mu}_2(\hat{c})  + \frac{r_3 (1-a_{32}(1-a_{21})}{\nu^{\mu}_2(\hat{c})}  &\text{ if }\nu^{\mu}_2(\hat{c})\leq \frac{r_3 (1-a_{32}(1-a_{21})}{d_3\nu^{\mu}_3}\text{ and }  \nu^{\mu}_2(\hat{c}) \leq \nu^{\mu}_3,\\
\beta^\mu_3 &\text{ otherwise},
\end{cases}
\end{equation}
where $\beta^\mu_3< c_2$ (as proved in Lemma \ref{lem:betamu3}) and
\begin{equation}\label{eq:nu2}
\nu^{\mu}_2(\hat{c}): = 
\begin{cases}
\smallskip
 \frac{1}{2d_3}\left\{\hat{c} - \sqrt{\hat{c}^2 - 4d_3[r_3(1-a_{32}(1-a_{21})) + \rho^\mu_{\rm nlp}(\hat{c})]}\right\} \quad \\
  \qquad   \qquad   \qquad \text{ if }\hat{c}^2 \geq  4d_3[r_3(1-a_{32}(1-a_{21})) + \rho^\mu_{\rm nlp}(\hat{c})],\\
 \infty  \qquad \qquad \text{ otherwise},
\end{cases}
\end{equation}
and
\begin{equation}\label{qq:3.20}
\nu^{\mu}_3 := \frac{1}{2d_3} \left[  \beta^{\mu}_3 + \sqrt{(  \beta^{\mu}_3)^2 - \alpha_3^2(1-a_{32}(1-a_{21}))}\right],
\end{equation}
the latter is well-defined since $\beta^\mu_3 \geq c_{\rm{LLW}} \geq \alpha_3\sqrt{1-a_{32}(1-a_{21})}$ due to \eqref{def:betamu}.
\begin{remark}\label{rmk:wmu3}
By  the construction of $\nu^{\mu}_2(\hat{c})$ and $\nu^{\mu}_3$, we can rewrite $\beta^\mu_3$ as
\begin{equation}\label{betamu3}
 \beta^\mu_3=d_3 \nu^{\mu}_3+\frac{r_3(1-a_{32}(1-a_{21}))}{\nu^{\mu}_3}.
\end{equation}
In case $\hat{c}^2\geq 4d_3[r_3(1-a_{32}(1-a_{21})) + \rho^\mu_{\rm nlp}(\hat{c})]$, we can rewrite $\rho^\mu_{{\rm nlp}}(\hat c)$ as
\begin{equation}\label{wmu3nlplambda1}
  \rho^\mu_{{\rm nlp}}(\hat c)=\hat c\nu^{\mu}_2(\hat c)-d_3\left(\nu^{\mu}_2(\hat c)\right)^2-r_3(1-a_{32}(1-a_{21})).
\end{equation}
Furthermore, if $s^\mu(\hat{c})> \beta^\mu_3$, then it follows from \eqref{eq:smu} and \eqref{wmu3nlplambda1} that
\begin{equation}\label{wmu3nlplambda2}
  \rho^\mu_{{\rm nlp}}(\hat c)=\nu^{\mu}_2(\hat c)\cdot(\hat c-s^\mu(\hat{c})).
\end{equation}
\end{remark}
\begin{lemma}\label{lemma:shatc}
Let $\hat{c} \in (\beta^\mu_3, c_2]$ and $s^\mu(\hat{c})$ be defined by \eqref{eq:smu}. Then 
$s^\mu(\hat{c})\in[\beta^\mu_3,\hat c)$.
\end{lemma}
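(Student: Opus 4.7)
The plan is to dispatch the two cases in the definition \eqref{eq:smu} of $s^\mu(\hat c)$ separately. In the ``otherwise'' case, $s^\mu(\hat c)=\beta^\mu_3$ by fiat, and since $\hat c>\beta^\mu_3$ is given by hypothesis, the claim $s^\mu(\hat c)\in[\beta^\mu_3,\hat c)$ is immediate. The substantive case is the first branch, where $\nu^\mu_2(\hat c)$ is finite and $s^\mu(\hat c)=f(\nu^\mu_2(\hat c))$ with
\[
f(\nu):=d_3\nu+\frac{K}{\nu},\qquad K:=r_3(1-a_{32}(1-a_{21})).
\]

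For the lower bound $s^\mu(\hat c)\geq\beta^\mu_3$, I would exploit the geometry of $f$, which is strictly convex, decreasing on $(0,\sqrt{K/d_3})$, and increasing on $(\sqrt{K/d_3},\infty)$, with minimum value $2\sqrt{d_3K}=\alpha_3\sqrt{1-a_{32}(1-a_{21})}\leq c_{\rm LLW}\leq\beta^\mu_3$. By the representation \eqref{betamu3}, $\nu^\mu_3$ is the larger root of $f(\nu)=\beta^\mu_3$ and the smaller root is $\nu_*:=K/(d_3\nu^\mu_3)$. Since $\nu^\mu_3\geq\sqrt{K/d_3}\geq\nu_*$, the two side conditions $\nu^\mu_2(\hat c)\leq K/(d_3\nu^\mu_3)$ and $\nu^\mu_2(\hat c)\leq\nu^\mu_3$ imposed in \eqref{eq:smu} together reduce to $\nu^\mu_2(\hat c)\leq\nu_*\leq\sqrt{K/d_3}$. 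Monotonicity of $f$ on this interval then yields $s^\mu(\hat c)=f(\nu^\mu_2(\hat c))\geq f(\nu_*)=\beta^\mu_3$.

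For the strict upper bound $s^\mu(\hat c)<\hat c$, I would invoke the identity \eqref{wmu3nlplambda1}, which rearranges as
\[
\rho^\mu_{{\rm nlp}}(\hat c)=\hat c\,\nu^\mu_2(\hat c)-d_3\bigl(\nu^\mu_2(\hat c)\bigr)^2-K=\nu^\mu_2(\hat c)\bigl[\hat c-f(\nu^\mu_2(\hat c))\bigr]=\nu^\mu_2(\hat c)\bigl(\hat c-s^\mu(\hat c)\bigr).
\]
Since $\hat c>\beta^\mu_3\geq s^\mu_{{\rm nlp}}$ and $\rho^\mu_{{\rm nlp}}$ is nonnegative and nondecreasing (Lemma \ref{lemma_w3nlp}), the characterization $s^\mu_{{\rm nlp}}=\sup\{s:\rho^\mu_{{\rm nlp}}(s)=0\}$ forces $\rho^\mu_{{\rm nlp}}(\hat c)>0$. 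Combined with $\nu^\mu_2(\hat c)>0$, this gives $\hat c-s^\mu(\hat c)>0$.

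The main obstacle is the bookkeeping in the first step: one must verify that the two distinct side conditions imposed in \eqref{eq:smu} actually pin $\nu^\mu_2(\hat c)$ to the decreasing branch of $f$ rather than merely to a neighborhood of the two roots, which amounts to recognizing that $\nu_*=K/(d_3\nu^\mu_3)$ is the binding bound. Once this geometric picture is in place, both bounds on $s^\mu(\hat c)$ follow from direct algebraic manipulation, and no delicate viscosity-solution machinery is required beyond the already-proved monotonicity of $\rho^\mu_{{\rm nlp}}$.
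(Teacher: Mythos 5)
Your argument is correct and is essentially the paper's own proof in different clothing: the lower bound via the observation that the side conditions in \eqref{eq:smu} confine $\nu^\mu_2(\hat c)$ below the smaller root $K/(d_3\nu^\mu_3)$ of $f(\nu)=\beta^\mu_3$ is equivalent to the paper's factorization $s^\mu(\hat c)-\beta^\mu_3=\frac{\nu^\mu_2(\hat c)-\nu^\mu_3}{\nu^\mu_2(\hat c)}\bigl[d_3\nu^\mu_2(\hat c)-\frac{r_3(1-a_{32}(1-a_{21}))}{\nu^\mu_3}\bigr]\geq 0$, and the strict upper bound uses the same identity \eqref{wmu3nlplambda1}--\eqref{wmu3nlplambda2} together with $\rho^\mu_{\rm nlp}(\hat c)>0$ from $\hat c>\beta^\mu_3\geq s^\mu_{\rm nlp}$. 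No gaps; the only cosmetic difference is that you organize the cases by the branches of \eqref{eq:smu} rather than by whether $s^\mu(\hat c)=\beta^\mu_3$.
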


\begin{proof}
 First, we show $s^\mu(\hat c)  \geq \beta^\mu_3$.
By definition \eqref{eq:smu}, it suffices to verify  $s^\mu(\hat{c})\geq \beta^\mu_3$ when $\nu^{\mu}_2(\hat c)\leq\nu^{\mu}_3\text{ and }\nu^{\mu}_2(\hat c)\leq\frac{r_3(1-a_{32}(1-a_{21}))}{d_3\nu^{\mu}_3}$. In such a case, by \eqref{betamu3}, direct calculation yields
\begin{align*}
s^\mu(\hat{c})- \beta^\mu_3=&d_3(\nu^{\mu}_2(\hat c)-\nu^{\mu}_3) + r_3(1-a_{32}(1-a_{21}))\left[\frac{1}{\nu^{\mu}_2(\hat c)}- \frac{1}{\nu^{\mu}_3}\right]\\
=&\frac{\nu^{\mu}_2(\hat c)-\nu^{\mu}_3}{\nu^{\mu}_2(\hat c)}\left[d_3\nu^{\mu}_2(\hat c)-\frac{r_3(1-a_{32}(1-a_{21}))}{\nu^{\mu}_3}\right]\geq 0,
\end{align*}
which proves $s^\mu(\hat c)  \geq \beta^\mu_3$.

It remains to show $s^\mu(\hat c) < \hat{c}$.  Since $\hat c \in (\beta^\mu_3, c_2]$, there is nothing to prove in case $s^\mu(\hat c) = \beta^\mu_3$. Next, assume $s^\mu(\hat c) > \beta^\mu_3$.
Since $\hat c>\beta^\mu_3\geq s^\mu_{{\rm nlp}}$ by the definition in \eqref{def:betamu}, using \eqref{charact_wnlpmu} we have $ \rho^\mu_{{\rm nlp}}(\hat c)>0$. In view of \eqref{wmu3nlplambda2},  $ \rho^\mu_{{\rm nlp}}(\hat c)>0$ implies that $s^\mu(\hat{c})<\hat c$.
\end{proof}

%



\subsection{Estimating $\overline{c}_3$ from above}\label{S3}
The purpose of this subsection is to prove $\overline c_3\leq \max\{s_{{\rm nlp}},c_{\rm{LLW}}\}$ as stated in \eqref{estamitec3}. 
Recall that throughout the section, we have fixed  $d_i, r_i, a_{ij}$ and the initial conditions $u_{i,0}$  in such a way that 
$({\rm H}_{c_1,c_2,\lambda})$ hold for some $c_1>c_2$ and $\lambda \in (0,\infty]$. 

\subsubsection{Estimating $\overline{c}_3$ for given $\mu\in [0,1]$}\label{subsection5_1}
In this subsection, we show that $\overline{c}_3 \leq \beta^\mu_3$ for any $\mu\in[0,1]$ satisfying $w_{3,*}(1,c_2)\geq  \rho^\mu_{{\rm nlp}}(c_2)$, where $\beta^\mu_3$ is given in \eqref{def:betamu} and $w_{3,*}$ is defined by \eqref{eq:w_star3}.  
See Proposition \ref{prop:overc3_1} below.

\begin{lemma}\label{lemma:5-4} 
Let $(u_i)_{i=1}^3$  be any solution of \eqref{eq:1-1} such that $({\rm H}_{c_1,c_2,\lambda})$ holds.
 Fix any $\hat c\in(\beta^\mu_3,c_2]$ and $\mu\in[0,1]$. Suppose that
 \begin{equation}\label{condition216}
   w_{3,*}(1,\hat c)\geq \rho^\mu_{{\rm nlp}}(\hat c)>0.
 \end{equation}
Then we have
$\overline{c}_3\leq s^\mu(\hat{c})$, where $s^\mu(\hat{c})$ is defined by \eqref{eq:smu}.
\end{lemma}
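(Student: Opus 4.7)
The plan is to prove $\overline c_3 \leq s^\mu(\hat c)$ by producing a positive lower bound for $w_{3,*}(1, s)$ on the interval $(s^\mu(\hat c), \hat c\,]$, since positivity of $w_{3,*}$ on a ray forces $u_3(t, st)\to 0$ along that ray. The available boundary data are $w_{3,*}(t,0)=0$ (Remark \ref{rmk:w1w20}) and $w_{3,*}(1,\hat c)\geq \rho^\mu_{\rm nlp}(\hat c)$, which follows from the hypothesis \eqref{condition216} combined with the self-similarity in Remark \ref{rmk:w1w3}. Working in the reduced speed variable $s=x/t$ via Lemma \ref{lem:wrhoviscosity}, I would apply the comparison principle Lemma \ref{lemA2rho} on $[0,\hat c]$ against an explicitly constructed sub-solution.

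The natural sub-solution candidate, suggested by Lemma \ref{lemma:sub-solution} and the identities of Remark \ref{rmk:wmu3}, is the linear profile
\begin{equation*}
\underline\rho(s) := \nu_2^\mu(\hat c)\bigl(s - s^\mu(\hat c)\bigr)_+, \qquad s\in[0,\hat c],
\end{equation*}
when $\nu_2^\mu(\hat c)<\infty$ and $s^\mu(\hat c)>\beta_3^\mu$; in the remaining case $s^\mu(\hat c)=\beta_3^\mu$, the linear piece is matched instead to the $c_{\rm LLW}$-traveling-wave profile. A direct calculation using the dispersion relation $s^\mu(\hat c)=d_3\nu_2^\mu(\hat c)+r_3(1-a_{32}(1-a_{21}))/\nu_2^\mu(\hat c)$ shows that $\underline\rho$ is a viscosity sub-solution of the reduced variational inequality with effective reaction $r_3(1-a_{32}(1-a_{21}))$, while the identity $\nu_2^\mu(\hat c)(\hat c-s^\mu(\hat c))=\rho^\mu_{\rm nlp}(\hat c)$ from \eqref{wmu3nlplambda2} gives $\underline\rho(\hat c)=\rho^\mu_{\rm nlp}(\hat c)$, matching the right-hand boundary.

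The substantive input that remains is to verify that $w_{3,*}$ is a viscosity super-solution of
\begin{equation*}
\min\bigl\{\rho-s\rho'+d_3|\rho'|^2+r_3(1-a_{32}(1-a_{21})),\ \rho\bigr\}\geq 0 \quad\text{on } (0,\hat c).
\end{equation*}
The idea is to exploit the partial ordering of the two-species subsystem \eqref{eq:1-2'}: using only the trivial bounds $0\leq u_1\leq 1$, the pair $(u_2,u_3)$ satisfies
$\partial_t u_2-\partial_{xx}u_2\geq u_2(1-a_{21}-u_2-a_{23}u_3)$ and $\partial_t u_3-d_3\partial_{xx}u_3\leq r_3 u_3(1-a_{32}u_2-u_3)$,
so $u_3$ is bounded from above by the minimal-speed traveling wave of \eqref{eq:1-2'}, which exists at speed $s^\mu(\hat c)\geq c_{\rm LLW}$ by Lemma \ref{lemma:shatc}. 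Linearizing at the invaded state $(1-a_{21},0)$ yields precisely the coefficient $r_3(1-a_{32}(1-a_{21}))$ in the super-solution inequality. Once this is in hand, Lemma \ref{lemA2rho} delivers $w_{3,*}(1,s)\geq \underline\rho(s)>0$ for $s\in(s^\mu(\hat c),\hat c)$, and the conclusion follows.

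The main obstacle is precisely justifying the tighter reaction rate $r_3(1-a_{32}(1-a_{21}))$ rather than the weaker $r_3(1-a_{32})$ that one would obtain from a naive passage to the limit using the spreading $u_2\to 1$ in $\{\overline c_3 t<x<c_2 t\}$. The naive approach is doubly unsatisfactory: it produces the wrong constant, and it is circular since $\overline c_3$ is the very quantity being estimated. Replacing it by the traveling-wave comparison with \eqref{eq:1-2'} resolves both problems at once, because the invaded equilibrium $(1-a_{21},0)$ of the subsystem depends only on the coefficients and not on the unknown $\overline c_3$.
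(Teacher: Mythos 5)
Your reduction---prove $w_{3,*}(1,s)>0$ for $s\in(s^\mu(\hat c),\hat c\,]$ and deduce $\overline c_3\le s^\mu(\hat c)$---sounds like the paper's later HJ arguments, but the step it hinges on is not available, and this is a genuine gap. You assert that $w_{3,*}$ is a viscosity super-solution of $\min\{\rho-s\rho'+d_3|\rho'|^2+r_3(1-a_{32}(1-a_{21})),\rho\}=0$ on $(0,\hat c)$. In the half-relaxed-limit derivation from \eqref{eq:w3}, the super-solution property with a reaction $\hat{\mathcal R}$ requires, at every test point of the region, a lower bound of the form $\liminf\,(a_{31}u_1^\epsilon+a_{32}u_2^\epsilon+u_3^\epsilon)\ge a_{32}(1-a_{21})$ along the approximating sequence. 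On the part of $(0,\hat c)$ between the unknown front of $u_3$ and $\hat c$ no such bound is known: \eqref{eq:H.c1c2} gives $u_2\to 1$ only for $x>(\overline c_3+\eta)t$, and $\overline c_3$ is precisely what you are estimating, so the argument is circular exactly in the transition zone; behind the front $u_2$ may be driven far below $1-a_{21}$ (when $a_{23}>1-a_{21}$) and the available lower bound $u_3\gtrsim 1-a_{31}-a_{32}$ need not reach $a_{32}(1-a_{21})$. Your proposed fix does not repair this: an upper bound of $u_3$ by a traveling wave of \eqref{eq:1-2'} is not a lower bound on the reaction, hence yields no viscosity super-solution property for $w_{3,*}$; and a wave of speed $s^\mu(\hat c)>c_{\rm LLW}$ dominates $u_3$ only if its exponential decay at $+\infty$ is matched to the actual decay of $u_3$ along $x=\hat c t$, which is exactly the quantitative dichotomy (whether $\hat\mu\ge$ or $<\lambda_{\rm LLW}(\hat c-c_{\rm LLW})$) that produces the two branches of \eqref{eq:smu} and that your sketch never engages. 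Finally, even granted positivity of $w_{3,*}$ on rays in $(s^\mu(\hat c),\hat c\,]$, concluding $\overline c_3\le s^\mu(\hat c)$ also needs control of $u_3$ for $x>\hat c t$, which you do not address.

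The paper's proof is parabolic, not HJ, at this step: it picks $\hat c_j\nearrow\hat c$ with $u_2(t,\hat c_j t)\ge(1-a_{21})/2$, compares $(u_2,u_3)$ with the auxiliary competitive subsystem \eqref{eq:subu2u3} on the sector $\{0<x<\hat c_j t\}$ (the bound $u_1\le 1$ enters only through the factor $1-a_{21}$ in the $u_2$-equation), feeds in the boundary decay $u_3(t,\hat c_j t)\le \exp\{-(\hat\mu_j+o(1))t\}$ with $\hat\mu_j=w_{3,*}(1,\hat c_j)\wedge\rho^\mu_{\rm nlp}(\hat c_j)$, and invokes Lemma \ref{lem:appen1}; that lemma is where the rate $r_3(1-a_{32}(1-a_{21}))$, the exponent $\nu_2^\mu(\hat c)$, and the speed formula legitimately arise. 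Letting $j\to\infty$ and an algebraic comparison then give $s_{\hat c}\le s^\mu(\hat c)$. The rate-function/viscosity machinery with reaction $r_3(1-a_{32})$ (not $r_3(1-a_{32}(1-a_{21}))$) is used by the paper only afterwards, on regions ahead of the front where $\overline c_3\le s^\mu(\hat c)$ has already been secured by this lemma. To salvage your route you would essentially have to re-prove Lemma \ref{lem:appen1}, i.e., carry out the sector comparison with the subsystem anyway.
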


\begin{proof}
Observe from \eqref{condition216} that $w_{3,*}(1,\hat{c})>0$ so that (by  definition \eqref{eq:w_star3} of $w_{3,*}$) we have $u_3(t,\hat{c}t) \to 0$ as $t\to\infty$, i.e. $\hat c\in (\overline{c}_3,c_2]$.
By \eqref{eq:H.c1c2}, 
we can choose  a sequence $\hat{c}_j\in (\overline{c}_3,c_2)$ such that
 $\hat{c}_j\to\hat{c}$ as $j \to \infty$ and
\begin{equation}\label{conditionc_j}
\lim\limits_{t\to\infty} u_2(t,\hat{c}_jt)\geq\frac{1-a_{21}}{2}\, \quad \text{ for each }\,\,\,j \in \mathbb{N}.
\end{equation}

%
Fix $j \in \mathbb{N}$ large such  that $\hat\mu_j :=w_{3,*}(1,\hat{c}_j)\wedge \rho^\mu_{{\rm nlp}}(\hat{c}_j)>0$. Denote by $(\underline u_2,\overline u_3)$ the unique solution of the problem
\begin{equation}\label{eq:subu2u3}
\left\{
\begin{array}{ll}
\medskip
\partial_t \underline u_2-\partial_{xx}\underline u_2=\underline u_2(1-a_{21}-\underline u_2-a_{23}\overline u_3) &\text{ for }0<x<\hat c_jt,\,t>t_0,\\
\medskip
\partial_t \overline u_3-d_3\partial_{xx}\overline u_3=r_3\overline u_3(1-a_{32} \underline u_2- \overline u_3) & \text{ for } 0<x<\hat c_jt,\,t>t_0,
\end{array}
\right.
\end{equation}
with the initial-boundary condition
$$\underline u_2 = \min\left\{u_2, \frac{1-a_{21}}{2}\right\} \,\,\text{ and }\,\, \overline u_3 = u_3 \,\,\text{ on }\,\, \partial\{(t,x):\, t>t_0,\, x \in \{0, \hat{c}_j t\}\}.$$
In view of \eqref{conditionc_j}, we have $ \lim\limits_{t\to\infty}\underline u_2(t,\hat{c}_jt)=\frac{1-a_{21}}{2}$.
 Obviously,
 $(u_2,u_3)$ defined by \eqref{eq:1-1} is a classical  super-solution of \eqref{eq:subu2u3}, so that by comparison we derive that
$$ u_2\geq \underline u_2\,\,\text{ and } \,\, u_3\leq \overline u_3 \quad 
\text{ for } 0\leq x\leq\hat c_jt,\,\,t\geq t_0.$$

 By the definition of $w_{3,*}$ in \eqref{eq:w_star3} and $w^\epsilon_3(1,\hat{c}_j)=-\epsilon \log{u_3^\epsilon(1,\hat{c}_j)}$, for  small $\epsilon>0$, 
 we have
\begin{equation*}
-\epsilon\log{u_3\left(\frac{1}{\epsilon},\frac{\hat{c}_j}{\epsilon}\right)}\geq w_{3,*}(1,\hat{c}_j)+o(1) \geq  \hat\mu_j+o(1),
\end{equation*}
that is
$$u_3\left(\frac{1}{\epsilon},\frac{\hat{c}_j}{\epsilon}\right) \leq \exp\left(-\frac{\hat\mu_j + o(1)}{\epsilon}\right).  
$$
Since $\overline{u}_3(t,\hat{c}_j t) = u_3(t,\hat{c}_j t)$ for all $t$, this implies
\begin{equation}\label{condition1}
  \overline{u}_3(t,\hat{c}_jt)=u_3(t,\hat{c}_jt)\leq \exp\{-(\hat\mu_j+o(1))t\} \quad \text{ for } t\gg1.
\end{equation}
We can apply Lemma \ref{lem:appen1} to $(\underline u_2,\overline u_3)$ to yield
 $$
\lim_{t\to\infty} \sup_{ct<x<\hat{c}_jt}  u_3(t,x)\leq\lim_{t\to\infty} \sup_{ct<x<\hat{c}_j t}  \overline{u}_3(t,x) = 0  \quad \text{ for each }c >  s_{\hat{c}_j}.
$$
Here $s_{\hat{c}_j}$ can be expressed by 
\begin{equation*}
 s_{\hat{c}_j}=
\begin{cases}
\medskip
 c_{\rm{LLW}} & \text{ if } \hat\mu_j\geq \lambda_{\rm LLW}(\hat{c}_j-  c_{\rm{LLW}}),\\
 \hat{c}_j-\frac{2d_3\hat\mu_j}{\hat{c}_j-\sqrt{\hat{c}_j^2-4d_3[\hat\mu_j+r_3(1-a_{32}(1-a_{21}))]}} &  \text{ if } \hat\mu_j< \lambda_{\rm LLW}(\hat{c}_j- c_{\rm{LLW}}),
\end{cases}
\end{equation*}
where $0<\lambda_{\rm LLW}\leq \sqrt{r_3(1-a_{32}(1-a_{21}))/d_3}$ is the smaller positive root of $\lambda c_{\rm{LLW}} - d_3\lambda^2 - r_3(1-a_{31}(1-a_{21}))=0$ (see Remark \ref{rmk:lambdaLLW}).
Hence,  $\overline{c}_3 \leq s_{\hat{c}_j}$ for all $j \gg 1$. 
Recalling that $\hat\mu_j :=w_{3,*}(1,\hat{c}_j)\wedge \rho^\mu_{{\rm nlp}}(\hat{c}_j)>0$ and that
 $w_{3,*}(1,\hat c)\geq \rho^\mu_{{\rm nlp}}(\hat c)$, we arrive at $\hat\mu_j\to\rho^\mu_{{\rm nlp}}(\hat{c})$ as $j \to \infty$ (note that $\rho^\mu_{{\rm nlp}}$ is continuous and $w_{3,*}$ is lower semicontinuous).
Therefore, letting $j \to \infty$, we obtain $\overline{c}_3 \leq s_{\hat{c}}$. Here $s_{\hat{c}}$ is given by
\begin{equation}\label{eq:chatc}
 s_{\hat{c}}=
\begin{cases}
\medskip
 c_{\rm{LLW}} & \text{ if }  \rho^\mu_{{\rm nlp}}(\hat{c})\geq \lambda_{\rm LLW}(\hat{c}-  c_{\rm{LLW}}),\\
\hat{c}-\frac{ \rho^\mu_{{\rm nlp}}(\hat{c})}{\nu^{\mu}_2(\hat c)} &  \text{ if } \rho^\mu_{{\rm nlp}}(\hat{c})< \lambda_{\rm LLW}(\hat{c}- c_{\rm{LLW}}),
\end{cases}
\end{equation}
where we used  the definition \eqref{eq:nu2} of  $\nu^{\mu}_2(\hat c)$.
It remains to verify $ s_{\hat{c}}\leq s^\mu(\hat{c})$.

 If $\rho^\mu_{{\rm nlp}}(\hat c)\geq \lambda_{\rm LLW}(\hat c-  c_{\rm{LLW}})$, then by \eqref{eq:chatc} we obtain $s_{\hat{c}}=c_{\rm{LLW}}$.
Since $s^\mu(\hat{c})\geq \beta^\mu_3\geq c_{\rm{LLW}}$ by    Lemma \ref{lemma:shatc} and \eqref{def:betamu}, we have $s_{\hat{c}} = c_{\rm{LLW}} \leq s^\mu(\hat{c})$.

It remains to prove $s_{\hat{c}}\leq s^\mu(\hat{c})$ if $\rho^\mu_{{\rm nlp}}(\hat c)< \lambda_{\rm LLW}(\hat c-  c_{\rm{LLW}})$.
We use Remark \ref{rmk:lambdaLLW} to derive
\begin{align}\label{eq:uuuu}
\rho^{\mu}_{\rm nlp}(\hat{c})&<\lambda_{\rm LLW}(\hat c-  c_{\rm{LLW}}) \notag \\
&=\lambda_{\rm LLW}\hat{c} - d_3 \lambda_{\rm LLW}^2 - r_3(1-a_{32}(1-a_{21})).
\end{align}
Completing the square, $\lambda_{\rm LLW}\hat{c} - d_3 \lambda_{\rm LLW}^2 \leq \frac{\hat{c}^2}{4d_3}$ so that  we arrive at
$\rho^{\mu}_{\rm nlp}(\hat{c})< \frac{\hat{c}^2}{4d_3} -r_3(1-a_{32}(1-a_{21}))$, 
i.e. $\hat{c}^2> 4d_3[r_3(1-a_{32}(1-a_{21})) + \rho^\mu_{\rm nlp}(\hat{c})]$,
whence we can invoke \eqref{wmu3nlplambda1} and the second part of \eqref{eq:chatc} to derive that
\begin{equation}\label{eq:5-9}
 s_{\hat{c}}= \frac{\hat c \nu^\mu_2(\hat c) - \rho^\mu_{\rm nlp}(\hat c)}{\nu^\mu_2(\hat c)}=
d_3\nu^{\mu}_2(\hat c) + \frac{r_3(1-a_{32}(1-a_{21}))}{\nu^{\mu}_2(\hat c)}.
\end{equation}

Next, we claim that 
\begin{equation}\label{lambda1.1}
\nu^\mu_2(\hat{c}) < \lambda_{\rm LLW} \leq \frac{\beta^\mu_3}{2d_3}.
\end{equation}
Indeed, by \eqref{eq:nu2} and Remark \ref{rmk:lambdaLLW}, respectively, we have
\begin{equation}\label{lambda1}
0<\nu^{\mu}_2(\hat c) \leq \frac{\hat{c}}{2d_3} \quad \text{ and }\quad
0<\lambda_{\rm LLW}
\leq \frac{c_{\rm{LLW}}}{2d_3}\leq \frac{\beta_3^\mu}{2d_3} \leq \frac{\hat{c}}{2d_3},
\end{equation}
This yields the second inequality of \eqref{lambda1.1}. Next, we 
compare \eqref{eq:uuuu} and \eqref{wmu3nlplambda1} to obtain
\begin{equation}\label{lambda3LLW}
  \hat c\nu^{\mu}_2(\hat c)-d_3\left(\nu^{\mu}_2(\hat c)\right)^2< \lambda_{\rm LLW}\hat c-d_3\lambda_{\rm LLW}^2.
\end{equation}
Since \eqref{lambda1} says that 
$\nu^{\mu}_2(\hat c)$ and $\lambda_{\rm LLW}$ belong to the interval $I=(0, \frac{\hat{c}}{2d_3}]$, on which $s \mapsto\hat{c}s - d_3 s^2$ is monotone,
this completes the proof of \eqref{lambda1.1}.

Since $\nu^{\mu}_3 \geq \frac{\beta^{\mu}_3}{2d_3}$ (see \eqref{qq:3.20}), we deduce from \eqref{lambda1.1} that
%
\begin{equation}\label{eq:3.15b}
\nu^{\mu}_2(\hat c)\leq\nu^{\mu}_3.\end{equation}
Next, we verify $s_{\hat{c}}\leq s^\mu(\hat{c})$ by dividing into the following two cases:
\begin{itemize}
  \item [{\rm (i)}] If $\nu^{\mu}_2(\hat c)\leq\frac{r_3(1-a_{32}(1-a_{21}))}{d_3\nu^{\mu}_3}$, then since $\nu^{\mu}_2(\hat{c}) \leq \nu_3^{\mu}$ in \eqref{eq:3.15b},  it follows from \eqref{eq:smu} and
\eqref{eq:5-9} that
 $s_{\hat{c}}$ exactly equals $s^\mu(\hat{c})$;
  \item [{\rm (ii)}] If $\nu^{\mu}_2(\hat c) >\frac{r_3(1-a_{32}(1-a_{21}))}{d_3\nu^{\mu}_3}$, then
$s^\mu(\hat c)  = \beta^\mu_3$ by \eqref{eq:smu}.
We directly calculate that
\begin{align*}
 s^\mu(\hat{c})- s_{\hat{c}} 
&=d_3(\nu^{\mu}_3-\nu^{\mu}_2(\hat c))+ r_3(1-a_{32}(1-a_{21}))\left[\frac{1}{\nu^{\mu}_3}-\frac{1}{\nu^{\mu}_2(\hat c)}\right]\\
&=\frac{\nu^{\mu}_3-\nu^{\mu}_2(\hat c)}{\nu^{\mu}_2(\hat c)}\left[d_3\nu^{\mu}_2(\hat c)- \frac{r_3(1-a_{32}(1-a_{21}))}{\nu^{\mu}_3}\right]\geq 0,
\end{align*}
where we used \eqref{betamu3} and \eqref{eq:5-9} for the first equality, and used $\frac{r_3(1-a_{32}(1-a_{21}))}{d_3\nu^{\mu}_3}<\nu^{\mu}_2(\hat c)\leq \nu^{\mu}_3$ for the last inequality.
\end{itemize}
The proof is thereby completed.
 \end{proof}

To set up the proof by continuity, 
let $\rho^\mu_{{\rm nlp}}$ be the unique solution of \eqref{eq:w3nlpmu}, and define
\begin{equation}\label{E}
    \mathcal{E}:=\left\{\mu\in[0,1]:w_{3,*}(1,c_2)\geq  \rho^\mu_{{\rm nlp}}(c_2)\right\},
\end{equation}
We establish in the next two propositions that $\overline{c}_3\leq \beta^\mu_3$ for all $\mu\in\mathcal{E}$.
\begin{proposition}\label{prop:overc3}
If $\mu \in \mathcal{E}$, then  either $\overline{c}_3\leq \beta^\mu_3$ or $ w_{3,*}(1,c) \geq \rho^{\mu}_{\rm nlp}(c)$ for all $c \in [\beta^{\mu}_3, c_2]$.
\end{proposition}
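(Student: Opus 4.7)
The plan is to run a continuation-in-$c$ argument on the closed set where the desired pointwise bound holds. I would introduce
\[G := \{c \in [\beta^\mu_3, c_2] : w_{3,*}(1, c) \geq \rho^\mu_{\rm nlp}(c)\},\]
which is closed in $[\beta^\mu_3, c_2]$ because $w_{3,*}(1, \cdot)$ is lower semicontinuous as a half-relaxed liminf and $\rho^\mu_{\rm nlp}$ is continuous by Lemma \ref{lemma_w3nlp}; the hypothesis $\mu \in \mathcal{E}$ puts $c_2 \in G$. Setting
\[c^* := \inf\{c \in [\beta^\mu_3, c_2] : [c, c_2] \subset G\},\]
closedness of $G$ gives $[c^*, c_2] \subset G$, and in particular $c^* \in G$. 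If $c^* = \beta^\mu_3$, the second alternative $[\beta^\mu_3, c_2] \subset G$ holds and the proof is complete; otherwise $c^* > \beta^\mu_3$, and I aim to reach either the first alternative or a contradiction with the minimality of $c^*$.

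Assuming $c^* > \beta^\mu_3 \geq s^\mu_{\rm nlp}$, the definition \eqref{charact_wnlpmu} together with Lemma \ref{lemma_w3nlp} gives $\rho^\mu_{\rm nlp}(c^*) > 0$, so $w_{3,*}(1, c^*) > 0$. I would then apply Lemma \ref{lemma:5-4} with $\hat c = c^*$ to conclude
\[\overline c_3 \leq s^\mu(c^*), \qquad s^\mu(c^*) \in [\beta^\mu_3, c^*),\]
the inclusion coming from Lemma \ref{lemma:shatc}. If $s^\mu(c^*) = \beta^\mu_3$, the first alternative $\overline c_3 \leq \beta^\mu_3$ follows immediately. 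The only remaining case is $s^\mu(c^*) \in (\beta^\mu_3, c^*)$, in which I claim that $G$ extends strictly below $c^*$, contradicting the definition of $c^*$.

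To justify the extension, I use that $\overline c_3 \leq s^\mu(c^*)$ makes $u_3 \to 0$ along every ray $x = ct$ with $c > s^\mu(c^*)$; combined with $({\rm H}_{c_1, c_2, \lambda})$, which forces $u_1 \to 0$ and $u_2 \to 1$ for $x/t \in (\beta^\mu_3, c_2)$, the reaction $r_3(1 - a_{31}u_1^\epsilon - a_{32} u_2^\epsilon - u_3^\epsilon)$ in the equation for $w_3^\epsilon$ converges in the half-relaxed sense to $r_3(1-a_{32}) = \mathcal{R}^\mu(x/t)$ throughout the sector $\Omega := \{(t,x) : s^\mu(c^*) t < x < c_2 t\}$. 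By stability of viscosity solutions, $w_{3,*}$ is a viscosity super-solution of $\partial_t w + d_3|\partial_x w|^2 + \mathcal{R}^\mu(x/t) = 0$ in $\Omega$, and Lemma \ref{lem:wrhoviscosity} rephrases this as: the slice $s \mapsto w_{3,*}(1, s)$ is a viscosity super-solution of the reduced variational inequality \eqref{eq:w3nlpmu} on the interval $(s^\mu(c^*), c_2)$.

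The remaining step is a comparison via Lemma \ref{lemA2rho} between this super-solution and $\rho^\mu_{\rm nlp}$ on $[s^\mu(c^*), c_2]$. The right endpoint $c_2$ is handled by $\mu \in \mathcal{E}$. The main obstacle will be the left endpoint $s^\mu(c^*)$, where only $w_{3,*}(1, s^\mu(c^*)) \geq 0$ is automatic whereas $\rho^\mu_{\rm nlp}(s^\mu(c^*)) > 0$; I would bridge this by constructing an auxiliary sub-solution in the spirit of Lemma \ref{lemma:sub-solution}, patching a line through $(s^\mu(c^*), 0)$ that lies below $\rho^\mu_{\rm nlp}$ on the interval, or alternatively by carrying out the comparison on a shrunken interval $[s^\mu(c^*) + \delta, c_2]$ and passing $\delta \to 0$ using uniform bounds. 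Either route will yield $w_{3,*}(1, c) \geq \rho^\mu_{\rm nlp}(c)$ on $[s^\mu(c^*), c_2]$, so that $[s^\mu(c^*), c^*) \subset G$, contradicting the definition of $c^*$ and completing the proof.
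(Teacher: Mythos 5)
Your set-up (the closed set $G$, $c^*=\inf$, closedness from lower semicontinuity of $w_{3,*}(1,\cdot)$ and continuity of $\rho^\mu_{\rm nlp}$, the application of Lemma \ref{lemma:5-4} and Lemma \ref{lemma:shatc} at $c^*$, and the passage to a super-solution of the reduced problem on the sector with boundary data from $\mu\in\mathcal{E}$) follows the paper's argument. The gap is in the last step, which is exactly where the real work lies. Your goal, $w_{3,*}(1,c)\ge\rho^\mu_{\rm nlp}(c)$ on all of $[s^\mu(c^*),c_2]$, is both stronger than needed and generally unattainable: at the left endpoint one only knows $w_{3,*}(1,s^\mu(c^*))\ge 0$, while $\rho^\mu_{\rm nlp}(s^\mu(c^*))>0$ since $s^\mu(c^*)>\beta^\mu_3\ge s^\mu_{\rm nlp}$; what is needed (and what the paper proves) is only $w_{3,*}(1,\cdot)\ge\rho^\mu_{\rm nlp}$ on some $[c^*-\delta,c^*]$. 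Neither of your two proposed fixes produces even this weaker statement. The shrunken-interval route is circular: the boundary value at $s^\mu(c^*)+\delta$ is precisely the inequality you are trying to prove. The patched-line route, as you describe it (a line through $(s^\mu(c^*),0)$ lying \emph{below} $\rho^\mu_{\rm nlp}$ on the interval), gives after comparison only $w_{3,*}(1,s)\ge\min\{\rho^\mu_{\rm nlp}(s),\nu\,(s-s^\mu(c^*))\}$, and if the line stays below $\rho^\mu_{\rm nlp}$ this minimum never equals $\rho^\mu_{\rm nlp}$, so no extension of $G$ below $c^*$ follows.

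What is missing is the quantitative slope estimate that makes the patched sub-solution useful: one must choose the slope $\nu_1(s^\mu(c^*))$ from \eqref{eq:nu1} and prove the strict inequality $\rho^\mu_{\rm nlp}(c^*)<\nu_1(s^\mu(c^*))\,\bigl(c^*-s^\mu(c^*)\bigr)$, so that the line lies strictly \emph{above} $\rho^\mu_{\rm nlp}$ in a left neighborhood of $c^*$; then the sub-solution $\min\{\rho^\mu_{\rm nlp}(s),\nu_1(s^\mu(c^*))(s-s^\mu(c^*))\}$ of Lemma \ref{lemma:sub-solution} coincides with $\rho^\mu_{\rm nlp}$ on $[c^*-\delta,c^*]$, and comparison (Lemma \ref{lemA2rho}) with the super-solution obtained from the sector yields $[c^*-\delta,c_2]\subset G$, contradicting minimality. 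This estimate is not formal: it uses that the first alternative in the definition \eqref{eq:smu} of $s^\mu(c^*)$ is active (which your reduction to $s^\mu(c^*)>\beta^\mu_3$ does guarantee), the identity $\rho^\mu_{\rm nlp}(c^*)=\nu^\mu_2(c^*)\,(c^*-s^\mu(c^*))$ from \eqref{wmu3nlplambda2} in Remark \ref{rmk:wmu3}, and the chain $\nu^\mu_2(c^*)\le s^\mu(c^*)/(2d_3)<\nu_1(s^\mu(c^*))$, which follows from $\nu^\mu_2(c^*)\le r_3(1-a_{32}(1-a_{21}))/(d_3\nu^\mu_2(c^*))$. Without this computation the continuation argument cannot be closed, so as written the proposal does not prove the proposition. (A secondary omission: in passing to the limit in \eqref{eq:w3} to get the super-solution property, the discontinuity of the Hamiltonian at $x=c_2t$ is handled in the paper via Proposition \ref{thm:u1v1111}, which relies on hypothesis \eqref{eq:1--1b}; your sector formulation partly sidesteps this, but the point deserves explicit care when $c^*=c_2$.)
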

\begin{proof}
Fix $\mu \in \mathcal{E}$ and define
\begin{equation}\label{D}
    \mathcal{D}_{\mu}:=\left\{c'\in[\beta^\mu_3,c_2]:w_{3,*}(1,c)\geq  \rho^\mu_{{\rm nlp}}(c)
    \,\,\text{ for all }\,\,c\in[c',c_2]
    \right\}.
\end{equation}
First we observe that $\mathcal{D}_{\mu}$ is closed, since $\rho^\mu_{{\rm nlp}}$ is continuous and $w_{3,*}$ is lower semicontinuous. 
Also, $\mathcal{D}_{\mu}$ is non-empty by the hypothesis $c_2\in\mathcal{D}_{\mu}$ (which is in fact equivalent to $\mu \in \mathcal{E}$).
Define $\hat c:=\inf\mathcal{D}_{\mu}$, then $\hat{c} \in \mathcal{D}_{\mu}$ and $\hat{c} \in [\beta^\mu_3, c_2]$. Suppose to the contradiction that Proposition \ref{prop:overc3} fails. Then we have  $\hat c \in (\beta^\mu_3,c_2]$ and $\overline{c}_3> \beta^\mu_3$. 

\smallskip
\noindent {\bf Step 1.} We show that $\overline{c}_3\leq s^\mu(\hat c)$, where $s^\mu(\hat c)\in[\beta^\mu_3,c_2)$ is defined by \eqref{eq:smu}. Taking $\overline{c}_3 > \beta^\mu_3$ into account, this implies in particular  $s^\mu(\hat c)>\beta^\mu_3$.

Since $\hat c\in (\beta^\mu_3,c_2]$ and $\hat c\in\mathcal{D}_{\mu}$, we see that $w_{3,*}(1,\hat c)\geq \rho^\mu_{{\rm nlp}}(\hat c)>0$.
(To see that
the last  term is positive, note that $\beta^\mu_3 \geq s^\mu_{\rm nlp}$ by definition, so that $\hat c>\beta^\mu_3\geq s^\mu_{{\rm nlp}}$. Hence $\rho^\mu_{\rm nlp}(\hat c)>0$ follows from the definition of $s^\mu_{\rm nlp}$ in \eqref{charact_wnlpmu}.)
Then we may apply Lemma \ref{lemma:5-4} to deduce $\overline{c}_3\leq s^\mu(\hat c)$. 
This completes Step 1.

To derive a contradiction to $\hat{c}=\inf \mathcal{D}_\mu$, we will
 find some $\delta=\delta(\hat c)>0$ such that $\hat c-\delta\in\mathcal{D}_{\mu}$
in the following three steps.

\smallskip

\noindent {\bf Step 2.} We show that $w_{3,*}(1,s)\geq {\rho}_{1}(s)$ for all $s\in [s^\mu(\hat c),\hat c ]$, where ${\rho}_{1}$ defines the unique viscosity solution of (for uniqueness see Lemma \ref{lemA2rho})
 \begin{align}\label{underlineW}
\left\{\begin{array}{ll}
\medskip
\min\{\rho-s\rho'+d_3|\rho'|^2+r_3\left(1-a_{32}\right), \rho\}=0 &\mathrm{for}~s\in (s^\mu(\hat c),\hat c),\\
\rho(s^\mu(\hat c))=0,\,\,\,\,\rho(\hat c)= \rho^\mu_{{\rm nlp}}(\hat c ).
\end{array}
\right.
\end{align}
 By Step 1, we have $\overline{c}_3\leq s^\mu(\hat c)$. Thus applying \eqref{eq:H.c1c2} 
 yields
$$ \liminf_{\substack{(t',x')\to (t,x)\\ \epsilon\to 0}}u_1^\epsilon(t',x')\geq \chi_{\{c_2t<x<c_1t\}}\,\,\text{ and }\,\,
\liminf_{\substack{(t',x')\to (t,x)\\ \epsilon\to 0}} u_2^\epsilon(t',x')\geq\chi_{\{s^\mu(\hat c)t<x<c_2t\}}.
$$
Letting $\epsilon \to 0$ in \eqref{eq:w3},  
it is standard \cite[Sect. 6.1]{BarlesLect} (also \cite[Propositions 3.1 and  3.2]{Barles_1990})  to see that $w_{3,*}$ is a viscosity super-solution of
\begin{equation}\label{eq:equ}
  \min\left\{\partial_t w+d_3|\partial_xw|^2+\overline{\mathcal{R}}_3(x/t),w\right\}=0 \,\,\,\,\mathrm{in }~ (0,\infty)\times\mathbb{R},
\end{equation} where
$\overline{\mathcal{R}}_3(s)=r_3(1-a_{31}\chi_{\{c_2<s< c_1\}}-a_{32}\chi_{\{s^\mu(\hat c)<s<c_2\}}).$
(We note that \eqref{eq:spreadingly111} in Proposition \ref{thm:u1v1111} 
is used crucially in the derivation, to deal with the discontinuity of  Hamiltonian function.)

We claim that $w_{3,*}$ is also a viscosity super-solution of
\begin{align}\label{underlineW0}
\left\{\begin{array}{ll}
\medskip
\min\{\partial_t w+d_3|\partial_x w|^2+r_3\left(1-a_{32}\right), w\}=0 &\text{for}~ s^\mu(\hat c)t<x<\hat c t,\\
w(t,s^\mu(\hat c)t)=0,\,\,\,\,w(t,\hat ct)=t \rho^\mu_{{\rm nlp}}(\hat c ) &\text{for}~ t\geq 0.
\end{array}
\right.
\end{align}
First, we check the boundary conditions. Indeed, it follows that
$w_{3,*}(t,s^\mu(\hat c)t)\geq0$ and
$w_{3,*}(t,\hat ct) = tw_{3,*}(1,\hat{c})\geq t \rho^\mu_{{\rm nlp}}(\hat c)$, where the first equality is due to \eqref{eq:w_star300} in Remark \ref{rmk:w1w3}, and the last inequality is due to $\hat{c} \in \mathcal{D}_\mu$. Next, observe that the first part of
\eqref{underlineW0} is the restriction of \eqref{eq:equ} to a subdomain, as
 $\overline{\mathcal{R}}_3(t,x)=r_3\left(1-a_{32}\right)$ when $s^\mu(\hat c)t<x<\hat ct.$
As a result, $w_{3,*}$, being a super-solution of \eqref{eq:equ}, automatically qualifies as a super-solution of \eqref{underlineW0}. Then
we  apply Proposition \ref{lemA3rho}, which exploits the connection between \eqref{underlineW} and \eqref{underlineW0}, to deduce
$$w_{3,*}(t,x)\geq t{\rho}_{1}\left(x/t\right)\quad \text{for }\,\, s^\mu(\hat c)t\leq x\leq\hat c t,$$
so that $w_{3,*}(1,s)\geq {\rho}_{1}(s)$ for all $s\in [s^\mu(\hat c),\hat c ]$.
Step 2 is thus completed.

\smallskip
\noindent {\bf Step 3.} To proceed further,  we show
\begin{equation}\label{goalinequality}
 0<\rho^\mu_{{\rm nlp}}(\hat c)<\nu_4^{\mu}(\hat{c}) \cdot (\hat c -s^\mu(\hat c)),
\end{equation}
where we define (consistently with definition of $\nu_1(\ell)$ in \eqref{eq:nu1})
\begin{equation}\label{eq:3.20b}
\nu_4^{\mu}(\hat{c}):=\nu_1(s^\mu(\hat c))=\frac{s^\mu(\hat c) +\sqrt{(s^\mu(\hat c))^2-\alpha^2_3(1-a_{32})}}{2d_3}.
\end{equation}
Since $s^\mu(\hat c)>\beta^\mu_3$ according to Step 1, the first alternative in \eqref{eq:smu} holds, and we deduce  
$$
s^\mu(\hat c)=d_3\nu^{\mu}_2(\hat c) + \frac{r_3(1-a_{32}(1-a_{21}))}{\nu^{\mu}_2(\hat c)}, \quad  \nu^{\mu}_2(\hat c)\leq\nu^{\mu}_3\,\, \text{ and } \,\, \nu^{\mu}_2(\hat c)\leq\frac{r_3(1-a_{32}(1-a_{21}))}{d_3\nu^{\mu}_3}.$$ This implies
$\nu^{\mu}_2(\hat c)\leq\frac{r_3(1-a_{32}(1-a_{21}))}{d_3\nu^{\mu}_2(\hat c)}=\frac{s^\mu(\hat c)}{d_3}-\nu^{\mu}_2(\hat c)$, so that by \eqref{eq:3.20b} we derive that
\begin{equation*}
  \nu^{\mu}_2(\hat c)\leq\frac{s^\mu(\hat c)}{2d_3}<\nu_4^{\mu}(\hat{c}).
\end{equation*}
This, together with \eqref{wmu3nlplambda2} in Remark \ref{rmk:wmu3}, implies 
\begin{equation*}
0<\rho^\mu_{{\rm nlp}}(\hat c)=\nu^{\mu}_2(\hat c)\cdot(\hat c-s^\mu(\hat c))<\nu_4^{\mu}(\hat{c})\cdot(\hat c -s^\mu(\hat c)).
\end{equation*}
(Note that  $\rho^\mu_{{\rm nlp}}(\hat c)>0$ since $\hat{c} > \beta^{\mu}_3 \geq s^{\mu}_{\rm nlp}$ as in Step 1.) We have proved
 \eqref{goalinequality}.

\smallskip
 \noindent {\bf Step 4.} We show that there exists some $\delta>0$ such that $\hat c-\delta\in\mathcal{D}_{\mu}$, which contradicts $\hat c=\inf\mathcal{D}_{\mu}$ and  completes the proof of Proposition \ref{prop:overc3}.

First, we apply Lemma \ref{lemma:sub-solution} with ${\ell'}=\hat c$, ${{\ell}}=s^\mu(\hat c)$ to conclude that
\begin{equation}\label{inequalityunderw}
{\rho}_{3}(s):=\min\left\{ \rho^\mu_{{\rm nlp}}(s), \,\nu_4^{\mu}(\hat{c})\cdot(s-s^\mu(\hat c))\right\}
\end{equation}
 is a viscosity sub-solution of \eqref{underlineW}, where $\nu_4^{\mu}(\hat{c})=\nu_1(s^\mu(\hat c))$ is defined in \eqref{eq:3.20b}. See Figure \ref{figure1} for a typical profile of ${\rho}_{3}$.
Since ${\rho}_{1}$ is a viscosity solution of \eqref{underlineW} by definition, we can apply
comparison principle in Lemma \ref{lemA2rho} to  deduce that
 \begin{equation}\label{inequaliywmu}
   {\rho}_{1}(s)\geq {\rho}_{3}(s)\,\quad\text{ for }s\in[s^\mu(\hat c),\hat c].
 \end{equation}
By \eqref{goalinequality}, it follows by continuity that there exists $\delta \in (0, \hat{c} - s^{\mu}(\hat{c}))$ such that
$$
0<\rho^\mu_{{\rm nlp}}(s) <\nu_4^{\mu}(\hat{c})\cdot(s- s^\mu(\hat{c})) \quad \text{ for } s\in[ \hat{c} -\delta ,\hat{c}].
$$
On account of \eqref{inequalityunderw}, we have ${\rho}_{3}(s) = \rho^{\mu}_{\rm nlp}(s)$ in $[\hat{c}-\delta,\hat{c}]$, so that
\begin{equation}\label{eq:uu?}
w_{3,*}(1,s)\geq {\rho}_{1}(s)\geq {\rho}_{3}(s)
=\rho^\mu_{{\rm nlp}}(s)\,\,\text{ for all }s\in [\hat c -\delta, \hat c ],
\end{equation}
where the first inequality follows from Step 2 and the second one is due to \eqref{inequaliywmu}.
Since $\hat{c} \in \mathcal{D}_\mu$, we already have $w_{3,*}(1,s) \geq \rho^\mu_{\rm nlp}(s)$ for $s \in [\hat{c},c_2]$. Taking \eqref{eq:uu?} into account,
we  thus arrive at  $\hat c-\delta\in\mathcal{D}_{\mu}$,   a contridiction. Step 4 is  completed and Proposition \ref{prop:overc3}  is proved.
\end{proof}
\begin{figure}[http!!]
  \centering
\includegraphics[height=1.4in]{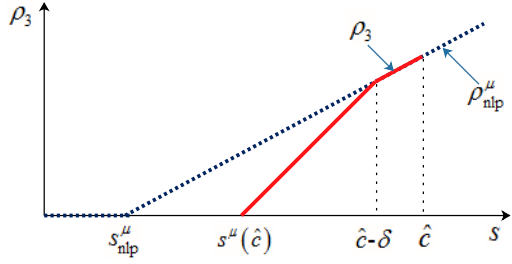}
  \caption{A typical profile of  $\rho_3$.}
  \label{figure1}
  \end{figure}

We improve Proposition \ref{prop:overc3} by removing one alternative in its conclusion.

\begin{proposition}\label{prop:overc3_1}
 Assume that $({\rm H}_{c_1,c_2,\lambda})$ holds. If
 $\mu \in \mathcal{E}$, 
 then
$\overline{c}_3 \leq \beta^\mu_3$, 
where  $\beta^\mu_3=\max\{s^\mu_{{\rm nlp}},c_{\rm{LLW}}\}$ with $s^\mu_{{\rm nlp}}$ given by \eqref{charact_wnlpmu}.
\end{proposition}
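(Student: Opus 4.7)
The plan is to reduce the proposition to a short limiting argument built on top of Proposition \ref{prop:overc3}. Since the conclusion of Proposition \ref{prop:overc3} is a disjunction in which the first alternative is already the desired inequality $\overline{c}_3 \leq \beta^\mu_3$, the whole task reduces to showing that the second alternative, namely $w_{3,*}(1,c) \geq \rho^\mu_{\rm nlp}(c)$ for every $c \in [\beta^\mu_3, c_2]$, also forces $\overline{c}_3 \leq \beta^\mu_3$.

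To carry this out, I would pick a sequence $c_n \in (\beta^\mu_3, c_2]$ with $c_n \downarrow \beta^\mu_3$; such a sequence exists because $\beta^\mu_3 < c_2$ by Lemma \ref{lem:betamu3}. For each $n$ the hypothesis of Lemma \ref{lemma:5-4} with $\hat{c} = c_n$ reads $w_{3,*}(1,c_n) \geq \rho^\mu_{\rm nlp}(c_n) > 0$: the first inequality is provided by the second alternative of Proposition \ref{prop:overc3}, while the strict positivity of $\rho^\mu_{\rm nlp}(c_n)$ follows from $c_n > \beta^\mu_3 \geq s^\mu_{\rm nlp}$ together with the characterization \eqref{charact_wnlpmu} of $s^\mu_{\rm nlp}$ as the supremum of the zero set of $\rho^\mu_{\rm nlp}$. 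Lemma \ref{lemma:5-4} then delivers $\overline{c}_3 \leq s^\mu(c_n)$ for every $n$.

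The closing step is a squeeze argument. By Lemma \ref{lemma:shatc}, each $s^\mu(c_n)$ lies in the interval $[\beta^\mu_3, c_n)$, so passing to the limit $n \to \infty$ and using $c_n \downarrow \beta^\mu_3$ forces $s^\mu(c_n) \to \beta^\mu_3$, and therefore $\overline{c}_3 \leq \beta^\mu_3$, as desired. Because all the analytical difficulty has already been absorbed into Proposition \ref{prop:overc3} and Lemma \ref{lemma:5-4}, I do not anticipate a genuine obstacle; the only point requiring care is the verification that the hypothesis $w_{3,*}(1,c_n) \geq \rho^\mu_{\rm nlp}(c_n)$ is available along the whole approximating sequence, which is immediate once the second alternative is assumed.
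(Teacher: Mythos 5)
Your proposal is correct: the hypotheses of Lemma \ref{lemma:5-4} are indeed available at every $c_n\in(\beta^\mu_3,c_2]$ under the second alternative of Proposition \ref{prop:overc3} (positivity of $\rho^\mu_{\rm nlp}(c_n)$ follows, as you say, from $c_n>\beta^\mu_3\geq s^\mu_{\rm nlp}$ and the fact that the zero set of $\rho^\mu_{\rm nlp}$ is exactly $[0,s^\mu_{\rm nlp}]$ by Lemma \ref{lemma_w3nlp}), and then $\overline{c}_3\leq s^\mu(c_n)<c_n\downarrow\beta^\mu_3$ closes the argument. The reduction via Proposition \ref{prop:overc3} is the same as in the paper, but your way of eliminating the second alternative differs: the paper does not re-invoke Lemma \ref{lemma:5-4} at all; instead it observes that $w_{3,*}(1,c)\geq\rho^\mu_{\rm nlp}(c)>0$ for every $c\in(\beta^\mu_3,c_2)$ and converts this, directly through the definition \eqref{eq:w_star3} of the lower half-relaxed limit, into $u_3(t,ct)\to 0$ exponentially along each such ray, hence $\overline{c}_3\leq\beta^\mu_3$, contradicting the standing assumption $\overline{c}_3>\beta^\mu_3$. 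The paper's closing step is more elementary (no new comparison is needed, only the meaning of the rate function), at the cost of translating pointwise-in-speed exponential smallness into the statement about $\overline{c}_3$; your route pays instead by re-running the heavier machinery packaged in Lemma \ref{lemma:5-4}, but it buys a cleaner bookkeeping, since the conclusion $\overline{c}_3\leq s^\mu(\hat c)$ is already stated in terms of $\overline{c}_3$ and only the squeeze $s^\mu(c_n)\in[\beta^\mu_3,c_n)$ from Lemma \ref{lemma:shatc} is needed. Both arguments are sound and rest on the same three ingredients already established in the paper.
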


\begin{proof}
If Proposition \ref{prop:overc3_1} fails, i.e. $\overline{c}_3 > \beta^\mu_3$, then by Proposition \ref{prop:overc3},  we deduce
$$
w_{3,*}(1,c)\geq  \rho^\mu_{{\rm nlp}}(c) \quad \text{ for all } \,\,\, c \in [\beta^{\mu}_3, c_2].
$$
Since $\beta^\mu_3 \geq s^\mu_{{\rm nlp}}$ (see the definition of  $\beta^\mu_3$ in \eqref{def:betamu}),  we have $(\beta^{\mu}_3,c_2) \subset (s^\mu_{{\rm nlp}}, c_2)$. It follows from the definition of $s^\mu_{{\rm nlp}}$ in  \eqref{charact_wnlpmu} that
\begin{equation}\label{w3>0}
    w_{3,*}(1,c)\geq \rho^\mu_{{\rm nlp}}(c)>0 \,\,\text{ for any } c\in (\beta^\mu_3,c_2).
\end{equation}
Recalling \eqref{eq:w_star3},  we see that for each $c\in (\beta^\mu_3,c_2)$,
$$ \lim_{\epsilon \to 0} u_3\left(\frac{1}{\epsilon},\frac{ c}{\epsilon}\right) \leq  \lim_{\epsilon \to 0} \exp\left(-\frac{ w_{3,*}(1, c) + o(1)}{\epsilon}\right)=0,
$$
so that by  \eqref{w3>0} we derive that
$$
\lim_{t\to\infty} \sup_{ct<x <c_2t}  u_3(t,x) = 0  \, \text{ for each }c\in (\beta^\mu_3,c_2).
$$
Therefore, we reach $\overline{c}_3 \leq \beta^\mu_3$, a contradiction. This completes the proof.
\end{proof}

\subsubsection{Bootstrapping  up to $\mu=1$}\label{subsection5_2}
We proceed to prove $\overline{c}_3 \leq \beta_3$ in this subsection,  where  $\beta_3=\max\{s_{{\rm nlp}},c_{\rm{LLW}}\}$. In view of Proposition \ref{prop:overc3_1} (see also Remark \ref{rmk:defbeta3}), it is enough to show that $1 \in \mathcal{E}$.
We will argue with a continuity argument.

\begin{lemma}\label{lemma:0inE}
Assume that $({\rm H}_{c_1,c_2,\lambda})$ holds. 
Then $0\in\mathcal{E}$.
\end{lemma}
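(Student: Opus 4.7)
The goal is to verify $0 \in \mathcal{E}$, which by \eqref{E} amounts to showing $w_{3,*}(1, c_2) \geq \rho^0_{\rm nlp}(c_2)$. My plan is to bound $w_3^\epsilon$ from below using the crudest possible estimate of the reaction rate, namely the constant $r_3$, and then to identify the resulting value at $s = c_2$ with $\rho^0_{\rm nlp}(c_2)$ via the explicit formulas of Lemma \ref{lemA1rho}.

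\textbf{Step 1 (coarse super-solution).} Because $u_1^\epsilon, u_2^\epsilon, u_3^\epsilon \geq 0$, we have
\[
r_3\bigl(1 - a_{31}u_1^\epsilon - a_{32}u_2^\epsilon - u_3^\epsilon\bigr) \leq r_3,
\]
so \eqref{eq:w3}, combined with $w_3^\epsilon \geq 0$, shows that $w_3^\epsilon$ is a viscosity super-solution of the $\epsilon$-problem with reaction rate replaced by the \emph{constant} $r_3$. Since this replacement reaction rate is continuous in $(t,x)$, no delicate treatment of discontinuities is required: the standard Barles--Perthame half-relaxed limit (as sketched at the start of Section \ref{sec:prelim}) yields that $w_{3,*}$ is a viscosity super-solution of
\[
\min\bigl\{\partial_t w + d_3|\partial_x w|^2 + r_3,\; w\bigr\} = 0 \quad \text{in }(0,\infty)\times(0,\infty).
\]
The boundary and initial data required by Proposition \ref{lemA3rho} are provided by Remark \ref{rmk:w1w20} (giving $w_{3,*}(t,0)=0$ and $w_{3,*}(0,x) = \lambda x$ when $\lambda \in (0,\infty)$), and by the compact support of $u_{3,0}$ when $\lambda = \infty$ (giving $w_{3,*}(0,x) = +\infty$ for $x>0$).

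\textbf{Step 2 (Proposition \ref{lemA3rho} and matching values).} I apply Proposition \ref{lemA3rho} with $\hat r = r_3$, $\hat d = d_3$, $g \equiv 0$, and $\hat\lambda = \lambda$; the hypothesis $({\rm H}_g)$ is satisfied vacuously with $c_g = \sigma_3(\lambda)$. This yields
\[
w_{3,*}(1, c_2) \geq \hat\rho(c_2),
\]
where $\hat\rho$ is the unique viscosity solution of \eqref{hatrho0} with these parameters. On the other hand, $\rho^0_{\rm nlp}$ is the unique viscosity solution of \eqref{hatrho0} with the \emph{same} $\hat r, \hat d, \hat\lambda$ but with $g(s) = r_3 a_{32}\chi_{\{s \leq c_2\}}$, whose support is $[0, c_2]$. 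Since $c_2 > \sigma_3(\lambda)$ by Definition \ref{def:h3}(i), Lemma \ref{lemA1rho} furnishes closed-form expressions for both $\hat\rho$ (taking $c_g = \sigma_3(\lambda)$) and $\rho^0_{\rm nlp}$ (taking $c_g = c_2$) on $[c_2, \infty)$ that depend only on $\hat r, \hat d, \hat\lambda$ and are \emph{independent} of $g$. A short case analysis, separating $\lambda \lessgtr \sqrt{r_3/d_3}$ and $c_2 \lessgtr 2d_3 \lambda$, confirms that $\hat\rho(c_2) = \rho^0_{\rm nlp}(c_2)$ in each case. Combining the two estimates gives $w_{3,*}(1, c_2) \geq \rho^0_{\rm nlp}(c_2)$, hence $0 \in \mathcal{E}$.

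\textbf{Main obstacle.} The delicate step of {Theorem A}'s overall argument---handling the discontinuity of the limit Hamiltonian at $s = c_2$, which in the bootstrap at general $\mu \in (0,1]$ relies on the refined spreading behaviour from Proposition \ref{thm:u1v1111}---is sidestepped entirely at $\mu = 0$ by the use of the constant upper bound $r_3$. The only nontrivial bookkeeping is the case analysis at the end of Step 2, which amounts to a routine comparison of the explicit formulas from Lemma \ref{lemA1rho}; this works precisely because $g$ is supported in $[0, c_2]$ and therefore does not enter the branch of the solution at the endpoint $s = c_2$.
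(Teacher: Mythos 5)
Your proposal is correct, and it follows the same overall strategy as the paper's proof: show that $w_{3,*}$ is a viscosity super-solution of an auxiliary variational inequality, invoke Proposition \ref{lemA3rho} to bound $w_{3,*}(1,c_2)$ from below by the value at $s=c_2$ of the corresponding solution of \eqref{hatrho0}, and then identify that value with $\rho^0_{\rm nlp}(c_2)$ through the $g$-independence of the formulas in Lemma \ref{lemA1rho} on $[c_g,\infty)$, using $c_2>\sigma_3(\lambda)$. The only real difference is the choice of auxiliary reaction: the paper keeps the $u_2$ term, using \eqref{eq:H.c1c2} to pass to $g=r_3a_{32}\chi_{\{\overline{c}_3<s<c_2\}}$ (so its comparison function $\rho_4$ has the same $c_g=c_2$ as $\rho^0_{\rm nlp}$ and the endpoint identification is immediate), whereas you discard $u_1,u_2,u_3$ altogether and take $g\equiv 0$. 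Your variant is marginally simpler at the PDE level --- the limit Hamiltonian is continuous, so no care with discontinuous coefficients or with the spreading information on $u_2$ is needed at this step --- at the cost of the short case analysis in $\lambda$ matching the two explicit formulas (which you could avoid by applying Lemma \ref{lemA1rho} to $\hat\rho$ with $c_g=c_2$ as well, legitimate since ${\rm spt}\,g=\emptyset$). Both arguments hinge on the same fact that the solution of \eqref{hatrho0} restricted to $\{s\geq c_2\}$ does not see $g$, and your handling of the boundary/initial data (Remark \ref{rmk:w1w20}, and the $\lambda=\infty$ case) is at the same level of rigor as the paper's.
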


\begin{proof}
Observe from \eqref{eq:H.c1c2}
that
$$ 
\liminf_{\substack{(t',x')\to (t,x)\\ \epsilon\to 0}}u_2^\epsilon(t',x')\geq\chi_{\{\overline{c}_3t<x<c_2t\}}.$$
By a standard verification,  we assert that $w_{3,*}$ is a viscosity super-solution of
\begin{equation*}
\left \{
\begin{array}{ll}
\smallskip
\min\{\partial_t w+d_3|\partial_xw|^2+r_3(1-a_{32}\chi_{\{\overline{c}_3 t<x<c_2 t\}}),w\}=0 &\text{in }(0,\infty)\times (0,\infty),\\
\smallskip
w(t, 0)=0, \qquad w(0,x)=\lambda x & \text{for }t\geq 0,\, x\geq 0,
\end{array}
\right.
\end{equation*}
where $\lambda\in(0,\infty]$ is given in $({\rm H}_{c_1,c_2,\lambda})$, and the boundary conditions have been verified in Remark \ref{rmk:w1w20}.
A direct application of Proposition \ref{lemA3rho} yields
 \begin{equation}\label{coparisonresult}
  w_{3,*}(t,x) \geq t{\rho}_{4}\left(x/t\right)\,\,\text{ in }~ [0,\infty)\times [0,\infty), 
  \end{equation}
where ${\rho}_{4}(s)$  is the unique viscosity solution of
\begin{align}\label{eq:sub-solutionw2}
\left\{\begin{array}{l}
\smallskip
\min\{\rho-s\rho'+d_3|\rho'|^2+r_3(1-a_{32}\chi_{\{\overline{c}_3 <s<c_2\}}),\rho\}=0 \,\,\text{ in } (0,\infty),\\
\rho(0)=0,\quad\lim\limits_{s\to\infty}\frac{\rho(s)}{s}=\lambda.
\end{array}
\right.
\end{align}

We recall from \eqref{eq:w3nlpmu} that $\rho^0_{{\rm nlp}}$ defines the unique viscosity solution of
\begin{align}\label{w03nlp}
\left\{\begin{array}{l}
\smallskip
\min\{\rho-s\rho'+d_3|\rho'|^2+r_3(1-a_{32}\chi_{\{s\leq c_2\}}),\rho\}=0 \,\,\text{ in } (0,\infty),\\
\rho(0)=0,\quad\lim\limits_{s\to\infty}\frac{\rho(s)}{s}=\lambda.
\end{array}
\right.
\end{align}
Regarding \eqref{eq:sub-solutionw2} and \eqref{w03nlp},  we apply Lemma \ref{lemA1rho} with $c_{g}=c_2$ and $g=r_3a_{32}\chi_{\{\overline{c}_3 <s<c_2\}}$ or $g=r_3a_{32}\chi_{\{s\leq c_2\}}$ 
to deduce that
${\rho}_{4}(c_2)=\rho^0_{{\rm nlp}}(c_2)$,
by which we  deduce from \eqref{coparisonresult} that  $w_{3,*}(1,c_2)\geq {\rho}_{4}(c_2)=\rho^0_{{\rm nlp}}(c_2)$, so that $0\in\mathcal{E}$ by the definition of $\mathcal{E}$ in \eqref{E}.
\end{proof}
We now state the main result of this section.
\begin{proposition}\label{prop:estamoverc3}
Let $(u_i)_{i=1}^3$  be any solution of \eqref{eq:1-1} such that $({\rm H}_{c_1,c_2,\lambda})$ holds. 
Then
$$\overline{c}_3 \leq \beta_3=\max\{s_{{\rm nlp}},c_{\rm{LLW}}\}, $$
where  
$s_{ {\rm nlp}}=s_{ {\rm nlp}}^\mu \big|_{\mu=1}$, and $s_{ {\rm nlp}}^\mu$ is given by \eqref{charact_wnlpmu}.
\end{proposition}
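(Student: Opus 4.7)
By Proposition \ref{prop:overc3_1} and Remark \ref{rmk:defbeta3}, it suffices to establish that $1 \in \mathcal{E}$, where $\mathcal{E}$ is defined in \eqref{E}. The strategy is a continuity argument on the parameter $\mu \in [0,1]$, bootstrapping from the base case $\mu=0$ already obtained in Lemma \ref{lemma:0inE}.

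First I would verify that $\mathcal{E}$ is closed in $[0,1]$. By Lemma \ref{lemma_w3nlp}, the map $\mu \mapsto \rho^\mu_{\rm nlp}(c_2)$ is Lipschitz continuous; combined with the lower semicontinuity of $w_{3,*}$, the defining inequality $w_{3,*}(1,c_2)\geq \rho^\mu_{\rm nlp}(c_2)$ passes to limits in $\mu$. Since $0 \in \mathcal{E}$, the supremum $\mu^\ast := \sup \mathcal{E}$ lies in $\mathcal{E}$. The goal then becomes $\mu^\ast = 1$.

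I would argue by contradiction and assume $\mu^\ast < 1$. Since $\mu^\ast \in \mathcal{E}$, Proposition \ref{prop:overc3_1} yields $\overline{c}_3 \leq \beta^{\mu^\ast}_3$, and Lemma \ref{lem:betamu3} upgrades this to the strict bound $\overline{c}_3 \leq \beta^{\mu^\ast}_3 < c_2$. The point is to exploit this strict inequality to upgrade the super-solution inequality for $w_{3,\ast}$. From \eqref{eq:H.c1c2} one has $\liminf_{\epsilon \to 0} u_1^\epsilon \geq \chi_{\{c_2 < x/t < c_1\}}$ and $\liminf_{\epsilon \to 0} u_2^\epsilon \geq \chi_{\{\overline{c}_3 < x/t < c_2\}}$ in the sense of half-relaxed limits, with Proposition \ref{thm:u1v1111} controlling the discontinuity issue. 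Following the standard derivation (as in \cite[Sect.~6.1]{BarlesLect} and \cite[Propositions 3.1, 3.2]{Barles_1990}, exactly as in the proof of Proposition \ref{prop:overc3}), $w_{3,\ast}$ is a viscosity super-solution of
\begin{equation*}
\min\bigl\{\partial_t w + d_3|\partial_x w|^2 + r_3\bigl(1 - a_{31}\chi_{\{c_2 < x/t < c_1\}} - a_{32}\chi_{\{\overline{c}_3 < x/t < c_2\}}\bigr),\, w\bigr\} = 0
\end{equation*}
in $(0,\infty)\times(0,\infty)$, with $w(t,0) = 0$ and initial trace $w(0,x) = \lambda x$ as in Remark \ref{rmk:w1w20}. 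Applying Proposition \ref{lemA3rho}, I get $w_{3,\ast}(1,s) \geq \tilde\rho(s)$, where $\tilde\rho$ is the unique viscosity solution of the corresponding reduced variational inequality with the same boundary data.

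The final and hardest step is to show the strict inequality $\tilde\rho(c_2) > \rho^{\mu^\ast}_{\rm nlp}(c_2)$. Granted this, the Lipschitz continuity of $\mu \mapsto \rho^\mu_{\rm nlp}(c_2)$ (Lemma \ref{lemma_w3nlp}) gives $w_{3,\ast}(1,c_2) \geq \tilde\rho(c_2) \geq \rho^{\mu^\ast+\delta}_{\rm nlp}(c_2)$ for some $\delta > 0$, contradicting the maximality of $\mu^\ast$ and completing the proof. The key observation is that the reaction $\tilde{\mathcal{R}}$ for $\tilde\rho$ carries the \emph{full} discount $-r_3 a_{31}$ on $(c_2,c_1)$ (coming from the true convergence $u_1 \to 1$), while $\mathcal{R}^{\mu^\ast}$ carries only $-\mu^\ast r_3 a_{31}$ there, a strict improvement by $r_3 a_{31}(1-\mu^\ast) > 0$. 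Converting this pointwise improvement on $(c_2,c_1)$ into a strict gain at $s = c_2$ is the main technical obstacle, since on $[0,\overline{c}_3]$ the comparison goes the other way ($\tilde{\mathcal{R}} > \mathcal{R}^{\mu^\ast}$ there). I would handle this via a local sub-solution construction on a subinterval $[c_2, c_1]$ in the spirit of Lemma \ref{lemma:sub-solution} and Step 3--4 of the proof of Proposition \ref{prop:overc3}: taking $\rho^{\mu^\ast}_{\rm nlp}(c_1)$ as the right boundary value, propagating backward through $(c_2,c_1)$ with the strictly smaller reaction $r_3(1-a_{31})$, and thereby obtaining the desired strict gain at $s = c_2$.
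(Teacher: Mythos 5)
Your scaffolding is the same as the paper's: closedness and nonemptiness of $\mathcal{E}$ (Lemma \ref{lemma_w3nlp}, Lemma \ref{lemma:0inE}), passing to $\mu^\ast=\sup\mathcal{E}\in\mathcal{E}$, invoking Proposition \ref{prop:overc3_1}, deriving the super-solution property of $w_{3,*}$ with the improved reaction, and comparing with a reduced solution via Proposition \ref{lemA3rho}. The gap is in the closing step, and it is genuine. A comparison argument confined to $[c_2,c_1]$ cannot produce a lower bound for $\tilde\rho(c_2)$: for this stationary variational inequality the value at an interior speed is not determined by the data to its right alone, and in a comparison on the bounded interval $[c_2,c_1]$ the point $c_2$ is a boundary point whose value is precisely the unknown; ``propagating backward from $c_1$ with the smaller reaction'' gives nothing unless your sub-solution extends leftward to a point where $\tilde\rho$ is already controlled from below. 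This is exactly why the paper truncates the domain to $\{x>s^{\mu_M}_{\rm nlp}t\}$ (where the super-solution has zero boundary data, neutralizing the region $(0,\overline{c}_3)$ where, as you note, the reaction comparison goes the wrong way), and glues $\rho^{\mu_\sharp}_{\rm nlp}$ to the steep linear cap $\nu_1(s^{\mu_M}_{\rm nlp})\,(s-s^{\mu_M}_{\rm nlp})$ via Lemma \ref{lemma:sub-solution}; the admissibility of that cap at $c_2$ (Step 1 of the paper's proof) rests on Lemma \ref{w3nllp}, which needs the case $s^{\mu_M}_{\rm nlp}>c_{\rm LLW}$ --- a case distinction your plan omits (in the opposite case one concludes at once since $\beta^{\mu_M}_3=c_{\rm LLW}\le\beta_3$, and there is no need to push $\mu^\ast$ to $1$).

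Second, the strict inequality $\tilde\rho(c_2)>\rho^{\mu^\ast}_{\rm nlp}(c_2)$, on which your Lipschitz bump-up relies, is not established and need not hold: in the regimes of the last alternative of Theorem C the free boundary and the profile near $c_2$ are governed by the reaction $r_3(1-a_{32})$ on $(s_{\rm nlp},c_2)$ alone, so no strict gain at $c_2$ can be extracted from the reaction difference on $(c_2,c_1)$. The paper's argument deliberately avoids any strictness of values: for any $\mu_\sharp\in(\mu_M,1)$ sufficiently close to $\mu_M$, the function $\rho^{\mu_\sharp}_{\rm nlp}$ is automatically a viscosity sub-solution of the full-reaction problem \eqref{eq:super-solution} (a sub-solution for the larger reaction $\mathcal{R}^{\mu_\sharp}$ is one for the smaller reaction $\overline{\mathcal{R}}_3$), the glued sub-solution $\rho_7$ equals $\rho^{\mu_\sharp}_{\rm nlp}(c_2)$ at $s=c_2$, and comparison on $[s^{\mu_M}_{\rm nlp},c_1]$ --- using that all these solutions coincide on $[c_1,\infty)$ by Lemma \ref{lemA1rho} --- gives $w_{3,*}(1,c_2)\ge\rho^{\mu_\sharp}_{\rm nlp}(c_2)$, i.e. $\mu_\sharp\in\mathcal{E}$, directly contradicting $\mu_M=\sup\mathcal{E}$. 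You would need to replace your local $[c_2,c_1]$ construction by a sub-solution of this global kind (with controlled left boundary value and with the target value $\rho^{\mu_\sharp}_{\rm nlp}(c_2)$ attained at $c_2$) for the argument to close.
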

\begin{remark}\label{rmk:u1upper}
By Proposition \ref{prop:estamoverc3}, we have $\overline{c}_3 \leq \beta_3= \max\{ s_{\rm nlp}, c_{\rm{LLW}}\}$. Hence species $u_1$ is controlled by species $u_2$ in the region $\{(t,x):\beta_3 t \leq x \leq c_2 t\}$ for $t \gg 1$. Precisely, one may apply \eqref{eq:H.c1c2} 
to deduce that
$$
\limsup_{\substack{(t',x')\to (t,x)\\ \epsilon\to 0}}  u_1^\epsilon(t',x')\leq \chi_{\{c_2t\leq x\leq c_1t\}}+\chi_{\{x\leq\beta_3t\}},
$$
where $u_1^\epsilon$ is defined by \eqref{scaling}.
\end{remark}

\begin{proof}[Proof of Proposition {\rm\ref{prop:estamoverc3}}]
 In order to apply Proposition \ref{prop:overc3_1}, we will show $1 \in \mathcal{E}$ by a continuity argument.
First, we claim that $\mathcal{E}$ is closed and non-empty. It is closed since
$\rho^\mu_{{\rm nlp}}(c_2)$ is continuous in $\mu$ (see Lemma \ref{lemma_w3nlp}). The set
$\mathcal{E}$ is non-empty because of $0\in\mathcal{E}$, which is proved in Lemma \ref{lemma:0inE}.
Define $\mu_{\hspace{-.5pt}M}=\sup\mathcal{E}$ such that $\mu_{\hspace{-.5pt}M}\in[0,1]$. By the closedness of $\mathcal{E}$,
we have $\mu_{\hspace{-.5pt}M}\in\mathcal{E}$, so that Proposition \ref{prop:overc3_1} implies
\begin{equation}\label{eatmatover}
  \overline{c}_3 \leq \beta^{\mu_{\hspace{-.5pt}M}}_3,
\end{equation}
where  $\beta^{\mu_{\hspace{-.5pt}M}}_3=\max\{s^{\mu_{\hspace{-.5pt}M}}_{{\rm nlp}},c_{\rm{LLW}}\}$ with $s^{\mu_{\hspace{-.5pt}M}}_{{\rm nlp}}\in(0,c_2)$ given by \eqref{charact_wnlpmu}. If $s^{\mu_{\hspace{-.5pt}M}}_{{\rm nlp}}\leq c_{\rm{LLW}}$, then Proposition \ref{prop:estamoverc3} can be established by \eqref{eatmatover}, since
$$\overline{c}_3 \leq \beta^{\mu_{\hspace{-.5pt}M}}_3=c_{\rm{LLW}}\leq \max\{s_{\rm nlp}, c_{\rm{LLW}}\}=\beta_3.$$
 Therefore, it remains to consider the case 
 $s^{\mu_{\hspace{-.5pt}M}}_{{\rm nlp}}> c_{\rm{LLW}}$ and prove $\mu_{\hspace{-.5pt}M}=1$.

Suppose to the contrary that $\mu_{\hspace{-.5pt}M}<1$ and  $s^{\mu_{\hspace{-.5pt}M}}_{{\rm nlp}} > c_{\rm LLW}$.  Then
\begin{equation}\label{eq:ssssnlp}
\overline{c}_3\leq\beta^{\mu_{\hspace{-.5pt}M}}_3= \max\{s^{\mu_{\hspace{-.5pt}M}}_{{\rm nlp}}, c_{\rm{LLW}}\}=s^{\mu_{\hspace{-.5pt}M}}_{{\rm nlp}}.
\end{equation} 
Again by \eqref{eq:H.c1c2}, 
we deduce that
$$ \liminf_{\substack{(t',x')\to (t,x)\\ \epsilon\to 0}}u_1^\epsilon(t',x')\geq \chi_{\{c_2t<x<c_1t\}}\,\,\text{ and }\,\,\liminf_{\substack{(t',x')\to (t,x)\\ \epsilon\to 0}}u_2^\epsilon(t',x')\geq\chi_{\{s^{\mu_{\hspace{-.5pt}M}}_{{\rm nlp}}<x<c_2t\}},$$
where the second inequality follows from \eqref{eq:ssssnlp}.
 Letting $\epsilon \to 0$ in \eqref{eq:w3}, by \eqref{eq:spreadingly111} in Proposition \ref{thm:u1v1111} and  Remark \ref{rmk:w1w20},
we may check that $w_{3,*}$ is a viscosity super-solution of  
 \begin{equation}\label{eq:super-solution00}
\left \{
\begin{array}{ll}
\smallskip
\min\{\partial_t w+d_3|\partial_xw|^2+\overline{\mathcal{R}}_{3}(\frac{x}{t}),w\}=0 &\text{for }x>s^{\mu_{\hspace{-.5pt}M}}_{{\rm nlp}}t,\\
\smallskip
w(0,x)=\lambda x & \text{for } x\geq 0,\\
w(t,s^{\mu_{\hspace{-.5pt}M}}_{{\rm nlp}} t)=0 & \text{for } t\geq 0,
\end{array}
\right.
\end{equation}
where $\overline{\mathcal{R}}_{3}(s)=r_3(1-a_{31}\chi_{\{c_2<s<c_1\}}-a_{32}\chi_{\{s^{\mu_{\hspace{-.5pt}M}}_{{\rm nlp}}<s<c_2\}})$. By Proposition \ref{lemA3rho}, we deduce that
\begin{equation}\label{comparisonresult20}
  w_{3,*}(t,x)\geq t{\rho}_{5}\left(x/t\right)\,\,\text{ for } \,\,x\geq s^{\mu_{\hspace{-.5pt}M}}_{{\rm nlp}}t,
\end{equation}
 where ${\rho}_{5}$ defines the unique viscosity solution of
 \begin{equation}\label{eq:super-solution}
\left \{
\begin{array}{ll}
\smallskip
\min\{\rho-s\rho'+d_3|\rho'|^2+\overline{\mathcal{R}}_{3}(s),\rho\}=0 &\text{for } s\in(s^{\mu_{\hspace{-.5pt}M}}_{{\rm nlp}},\infty),\\
\rho(s^{\mu_{\hspace{-.5pt}M}}_{{\rm nlp}})=0,\quad\lim\limits_{s\to\infty}\frac{\rho(s)}{s}=\lambda.
\end{array}
\right.
\end{equation}

In what follows, we will show that there exists some  ${\mu_{\sharp}}\in(\mu_{\hspace{-.5pt}M},1)$ such that ${\mu_{\sharp}}\in\mathcal{E}$. This is in contradiction to $\mu_{\hspace{-.5pt}M} = \sup \mathcal{E}$.

\smallskip

\noindent {\bf Step 1.} We choose some ${\mu_{\sharp}}\in(\mu_{\hspace{-.5pt}M},1)$ such that
\begin{equation}\label{c**smallc2}
  \rho^{{\mu_{\sharp}}}_{{\rm nlp}}(c_2) < \nu_1( s^{\mu_{\hspace{-.5pt}M}}_{\rm nlp}) \cdot (c_2-s^{\mu_{\hspace{-.5pt}M}}_{{\rm nlp}}),
\end{equation}
where (see \eqref{eq:nu1} for the definition of $\nu_1(\ell)$)
$$\nu_1( s^{\mu_{\hspace{-.5pt}M}}_{\rm nlp})= \nu_1(\ell)\big|_{\ell = s^{\mu_{\hspace{-.5pt}M}}_{\rm nlp}} = \frac{s^{\mu_{\hspace{-.5pt}M}}_{\rm nlp} + \sqrt{(s^{\mu_{\hspace{-.5pt}M}}_{\rm nlp})^2 - \alpha_3^2(1-a_{32})}}{2d_3}.$$

Indeed,  notice that  $s^{\mu_{\hspace{-.5pt}M}}_{{\rm nlp}}> c_{\rm{LLW}}\geq\alpha_3\sqrt{1-a_{32}}$.  By  Lemma \ref{w3nllp} and the definition of $\lambda^{\mu_{\hspace{-.5pt}M}}_{{\rm nlp}}$ there, we arrive at
$$\rho^{\mu_{\hspace{-.5pt}M}}_{{\rm nlp}}(c_2)= \lambda^{\mu_{\hspace{-.5pt}M}}_{{\rm nlp}}(c_2- {s^{\mu_{\hspace{-.5pt}M}}_{{\rm nlp}}})<\frac{s^{\mu_{\hspace{-.5pt}M}}_{{\rm nlp}}}{2d_3}(c_2-s^{\mu_{\hspace{-.5pt}M}}_{{\rm nlp}})<  \nu_1( s^{\mu_{\hspace{-.5pt}M}}_{\rm nlp}) \cdot(c_2-s^{\mu_{\hspace{-.5pt}M}}_{{\rm nlp}}).$$
Hence, from the continuity of $\rho^{\mu}_{{\rm nlp}}$ in $\mu$ as stated in Lemma \ref{lemma_w3nlp},  we may choose ${\mu_{\sharp}}\in(\mu_{\hspace{-.5pt}M},1)$ to be sufficiently close to $\mu_{\hspace{-.5pt}M}$, so that \eqref{c**smallc2} holds.

\smallskip
\noindent {\bf Step 2.}  Let ${\mu_{\sharp}}\in(\mu_{\hspace{-.5pt}M},1)$  be chosen as in Step 1. It follows 
from Lemma \ref{lemma:sub-solution} that
$$
{\rho}_{6}(s)
:=\min\left\{\rho^{{\mu_{\sharp}}}_{{\rm nlp}}(s), \, \nu_1( s^{\mu_{\hspace{-.5pt}M}}_{\rm nlp}) \cdot (s-s^{\mu_{\hspace{-.5pt}M}}_{{\rm nlp}})\right\}$$
is  a viscosity sub-solution of
 \begin{align*}
\left \{
\begin{array}{ll}
\medskip
\min\{\rho-s\rho'+d_3|\rho'|^2+r_3\left(1-a_{32}\right), w\}=0 &\mathrm{in}~ (s^{\mu_{\hspace{-.5pt}M}}_{{\rm nlp}},c_2),\\
\rho(s^{\mu_{\hspace{-.5pt}M}}_{{\rm nlp}})=0,\,\,\,\,\rho(c_2)= \rho^{{\mu_{\sharp}}}_{{\rm nlp}}(c_2 ),
\end{array}
\right.
\end{align*}
where $\nu_1( s^{\mu_{\hspace{-.5pt}M}}_{\rm nlp})= \nu_1(\ell)\big|_{\ell = s^{\mu_{\hspace{-.5pt}M}}_{\rm nlp}}$ is defined in Step 1.

\smallskip
\noindent {\bf Step 3.} By \eqref{c**smallc2}, there exists  $\delta>0$ such that 
${{\rho}_{6}}(s)=\rho^{{\mu_{\sharp}}}_{{\rm nlp}}(s)$ for $s \in [c_2-\delta,c_2]$.
Define
\begin{equation}\label{inequalityunderwmax}
{\rho}_{7}(s):=
\begin{cases}
\smallskip
 \rho^{{\mu_{\sharp}}}_{{\rm nlp}}(s) & \text{ for }\,\,  s\in ( c_2,\infty),\\
 \smallskip
 \rho^{{\mu_{\sharp}}}_{{\rm nlp}}(s) = {\rho}_{6}(s) & \text{ for }\,\,  s\in[ c_2-\delta,c_2],\\
{{\rho}_{6}}(s)&  \text{ for } \,\, s\in [s^{\mu_{\hspace{-.5pt}M}}_{{\rm nlp}}, c_2 - \delta). 
\end{cases}
\end{equation}
We claim that $\rho_7$ is a a viscosity sub-solution of \eqref{eq:super-solution} in the entire region 
 $(s^{\mu_{\hspace{-.5pt}M}}_{\rm nlp},\infty)$.

Indeed, since ${\mu_{\sharp}}<1$,
we see that $\rho^{{\mu_{\sharp}}}_{{\rm nlp}}$ is a viscosity sub-solution of \eqref{eq:super-solution} in $(c_2 -\delta,\infty)$. Moreover, it is straightforward to check that
${{\rho}_{6}}$ is a viscosity sub-solution of \eqref{eq:super-solution} in 
$(s^{\mu_{\hspace{-.5pt}M}}_{\rm nlp}, c_2)$. By noting that viscosity solution is a local property, we can deduce that ${\rho}_{7}$ as given in \eqref{inequalityunderwmax}
 is a viscosity sub-solution of \eqref{eq:super-solution} in 
 $(s^{\mu_{\hspace{-.5pt}M}}_{\rm nlp},\infty)$.

\smallskip
\noindent {\bf Step 4.} We claim that $\rho_5(s) \geq \rho_7(s)$  
for $s\in [s^{\mu_{\hspace{-.5pt}M}}_{{\rm nlp}}, c_1]$.

To see that, we apply Lemma \ref{lemA1rho} with $c_{g}=c_1$, and deduce from  the definitions of ${\rho}_{5}$ (as the unique viscosity solution of \eqref{eq:super-solution}) and $\rho^{{\mu_{\sharp}}}_{{\rm nlp}} = \rho^{\mu}_{{\rm nlp}}\big|_{\mu = {\mu_{\sharp}}}$ that
$$
{\rho}_{5}(s)= \rho^{{\mu_{\sharp}}}_{{\rm nlp}}(s)={\rho}_{7}(s) \quad  \text{ for }s\geq c_1.
$$ 
Observe that ${\rho}_{5}$ and ${\rho}_{7}$ are a pair of super- and sub-solutions to \eqref{eq:super-solution} in the bounded interval $(s^{\mu_{\hspace{-.5pt}M}}_{{\rm nlp}}, c_1)$, with boundary values,  
$$
\rho_5(s^{\mu_{\hspace{-.5pt}M}}_{{\rm nlp}}) \geq  0 = \rho_7(s^{\mu_{\hspace{-.5pt}M}}_{{\rm nlp}})\quad \text{ and }\quad \rho_5(c_1)= \rho_7(c_1).
$$
By standard comparison principle \cite[Theorem 2]{Tourin_1992},
 we arrive at
${\rho}_{5}(s)\geq{\rho}_{7}(s)$ for $s\in [s^{\mu_{\hspace{-.5pt}M}}_{{\rm nlp}}, c_1]$.

\smallskip
\noindent {\bf Step 5.} We claim that  ${\mu_{\sharp}}\in\mathcal{E}$,  which  contradicts the definition of $\mu_{\hspace{-.5pt}M}$. 

Step 4 together with \eqref{comparisonresult20} implies that
$$ w_{3,*}(1,c_2)\geq{\rho}_{5}(c_2)\geq {\rho}_{7}(c_2)=\rho^{{\mu_{\sharp}}}_{{\rm nlp}}(c_2),$$
 so that ${\mu_{\sharp}}\in\mathcal{E}$, which is impossible as ${\mu_{\sharp}}>\mu_{\hspace{-.5pt}M}= \sup \mathcal{E}$.

Thus $\mu_{\hspace{-.5pt}M} = 1$, and $\mathcal{E} = [0,1]$. We  now take $\mu=1$ in Proposition \ref{prop:overc3_1} to establish $\overline{c}_3 \leq \beta^{\mu}_3 \big|_{\mu =1} = \beta_3$. This completes the proof.
\end{proof}
\subsection{Estimate $\underline c_3$ from below}\label{S4}
Let $({\rm H}_{c_1,c_2,\lambda})$ be satisfied for some $c_1>c_2$ and $\lambda \in (0,\infty]$. Define the continuous function $\underline{\rho}_{\rm nlp}:[0,\infty)\rightarrow[0,\infty)$ as  the unique viscosity solution of
\begin{equation}\label{eq:sub-solutionw3}
\left \{
\begin{array}{ll}
\smallskip
\min\{\rho-s\rho'+d_3|\rho'|^2+\underline{\mathcal{R}}(s),\rho\}=0 &\text{ in }(0,\infty),\\
\rho(0)=0,\quad\lim\limits_{s\to\infty}\frac{\rho(s)}{s}=\lambda,
\end{array}
\right.
\end{equation}
where 
\begin{equation}\label{qq:3.55}
\underline{\mathcal{R}}(s)=r_3(1-a_{31}\chi_{\{c_2\leq s\leq c_1\}}-a_{32}\chi_{\{s\leq c_2\}})-r_3a_{31}\chi_{\big\{s\leq \max\{s_{{\rm nlp}}, 
c_{\rm{LLW}}\}\big\}},
\end{equation} 
with $s_{\rm nlp}(c_1,c_2,\lambda)$ and $c_{\rm LLW}$ being given in \eqref{charact_snlp} and Definition \ref{c3LLW} respectively.

By arguing similarly as in Lemma \ref{lemma_w3nlp}, we see that $\underline{\rho}_{\rm nlp}$ is continuous and non-decreasing in $s \in [0,\infty)$. Hence, we can 
define the speed 
\begin{equation}\label{definitionunderbata}
 \underline{\beta}_3 := \sup \{s:{\underline\rho}_{\rm nlp}(s) = 0\}.
\end{equation}

The main result of this subsection is to establish a lower bound of $\underline c_3$, and show that this lower bound coincides with the upper bound showed in Proposition \ref{prop:estamoverc3} 
in  case $s_{\rm nlp} \geq c_{\rm{LLW}}$.

\begin{proposition}\label{underc3}
Let $(u_i)_{i=1}^3$  be any solution of \eqref{eq:1-1} such that $({\rm H}_{c_1,c_2,\lambda})$ holds. 
Then we have 
$
\underline{c}_{3}\geq \underline{\beta}_3, 
$
where $\underline{\beta}_3$ is  given  by \eqref{definitionunderbata}
and satisfies
$\underline{\beta}_3\in [\alpha_3\sqrt{1-a_{31} - a_{32}},s_{{\rm nlp}}]$. Furthermore, 
$\underline{\beta}_3= s_{{\rm nlp}}$ if $ s_{{\rm nlp}}\geq c_{\rm{LLW}}$,
 where $s_{{\rm nlp}} = s_{{\rm nlp}}^\mu \big|_{\mu =1}$. 
\end{proposition}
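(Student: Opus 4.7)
\textbf{Bounds on $\underline{\beta}_3$.} The proof splits naturally into three parts: bounds on $\underline{\beta}_3$, the refinement to equality when $s_{\rm nlp}\geq c_{\rm LLW}$, and the main inequality $\underline{c}_3\geq \underline{\beta}_3$. From the explicit form of $\underline{\mathcal{R}}$ in \eqref{qq:3.55}, one checks $\underline{\mathcal{R}}\leq \mathcal{R}$ pointwise, with strict inequality only on $[0,\beta_3]$. Then $\rho_{\rm nlp}$ is a viscosity sub-solution of the variational inequality defining $\underline{\rho}_{\rm nlp}$: the sub-solution condition is vacuous on $\{\rho_{\rm nlp}=0\}$, and on $\{\rho_{\rm nlp}>0\}$ it follows from $\underline{\mathcal{R}}\leq\mathcal{R}$ combined with the classical equation $\rho_{\rm nlp}-s\rho_{\rm nlp}'+d_3|\rho_{\rm nlp}'|^2+\mathcal{R}=0$. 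Comparison (Lemma \ref{lemA2rho}) then yields $\rho_{\rm nlp}\leq\underline{\rho}_{\rm nlp}$, so $\{\underline{\rho}_{\rm nlp}=0\}\subseteq\{\rho_{\rm nlp}=0\}$ and $\underline{\beta}_3\leq s_{\rm nlp}$. For the lower bound I construct the super-solution $\overline{\rho}(s):=\max\{s^2/(4d_3)-r_3(1-a_{31}-a_{32}),0\}$ mirroring Step 3 of Lemma \ref{lemma_w3nlp}: since $\underline{\mathcal{R}}(s)\geq r_3(1-a_{31}-a_{32})$ for all $s$, the same classical verification shows $\overline{\rho}$ is a viscosity super-solution of the $\underline{\rho}_{\rm nlp}$ equation. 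Comparison then forces $\underline{\rho}_{\rm nlp}\leq\overline{\rho}$, which vanishes on $[0,\alpha_3\sqrt{1-a_{31}-a_{32}}]$, so $\underline{\beta}_3\geq \alpha_3\sqrt{1-a_{31}-a_{32}}$.

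\textbf{Equality when $s_{\rm nlp}\geq c_{\rm LLW}$.} In this case $\beta_3=s_{\rm nlp}$ and $\underline{\mathcal{R}}=\mathcal{R}$ on $(s_{\rm nlp},\infty)$, while $\underline{\mathcal{R}}\equiv r_3(1-a_{31}-a_{32})>0$ on $(0,s_{\rm nlp})$. I verify that $\rho_{\rm nlp}$ is also a viscosity super-solution of the $\underline{\rho}_{\rm nlp}$ equation. On $(s_{\rm nlp},\infty)$ both functions solve the same equation; on $(0,s_{\rm nlp})$, $\rho_{\rm nlp}\equiv 0$, so at any strict local minimum $s_0$ of $\rho_{\rm nlp}-\phi$ the function $\phi$ attains a strict local max, forcing $\phi'(s_0)=0$ and reducing the super-solution condition to $\underline{\mathcal{R}}^*(s_0)\geq 0$, which holds by positivity. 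Lemma \ref{lemA2rho} then gives $\underline{\rho}_{\rm nlp}\leq \rho_{\rm nlp}$, and combining with the previous paragraph yields $\underline{\rho}_{\rm nlp}=\rho_{\rm nlp}$ and $\underline{\beta}_3=s_{\rm nlp}$.

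\textbf{Main inequality $\underline{c}_3\geq \underline{\beta}_3$.} By Proposition \ref{prop:estamoverc3}, $\overline{c}_3\leq \beta_3$; combining with \eqref{eq:H.c1c2} gives $\liminf_{\epsilon\to 0}u_1^\epsilon\geq \chi_{\{c_2<x/t<c_1\}}$ and $\liminf_{\epsilon\to 0}u_2^\epsilon\geq \chi_{\{\overline{c}_3<x/t<c_2\}}$. Passing to the half-relaxed limit in \eqref{eq:w3}, exactly as in the derivation of \eqref{eq:equ} in the proof of Proposition \ref{prop:overc3}, one obtains that $w_3^*$ is a viscosity sub-solution of $\min\{\partial_t w+d_3|\partial_x w|^2+\overline{\mathcal{R}}(x/t),w\}=0$ in $(0,\infty)\times(0,\infty)$, with
\[
\overline{\mathcal{R}}(s):=r_3\bigl(1-a_{31}\chi_{\{c_2<s<c_1\}}-a_{32}\chi_{\{\overline{c}_3<s<c_2\}}\bigr)
\]
and boundary/initial data as in Remark \ref{rmk:w1w20}. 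Using $\overline{c}_3\leq \beta_3$, a direct comparison on each of the intervals $(0,\overline{c}_3)$, $(\overline{c}_3,\beta_3)$, $(\beta_3,c_2)$, $(c_2,c_1)$, $(c_1,\infty)$ shows $\overline{\mathcal{R}}(s)\geq \underline{\mathcal{R}}(s)$ pointwise, so $w_3^*$ is \emph{a fortiori} a sub-solution of the $\underline{\mathcal{R}}$-variational inequality. Proposition \ref{lemA3rho} then delivers $w_3^*(t,x)\leq t\underline{\rho}_{\rm nlp}(x/t)$. On the sector $\{x<\underline{\beta}_3 t\}$ we have $\underline{\rho}_{\rm nlp}(x/t)=0$ and $w_3^*\geq 0$, so $w_3^*\equiv 0$ there; Lemma \ref{lem:underlinec} then produces the uniform lower bound $u_3(t,x)\geq 1-a_{31}-a_{32}$ on $\{x<(\underline{\beta}_3-\eta)t\}$ for every small $\eta>0$, proving $\underline{c}_3\geq \underline{\beta}_3$.

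\textbf{Main obstacle.} The most delicate step is the viscosity-stability passage identifying the sub-solution property of $w_3^*$ for a variational inequality whose Hamiltonian is piecewise constant with jumps at $x/t\in\{\overline{c}_3,c_2,c_1\}$. As in Proposition \ref{prop:overc3}, one must carefully use the upper semicontinuous envelope of the discontinuous reaction; crucially, the derivation invokes Proposition \ref{thm:u1v1111} (enabled by the hypothesis \eqref{eq:1--1b}) to preclude a ``gap" at $x=c_2 t$ that would otherwise allow $u_3$ to escape the sought lower bound.
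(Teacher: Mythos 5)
Your treatment of $\underline{\beta}_3$ is essentially the paper's own argument: the upper bound $\underline{\beta}_3\le s_{\rm nlp}$ via $\underline{\mathcal{R}}\le\mathcal{R}$ and comparison, the lower bound via the super-solution $\max\{s^2/(4d_3)-r_3(1-a_{31}-a_{32}),0\}$, and the equality case via showing $\rho_{\rm nlp}$ is also a super-solution of \eqref{eq:sub-solutionw3} when $\beta_3=s_{\rm nlp}$ (your $\phi'(s_0)=0$ argument on $(0,s_{\rm nlp})$ is exactly the paper's). The gap is in the main inequality $\underline{c}_3\ge\underline{\beta}_3$, and it is a direction error in the half-relaxed limit step. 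To obtain a viscosity \emph{sub}-solution property for the upper limit $w_3^*$, one must bound the reaction $r_3(1-a_{31}u_1^\epsilon-a_{32}u_2^\epsilon)$ from \emph{below}, which requires \emph{upper} ($\limsup$) bounds on $u_1^\epsilon,u_2^\epsilon$. The $\liminf$ bounds you quote from \eqref{eq:H.c1c2} bound the reaction from above and yield only the \emph{super}-solution property of $w_{3,*}$ — that is precisely what the derivation of \eqref{eq:equ} in Proposition \ref{prop:overc3}, which you cite as your model, actually does. With the available $\limsup$ bounds (Remark \ref{rmk:u1upper}: $\limsup u_1^\epsilon\le\chi_{\{c_2\le x/t\le c_1\}}+\chi_{\{x/t\le\beta_3\}}$ and $\limsup u_2^\epsilon\le\chi_{\{x/t\le c_2\}}$), the best sub-solution reaction one can justify is $\underline{\mathcal{R}}_1$ of the paper, which equals $r_3(1-a_{31}-a_{32})$ on $(0,\beta_3)$ and at $s=c_2$; your $\overline{\mathcal{R}}$, which equals $r_3$ on $(0,\overline{c}_3)$ and at $s=c_2$, is not an admissible sub-solution reaction — indeed at $s=c_2$ Proposition \ref{thm:u1v1111} shows the competition does \emph{not} vanish, and on $\{x/t<\beta_3\}$ no smallness of $u_1,u_2$ is known. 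Your "a fortiori" step is logically sound in direction (a sub-solution for a larger reaction is one for a smaller), but its premise is never established, so Proposition \ref{lemA3rho} cannot be invoked.

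Because of this, you also skip the genuine technical point: the justifiable reaction $\underline{\mathcal{R}}_1$ is \emph{strictly smaller} than $\underline{\mathcal{R}}$ at the single point $s=c_2$, so the sub-solution property with respect to $\underline{\mathcal{R}}$ does not follow by monotonicity. The paper's Lemma \ref{lem:sub-solutionw3} bridges exactly this: it shows $\rho_3^*(s)=w_3^*(1,s)$ is locally Lipschitz, hence differentiable a.e.\ by Rademacher, satisfies the differential inequality a.e.\ (where $\underline{\mathcal{R}}_1=\underline{\mathcal{R}}$), and then uses convexity of the Hamiltonian (\cite[Proposition 5.1]{Bardi1997}) to upgrade this to a genuine viscosity sub-solution of \eqref{eq:sub-solutionw3}, after which Proposition \ref{lemA3rho} gives $w_3^*\le t\underline{\rho}_{\rm nlp}(x/t)$ and Lemma \ref{lem:underlinec} finishes as you describe. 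Relatedly, your "main obstacle" paragraph has the two tools swapped: the no-gap Proposition \ref{thm:u1v1111} (enabled by \eqref{eq:1--1b}) is needed on the \emph{super}-solution side, i.e.\ for the upper bound on $\overline{c}_3$ in Propositions \ref{prop:overc3} and \ref{prop:estamoverc3}, while on the sub-solution/lower-bound side the discontinuity at $s=c_2$ is handled by the a.e./convexity argument and needs no such condition (see Remark \ref{rmk:3.21}).
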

\begin{remark}\label{rmk:estamoverc3}
In case $s_{\rm nlp} \geq c_{\rm{LLW}}$, Propositions \ref{prop:estamoverc3} and \ref{underc3} together imply
$$
s_{\rm nlp} \leq \underline{c}_3 \leq \overline{c}_3 \leq \max\{s_{\rm nlp}, c_{\rm{LLW}}\} = s_{\rm nlp}.
$$
Thus $\overline{c}_3 = \underline{c}_3 = s_{\rm nlp}$ and Propositions \ref{prop:estamoverc3} and \ref{underc3} are sharp in such a case.
\end{remark}

We establish a lemma before proving the Proposition \ref{underc3}.

\begin{lemma}\label{lem:sub-solutionw3}
Let $(u_i)_{i=1}^3$  be any solution of \eqref{eq:1-1} such that $({\rm H}_{c_1,c_2,\lambda})$ holds.
Then
$$w_3^*(1,s)\leq{\underline\rho}_{\rm nlp}(s)\,\,\text{ for }s\in[0,\infty).$$
where ${\underline\rho}_{\rm nlp}$ is defined as the unique viscosity solution of \eqref{eq:sub-solutionw3}.
\end{lemma}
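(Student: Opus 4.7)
The plan is to identify $w_3^*$ as a viscosity sub-solution of a Hamilton--Jacobi variational inequality whose reaction term equals $\underline{\mathcal{R}}(x/t)$, and then invoke Proposition \ref{lemA3rho} to compare $w_3^*$ with $t\underline{\rho}_{\rm nlp}(x/t)$; restricting to $t=1$, $x=s$ will yield the claim. The first ingredient is the collection of upper bounds on the $u_i^\epsilon$ already available. From the spreading conditions \eqref{eq:H.c1c2} in $({\rm H}_{c_1,c_2,\lambda})$ one reads off $\limsup u_2^\epsilon \leq \chi_{\{x\leq c_2 t\}}$, while Proposition \ref{prop:estamoverc3} (which yields $\overline{c}_3\leq \beta_3$) combined with Remark \ref{rmk:u1upper} gives
$$\limsup_{\substack{(t',x')\to (t,x)\\ \epsilon\to 0}} u_1^\epsilon(t',x') \leq \chi_{\{c_2 t\leq x\leq c_1 t\}}+\chi_{\{x\leq \beta_3 t\}},\qquad \limsup u_3^\epsilon \leq \chi_{\{x\leq \beta_3 t\}}.$$

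Second, I would pass to the upper half-relaxed limit in equation \eqref{eq:w3}. At a point $(t_0,x_0)$ where $w_3^*-\phi$ attains a strict local maximum with $w_3^*(t_0,x_0)>0$, the values $u_3^\epsilon$ decay to zero locally, so the $-u_3^\epsilon$ contribution drops out of the reaction. Substituting the upper bounds above and invoking the stability framework for viscosity solutions with discontinuous Hamiltonians (cf.\ \cite[Sect.~6.1]{BarlesLect} and \cite[Propositions 3.1--3.2]{Barles_1990}), one concludes that $w_3^*$ is a viscosity sub-solution of
$$\min\left\{\partial_t w + d_3|\partial_x w|^2 + \underline{\mathcal{R}}(x/t),\, w\right\}=0 \quad\text{in } (0,\infty)\times(0,\infty),$$
with $\underline{\mathcal{R}}$ as given in \eqref{qq:3.55}. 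The boundary/initial data are compatible: $w_3^*(t,0)=0$ by Remark \ref{rmk:w1w20}, and $w_3^*(0,x)=\lambda x$ when $\lambda\in(0,\infty)$ (also Remark \ref{rmk:w1w20}), while the growth estimate of Lemma \ref{lem:3-1}(ii) supplies the necessary barrier when $\lambda=\infty$.

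Third, writing $\underline{\mathcal{R}}(s)=r_3-g(s)$ with
$$g(s)=r_3 a_{31}\chi_{\{c_2\leq s\leq c_1\}}+r_3 a_{32}\chi_{\{s\leq c_2\}}+r_3 a_{31}\chi_{\{s\leq \beta_3\}},$$
the hypothesis $({\rm H}_g)$ is satisfied with $c_g=c_1$, since $c_1\geq\sigma_3(\lambda)$ by Definition \ref{def:h3}(i). Proposition \ref{lemA3rho} then delivers $w_3^*(t,x)\leq t\underline{\rho}_{\rm nlp}(x/t)$ throughout $(0,\infty)\times(0,\infty)$, and the $t=1$ slice gives the lemma.

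The principal difficulty lies in the careful passage to the limit across the jump discontinuities of $\underline{\mathcal{R}}$ at $s=\beta_3,c_2,c_1$, where the upper bounds on $u_1^\epsilon$ and $u_2^\epsilon$ are themselves discontinuous. This will be handled by working with the upper semicontinuous envelope $\underline{\mathcal{R}}^*$ and by appealing to the joint matching estimate of Proposition \ref{thm:u1v1111} to control $a_{31}u_1^\epsilon+a_{32}u_2^\epsilon$ across $x=c_2 t$, exactly as in Step 2 of Proposition \ref{prop:overc3}; once this technical point is settled, the remainder is a direct application of the one-dimensional comparison framework from Section \ref{sec:prelim}.
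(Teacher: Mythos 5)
Your overall skeleton (upper bounds on $u_1^\epsilon,u_2^\epsilon$ from $({\rm H}_{c_1,c_2,\lambda})$ together with Remark \ref{rmk:u1upper}, passage to the upper half-relaxed limit in \eqref{eq:w3}, then comparison via Proposition \ref{lemA3rho}) is the same as the paper's, but there is a genuine gap at the decisive step: you assert that the stability argument directly yields that $w_3^*$ is a viscosity sub-solution of the equation with reaction $\underline{\mathcal{R}}(x/t)$. It does not. At a test point on the ray $x=c_2t$, the max points of $w_3^\epsilon-\varphi$ may approach that ray from either side, so the only provable bound is $\limsup u_1^\epsilon\leq 1$ \emph{and} $\limsup u_2^\epsilon\leq 1$ simultaneously; the reaction value you can install there is only $r_3(1-a_{31}-a_{32})$, i.e.\ the limit gives the sub-solution property for the ``dipped'' function $\underline{\mathcal{R}}_1\leq\underline{\mathcal{R}}$ of the paper, not for $\underline{\mathcal{R}}$ itself. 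Since the sub-solution inequality with a smaller reaction does not imply the one with the larger reaction (the inequality goes the wrong way), comparison with $\underline\rho_{\rm nlp}$ cannot be invoked at this stage, and this is exactly the point the paper flags with ``we cannot directly apply comparison directly.''

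Your proposed patch does not close this gap: working with the upper semicontinuous envelope $\underline{\mathcal{R}}^*$ is relevant to \emph{super}-solutions, and Proposition \ref{thm:u1v1111} provides a \emph{lower} bound $a_{31}u_1+a_{32}u_2\gtrsim\min\{a_{31},a_{32}\}$ near $x=c_2t$, which is precisely what is needed for the super-solution property of $w_{3,*}$ in Step 2 of Proposition \ref{prop:overc3} — for the sub-solution property of $w_3^*$ you would instead need an \emph{upper} bound on $a_{31}u_1^\epsilon+a_{32}u_2^\epsilon$ better than $a_{31}+a_{32}$ across the transition, which that proposition does not give (see Remark \ref{rmk:3.21}, where the paper stresses that the two directions are not symmetric). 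The paper's actual repair is different and is the heart of the proof: using the homogeneity $w_3^*(t,x)=tw_3^*(1,x/t)$ one reduces to $\rho_3^*(s)=w_3^*(1,s)$; since $\underline{\mathcal{R}}_1\geq0$, $\rho_3^*$ is locally Lipschitz by \cite[Proposition 1.14]{IshiiLect}, hence differentiable a.e.\ by Rademacher, so it satisfies the differential inequality a.e., where $\underline{\mathcal{R}}_1=\underline{\mathcal{R}}$ a.e.; convexity of the Hamiltonian and \cite[Proposition 5.1]{Bardi1997} then upgrade this a.e.\ inequality to the genuine viscosity sub-solution property for \eqref{eq:sub-solutionw3}, after which Proposition \ref{lemA3rho} gives $w_3^*(1,s)\leq\underline\rho_{\rm nlp}(s)$. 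Without this (or an equivalent) upgrade step, your argument is incomplete.
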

\begin{proof}
Using \eqref{eq:H.c1c2}, 
we observe that
$$
\limsup_{\substack{(t',x')\to (t,x)\\ \epsilon\to 0}}  u_1^\epsilon(t',x')\leq \chi_{\{c_2t\leq x\leq c_1t\}}+\chi_{\{x\leq\beta_3t\}} \quad \text{ and }\quad 
\limsup_{\substack{(t',x')\to (t,x)\\ \epsilon\to 0}}  u_2^\epsilon(t',x')\leq \chi_{\{x\leq c_2t\}}. 
$$
 Letting $\epsilon \to 0$ in \eqref{eq:w3} and use Remark \ref{rmk:w1w20} to verify boundary conditions, it is standard to
verify that $w_3^*$ is  a viscosity  sub-solution of
\begin{equation}\label{eq:subsub}\min\{\partial_t w+d_3|\partial_xw|^2+\underline{\mathcal{R}}_1(x/t),w\}=0 \quad\text{in}\quad (0,\infty)\times (0,\infty),\end{equation}
where at $s = c_2$, we can only estimate both $u_1^\epsilon$ and $u_2^\epsilon$ from above by $1$, so that
$$\underline{\mathcal{R}}_1(s)=\begin{cases}
\underline{\mathcal{R}}(s)& \text{for } s\neq c_2,\\
1-a_{31}-a_{32} & \text{for } s= c_2,
\end{cases}
$$
and $\underline{\mathcal{R}}(s)$ is defined in \eqref{qq:3.55}. 

Note that $\underline{\mathcal{R}}_1(s) \leq \underline{\mathcal{R}}(s)$, so
we cannot directly apply comparison directly, and need to proceed with care. Since $w_3^*(t,x)=tw_3^*(1,\frac{x}{t})$ as stated in Remark \ref{rmk:w1w3},  by arguing as in Lemma \ref{lemA2rho}, it can be verified that $\rho_3^*(s):=w_3^*(1,s)$ satisfies, in the viscosity sense,
\begin{equation}\label{eq:w3upperstar}
\min\{\rho-s\rho'+d_3|\rho'|^2+\underline{\mathcal{R}}_1(s),\rho\}\leq 0 \quad \text{in}\quad(0,\infty),
\end{equation}
and satisfies
\begin{equation*}
\rho_3^*(0)=0  \quad \text{ and }\quad \lim\limits_{s\to\infty}\frac{\rho_3^*(s)}{s}=\lambda.
\end{equation*}

Now, we claim  $\rho_3^* \in {\rm Lip}_{\mathrm{loc}}([0,\infty))$.  Indeed,
since $\underline{\mathcal{R}}_1(s) \geq 0$, one can easily verify that $\rho_3^*(s)$ is a viscosity sub-solution of $\rho -s\rho' + d_3|\rho'|^2 = 0$ on $(0,\infty)$. Fix an arbitrary $s_0>0$, and choose $M = M(s_0)>0$ such that $\rho(s_0) -s_0 M + \frac{d_3}{2}M^2 >0$, then a direct application of \cite[Proposition 1.14]{IshiiLect} yields that  $\rho_3^*$ is Lipschitz continuous in $[0,s_0]$, for arbitrary $s_0>0$.

It follows from the Rademacher's theorem that $\rho_3^*$ is differentiable a.e. on $[0,\infty)$. Being a viscosity sub-solution of \eqref{eq:w3upperstar}, it thus satisfies the differential inequality \eqref{eq:w3upperstar} a.e. on $[0,\infty)$. Since $\underline{\mathcal{R}}_1(s) = \underline{\mathcal{R}}(s)$ a.e. we have proved that $\rho_3^*$ satisfies
\begin{equation}\label{eq:w3upperstar123}
\min\{\rho-s\rho'+d_3|\rho'|^2+\underline{\mathcal{R}}(s),\rho\}\leq 0 \quad \text{a.e. in}\quad(0,\infty).
\end{equation}
However, by the convexity of the Hamiltonian, we can apply \cite[Proposition 5.1]{Bardi1997} to conclude that $\rho_3^*$ is in fact a viscosity sub-solution of \eqref{eq:sub-solutionw3}, for which $\underline\rho_{\rm nlp}$ is the unique viscosity solution. The lemma thus follows by a standard comparison via Proposition \ref{lemA3rho} (where we realize that $w_3^*(t,x) = t\rho_3^*(x/t)$ is now a sub-solution of \eqref{eq:subsub} with $\underline{\mathcal{R}}_1$ replaced by $\underline{\mathcal{R}}$).
\end{proof}
\begin{remark}\label{rmk:3.21}
The equivalence in {concepts of the viscosity sub-solutions} for \eqref{eq:w3upperstar} and \eqref{eq:w3upperstar123} is guaranteed by convexity of the Hamiltonian functions. This is however not true for {viscosity} super-solutions. 
This is why the condition \eqref{thm:u1v1111} was essential in  
Step 2 in the proof of Proposition \ref{prop:overc3} as well as in the verification that $w_{3,*}$ being a viscosity super-solution of \eqref{eq:super-solution00}.
Heuristically, this suggests that the ``gap" created by the succession of $u_1$ by $u_2$ may speed up, but never slow down, the invasion $u_3$. 
\end{remark}

We are ready to prove Proposition \ref{underc3}.


\begin{proof}[Proof of Proposition {\rm\ref{underc3}}]
\mbox{}

\smallskip
\noindent {\bf Step 1.} We show $\underline{c}_{3}\geq \underline{\beta}_3$.
By Lemma \ref{lem:sub-solutionw3},  $0\leq w_3^*(1,s)\leq{\underline\rho}_{\rm nlp}(s)$ in $(0,\infty)$,
which implies 
 \begin{equation*}
\{s:w_3^*(1,s)=0\}\supset\{s:{\underline\rho}_{\rm nlp}(s) = 0\}=[0, \underline{\beta}_3].
\end{equation*}
Therefore, by \eqref{eq:w_star300}, $w_3^*(t,x)=0$ in $ \{(t,x):0 \leq x <\underline{\beta}_3 t\}$.
Recalling the definition  of $w_3^*$ in \eqref{eq:w_star3},  we see that 
$w_3^\epsilon(t,x) = -\epsilon \log{u_3^\epsilon(t,x)} \to 0$ \ locally uniformly in $\{(t,x):0 \leq x <\underline{\beta}_3 t\}$ as  $\epsilon \to 0$. 
 For each small $\eta>0$, we can choose the compact sets $K,K'$ in Lemma \ref{lem:underlinec} by $$K=\{(1,s): 2\eta\leq s \leq  \underline{\beta}_3 - 2\eta \}\,\,\text{ and }\,\, K'=\{(1,s): \eta\leq s \leq  \underline{\beta}_3 - \eta \}.$$
Since $0 \leq u_i \leq 1$ for all $(t,x)$ and $i=1,2$,
$$\sup\limits_{(t,x)\in K'} u_1^\epsilon(t,x)\leq 1\,\,\text{ and }\sup\limits_{(t,x)\in K'} u_2^\epsilon(t,x)\leq 1.$$
We may apply Lemma \ref{lem:underlinec} to deduce that 
\begin{equation*}
\liminf_{t\to \infty}\inf_{2\eta t\leq x \leq  (\underline{\beta}_3 - 2\eta)t } u_3 (t,x)=\liminf_{\epsilon\to 0}\inf_{K} u_3^\epsilon (t,x)>0.
\end{equation*}
This implies $\underline{c}_3\geq  \underline{\beta}_3$, and  Step 1 is completed.

\smallskip

\noindent {\bf Step 2.}  We  claim $\underline{\beta}_3\leq s_{{\rm nlp}}$.
It is straightforward to check  that $\rho_{\rm nlp} = \rho_{\rm nlp}^{\mu}\big|_{\mu=1}$ (as given by \eqref{eq:w3nlpmu} with $\mu=1$)  is a viscosity sub-solution of \eqref{eq:sub-solutionw3}. 
By Corollary \ref{corA3rho} once again we get
\begin{equation*}
 {\underline\rho}_{\rm nlp}(s)\geq  \rho_{{\rm nlp}}(s)\,\,\text{ in } \,[0,\infty).
\end{equation*}
By definition of $\underline{\beta}_3$ and $s_{\rm nlp}$ (see \eqref{definitionunderbata} and \eqref{charact_wnlpmu} with $\mu=1$), we deduce
\begin{equation}\label{charactwnlp1}
\underline{\beta}_3 = \sup \{s :{\underline\rho}_{\rm nlp}(s)=0\}  \leq \sup \{s :{\rho}_{\rm nlp}(s)=0\} = s_{\rm nlp}.
\end{equation}

\noindent {\bf Step 3.}  We  claim $\underline{\beta}_3\geq \alpha_3\sqrt{1-a_{31} - a_{32}}$.
In this case, it suffices to note that ${\rho}_{8}(s):=\max\{\frac{s^2}{4d_3} - r_3(1-a_{31} - a_{32}),0\}$ defines a viscosity super-solution of  \eqref{eq:sub-solutionw3}, so that we can proceed as in Step 2 to yield the  inequality $\underline{\beta}_3 \geq \alpha_3\sqrt{1-a_{31} - a_{32}}$.


\smallskip

\noindent {\bf Step 4.} We show $\underline{\beta}_3= s_{{\rm nlp}}$ if $s_{{\rm nlp}}\geq c_{\rm LLW}$.
Assume $s_{{\rm nlp}}\geq c_{\rm LLW}$, then  $\beta_3=s_{{\rm nlp}}$.
It suffices to show that $\rho_{{\rm nlp}}$ is a viscosity solution of \eqref{eq:sub-solutionw3}. Indeed, if that is the case, then by uniqueness in Lemma \ref{lemA1rho} we deduce that ${\underline\rho}_{\rm nlp} = \rho_{{\rm nlp}}$ in $[0,\infty)$.
Hence the equality  in \eqref{charactwnlp1} holds, and we derive $\underline{\beta}_3=s_{{\rm nlp}}$.

To show that $\rho_{{\rm nlp}}$ is a viscosity solution of \eqref{eq:sub-solutionw3}, noting that since $\rho_{{\rm nlp}}$ is already a viscosity sub-solution of \eqref{eq:sub-solutionw3}, it is enough to verify that it is a viscosity super-solution of \eqref{eq:sub-solutionw3} in $(0,\infty)$. To this end, suppose that $\rho_{{\rm nlp}} - \phi$ attains a strict local minimum at $s_0>0$. In view of $\rho_{\rm nlp} \geq 0$, it suffices to show that
\begin{equation}\label{eq:wnlpissuper}
\rho_{{\rm nlp}}(s_0)-s_0\phi'(s_0)+d_3|\phi'(s_0)|^2 + (\underline{\mathcal{R}})^*(s_0) \geq 0,
\end{equation}
where $(\underline{\mathcal{R}})^* (s)= \limsup_{s' \to s}\underline{\mathcal{R}}(s') $ is the upper envelope of $\underline{\mathcal{R}}$. Let $\mathcal{R}^1(s)$ be as given below \eqref{eq:w3nlpmu}, then
$\underline{\mathcal{R}}(s) = \mathcal{R}^1(s) - r_3a_{31}\chi_{\{ s \leq \beta_3\}}$ for $s \geq 0$.  
Since $\mathcal{R}^1(s)$ is continuous at 
$\beta_3 \in (0, c_2)$, 
\begin{equation}\label{eq:envelope}
(\underline{\mathcal{R}})^* (s) = (\mathcal{R}^1)^*(s) \quad \text{ for }s \geq \beta_3.
\end{equation}

If $s_0 \geq s_{\rm nlp}$, then noting that $\beta_3=s_{\rm nlp}$, 
we have $s_0 \geq \beta_3$. Then
 \eqref{eq:wnlpissuper}  follows from \eqref{eq:envelope} and the fact that  $w_{\rm nlp}$ is a viscosity solution of \eqref{eq:w3nlpmu}.

 If $s_0 < s_{\rm nlp}$, then by the definition of $s_{\rm nlp}$, we see that $\rho_{\rm nlp}(s)$ vanishes in a neighborhood of $s_0$, so that $ \phi'(s_0) = 0$, and thus
 $$
\rho_{{\rm nlp}}(s_0)-s_0\phi'(s_0)+d_3|\phi'(s_0)|^2 +  (\underline{\mathcal{R}})^*(s_0) = \underline{\mathcal{R}}(s_0) \geq r_3(1-a_{31} - a_{32}) >0,
$$
which implies  \eqref{eq:wnlpissuper} holds. 
Therefore,  $\rho_{{\rm nlp}}$ is a viscosity super-solution of \eqref{eq:sub-solutionw3}. 
\end{proof}


 We are now in a position to prove {Theorem A}.
\begin{proof}[Proof of {Theorem {\rm A}}]
The estimate \eqref{estamitec3} in {Theorem A} is a direct consequence of  Proposition \ref{prop:estamoverc3}, Proposition \ref{underc3}, and Lemma \ref{lem:betamu3}.  The fact $\underline{c}_3=\overline{c}_3=s_{{\rm nlp}}$  when $s_{{\rm nlp}}\geq c_{\rm{LLW}}$ is proved in Remark \ref{rmk:estamoverc3}.  Therefore, it remains to show that 
\begin{equation}\label{spreadingfinial}
    \liminf\limits_{t \rightarrow \infty} \inf\limits_{0 \leq  x < (\underline{c}_3 -\eta)t} u_3(t,x)>0 \quad \text{for each small }\,\,\eta>0,
\end{equation}
and then the spreading property \eqref{eq:spreadingly} follows from \eqref{eq:H.c1c2} in the assumption $({\rm H}_{c_1,c_2,\lambda})$. 
%
%
Observe from $u_1\leq 1$ and $u_2 \leq 1$ that $u_3$ is a classical super-solution of
$$
\left\{
\begin{array}{ll}
\smallskip
\partial_t u = d_3\partial_{xx}u +r_3 u(1-a_{31}-a_{32} - u) &\text{ for }(t,x) \in (0,\infty)\times \mathbb{R},\\
u(0,x) =  u_3(0,x) &\text{ for } x \in \mathbb{R}.
\end{array}
\right.
$$
By the classical results in  Fisher \cite{Fisher_1937} or Kolmogorov et al. \cite{Kolmogorov_1937}, we have
\begin{equation}\label{eq:u3kpp1}
\lim_{t \to \infty} \inf_{|x| < (\underline{\sigma}_3 - \eta)t} u(t,x) \geq (1-a_{31}-a_{32})/2>0 \quad \text{ for small }\,\,\eta >0,
\end{equation}
where $\underline\sigma_3:= \alpha_3\sqrt{1-a_{31}-a_{32}}$. Hence, to prove \eqref{spreadingfinial}, it suffices to claim that
for each $\eta>0$ small, there exist some $T>0$ and $\delta>0$ such that
\begin{equation}\label{positiveu3}
  u_3(t,x) \geq \delta\quad \text{ in }\,\,\,\{(t,x): t \geq T,\,  (-\underline\sigma_3 + \eta)t \leq  x \leq (\underline{c}_3 - \eta) t\}.
\end{equation}
Fix a small $\eta>0$.   By definition of  $\underline{c}_3$,   there exist some $c'_3\in (\underline{c}_3-{\eta}, \underline{c}_3)$ and $T>0$ such that
\begin{equation}\label{lowerboundu3_2}
  \inf\limits_{t\geq T}u_3(t,c'_3t)>0.
\end{equation}
Since $\{(T,x): (-\underline{\sigma}_3 +\eta) T \leq x \leq c'_3 T\}$ is compact, we apply \eqref{eq:u3kpp1} to get
\begin{equation}\label{eq:c3'}
  \inf\limits_{(-\underline{\sigma}_3 +\eta) T \leq x \leq c'_3 T}u_3(T,x)>0.
\end{equation}
By \eqref{eq:u3kpp1}, \eqref{lowerboundu3_2} and \eqref{eq:c3'}, we deduce that
\begin{equation*}
  \begin{split}
   \delta := \min\Big\{\inf\limits_{t \geq T}  u_3(t,c'_3 t),&\,\,\,\inf\limits_{t \geq T}  u_3(t,(-\underline{\sigma}_3 + \eta) t),\,\,(1-a_{31}-a_{32})/2, \inf\limits_{\scriptscriptstyle (-\underline{\sigma}_3 + \eta) T\leq x \leq c'_3 T} u_3(T,x) \Big\}
  \end{split}
\end{equation*}
is positive. Then $u_3$ is a super-solution to the KPP-type equation
$\partial_t  u=d_3 \partial_{xx} u + r_3u(1-a_{31}-a_{32}-u)$
such that $u_3 \geq \delta$ on the parabolic boundary. By the parabolic maximum principle, we derive \eqref{positiveu3} and the proof of {Theorem A} is complete.
\end{proof}

\section{Asymptotic behaviors of the final zone}\label{S5}

The purpose of this section is to prove {Theorem B}, which characterizes the asymptotic  profile of the final zone $\{(t,x):x < \underline{c}_3 t\}$.

\begin{proof}[Proof of {Theorem {\rm B}}]
By \eqref{eq:u3kpp1} and the definition of $\underline{c}_3$, it is obvious that $\underline{c}_3\geq \alpha_3\sqrt{1-a_{31}-a_{32}}$.
Hence, it remains to prove \eqref{spreadings}. We divide the proof into several steps.

\smallskip
\noindent{\bf Step 1.} We show that, if
\begin{equation}\label{upperboundui_B}
  \lim\limits_{t\to\infty}\sup\limits_{(-\underline{\sigma}_3 +\eta)t<x<(\underline{c}_3-\eta)t} u_i\leq B_{i} \,\,\text{ for } i=1,2, \text{ and each small }\eta>0, 
\end{equation}
with some constant $B_{i}\in[0,1]$, then
\begin{equation}\label{lowerboundu3}
  \lim\limits_{t\to\infty}\inf\limits_{(-\underline{\sigma}_3 +\eta)t<x<(\underline{c}_3-\eta)t} u_3\geq A \quad \text{ for each small }\eta>0,
\end{equation}
where $A=1-a_{31}B_{1}-a_{32}B_{2}$.
 Suppose {that} \eqref{lowerboundu3} fails. Then there exists $(t_n,x_n)$ such that
\begin{equation}\label{eq:4.6b}
        c_n:= x_n/t_n \to c \in (-\underline\sigma_3, \underline{c}_3) \,\text{ and }\, \lim\limits_{n\to\infty}u_3(t_n,x_n)<A.
\end{equation}
 Denote $(u_{1,n},u_{2,n},u_{3,n})(t,x):= (u_1,u_2,u_3)(t_n + t, x_n + x)$.
In view of
$0 \leq u_{i,n} \leq 1$  in $ [-t_n,\infty)\times\mathbb{R}$ for $i=1,2,3,$
  by parabolic estimates we assert that $(u_{1,n},u_{2,n},u_{3,n})$ is precompact in $C^2_{\mathrm{loc}}(K)$ for each compact subset $K \subset \mathbb{R}^2$.
Passing to a subsequence if necessary,   we assume that $u_{3,n} \to \hat u_3$ in $C^2_{\mathrm{loc}}(\mathbb{R}^2)$, which satisfies  
 $
\partial_t \hat u_3-d_3\partial_{xx}\hat u_3\geq r_3\hat u_3(A-\hat u_3)$ in $\mathbb{R}^2$ due to \eqref{upperboundui_B}.

Observe from \eqref{positiveu3} that $ \hat u_3(t,x) >\delta$ in $\mathbb{R}^2$. Let $\underline{U}_3(t)$ denote the solution of 
$$ U'_3 = r_3U_3(A-U_3 )\quad\text{and}\quad U_3(0)=\delta,$$
which satisfies $\underline{U}_3(\infty)=\lim\limits_{t\to\infty}\underline{U}_3(t) = A$.
Note that for each $T_1>0$, $\hat u_3(-T_1,x)\geq \underline{U}_3(0)$ for all $x \in \mathbb{R}$. By comparison, we have
$
 \hat u_3(t,x)\geq  \underline{U}_3(t+T_1) $ for $ (t,x)\in [-T_1,0] \times \mathbb{R},
$
and thus  
$ \hat u_3(0,0)\geq \underline{U}_3(T_1)$ for each $T_1>0.$
Letting $T_1 \to \infty$, we obtain $\hat u_3(0,0)\geq A$, i.e.
$\lim\limits_{n \to \infty} u_3(t_n,x_n) \geq A,$
 contradicting  \eqref{eq:4.6b}. Therefore, \eqref{lowerboundu3} is established.

\smallskip
\noindent{\bf Step 2.} We show that, if
\begin{equation*}
  \lim\limits_{t\to\infty}\inf\limits_{(-\underline\sigma_3 + \eta)t<x<(\underline{c}_3-\eta)t} u_3\geq A \quad \text{ for each small }\eta>0,
\end{equation*}
with some $A\in[0,1]$, then
\begin{equation}\label{upperboundui}
  \lim\limits_{t\to\infty}\sup\limits_{(-\underline\sigma_3 + \eta)t<x<(\underline{c}_3-\eta)t} u_i\leq B_{i}\,\,\text{ for }i=1,2,  \text{ and each small }\eta>0,
\end{equation}
where $ B_{i}=\max\{1-a_{i3}A,0\}$.

Since this is analogous to the arguments in Step 1, we omit the details.

\smallskip
%
%

\noindent{\bf Step 3.} We show that if $1< a_{23}\leq a_{13}$, then for each small $\eta>0$,
\begin{equation}\label{eq:spreadu1}
\lim\limits_{t\to\infty}\sup\limits_{(-\underline\sigma_3 + \eta)t<x<(\underline{c}_3-\eta)t} |u_1(t,x)|=0;
\end{equation}
If $1<a_{13}\leq a_{23}$, then for each small  $\eta>0$,
\begin{equation}\label{eq:spreadu2}
\lim\limits_{t\to\infty}\sup\limits_{(-\underline\sigma_3 + \eta)t<x<(\underline{c}_3-\eta)t} |u_2(t,x)|=0.
\end{equation}

We only treat the case $1< a_{23}\leq a_{13}$ and prove \eqref{eq:spreadu1}, as \eqref{eq:spreadu2} follows by switching the roles of $u_1$ and $u_2$. We shall define $B_{1,j}, B_{2,j}, A_j$ inductively by applying Steps 1 and 2. First, define $B_{1,1} = B_{2,1} = 1$ and apply Step 2, so that  \eqref{lowerboundu3} holds for $A=A_1=1-a_{31}-a_{32}$. Then letting $A=A_1$ in Step 2, we deduce \eqref{upperboundui} with $B_{i}=B_{i,2}=\max\{1-a_{i3}A_1,0\}$ for $i=1,2$. Recurrently, if $1-a_{13}A_m>0$ for some $m>1$, then $1-a_{23}A_m>0$ (by $a_{13}\geq a_{23}$) and 
\begin{equation}\label{recursion}
\begin{split}
  A_{m+1}&=1-a_{31}(1-a_{13}A_{m})-a_{32}(1-a_{23}A_{m})\\
  &=A_1+(a_{31}a_{13}+a_{32}a_{23})A_{m}=\sum_{n=0}^m (a_{31}a_{13}+a_{32}a_{23})^n A_1,
  \end{split}
\end{equation}
whence \eqref{lowerboundu3} holds for $A=A_{m+1}$. Notice from \eqref{recursion} that $A_{m+1}>A_{m}$. We shall claim that there exists some $m_0>1$  such that $1-a_{13}A_{m_0}\leq 0$, and then applying  \eqref{upperboundui} in Step 3 with $A=A_{m_0}$,  we deduce  \eqref{eq:spreadu1}.

To this end, we  argue by contradiction and assume that $1-a_{13}A_m>0$ for all $m>1$, so that \eqref{recursion} holds for all $m$. We can reach a contradiction by  the following two cases:
\begin{itemize}
  \item [{\rm(i)}]  If  $a_{31}a_{13}+a_{32}a_{23}\geq1$, then by choosing some $m_0\geq \frac{1}{a_{13}A_1}$, it follows from \eqref{recursion} that $1-a_{13}A_{m_0}\leq1-a_{13} m_0A_1\leq 0$, which is a contradiction;

\smallskip
  \item [{\rm(ii)}]If $a_{31}a_{13}+a_{32}a_{23}<1$, then letting $m\nearrow\infty$ in \eqref{recursion} gives
 $$A_{\infty}=\frac{A_1}{1-(a_{31}a_{13}+a_{32}a_{23})}=\frac{1-a_{31}-a_{32}}{1-(a_{31}a_{13}+a_{32}a_{23})}\geq 1,$$
 where the inequality follows from  $a_{13}>1$ and $a_{23}>1$. Hence, we can choose $m_0$ large such that $1-a_{13}A_{m_0}\leq0$, which is also a contradiction.
\end{itemize}
Therefore, \eqref{eq:spreadu1} is established.

\smallskip
\noindent{\bf Step 4.}  We show \eqref{spreadings}. The proof is based on classification of entire solutions of \eqref{eq:1-1}. We only consider the case  $1< a_{23}\leq a_{13}$, since for the case  $1<a_{13}\leq a_{23}$, \eqref{spreadings} can be proved by a same way. By \eqref{eq:spreadu1} in Step 3, it remains to prove
\begin{equation}\label{lowerboundu2u3}
  \lim\limits_{t\to\infty}\sup\limits_{(-\underline\sigma_3 + \eta)t<x<(\underline{c}_3-\eta)t}(|u_2(t,x)|+|u_3(t,x)-1|)=0 \quad \text{ for each small }\eta>0.
\end{equation}
 Suppose that \eqref{lowerboundu2u3} fails. Then
 there exists $(t_n,x_n)$ such that
\begin{equation*}
        c_n:=x_n/t_n \to c \in (-\underline\sigma_3, \underline{c}_3) \quad\text{and}\quad \lim\limits_{n\to\infty}u_2(t_n,x_n)>0 \text{ or } \lim\limits_{n\to\infty}u_3(t_n,x_n)<1.
\end{equation*}
As before,  we also denote $(u_{1,n},u_{2,n},u_{3,n})(t,x):= (u_1,u_2,u_3)(t_n + t, x_n + x)$.
By parabolic estimates,
   we may assume that $(u_{1,n},u_{3,n})$ converges  to $(\hat u_{2},\hat u_{3})$ in $C^2_{\mathrm{loc}}(\mathbb{R}^2)$ by passing to a subsequence. By \eqref{eq:spreadu1},  we see that $(\hat u_{2},\hat u_{3})$ satisfies
 \begin{equation*}
\left\{\begin{array}{ll}
\smallskip
\partial_t \hat u_{2}-\partial_{xx}\hat u_{2}= \hat u_{2}(1-\hat u_{2} - a_{23}\hat u_{3}) & \text{ in } \mathbb{R}^2,\\
\partial_t \hat u_{3}-d_3\partial_{xx}\hat u_{3}= r_3\hat u_{3}(1-a_{32} \hat u_{2} -\hat u_{3}) & \text{ in } \mathbb{R}^2.\\
\end{array}
\right.
\end{equation*}
 Again by \eqref{positiveu3} in  the proof of {Theorem A}, we have $ (\hat u_2,\hat u_3)(t,x) \preceq (1,\delta)$ for all $(t,x)\in\mathbb{R}^2$. Let $(\overline{U}_2,\underline{U}_3)$ denote the solution of  ODEs
$$
 U'_2 = U_2(1-U_{2} - a_{23}U_3 ) \,\,\text{ and }\,\,U'_3 = r_3U_3(1-a_{32} U_2 -U_3 ),
$$
with initial data $(\overline{U}_2,\underline{U}_3)(0)=(1,\delta)$, so that $(\overline{U}_2,\underline{U}_3)(\infty) =(0,1)$ due to $a_{32}<1<a_{23}$.

Analogue to Step 1,  by comparison we can arrive at
$(\hat u_2,\hat u_3)(0,0)\preceq(\overline{U}_2,\underline{U}_3)(\infty)=(0,1),$
so that $u_2(t_n,x_n)\to 0$ and $u_3(t_n,x_n) \to 1$ as $n \to \infty$,
which is a contradiction. Therefore, \eqref{lowerboundu2u3} is established.
The proof of {Theorem B} is now complete.
\end{proof}

\begin{remark}
Let hypothesis $({\rm H}_{c_1,c_2,\lambda})$ hold with $c_1>c_2$. Assume $a_{13} > a_{31}$ and $a_{23} < a_{32}$ (instead of $a_{13}, a_{23} >1$ in {Theorem B}), then we claim that for each small $\eta>0$,
\begin{equation*}\label{spreadingsss}
\lim\limits_{t\to\infty}\sup\limits_{0<x<(\underline{c}_3-\eta)t}(|u_1(t,x) - U_1^*|+|u_2(t,x) - U_2^*|+|u_3(t,x)-U_3^*|)=0,
\end{equation*}
where $(U_1^*,U_2^*,U_3^*)$ is the unique positive equilibria of system \eqref{eq:1-1}. In this case, \cite[Proposition 1]{Cham_2010} can be applied to yield a strictly convex Lyapunov function for  system \eqref{eq:1-1}. One can then proceed similarly as in \cite[Lemma 7.7]{Xiao_2019} to fully classify the positive entire solutions of the three-species competition system \eqref{eq:1-1}. We omit the details.
\end{remark}

\section{Properties  of $s_{\rm nlp}(c_1,c_2,\lambda)$}\label{S6}
This section is devoted to deriving  Remark \ref{rmk:1.3} and the proof of Proposition \ref{coro:1estamoverc001}. 
Since $s_{\rm nlp}\leq \sigma_3(\lambda)$ was established in 
Lemma \ref{lem:betamu3}, 
Remark \ref{rmk:1.3} follows from  the following result.
\begin{proposition}\label{charactsnlpc}
Let $s_{{\rm nlp}}$ be defined by \eqref{charact_snlp} for 
$c_1>c_2$ and $\lambda\in (0,\infty]$. 
Then $s_{\rm{nlp}}\geq \alpha_3\sqrt{1-a_{32}}.$
Furthermore, if $a_{31} < a_{32}$ and $\alpha_3 < c_2 < c_1 < \alpha_3(\sqrt{a_{32}} + \sqrt{1-a_{32}})$, then 
$s_{\rm nlp} > \alpha_3\sqrt{1-a_{32}}$, where $\alpha_3=2\sqrt{d_3 r_3}$.
\end{proposition}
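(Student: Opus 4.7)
The plan is to establish both inequalities by constructing explicit viscosity super-solutions of the variational inequality \eqref{eq:w3nlp} and invoking the comparison principle (Corollary \ref{corA3rho}). Since $\rho_{\rm nlp}$ is non-decreasing in $s$ (Lemma \ref{lemma_w3nlp}) and $s_{\rm nlp}$ is the right endpoint of its zero set, it suffices in each case to exhibit a super-solution $\overline\rho$ that vanishes at the targeted free-boundary value.

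For the first inequality $s_{\rm nlp}\geq \alpha_3\sqrt{1-a_{32}}$, I would define the piecewise function
\[
\overline\rho_1(s)=\begin{cases}
\max\left\{\frac{s^2}{4d_3}-r_3(1-a_{32}),\,0\right\}, & 0\leq s\leq c_2,\\
\frac{s^2}{4d_3}-r_3(1-a_{31}), & c_2< s\leq c_1,\\
\frac{s^2}{4d_3}-r_3, & s>c_1,
\end{cases}
\]
with the lower semicontinuous value taken at the jumps $c_2,c_1$. On each open interval of continuity the relevant branch solves the Hamilton--Jacobi equation with inner source $r_3(1-a^*)$ for $a^*\in\{a_{32},a_{31},0\}$, so the super-solution inequality reduces to $\mathcal{R}(s)\geq r_3(1-a^*(s))$, which holds identically on each interval (the choice $a^*=a_{31}$ on $(c_2,c_1]$ accommodates both Case A, $a_{31}\leq a_{32}$, and Case B, $a_{31}>a_{32}$). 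At the corner $s=\alpha_3\sqrt{1-a_{32}}$ and at the jumps $c_2,c_1$, the super-solution property (using the upper semicontinuous envelope $\mathcal{R}^*$) reduces, after completing the square in the test-slope $p$, to inequalities of the form $d_3(p-c/(2d_3))^2\geq 0$, which are automatic. Since $\overline\rho_1(0)=0$ and $\lim_{s\to\infty}\overline\rho_1(s)/s=\infty\geq\lambda$, Corollary \ref{corA3rho} gives $\rho_{\rm nlp}\leq\overline\rho_1$, so $\rho_{\rm nlp}(\alpha_3\sqrt{1-a_{32}})=0$.

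For the strict inequality under $a_{31}<a_{32}$ and $\alpha_3<c_2<c_1<\alpha_3(\sqrt{a_{32}}+\sqrt{1-a_{32}})$, I would refine the construction by replacing the KPP parabola on $[\alpha_3\sqrt{1-a_{32}},c_1]$ with a linear piece of slightly smaller slope, exploiting the strict slack $r_3(a_{32}-a_{31})>0$ available on $(c_2,c_1)$ in Case A. Fix $\delta>0$ small, set $s_0=\alpha_3\sqrt{1-a_{32}}+\delta$, and let $\nu_1$ be the smaller root of $d_3\nu^2-s_0\nu+r_3(1-a_{32})=0$. Define
\[
\overline\rho_2(s)=\begin{cases}
0, & 0\leq s\leq s_0,\\
\nu_1(s-s_0), & s_0\leq s\leq c_1,\\
\frac{s^2}{4d_3}-r_3(1-a_{32}), & s>c_1,
\end{cases}
\]
taking the lower semicontinuous value at $c_1$. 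On $(s_0,c_2)$ the linear piece satisfies the HJ equation with equality (by the choice of $\nu_1$); on $(c_2,c_1)$ it leaves the positive residual $r_3(a_{32}-a_{31})$; and on $(c_1,\infty)$ the parabola leaves the residual $r_3 a_{32}>0$. The super-solution property at the corner $s_0$ amounts to $d_3p^2-s_0p+r_3(1-a_{32})\geq 0$ on $[0,\nu_1]$, which holds since $\nu_1$ is the smaller root. The delicate verification is the upward jump of $\overline\rho_2$ at $c_1$: the admissible test-slopes (from one-sided Taylor expansions, linear on the left and parabolic on the right separated by a strictly positive gap) are $p\in[\nu_1,\infty)$, and the super-solution inequality (with $\mathcal{R}^*(c_1)=r_3$) simplifies, after substituting $\nu_1 s_0=d_3\nu_1^2+r_3(1-a_{32})$ and completing the square in $p$, to
\[
(c_1-2d_3\nu_1)^2\leq 4d_3 r_3 a_{32}=\alpha_3^2 a_{32}.
\]
As $\delta\to 0^+$, $2d_3\nu_1\to\alpha_3\sqrt{1-a_{32}}$, so this reduces in the limit to $c_1\leq\alpha_3(\sqrt{1-a_{32}}+\sqrt{a_{32}})$, which holds strictly by hypothesis. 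Hence for $\delta$ sufficiently small, $\overline\rho_2$ is a valid viscosity super-solution, and the comparison principle yields $\rho_{\rm nlp}(s_0)\leq\overline\rho_2(s_0)=0$, so $s_{\rm nlp}\geq s_0>\alpha_3\sqrt{1-a_{32}}$.

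The principal obstacle is the careful handling of the viscosity super-solution condition across the discontinuities of $\mathcal{R}$, particularly at the upward jump of $\overline\rho_2$ at $c_1$ in Part 2, where one must correctly identify the admissible test-function slopes from the one-sided expansions on either side, invoke $\mathcal{R}^*(c_1)=r_3$, and close the argument via a completion of the square that precisely matches the geometric hypothesis $c_1<\alpha_3(\sqrt{a_{32}}+\sqrt{1-a_{32}})$. The remaining verifications at the other jumps and at the corner $s_0$ are routine quadratic computations.
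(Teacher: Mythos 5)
Your overall strategy --- explicit super-solutions fed into the comparison machinery of Section \ref{sec:prelim} --- coincides with the paper's for the weak inequality, but your route to the strict one is genuinely different. For $s_{\rm nlp}\geq\alpha_3\sqrt{1-a_{32}}$ the paper builds a \emph{continuous} super-solution on $[0,c_1]$ (zero, then the parabola $\frac{s^2}{4d_3}-r_3(1-a_{32})$ up to $c_2$, then a steep linear cap chosen to match continuously at $c_2$ and to dominate $\rho_{\rm nlp}(c_1)$) and applies the bounded-interval comparison of Lemma \ref{lemA2rho}; you instead glue raw parabolas with jumps at $c_2,c_1$ and compare on all of $(0,\infty)$ via Corollary \ref{corA3rho}. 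For the strict inequality the paper constructs nothing: it uses $a_{31}<a_{32}$ to view $\rho_{\rm nlp}$ as a sub-solution of the problem with $\mathcal{R}$ replaced by $r_3(1-a_{32}\chi_{\{s\leq c_1\}})$ and then quotes the two-species nonlocal-pulling result of \cite[Theorem 1.3]{LLL20192}, whose free boundary already exceeds $\alpha_3\sqrt{1-a_{32}}$ when $c_1<\alpha_3(\sqrt{a_{32}}+\sqrt{1-a_{32}})$. Your Part 2 is a self-contained substitute for that citation: the tangent-line super-solution vanishing on $[0,s_0]$, the admissible slopes $p\geq\nu_1$ at the upward jump at $c_1$, the use of $\mathcal{R}^*(c_1)=r_3$, and the resulting requirement $(c_1-2d_3\nu_1)^2\leq\alpha_3^2 a_{32}$, which as $\delta\to0^+$ is exactly the hypothesis on $c_1$, are all correct, and $(c_2,c_1)$ is precisely where $a_{31}<a_{32}$ enters. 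Your argument buys independence from the external two-species theorem at the cost of a more delicate viscosity verification; the paper buys brevity by outsourcing exactly that verification.

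Two caveats. First, both of your super-solutions are discontinuous, so you are using comparison between the continuous $\rho_{\rm nlp}$ and a merely lower semicontinuous super-solution. Definition \ref{defviscosity} does admit lsc super-solutions and Corollary \ref{corA3rho} is stated for them, so formally you are inside the framework; but the paper never exercises comparison against a discontinuous super-solution of its own making (its bounded-interval Lemma \ref{lemA2rho} rests on Tourin's comparison theorem for continuous sub- and super-solutions), and you could sidestep the point entirely by making the constructions continuous --- in Part 1 exactly as the paper does with the steep linear cap on $(c_2,c_1]$, and in Part 2 by capping the tangent line with a sufficiently steep segment before rejoining a parabola, which leads to the same quadratic inequality. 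Second, Definition \ref{defviscosity} requires super-solutions to be nonnegative: your branches $\frac{s^2}{4d_3}-r_3(1-a_{31})$ and $\frac{s^2}{4d_3}-r_3$ in Part 1 are negative when $c_2<\alpha_3\sqrt{1-a_{31}}$, respectively $c_1<\alpha_3$, so as written Part 1 needs these branches truncated at $0$ (or the remark that in every application one has $c_2>\sigma_3(\lambda)\geq\alpha_3$, which keeps all branches positive). The paper's own construction carries the analogous implicit requirement $c_2\geq\alpha_3\sqrt{1-a_{32}}$, so this is shared looseness rather than a defect peculiar to your proof, but it should be stated.
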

\begin{proof}
\noindent {\bf Step 1.} We prove  $s_{\rm{nlp}}\geq \alpha_3\sqrt{1-a_{32}}.$  The proof depends on the construction of a viscosity super-solution and an application of Lemma \ref{lemA2rho}.
Define $\overline{\rho}: [0,c_1]\to[0,\infty)$ by
\begin{equation*}
\overline{\rho}{(s)} := 
\begin{cases}
\smallskip
\overline{\lambda}s- d_3 \overline{\lambda}^2 +\overline{r} &\text{ for }{c_2} < s\leq {c_1},\\
\smallskip
\frac{s^2}{4d_3}-r_3(1-a_{32})&\text{ for } \alpha_3\sqrt{1-a_{32}} < {s} \leq  {c_2},\\
0 &\text{ for } 0\leq {s}  \leq \alpha_3\sqrt{1-a_{32}},
\end{cases}
\end{equation*}
where $\overline{\lambda}=\frac{{c_1}+{c_2}}{4d_3}+\frac{r_3(1-a_{32})}{{c_1}-{c_2}}$ and $\overline{r}=d_3\left[\frac{{c_1}-{c_2}}{4d_3}-\frac{r_3(1-a_{32})}{{c_1}-{c_2}}\right]^2$.
Let us show that  ${\overline{\rho}}$ is a viscosity super-solution of \eqref{eq:w3nlp} in the interval $(0,c_1)$. 
Set $A:= \frac{c_1- c_2}{4d_3}$ and $B:= \frac{r_3(1-a_{32})}{c_1-c_2}$. We can verify ${\overline{\rho}}$  is continuous  in $[0,{c_1}]$ by the following calculations at $s=c_2$:
\begin{align*}
\quad\overline{\lambda}{c_2}- d_3 \overline{\lambda}^2 +\overline{r} &={c_2}\left[A + B+\frac{{c_2}}{2d_3}\right]-d_3\left[A + B+\frac{{c_2}}{2d_3}\right]^2+\overline{r}\\
&={c_2}\left[A + B\right]+\frac{(c_2)^2}{2d_3}-d_3\left[A +B\right]^2 -{c_2}\left[A  +B\right]  -\frac{(c_2)^2}{4d_3}+\overline{r}\\
&=\frac{(c_2)^2}{4d_3}-d_3\left[A  +B\right]^2+d_3\left[A - B\right]^2\\
&=\frac{(c_2)^2}{4d_3}-4d_3 AB = \frac{(c_2)^2}{4d_3}-r_3(1-a_{32}).
\end{align*}
Observe that ${\overline{\rho}}$ is a classical super-solution for \eqref{eq:w3nlp} in the set $(0,c_1)\setminus\{{c_2},\alpha_3\sqrt{1-a_{32}} \}$.
Since ${\overline{\rho}} \geq 0$ by construction, it remains to consider the case when ${\overline{\rho}} - \phi$ attains a strict local minimum at  $\hat{s} = {c_2}$ or $ \hat{s}=\alpha_3\sqrt{1-a_{32}}$, where $\phi\in C^1(0,\infty)$ is a test function.  In case ${\hat{s}} = {c_2}$, direct calculation at $s={\hat{s}}$ yields that
\begin{align*}
{\overline{\rho}(\hat s)}-{\hat s}\phi' +d_3|\phi'|^2 +\mathcal{R}^*(\hat s)
&\geq \frac{(c_2)^2}{4d_3}- {c_2} \phi' +d_3 | \phi'|^2 = d_3\left( \phi'-\frac{{c_2}}{2d_3}\right)^2\geq 0.
\end{align*}
On the other hand, if ${\hat{s}} = \alpha_3\sqrt{1-a_{32}}$, then at $s={\hat{s}}$, we calculate that
\begin{align*}
{\overline{\rho}(\hat s)}-\hat s\phi' +d_3|\phi'|^2 + \mathcal{R}^*(\hat s)&=  -\alpha_3\sqrt{1-a_{32}} \phi' +d_3 | \phi'|^2 +  r_3(1 -a_{32})\\
&= d_3\left[ \phi'-\sqrt{\tfrac{r_3(1-a_{32})}{d_3}}\right]^2\geq 0.
\end{align*}
Therefore, ${\overline{\rho}}$ defined above is a viscosity super-solution of \eqref{eq:w3nlp}.

Recall that $\rho_{\rm nlp}$ denotes  the unique viscosity solution of \eqref{eq:w3nlp}.
Notice that $\rho_{\rm{nlp}}(0)=0={\overline{\rho}}(0)$.
To apply Lemma \ref{lemA2rho}, let us verify the boundary condition 
${\overline{\rho}}({c_1})\geq \rho_{\rm{nlp}}({c_1}).$
First, by Lemma \ref{lemA1rho} with  $c_g=c_1$,  it follows easily that $\rho_{\rm{nlp}}({c_1})\leq c^2_1/(4d_3)-r_3(1-a_{31})$. Writing $\overline\lambda = \frac{c_1}{2d_3} - D$ and $\overline{r} = d_3 D^2$, where $D = \frac{{c_1}-{c_2}}{4d_3}-\frac{r_3(1-a_{32})}{{c_1}-{c_2}}$, we verify that
 \begin{align*}
 {\overline{\rho}}({c_1})= \overline{\lambda}{c_1}- d_3 \overline{\lambda}^2 +\overline{r}
 &={c_1}\left[   \frac{c_1}{2d_3} - D   \right] -d_3\left[ \frac{c_1}{2d_3} - D\right]^2+d_3 D^2\\
&=\frac{c_1^2}{2d_3}-{c_1}D-  d_3  \left[  \frac{c_1^2}{4d_3^2}- \frac{c_1D}{d_3} +D^2 \right]  +  d_3 D^2=\frac{c_1^2}{4d_3}\geq\rho_{\rm{nlp}}({c_1}).
\end{align*}
By applying  Lemma \ref{lemA2rho} with $c_{b}=c_1$, we  deduce ${\overline{\rho}}(s)\geq \rho_{\rm{nlp}}(s)$ for $s\in [0,{c_1}]$. 
In particular, $0 \leq \rho_{\rm nlp} (\alpha_3\sqrt{1-a_{32}})\leq {\overline{\rho}}(\alpha_3\sqrt{1-a_{32}})=0$. Hence by definition, we have  $s_{\rm{nlp}} = \sup\{s \geq 0: \rho_{\rm nlp}(s)=0\} \geq \alpha_3\sqrt{1-a_{32}}$, which completes Step 1.

\smallskip
\noindent {\bf Step 2.} We show  $s_{\rm{nlp}}> \alpha_3\sqrt{1-a_{32}}$ if  $a_{31} < a_{32}$ and $\alpha_3 < c_2 < c_1 < \alpha_3(\sqrt{a_{32}} + \sqrt{1-a_{32}})$.
Due to $a_{31} < a_{32}$,  it can be verified that  $\rho_{\rm nlp}$ is a viscosity sub-solution of
\begin{align}\label{eq:w3nlp-1-1}
\left\{\begin{array}{l}
\smallskip
\min\{\rho-s\rho'+d_3|\rho'|^2+r_3(1-a_{32}\chi_{\{s\leq c_1\}}),\rho\}=0 \,\,\text{ in } (0,\infty),\\
\rho(0)=0,\quad\lim\limits_{s\to\infty}\frac{\rho(s)}{s}=\lambda.
\end{array}
\right.
\end{align}
Let $\overline\rho$ be the unique viscosity solution of \eqref{eq:w3nlp-1-1}. Corollary \ref{corA3rho} implies that 
$\rho_{\rm nlp}(s)\leq\overline\rho(s)$ for all $s\in(0,\infty)$. 
By the same arguments as in the proof of Lemma \ref{lemA2rho}, we can verify that $\overline{w}(t,x):=t\overline\rho(\frac{x}{t})$ is the viscosity solution of
\begin{align}\label{hatwinfty-1-1}
\left\{\begin{array}{ll}
\medskip
\min\{\partial_tw+ d_3|\partial_xw|^2+r_3(1-a_{32}\chi_{\{x \leq c_1 t\}}),w\}=0 \,\,\text{ in }(0,\infty)\times(0,\infty),\\
 w(0,x)=\lambda x, 
\,\,\,
w(t,0)=0 \quad\text{ for } \,\, x\in  [0,\infty), \,\, t\in (0,\infty).
\end{array}
\right.
\end{align}

Define $\underline{s}_{\rm nlp}>0$ such that $\{(t,x): \overline{w}(t,x)=0\}=\{(t,x):t>0 \,\text{ and }\,x\leq \underline{s}_{\rm nlp} t\}$. Since $c_1 < \alpha_3(\sqrt{a_{32}} + \sqrt{1-a_{32}})$, it is shown in \cite[(1.6) in Theorem 1.3]{LLL20192} that  
$$\underline{s}_{\rm nlp}> \alpha_3\sqrt{1-a_{32}}.$$
(To apply \cite[Theorem 1.3]{LLL20192}, we consider the transformation   $\tilde{w}(s,y):=\overline{w}\left(\frac{s}{r_3},\sqrt{\frac{d_3}{r_3}}y\right)$.)  
In view of ${\overline{\rho}}(\underline{s}_{\rm nlp})=\overline{w}(1,\underline{s}_{\rm nlp})=0$, we arrive at 
$0 \leq \rho_{\rm nlp} (\underline{s}_{\rm nlp})\leq {\overline{\rho}}(\underline{s}_{\rm nlp})=0$. By definition,   $s_{\rm{nlp}} = \sup\{s \geq 0: \rho_{\rm nlp}(s)=0\} \geq \underline{s}_{\rm nlp}> \alpha_3\sqrt{1-a_{32}}$ as desired.
\end{proof}

Next we  prove   Proposition \ref{coro:1estamoverc001}. We first prepare the following result:
\begin{lemma}\label{lemma_snlp}
Let $s_{\rm nlp}(c_1,c_2,\lambda)$ be defined by \eqref{charact_snlp}. Then $s_{{\rm nlp}}$ is continuous and  non-increasing  with respect to $\lambda\in (0,\infty]$.
\end{lemma}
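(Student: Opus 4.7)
The proof splits into two parts: monotonicity and continuity.

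For \emph{monotonicity}, let $0<\lambda_1\leq\lambda_2\leq\infty$, and view $\rho_{\rm nlp}^{\lambda_1}$ and $\rho_{\rm nlp}^{\lambda_2}$ as viscosity sub- and super-solutions, respectively, of the common variational inequality \eqref{eq:w3nlp}. The boundary conditions of Lemma \ref{lemA2rho} with $c_b=\infty$ are verified directly: $\rho_{\rm nlp}^{\lambda_i}(0)=0$, $\lim_{s\to\infty}\rho_{\rm nlp}^{\lambda_1}(s)/s=\lambda_1\leq\lambda_2=\lim_{s\to\infty}\rho_{\rm nlp}^{\lambda_2}(s)/s$, and $\lambda_1<\infty$ (the degenerate case $\lambda_1=\lambda_2=\infty$ being trivial). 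Hence $\rho_{\rm nlp}^{\lambda_1}\leq\rho_{\rm nlp}^{\lambda_2}$ on $[0,\infty)$, and since Lemma \ref{lemma_w3nlp} identifies the zero set of each $\rho_{\rm nlp}^{\lambda_i}$ as the interval $[0,s_{\rm nlp}(\lambda_i)]$, this inclusion gives $s_{\rm nlp}(\lambda_1)\geq s_{\rm nlp}(\lambda_2)$.

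For \emph{continuity}, fix a sequence $\lambda_n\to\lambda\in(0,\infty]$ and write $\rho_n=\rho_{\rm nlp}^{\lambda_n}$. The first step is to prove $\rho_n\to\rho_{\rm nlp}^\lambda$ locally uniformly on $[0,\infty)$. By standard Lipschitz bounds for convex first-order equations (cf.\ \cite[Proposition 1.14]{IshiiLect}) together with the explicit asymptotic formulas of Lemma \ref{lemA1rho}, the family $\{\rho_n\}$ is locally uniformly bounded and equicontinuous, so Arzel\`a-Ascoli yields subsequential uniform limits. By stability of viscosity solutions, any such limit $\rho^\star$ solves \eqref{eq:w3nlp} with $\rho^\star(0)=0$. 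Because the formulas in Lemma \ref{lemA1rho} express $\rho_n(s)$ on $[c_1,\infty)$ continuously in $\lambda_n$, one can pass to the limit there and check $\lim_{s\to\infty}\rho^\star(s)/s=\lambda$. Uniqueness then identifies $\rho^\star=\rho_{\rm nlp}^\lambda$, so the full sequence converges.

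The second step is the continuity of $s_{\rm nlp}$. Upper semicontinuity is immediate: any $s_0>s_{\rm nlp}(\lambda)$ satisfies $\rho_{\rm nlp}^\lambda(s_0)>0$, so $\rho_n(s_0)>0$ and $s_{\rm nlp}(\lambda_n)<s_0$ for $n$ large. Lower semicontinuity for $\lambda_n\leq\lambda$ is immediate from the monotonicity; the delicate case is $\lambda_n\searrow\lambda$. Suppose toward contradiction that $s^\star:=\lim_n s_{\rm nlp}(\lambda_n)<s_{\rm nlp}(\lambda)$; Proposition \ref{charactsnlpc} then gives $s^\star\geq\alpha_3\sqrt{1-a_{32}}$. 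Pick $\delta>0$ with $s^\star+\delta<s_{\rm nlp}(\lambda)$ and set $I:=[s^\star+\delta,s_{\rm nlp}(\lambda)]$, splitting $I$ at $c_2$ if necessary so that $\mathcal{R}(s)$ is a positive constant ($r_3(1-a_{32})$ or $r_3(1-a_{31})$) on each piece. For $n$ large, $\rho_n>0$ on $I$ and $d_3(\rho_n')^2-s\rho_n'+\rho_n+\mathcal{R}(s)=0$ a.e.\ on each piece. The discriminant $s^2-4d_3(\rho_n+\mathcal{R}(s))$ is strictly positive for large $n$ (using $s^\star+\delta>\alpha_3\sqrt{1-a_{32}}$ together with $\rho_n\to 0$ uniformly on $I$), so $\rho_n'$ a.e.\ equals one of two positive real roots whose product is $(\rho_n+\mathcal{R}(s))/d_3$ and whose sum is $s/d_3$; by Vieta, both roots are bounded below on $I$ uniformly by $\nu:=r_3\min\{1-a_{31},1-a_{32}\}/s_{\rm nlp}(\lambda)>0$. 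Integrating $\rho_n'\geq\nu$ on $I$ yields $\rho_n(s_{\rm nlp}(\lambda))\geq\nu(s_{\rm nlp}(\lambda)-s^\star-\delta)>0$, contradicting the local uniform convergence $\rho_n\to\rho_{\rm nlp}^\lambda\equiv 0$ on $I$.

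The main obstacle is precisely this lower semicontinuity in the case $\lambda_n\searrow\lambda$: local uniform convergence of $\rho_n$ alone does not prevent the free boundary from jumping downward, since both $\rho_n$ and $\rho_{\rm nlp}^\lambda$ vanish on partially overlapping intervals. The resolution uses the Hamilton-Jacobi structure past the free boundary, exploiting the nondegeneracy $\mathcal{R}>0$ via Vieta's formulas to extract a uniform positive slope bound $\rho_n'\geq\nu$ that is incompatible with $\rho_n\to 0$ on an interval of positive length.
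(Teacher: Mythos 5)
Your proposal is correct, and it is worth noting how it relates to the paper's treatment: the paper disposes of this lemma in one line (``similar to Lemma \ref{lemma_w3nlp}\dots we omit the details''). Your monotonicity argument is exactly the intended one — compare $\rho_{\rm nlp}^{\lambda_1}$ and $\rho_{\rm nlp}^{\lambda_2}$ as sub-/super-solutions of the common equation in \eqref{eq:w3nlp} via Lemma \ref{lemA2rho} and read off the inclusion of zero sets from Lemma \ref{lemma_w3nlp}. Where you genuinely go beyond the paper is the continuity of the free boundary point: the paper's implicit step ``$\rho_{\rm nlp}$ continuous in $\lambda$ $\Rightarrow$ $s_{\rm nlp}$ continuous in $\lambda$'' is precisely the place where the free boundary could in principle jump, and you isolate this and close it with a quantitative nondegeneracy argument — on the set $\{\rho_n>0\}$ the solution is locally Lipschitz, satisfies $d_3(\rho_n')^2-s\rho_n'+\rho_n+\mathcal{R}(s)=0$ a.e., and Vieta gives the uniform slope bound $\rho_n'\geq \mathcal{R}(s)/s>0$, which is incompatible with $\rho_n\to 0$ on an interval of positive length. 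This is a legitimate and self-contained resolution; the paper's own (unwritten) alternative would be to read continuity and monotonicity in $\lambda$ directly off the explicit formula of Theorem C, where $\lambda$ enters only through $\zeta_1$ in \eqref{lambda3}.

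Two cosmetic points, neither affecting correctness. First, your description of $\mathcal{R}$ on $I$ as taking only the values $r_3(1-a_{32})$ or $r_3(1-a_{31})$ tacitly assumes $s_{\rm nlp}(\lambda)\leq c_1$ and ignores the single point $s=c_2$ where $\mathcal{R}=r_3(1-a_{31}-a_{32})$; since $\mathcal{R}\geq r_3(1-a_{31}-a_{32})>0$ everywhere by \eqref{eq:compactorder1}, the uniform lower bound $\nu$ survives unchanged (and the point $s=c_2$ is negligible for the a.e.\ identity). Second, the strict positivity of the discriminant need not be argued at all: at a.e.\ point of $\{\rho_n>0\}$ the equation holds with the real number $\rho_n'(s)$ as a root, so the discriminant is automatically nonnegative there, and the Vieta bound (positive sum and product of the roots) is all you use. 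Similarly, in the identification of the limit at infinity, the explicit formulas of Lemma \ref{lemA1rho} hold on $[c_g,\infty)$ with $c_g=\max\{c_1,\sigma_3(\lambda_n)\}$ rather than on $[c_1,\infty)$; since $\sigma_3(\lambda_n)$ stays bounded along the sequence, this changes nothing in your conclusion.
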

\begin{proof}
Similar to Lemma \ref{lemma_w3nlp}, we can prove that $\rho_{\rm nlp}$ is non-increasing and continuous in $\lambda$. This implies that $s_{\rm nlp}$ is non-increasing and continuous in $\lambda$. We omit the details. 
%
\end{proof}

\begin{proof}[Proof of  Proposition {\rm\ref{coro:1estamoverc001}}]
The proof is divided into 
two steps.

\smallskip
\noindent {\bf Step 1.} We show that there exist some $\delta>0$ and $\lambda\in (0,\infty)$ such that {\rm (i)} $\sigma_3(\lambda)=d_3\lambda + \frac{r_3}{\lambda}<\hat{s}_{\rm nlp}(c_1)$ and {\rm (ii)} $s_{\rm nlp}(c_1,  \hat{s}_{\rm nlp}(c_1), \lambda) > c_{\rm LLW} 
$ for all $a_{31}\in [0,\delta)$. Here $\hat{s}_{\rm nlp}$ and $s_{\rm nlp}$ are defined in \eqref{qq:1.7a} and \eqref{charact_snlp}, respectively.

First, we consider the case $a_{31}=0$. We claim that there exists $\overline\lambda \in (0, \sqrt{r_3/d_3})$ such that  
\begin{equation}\label{eq:qq5.1}
\alpha_3 <\overline\sigma_3 = \hat{s}_{\rm nlp}(c_1)  = {s}_{\rm nlp}(c_1,\hat{s}_{\rm nlp}(c_1),\overline\lambda), \quad \text{ where }\,\,\, \overline\sigma_3:= d_3\overline{\lambda} + \frac{r_3}{\overline{\lambda}}.
\end{equation}
Indeed, since $\alpha_3=2\sqrt{d_3 r_3}< \hat{s}_{\rm nlp}(c_1)$ (see \eqref{eq:prop1.14a}), we can choose the unique $\overline\lambda \in (0, \sqrt{r_3/d_3})$ so that the first equality in 
 \eqref{eq:qq5.1} holds. Also, the first inequality in \eqref{eq:qq5.1} follows from $\overline\lambda \in (0, \sqrt{r_3/d_3})$. Next, we show $\overline\sigma_3={s}_{\rm nlp}(c_1,\hat{s}_{\rm nlp}(c_1),\overline\lambda)$. To this end, we observe that 
 $$\overline{\rho}(s):=\max\left\{\overline{\lambda}\cdot(s-\overline{\sigma}_3), \, 0\right\}$$
 is the unique viscosity solution of 
  \begin{align}\label{eq:w3nlp-1}
 \min\{\rho-s\rho'+d_3|\rho'|^2+\mathcal{R}(s),\rho\}=0 \,\,\text{ in } (0,\infty),
 \end{align}
 where $\mathcal{R}(s)=r_3(1-a_{32}\chi_{\{s\leq  \hat{s}_{\rm nlp}(c_1)\}})$. However, by the fact that $a_{31}=0$ and the first equality of \eqref{eq:qq5.1}, it follows that $\overline\rho$ is also the unique viscosity solution of \eqref{eq:w3nlp} with $c_2=\hat{s}_{\rm nlp}(c_1)$ and $\lambda = \overline\lambda$. Hence $\overline\sigma_3 = s_{\rm nlp}(c_1,\hat{s}_{\rm nlp},\overline\lambda)$. Now, if we consider $\lambda = \overline\lambda+ \epsilon$ for $\epsilon>0$ small, then 
 $$
 \sigma_3(\lambda) < \overline\sigma_3 = \hat{s}_{\rm nlp}(c_1) \quad \text{and} \quad s_{\rm nlp}(c_1,  \hat{s}_{\rm nlp}(c_1), \lambda)  > \alpha_3,
 $$
where we used the continuous dependence in $\lambda$ (Lemma \ref{lemma_snlp}). 
This proves Step 1 in the case $a_{31}=0$. Since all of the desired inequalities are strict, the case $0<a_{31}\ll 1$ follows by continuous dependence on $a_{31}$.

\smallskip
\noindent {\bf Step 2.} 
We show \eqref{eq:spreadingly},  \eqref{eq:spreadings0}, and  \eqref{spreadings} hold for the  chosen $\lambda$ as in Step 1. First, 
the hierarchy conditions \eqref{eq:compactorder1}, \eqref{eq:1--1b}, and $a_{21}a_{12} <1$ imply  $\hat{c}_{\rm LLW} = 2\sqrt{1-a_{21}}$ as stated in Remark \ref{rmk:1--2a}.  Since $\sigma_3(\lambda) < \hat{s}_{\rm nlp}(c_1)$ for the chosen $\lambda$, we can apply Theorem \ref{thm:suff1.2} to deduce that $({\rm H}_{c_1,c_2,\lambda})$ holds with $c_1 = 2\sqrt{d_1 r_1}$ and $c_2 = \hat{s}_{\rm nlp}(c_1)$. 

Since \eqref{eq:compactorder1} and  \eqref{eq:1--1b} also hold, {Theorem A} applies. 
In particular, \eqref{eq:spreadingly} holds. 
In view of  $s_{\rm nlp}(c_1,  \hat{s}_{\rm nlp}(c_1), \lambda)> \alpha_3 \geq c_{\rm LLW}$ (see Remark \ref{rem1.3} for the last inequality), it follows that 
\eqref{eq:spreadings0} holds. Finally, due to  $a_{13}>1$ and $a_{23}>1$, \eqref{spreadings} is a direct consequence of {Theorem B}. 
Proposition \ref{coro:1estamoverc001} is proved.
\end{proof}

\section*{Acknowledgment}
 KYL is partiallly supported by  NSF grant DMS-1853561. QL (201706360310) and SL (201806360223) would like to thank the China Scholarship Council for financial support during the period of their overseas study and  express their gratitude to the Department of Mathematics, The Ohio State University for the warm hospitality. 

\begin{appendices}

\section{Some Useful Lemmas 
}\label{appendixlemmas}
In this section, we include some lemmas used in this paper. The first result called linear determinacy is based on  \cite[Theorem 2.1]{Lewis_2002}. Related results can be found in \cite{Huang_2010,Huang_2011}.
\begin{lemma}\label{lem:1estamoverc30}
 Let $c_{\rm{LLW}}$ be given by Definition {\rm \ref{c3LLW}}.  Suppose that
 \begin{equation}\label{lineardetermained}
 d_3 \geq \frac{1}{2}, \quad a_{32}(1-a_{21}) < 1 < \frac{a_{23}}{1-a_{21}},\quad\text{and} \quad a_{32}a_{23} <1.
  \end{equation}
 Then 
$c_{\rm{LLW}}=\alpha_3\sqrt{1-a_{32}(1-a_{21})}.$
 \end{lemma}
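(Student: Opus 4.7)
The strategy is to reduce system \eqref{eq:1-2'} via a change of variables into the canonical two-species Lotka--Volterra competition system treated by Lewis, Li and Weinberger in \cite[Theorem 2.1]{Lewis_2002}, and then verify that the hypotheses \eqref{lineardetermained} translate precisely into their sufficient condition for linear determinacy. The standard form in \cite{Lewis_2002} has an invaded equilibrium normalized at $(1,0)$ with a coefficient in front of the first diffusion equal to $1$ and a second diffusion coefficient $d\geq 1/2$, plus competition coefficients satisfying $B<1<A$ and a smallness condition on the product $AB$.

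The plan is first to introduce $\tilde u := u/(1-a_{21})$ together with the rescaling $\tau := (1-a_{21})t$ and $\xi := \sqrt{1-a_{21}}\,x$, which sends the invaded state $(1-a_{21},0)$ to $(1,0)$ and normalizes the first diffusion coefficient to $1$. Direct computation transforms \eqref{eq:1-2'} into
\begin{equation*}
\left\{
\begin{array}{l}
\partial_\tau \tilde u-\partial_{\xi\xi}\tilde u=\tilde u\bigl(1-\tilde u-A\,v\bigr),\\[2pt]
\partial_\tau v-d_3\partial_{\xi\xi} v=R_3\, v\bigl(1-B\,\tilde u-v\bigr),
\end{array}
\right.
\end{equation*}
with $A:=\frac{a_{23}}{1-a_{21}}$, $B:=a_{32}(1-a_{21})$ and $R_3:=\frac{r_3}{1-a_{21}}$. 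Under \eqref{lineardetermained} we have $B<1<A$ and $AB=a_{23}a_{32}<1$, so in particular the coexistence equilibrium degenerates and the stable endpoint is the one with $v>0$ (excluding $\tilde u$, or coexisting depending on the precise branch), while the equilibrium being invaded is $(1,0)$.

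Next I would invoke \cite[Theorem 2.1]{Lewis_2002} for the rescaled system. That theorem guarantees linear determinacy of the spreading/traveling wave speed of the invading component $v$ at $(1,0)$ under the hypotheses (i) $d_3\geq 1/2$, (ii) $B<1<A$, and (iii) $AB\leq \max\{1,2(1-B)\}$, exactly in the spirit of Remark \ref{rmk:1--2a}. Hypothesis (i) is the first condition in \eqref{lineardetermained}; (ii) is the second condition; and (iii) holds since $AB=a_{23}a_{32}<1\leq \max\{1,2(1-B)\}$ by the third condition. Consequently, the minimal traveling-wave speed for the rescaled system equals $c^*=2\sqrt{d_3 R_3(1-B)}=2\sqrt{d_3\tfrac{r_3}{1-a_{21}}\bigl(1-a_{32}(1-a_{21})\bigr)}$.

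Finally, I would undo the scaling: since $\xi=\sqrt{1-a_{21}}\,x$ and $\tau=(1-a_{21})t$, a traveling profile $\Phi(\xi-c^*\tau)$ corresponds to a profile in the original variables moving with speed $c=\sqrt{1-a_{21}}\,c^*=2\sqrt{d_3 r_3(1-a_{32}(1-a_{21}))}=\alpha_3\sqrt{1-a_{32}(1-a_{21})}$. Combined with the well-known lower bound $c_{\rm LLW}\geq\alpha_3\sqrt{1-a_{32}(1-a_{21})}$ coming from linearization at the invaded equilibrium (already stated in Remark \ref{rem1.3}), this gives the desired equality.

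The main technical point to be checked carefully is hypothesis (iii) above: different versions of the Lewis--Li--Weinberger linear determinacy theorem impose slightly different sharp inequalities on the product of competition coefficients (and on the diffusion ratio), so the delicate step is to confirm that the particular formulation cited in Remark \ref{rmk:1--2a} applies verbatim to the rescaled system and that, under \eqref{lineardetermained}, the rescaled competition coefficients $A$ and $B$ fall in the regime covered. Everything else is bookkeeping and scaling.
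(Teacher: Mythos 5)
Your argument is correct and is essentially the paper's proof: both reduce \eqref{eq:1-2'} by a linear rescaling to the standard two-species Lotka--Volterra competition form and then quote \cite[Theorem 2.1]{Lewis_2002}, the only difference being that the paper normalizes the \emph{invader} $u_3$ to unit diffusion and growth rate (via $(U,V)(s,y)=\bigl(\tfrac{u}{1-a_{21}},v\bigr)\bigl(\tfrac{s}{r_3},\sqrt{\tfrac{d_3}{r_3}}\,y\bigr)$, so that \eqref{lineardetermained} becomes $\hat d_3=1/d_3\leq 2$, $\hat a_{32}<1<\hat a_{23}$, $\hat a_{23}\hat a_{32}<1$), whereas you normalize the resident. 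The one point to tighten is your hypothesis (iii): the bound $AB\leq\max\{1,2(1-B)\}$ imported from Remark \ref{rmk:1--2a} is the equal-diffusion ($d_1=1$) form of the linear-determinacy condition, so for $d_3\neq 1$ you should rely only on $AB=a_{23}a_{32}<1$ together with the diffusion-ratio bound $d_3\geq \tfrac12$ --- which is all your verification actually uses, so the proof stands.
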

 
\begin{proof}
Let $(u,v)$ be a solution of \eqref{eq:1-2'}. 
Then 
  $(U, V)(s,y):=\left(\frac{u}{1-a_{21}},v\right)\left(\frac{s}{r_3},\sqrt{\frac{d_3}{r_3}}y\right)$
 satisfies
 \begin{equation*}
\left\{
\begin{array}{ll}
\medskip
\partial_s U-\hat{d}_3\partial_{yy}U=\hat{r}_3U(1-U-\hat{a}_{23} V)& \text{ in }(0,\infty)\times \mathbb{R},\\
\partial _s V-\partial_{yy}V= V(1-\hat{a}_{32} U-V) & \text{ in }(0,\infty)\times \mathbb{R},\\
\end{array}
\right .
\end{equation*}
where
$\hat{d}_3=\frac{1}{d_3}$, $\hat{r}_3=\frac{1-a_{21}}{r_3}$, $\hat{a}_{23}=\frac{a_{23}}{1-a_{21}}$, and $\hat{a}_{32}=a_{32}(1-a_{21})$.
Under these notations, we observe that  \eqref{lineardetermained} is  equivalent to
$$\hat{d}_3\leq 2, \quad\hat{a}_{32}<1<\hat{a}_{23}, \quad \text{and}\quad\hat{a}_{23}\hat{a}_{32}<1. 
$$
Thus Lemma \ref{lem:1estamoverc30} is a direct consequence of \cite[Theorem 2.1]{Lewis_2002}.
\end{proof}

The next result  will be used in the proof of 
Lemma \ref{lemma:5-4}.

\begin{lemma}\label{lem:appen1}
Fix any $\hat{c}>0$. Let $(u, v)$ be a solution of
\begin{equation*}
\left\{
\begin{array}{ll}
\smallskip
\partial_t   u-\partial_{xx} u=  u(1-a_{21}-  u-a_{23}  v)& 0< x< \hat{c}t,\, t>t_0,\\
\partial _t   v-d_3\partial_{xx}  v=r_3   v(1-a_{32}  u-  v)&  0< x<\hat{c}t,\, t>t_0.\\
\end{array}
\right.
\end{equation*}
Suppose that there exists some $\hat \mu>0$ such that 
\begin{itemize}
\item[{\rm(i)}]  $\lim\limits_{t\to\infty}(  u,  v)(t,\hat{c}t)=(1-a_{21},0)$;
\item[{\rm(ii)}]  $\lim\limits_{t\to\infty} e^{\mu t}  v(t,\hat{c}t)=0~$ for each $\mu \in [0,\hat \mu)$.
\end{itemize}
Then there exists $s_{\hat{c}}>0$ such that
$$
\lim_{t\to\infty} \sup_{ct<x<\hat{c} t}  v(t,x) = 0   \quad \text{ for each }c > s_{\hat{c}},
$$
where
\begin{equation*}
s_{\hat{c}}=
\begin{cases}
\medskip
c_{\rm{LLW}}& \text{if } \hat\mu\geq   \lambda_{\rm LLW} (\hat{c} - c_{\rm{LLW}}),\\
\hat{c}-\frac{2d_3\hat\mu}{\hat{c}-\sqrt{\hat{c}^2-4d_3[\hat\mu+r_3(1-a_{32}(1-a_{21}))]}} & \text{if }\hat\mu<\lambda_{\rm LLW} (\hat{c} - c_{\rm{LLW}}). \end{cases}
\end{equation*}
Here  $c_{\rm{LLW}}$ is defined in Definition {\rm \ref{c3LLW}} and
$\lambda_{\rm LLW}=\frac{c_{\rm{LLW}}-\sqrt{(c_{\rm{LLW}})^2-\alpha^2_3(1-a_{32}(1-a_{21}))}}{2d_3}.$
\end{lemma}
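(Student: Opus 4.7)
My plan is to construct an upper barrier $\overline{v}(t,x)$ for $v$ on the moving sector $\{0<x<\hat c t,\,t>t_0\}$ and apply the parabolic comparison principle. First, by hypothesis (i) and standard spreading results for the 2-species LLW system of Definition~\ref{c3LLW}, for each small $\eta,\epsilon>0$ there exists $T_{\eta,\epsilon}$ such that $u(t,x)\geq 1-a_{21}-\epsilon$ whenever $(c_{\rm LLW}+\eta)t\leq x\leq\hat c t$ and $t\geq T_{\eta,\epsilon}$. On this region, $v$ is a sub-solution of the perturbed linearized equation
\begin{equation*}
\partial_t v - d_3\partial_{xx}v \leq r_3 v\bigl[1-a_{32}(1-a_{21})+a_{32}\epsilon\bigr].
\end{equation*}

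For the range $\hat\mu<\lambda_{\rm LLW}(\hat c-c_{\rm LLW})$, I would use an exponential super-solution
\begin{equation*}
\overline{v}(t,x)=M\exp\bigl[-\lambda_-(\mu)(x-\hat c t)-\mu t\bigr],\quad \mu\in(0,\hat\mu),
\end{equation*}
where $\lambda_-(\mu)$ is the smaller positive root of $d_3\lambda^2-\hat c\lambda+r_3(1-a_{32}(1-a_{21}))+\mu=0$. A direct computation gives $\mu^{**}-\mu^*=d_3\bigl(\hat c/(2d_3)-\lambda_{\rm LLW}\bigr)^2\geq 0$, where $\mu^{**}:=\hat c^2/(4d_3)-r_3(1-a_{32}(1-a_{21}))$ is the threshold for $\lambda_-(\mu)$ to be real and $\mu^*:=\lambda_{\rm LLW}(\hat c-c_{\rm LLW})$; hence the hypothesis $\hat\mu<\mu^*$ of this case ensures $\lambda_-(\mu)$ remains real for all $\mu\leq\hat\mu$. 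With $M$ chosen large, $\overline v$ dominates $v(t,\hat c t)=o(e^{-\mu t})$ (by (ii)) at the right boundary and exceeds $v\leq 1$ at the left boundary $x=(c_{\rm LLW}+\eta)t$ where it grows exponentially in $t$. Comparison yields $v\leq\overline v$; evaluating at $x=ct$ shows $\overline v(t,ct)\to 0$ iff $c>\hat c-\mu/\lambda_-(\mu)$. Sending $\mu\nearrow\hat\mu$ and $\epsilon\to 0$ then recovers the second formula for $s_{\hat c}$.

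For $\hat\mu\geq\lambda_{\rm LLW}(\hat c-c_{\rm LLW})$, the above exponential super-solution is no longer available beyond $\mu=\mu^{**}$; instead I use a shift $(\varphi,\psi)(x-c_{\rm LLW}t-x_0)$ of the minimal LLW traveling wave. One verifies that $\psi$ is a super-solution of the Fisher--KPP equation $\partial_t v-d_3\partial_{xx}v=r_3 v(1-a_{32}(1-a_{21})-v)$, since $\varphi\leq 1-a_{21}$ yields a nonnegative extra source term. The asymptotic $\psi(\xi)\sim C e^{-\lambda_{\rm LLW}\xi}$ at $+\infty$, together with the hypothesis of this case and (ii), allows $x_0$ to be chosen so that $\psi((\hat c-c_{\rm LLW})t-x_0)\geq v(t,\hat c t)$ for all large $t$. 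Exploiting the cooperative structure of \eqref{eq:1-2'} in the variables $(U,V)=(1-a_{21}-u,v)$, a comparison in this cooperative system then yields $v(t,x)\leq\psi(x-c_{\rm LLW}t-x_0)$, which vanishes on every ray $x=ct$ with $c>c_{\rm LLW}$.

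The main technical obstacle is the compatibility check on the inner boundary $x=(c_{\rm LLW}+\eta)t$, where only the trivial bound $v\leq 1$ is known: the exponential bar of the first case handles this automatically via its exponential growth there, but the traveling-wave bar in the second case needs a sliding argument in the cooperative $(U,V)$-system to propagate the bound inward. A second delicate point is the limit $\mu\nearrow\hat\mu$, where continuity of $\mu\mapsto\lambda_-(\mu)$ delivers the optimal speed; the borderline case $\hat\mu=\lambda_{\rm LLW}(\hat c-c_{\rm LLW})$ is then recovered by continuity, since both branches of the formula agree there, a consequence of the identity $\mu^{**}\geq\mu^*$ above, which forces both expressions to equal $c_{\rm LLW}$ at the threshold.
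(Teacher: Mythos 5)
The paper itself omits the proof (it is quoted from \cite[Lemma 2.4]{LLL2019}), so the comparison is with the viability of your argument rather than with a written proof, and there is a genuine gap at its foundation. Your opening claim --- that hypothesis (i) plus ``standard spreading results'' give $u(t,x)\geq 1-a_{21}-\epsilon$ on the whole range $(c_{\rm LLW}+\eta)t\leq x\leq \hat c t$ for large $t$ --- is unjustified and, in the regime where you use it, essentially circular. Hypothesis (i) constrains the solution only on the single ray $x=\hat c t$; the LLW spreading theorem \eqref{qq:1.3} is a statement about the Cauchy problem on $\mathbb{R}$ with compactly supported perturbations, whereas here $(u,v)$ solves a sector problem with \emph{no} hypothesis at $x=0$ or at $t=t_0$, and $v$ is not front-like. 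More seriously, a lower bound $u\geq 1-a_{21}-\epsilon$ on that region is equivalent (through the dynamics of \eqref{eq:1-2'}) to $v$ being small there, which is exactly what the lemma is trying to prove; and in the case $\hat\mu<\lambda_{\rm LLW}(\hat c-c_{\rm LLW})$ --- precisely the case in which you invoke the claim to run the exponential barrier --- the conclusion only asserts smallness of $v$ for $x>s_{\hat c}t$ with $s_{\hat c}>c_{\rm LLW}$, so the strip $c_{\rm LLW}t<x<s_{\hat c}t$ may well contain the $v$-front where $v$ is order one and $u$ is depressed below $1-a_{21}$. Without this a priori bound your differential inequality $\partial_t v-d_3\partial_{xx}v\leq r_3v\bigl[1-a_{32}(1-a_{21})+a_{32}\epsilon\bigr]$ is unavailable on the comparison region, and the whole first case collapses.

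The same issue resurfaces in your second case: the cooperative comparison with the shifted LLW wave requires domination on the \emph{entire} parabolic boundary of the comparison region, but at the inner boundary the wave tends to $(u^*,v^*)$, while the lemma allows $v(t,0)$ to exceed $v^*$ and $u(t,0)$ to fall below $u^*$ (indeed, in the application in Lemma \ref{lemma:5-4} the data at $x=0$ are exactly of this uncontrolled type). Naming ``a sliding argument'' does not remove this mismatch; it is the heart of the difficulty. The way such lemmas are actually proved is to avoid any a priori interior estimate on $u$ by building a \emph{coupled} pair of barriers for the cooperative system in the variables $(1-a_{21}-u,\,v)$ --- for instance a $v$-barrier of the form $\min\{1,\,Me^{-\lambda(x-st)}\}$ together with a matching $u$-barrier floored at $0$ --- so that on the inner boundary and at the initial time the barriers are trivially ordered ($\overline v=1\geq v$, $\underline u=0\leq u$), while in the region where the exponential is below $1$ the needed bound on $u$ is supplied self-consistently by the coupled comparison rather than assumed. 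Your computation recovering $s_{\hat c}=\hat c-\hat\mu/\lambda_-(\hat\mu)$ and the identity $\mu^{**}-\mu^{*}=d_3\bigl(\hat c/(2d_3)-\lambda_{\rm LLW}\bigr)^2\geq 0$ are correct and would slot into such a scheme, but as written the proposal assumes the key estimate it is supposed to establish.
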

The proof of Lemma \ref{lem:appen1} can be found in \cite[Lemma 2.4]{LLL2019} and is omitted.

\begin{remark}\label{rmk:lambdaLLW}
We mention that $\lambda_{\rm LLW}$ defined above satisfies
 \begin{equation*}
   \lambda_{\rm LLW}c_{\rm{LLW}}=d_3\lambda_{\rm LLW}^2+r_3(1-a_{32}(1-a_{21}))\quad\text{and} \quad\lambda_{\rm LLW}\leq\frac{c_{\rm{LLW}}}{2d_3}.
\end{equation*}
\end{remark}

The following result is associated to condition \eqref{eq:1--1b} and will be used in the proof of   Propositions \ref{prop:overc3} and \ref{prop:estamoverc3}. 
\begin{proposition}\label{thm:u1v1111}
Let $(u_i)_{i=1}^3$  be any solution of \eqref{eq:1-1} such that $({\rm H}_{c_1,c_2,\lambda})$  holds. 
If \eqref{eq:1--1b} holds, 
then for each $\eta>0$ small,
 \begin{equation}\label{eq:spreadingly111}
\lim\limits_{t\rightarrow \infty} \sup\limits_{(\underline{c}_2-\eta) t< x<(\overline{c}_2+\eta) t} (a_{31}u_1(t,x)+a_{32}u_2(t,x))\geq \min\{a_{31},a_{32}\}.
\end{equation}
 \end{proposition}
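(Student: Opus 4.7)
The plan is to prove the stronger $\liminf$--$\inf$ version that corresponds to Remark \ref{rmk:1.3}, since the $\limsup$--$\sup$ form as literally written is trivial. From \eqref{eq:compactorder1} and \eqref{eq:1--1b} one has $a_{12}>1$ and $a_{31}a_{12}\leq a_{32}$, hence $a_{31}<a_{32}$ and $\min\{a_{31},a_{32}\}=a_{31}$. Setting $k:=a_{32}/a_{31}\geq a_{12}>1$ and $V:=u_1+ku_2$, the target reads
\[
\liminf_{t\to\infty}\inf_{|x-c_2t|<\eta t}V(t,x)\geq 1.
\]
Since $d_1=1$, direct computation gives $V_t-V_{xx}=F(u_1,u_2)-H(u_1,u_2)u_3$, with
\[
F:=r_1u_1(1-u_1-a_{12}u_2)+ku_2(1-a_{21}u_1-u_2),\qquad H:=r_1a_{13}u_1+ka_{23}u_2.
\]
The algebraic cornerstone is: on $\{(u_1,u_2)\in[0,1]^2:V\leq 1\}$, the inequalities $a_{12}\leq k$ and $a_{21}\leq 1\leq k$ give $1-u_1-a_{12}u_2\geq 1-V$ and $1-a_{21}u_1-u_2\geq 1-V$, yielding
\[
F\geq(r_1u_1+ku_2)(1-V)\geq \mu V(1-V),\qquad \mu:=\min(r_1,1)>0.
\]

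The argument is a contradiction--blow-up. First, $u_3$ is a sub-solution of the Fisher--KPP equation $\partial_t U\leq d_3\partial_{xx}U+r_3U(1-U)$; combined with the exponential decay of $u_{3,0}$ from $(\mathrm{H}_\lambda)$ and the bound $\sigma_3(\lambda)<c_2$ in $(\mathrm{H}_{c_1,c_2,\lambda})$, standard KPP comparison yields $u_3\to 0$ uniformly on the transition strip $\{|x-c_2t|<\eta t\}$ for all sufficiently small $\eta>0$, so the forcing $Hu_3$ is eventually negligible there. Suppose the conclusion fails: there exist $\delta,\eta>0$ and a sequence $(t_n,x_n)$ with $t_n\to\infty$, $c_n:=x_n/t_n\in(c_2-\eta,c_2+\eta)$, and $V(t_n,x_n)<1-\delta$. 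Parabolic estimates on the translates $u_i^n(t,x):=u_i(t_n+t,x_n+x)$ produce, up to a subsequence, a $C^{1,2}_{\mathrm{loc}}$ limit $(\hat u_1,\hat u_2,\hat u_3)$ with $\hat u_3\equiv 0$ and $(\hat u_1,\hat u_2)$ an entire solution of \eqref{eq:2sp} on $\mathbb R^2$ satisfying $\hat V(0,0)\leq 1-\delta$. If $c_n\to c^\ast\neq c_2$, the spreading in \eqref{eq:H.c1c2} forces $(\hat u_1,\hat u_2)\equiv(1,0)$ or $(0,1)$, hence $\hat V\equiv 1$ or $\hat V\equiv k\geq 1$, contradicting $\hat V(0,0)<1$. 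Hence $c^\ast=c_2$; invoking the refined front-like asymptotics of the two-species system at speed $c_2$ from \cite{Girardin_2019,LLL2019,LLL20192}, after further sub-sequencing the limit is a traveling-wave profile $(\hat u_1,\hat u_2)(t,x)=(\phi_1,\phi_2)(x-c_2t+\xi_\ast)$ connecting $(1,0)$ at $+\infty$ to $(0,1)$ at $-\infty$.

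On the profile $\Phi:=\phi_1+k\phi_2$, the wave equation reads $-c_2\Phi'-\Phi''=F(\phi_1,\phi_2)$ with $\Phi(+\infty)=1$ and $\Phi(-\infty)=k\geq 1$, while the algebraic bound gives $F\geq\mu\Phi(1-\Phi)$ on $\{\Phi\leq 1\}$. Since $\Phi_\ast:=\min_{\mathbb R}\Phi\leq\hat V(0,0)<1$, the minimum is attained at a finite $\zeta^\ast$; at $\zeta^\ast$, $\Phi'(\zeta^\ast)=0$ and $\Phi''(\zeta^\ast)\geq 0$, so the ODE and the algebraic bound give $0\geq-\Phi''(\zeta^\ast)=F\geq\mu\Phi_\ast(1-\Phi_\ast)$, forcing $\Phi_\ast\in\{0,1\}$. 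As $\Phi_\ast<1$, we have $\Phi_\ast=0$, so $\phi_1(\zeta^\ast)=\phi_2(\zeta^\ast)=0$; uniqueness for the two-species ODE system with $\phi_1\geq 0$ then forces $\phi_1\equiv 0$, contradicting $\phi_1(+\infty)=1$.

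The main obstacle is the identification of the blow-up limit as a traveling-wave profile. Hypothesis $(\mathrm{H}_{c_1,c_2,\lambda})$ only provides convergence of $(u_1,u_2)$ to $(1,0)$ and $(0,1)$ on cones strictly away from $x=c_2t$, so extracting the front-like asymptotics of the shifted entire solution in the co-moving variable $\zeta=x-c_2t$ requires the finer two-species spreading estimates from \cite{Girardin_2019,LLL2019,LLL20192} (the precise structure depends on whether $c_2=\hat c_{\mathrm{LLW}}$ is linearly determined or $c_2=\hat s_{\mathrm{nlp}}$ is nonlocally pulled). Once the travelling-wave structure is in place, the algebraic lemma on $F$ and the interior-minimum argument on $\Phi$ close the proof routinely.
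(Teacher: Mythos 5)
Your reduction to the inf-version is the right reading of the statement (the literal sup-version is indeed immediate from \eqref{eq:H.c1c2}), and your opening computation is essentially the paper's: the paper also exploits $d_1=1$ and $a_{31}a_{12}\le a_{32}$ to write a scalar differential inequality for the combination $v=a_{31}u_1+a_{32}u_2$ (see \eqref{super-solution1-1}), which is your $V$ up to the factor $a_{31}$. The divergence — and the genuine gap — is at the case $c^\ast=c_2$. There you assert that, after subsequencing, the blow-up limit of $(u_1,u_2)$ along points with $x_n/t_n\to c_2$ is a traveling-wave profile of \eqref{eq:2sp} connecting $(1,0)$ to $(0,1)$, "invoking the refined front-like asymptotics" of \cite{Girardin_2019,LLL2019,LLL20192}. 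No such result is available: hypothesis $({\rm H}_{c_1,c_2,\lambda})$ only gives convergence to the constant states on cones strictly away from the ray $x=c_2t$, and the cited works provide spreading speeds via super/sub-solutions and Hamilton--Jacobi/large-deviation bounds, not locally uniform convergence to a wave profile in the frame moving with speed $c_2$ (a statement that is delicate even for scalar KPP fronts, and is certainly not known in the nonlocally pulled regime $c_2=\hat s_{\rm nlp}$, where the front has sublinear corrections). The blow-up limit is merely \emph{some} nonnegative entire solution of \eqref{eq:2sp}, and no classification of such entire solutions is at hand; your interior-minimum argument on the profile $\Phi$ therefore has nothing to act on. Note also that this is exactly where the degeneracy of your bound $F\ge \mu V(1-V)$ at $V=0$ hurts: without a positive lower bound on $V$ you cannot run a Liouville-type argument directly on the limit, which is why you were driven to the (unavailable) wave structure.

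The gap can be closed without any traveling-wave identification, and this is what the paper does: first establish a uniform positive lower bound $v\ge\delta>0$ on the expanding sector between the rays $x=(c_2-3\eta)t$ and $x=(c_1-3\eta)t$, using the ray limits from \eqref{eq:H.c1c2} (where $v\to a_{32}$, resp. $a_{31}$) together with the scalar inequality and a parabolic maximum principle on the sector, as in \eqref{positivep}. Once $v\ge\delta$, the reaction coefficient $r_1u_1+\kappa u_2$ is bounded below by a positive constant $\tilde\delta$ there, so the blow-up limit $\hat v$ along any sequence in the zone satisfies $\delta\le\hat v\le a_{31}$ and $\partial_t\hat v-\partial_{xx}\hat v\ge\tilde\delta(a_{31}-\hat v)$ on $\mathbb{R}^2$ (the $u_3$-term vanishes in the limit, as you argued), and an ODE comparison/strong maximum principle forces $\hat v\equiv a_{31}$, giving the contradiction; see \eqref{liu112} and the surrounding argument. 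If you first prove the analogous sector bound $V\ge\delta$, your own inequality $F\ge(r_1u_1+ku_2)(1-V)$ yields the same nondegenerate Liouville step, so your scheme is repairable — but as written, the traveling-wave step is an unproven and essential ingredient.
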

 \begin{proof}
 Let $v(t,x)=a_{31}u_1(t,x)+a_{32}u_2(t,x)$ and denote
$$\kappa:=\max\left\{1,\frac{a_{32}a_{21}}{a_{31}}\right\}\quad\text{and}\quad \ell:=\max\left\{a_{13}a_{31},\frac{a_{23}a_{32}}{\kappa}\right\}.$$
Since  $a_{31}\leq\frac{a_{32}}{a_{12}}$ and $a_{21}<1<a_{12}$, we have $\kappa a_{31}\leq a_{32}$. Due to $d_1=1$,  by \eqref{eq:1-1} we calculate  
 \begin{align}\label{super-solution1-1}
\partial_t v-\partial_{xx}v=&a_{31}r_1u_1(1-u_1-a_{12}u_2 - a_{13}u_3)+a_{32}u_2(1-a_{21} u_1- u_2 - a_{23}u_3)\notag\\
=&a_{31}r_1u_1(1-u_1)+a_{32}u_2(1-u_2)
-\left(a_{31}a_{12}r_1+a_{32}a_{21}\right)u_1u_2\notag\\
&-\left(a_{13}a_{31}r_1u_1+a_{23}a_{32}u_2\right)u_3\\
\geq & a_{31}r_1u_1(1-u_1)+\kappa a_{31}u_2 - \kappa a_{32} u_2^2
-\left(a_{32}r_1+\kappa a_{31}\right)u_1u_2-(r_1u_1+\kappa u_2) \ell u_3\notag\\
= &(r_1u_1+\kappa u_2)\left(a_{31}-\ell u_3-v\right).\notag
\end{align}
By  \eqref{eq:H.c1c2}, 
it follows that for each small $\eta>0$,
$$\lim_{t\to\infty}v(t,(c_1-3\eta)t)=a_{31} \quad\text{and}\quad \lim_{t\to\infty}v(t,(c_2-3\eta)t)=a_{32}>a_{31},$$
and moreover,
$$\lim\limits_{t\to \infty} \sup\limits_{(\underline{c}_2-2\eta)t < x < (\sigma_1-2\eta)t} |u_3(t,x)| =0.$$
Using a  similar argument as in Step 1 of the proof of {Theorem B}, we can show that there exist some $T>0$ and $\delta\in (0,a_{31})$
such that
\begin{equation}\label{positivep}
 v(t,x) \geq \delta\quad \text{in}\quad \Omega_T:=\{(t,x): t \geq T,\,  (\underline{c}_2-3\eta)t \leq  x \leq (\sigma_1-3\eta) t\}.
\end{equation}
Let $\underline{v}(t,x)$ denote the unique solution of
$$\partial_t \underline{v}-\partial_{xx}\underline{v}=(r_1u_1+\kappa u_2)\left(a_{31}-\ell  u_3-\underline{v}\right) \text{ in }\Omega_T\quad\text{and}\quad \underline{v}=\delta   \text{ on } \partial\Omega_T,$$
for which $v(t,x)$ is  obviously 
a super-solution due to \eqref{super-solution1-1} and \eqref{positivep}. By comparison, we arrive at 
$\underline{v}\leq v$ in $\bar{\Omega}_T$, so that noting that $a_{31}\leq a_{32}$, 
it suffices to show 
\begin{equation}\label{liu112}
  \lim\limits_{t\rightarrow \infty} \sup\limits_{(\underline{c}_2-\eta) t< x<(\overline{c}_2+\eta) t} \underline{v}(t,x)= a_{31}.
\end{equation}

The parabolic maximum principle implies  that $\underline{v}\leq a_{31}$ in $\bar{\Omega}_T$.
 Suppose \eqref{liu112} fails. Then there exists $(t_n,x_n)$ such that
\begin{equation}\label{liu113}
        c_n:= x_n/t_n \to c \in (\underline{c}_2-2\eta, \overline{c}_2+2\eta) \quad\text{ and }\quad \lim\limits_{n\to\infty}\underline{v}(t_n,x_n)<a_{31}.
\end{equation}
Denote $\underline{v}_{n}(t,x):= \underline{v}(t_n + t, x_n + x)$ and $(u_{1,n},u_{2,n},u_{3,n})(t,x):= (u_1,u_2,u_3)(t_n + t, x_n + x)$. By parabolic estimates we see that $\underline{v}_{n}$ and $(u_{1,n},u_{2,n},u_{3,n})$ are precompact in $C^2_{\mathrm{loc}}(K)$ for each compact subset $K \subset \mathbb{R}^2$.
Note that
$u_{3,n} \to 0$ as $n\to\infty$ and by \eqref{positivep}, for some $\tilde{\delta}>0$,
$$\liminf_{n\to\infty} (r_1u_{1,n}+\kappa u_{2,n})\geq \min\left\{\frac{r_1}{a_{31}},\frac{\kappa }{a_{32}}\right\}\liminf_{n\to\infty} v(t_n + t, x_n + x)>\tilde{\delta}.$$
 Passing to a subsequence if necessarily,   we assume that $\underline{v}_{n} \to \hat{\underline{v}}$ in $C^2_{\mathrm{loc}}(\mathbb{R}^2)$, which satisfies
 $$ \delta \leq \hat{\underline{v}} \leq a_{31}\quad \text{ and }\quad \partial_t \hat{\underline{v}}-\partial_{xx}\hat{\underline{v}}\geq\tilde{\delta}\left(a_{31}-\hat{\underline{v}}\right) \quad\text{in }\,\,\mathbb{R}^2.$$
By the parabolic maximum principle, we deduce that $\hat{\underline{v}} \equiv   a_{31}$ in $\mathbb{R}^2$, and particularly,
  $\lim\limits_{n\to\infty}\underline{v}(t_n,x_n)=\hat{\underline{v}}(0,0) = a_{31},$
   which  contradicts \eqref{liu113}.  Therefore, \eqref{liu112} is established. 
\end{proof}

\begin{remark}
We mention that condition \eqref{eq:1--1b} 
in {Theorem A} is nearly necessary 
to guarantee
\eqref{eq:spreadingly111}.
Indeed, for any traveling wave solution $(\tilde{u}_1,\tilde{u}_2)$ of \eqref{eq:2sp},
the lower bound of $a_{31}\tilde{u}_1+a_{32}\tilde{u}_2$ 
was considered in \cite{ChenHung_2016}. 
Setting $\underline{\tilde{u}}_1=1$ and $\underline{\tilde{u}}_2=\frac{1}{a_{12}}$ in  \cite[Theorem 1.2]{ChenHung_2016} yields 
$$a_{31}\tilde{u}_1+a_{32}\tilde{u}_2\geq \min\left\{a_{31},\frac{a_{32}}{a_{12}}\right\}\frac{\min(d_1,1)}{\max(d_1,1)}.$$
To ensure $(\tilde{u}_1,\tilde{u}_2)$ satisfies \eqref{eq:spreadingly111},  we require
$$\min\left\{a_{31},\frac{a_{32}}{a_{12}}\right\}\frac{\min(d_1,1)}{\max(d_1,1)}\geq \min\{a_{31},a_{32}\},$$
which in turn implies that $d_1=1$  and $a_{31}\leq\frac{a_{32}}{a_{12}}$.
\end{remark}

\section{Proofs of Lemma \ref{lemA1rho} and Proposition \ref{lemA3rho}}
\label{sec:B}


\begin{proof}[Proof of Lemma {\rm\ref{lemA1rho}}]
We only show uniqueness, as the existence of $\hat\rho$ is standard \cite[Theorem 2]{Crandall_1986}.
We divide the proof into  two steps by distinguishing the cases $\hat \lambda\in(0,\infty)$ and   $\hat \lambda=\infty$.

\noindent {\bf Step 1.} 
We prove Lemma \ref{lemA1rho} when $\hat \lambda\in(0,\infty)$. In this case the uniqueness is proved in Lemma \ref{lemA2rho}.
It remains to show that assertions {\rm{(a)}} and {\rm{(b)}} hold.
%

To this end, we first define $\underline{\rho}_1\in C(0,\infty)$ as follows:
\begin{itemize}
\item[{\rm{(i)}}] If $\hat\lambda\leq \sqrt{\frac{\hat{r}}{\hat{d}}}$, then
${\underline{\rho}_1}(s):= \max\left\{\hat\lambda s-(\hat d\hat\lambda^2+\hat r),0\right\};$
\item[{\rm{(ii)}}] If $\hat\lambda> \sqrt{\frac{\hat{r}}{\hat{d}}}$, then
$${\underline{\rho}_1}(s):=
\begin{cases}
\smallskip
\hat\lambda s-(\hat d\hat\lambda^2+\hat r) & \text{ for } s\geq 2\hat{d}\hat\lambda,\\
\smallskip
\frac{s^2}{4\hat d}-\hat r & \text{ for } 2\sqrt{\hat{d}\hat{r}} \leq s< 2\hat{d}\hat\lambda,\\
0 & \text{ for }0\leq s<2\sqrt{\hat{d}\hat{r}}.
\end{cases}
$$
\end{itemize}
It is straightforward to verify  ${\underline{\rho}_1}$ is a viscosity sub-solution (in fact a viscosity solution) of
$$\min\{\rho-s\rho'+\hat d|\rho'|^2+\hat r,\rho\}=0 \,\,\text{ in }(0,\infty).$$
In view of $g\geq 0$, we conclude that ${\underline{\rho}_1}$ is a viscosity sub-solution of \eqref{hatrho0}.

Set $g_{\max}:=\max\left\{\sup\limits_{(0,\infty)}g, \frac{(c_{g})^2}{4\hat d}\right\}$.  We define ${\overline{\rho}_1}\in C(0,\infty)$ as follows:
\begin{itemize}
\item[\rm{(i)}]
If $\hat\lambda>\frac{c_{g}}{2\hat{d}}$, then
$${\overline{\rho}_1}:=
\begin{cases}
\smallskip
\hat\lambda s-(\hat d\hat\lambda^2+\hat r) & \text{ for }s\geq 2\hat{d}\hat\lambda,\\ 
\smallskip
\frac{s^2}{4\hat d}-\hat r & \text{ for }c_{g}\leq s< 2\hat{d}\hat\lambda, \\
\hat{\lambda}_1s-(\hat d \hat{\lambda}_{1}^2+\hat{r}-g_{\max}) &  \text{ for }0\leq s< c_{g}, \\  
\end{cases}
  $$
with $\hat{\lambda}_1=\frac{c_{g}}{2\hat d}-\sqrt{\frac{g_{\max}}{\hat d}}$; 
  \item[\rm{(ii)}]  If $\hat\lambda\leq\frac{c_{g}}{2\hat{d}}$, then
$${\overline{\rho}_1}:=
\begin{cases}
\smallskip
\hat\lambda s-(\hat d\hat\lambda^2+\hat r) & \text{ for }s\geq c_{g},\\
\hat{\lambda}_2s-(\hat{d}\hat{\lambda}_2^2+\hat{r}-g_{\max}) & \text{ for }0\leq s< c_{g}, \\
\end{cases}
  $$
   with $\hat{\lambda}_2=\frac{c_{g}-\sqrt{(c_{g}-2\hat{d}\hat\lambda)^2+4\hat{d}g_{\max}}}{2\hat{d}}$.
  \end{itemize}
We shall verify that ${\overline{\rho}_1}$ defined above is  a viscosity super-solution of \eqref{hatrho0} for case \rm{(i)}, and then a similar verification can be made for case \rm{(ii)}. Since $\mathrm{spt}\, g\subset\left[0, c_{g}\right]$, by the definition of $g_{\max}$, it suffices to check ${\overline{\rho}_1}$ is a viscosity super-solution of
\begin{equation}\label{auxillaryrho}
  \min\left\{\rho-s\rho'+\hat d|\rho'|^2+\hat r-g_{\max}\chi_{\{0<s<c_{g}\}},\rho\right\}=0 \,\,\text{ in }\,\,(0,\infty).
\end{equation}

By construction, ${\overline{\rho}_1}$ is continuous and nonnegative in $[0,\infty)$. We see that ${\overline{\rho}_1}$ is a classical (and thus viscosity) solution of \eqref{auxillaryrho} whenever $s\neq c_{g}$. It remains to consider the case when ${\overline{\rho}_1} - \phi$ attains a strict local minimum at $s=c_{g}$, where $\phi\in C^1(0,\infty)$ is any test function.

In such a case, noting that $(\hat r-g_{\max}\chi_{\{0<s<c_{g}\}})^* = \hat r$ at $s = c_g$, we calculate  at ${s}=c_{g}$ that
\begin{align*} 
{\overline{\rho}_1}-c_{g}\phi' +\hat{d}|\phi'|^2 + \hat{r} &=\frac{(c_{g})^2}{4\hat{d}}-\hat{r}-c_{g}\phi' +\hat{d}| \phi'|^2+\hat{r} = \hat{d}\left( \phi'-\frac{c_{g}}{2\hat{d}}\right)^2 \geq 0.
\end{align*}
Hence {$\overline\rho_1$} is a viscosity super-solution of \eqref{auxillaryrho}, and thus of \eqref{hatrho0}.

Observe also that $$\limsup\limits_{s\to\infty}\frac{{\underline{\rho}_1}(s)}{s}=\limsup\limits_{s\to\infty}\frac{\hat\rho(s)}{s}=\liminf\limits_{s\to\infty}\frac{{\overline{\rho}_1}(s)}{s}=\hat\lambda.$$ To apply Lemma \ref{lemA2rho},
 we shall verify  $ {\underline{\rho}_1}(0)\leq \hat\rho(0)\leq {\overline{\rho}_1}(0)$. For the case $\hat\lambda>\frac{c_{g}}{2\hat{d}}$, we calculate
\begin{equation}\label{calculation0}
\begin{split}
{\overline{\rho}_1}(0)=&-(\hat d \hat{\lambda}_1^2+\hat{r}-g_{\max})\\
\geq&\hat{\lambda}_1(c_{g}-\hat d\hat{\lambda}_1)-(\hat{r}-g_{\max})\\
=&\hat d\left(\frac{c_{g}}{2\hat d}-\sqrt{\frac{g_{\max}}{\hat d}}\right)\left(\frac{c_{g}}{2\hat d}+\sqrt{\frac{g_{\max}}{\hat d}}\right)-(\hat{r}-g_{\max})\\
=&\frac{(c_{g})^2}{4\hat d}-\hat r\geq 0=\hat\rho(0)={\underline{\rho}_1}(0),
\end{split}
\end{equation}
where the first inequality is due to $\hat{\lambda}_1\leq0$, and similar verification can be performed for the  case $\hat\lambda\leq\frac{c_{g}}{2\hat{d}}$.

Therefore,   $\overline\rho_1$ and  $\underline\rho_1$ defined above  are a pair of viscosity super- and sub-solutions of \eqref{hatrho0}. Observe from the expressions of ${\underline{\rho}_1}$ and ${\overline{\rho}_1}$ that ${\overline{\rho}_1}={\underline{\rho}_1}$ in $[c_{g},\infty)$ and satisfies assertions {\rm{(a)}} and {\rm{(b)}}. Let $\hat\rho$ be any viscosity solution of \eqref{hatrho0}. Since
 ${\underline{\rho}_1}\leq \hat\rho \leq {\overline{\rho}_1}$ in $[0,\infty)$
by Lemma \ref{lemA2rho}, the assertions {\rm{(a)}} and {\rm{(b)}} hold for $\hat\rho$ in $[c_{g},\infty)$. 
Step 1 is thus completed.

\smallskip
\noindent {\bf Step 2.} We prove Lemma \ref{lemA1rho} for the case $\hat \lambda=\infty$.
First, we show that for any viscosity solution $\hat \rho$  of \eqref{hatrho0} with $\hat \lambda=\infty$, it follows that
\begin{equation}\label{goal}
  \hat\rho(s)=
\frac{s^2}{4\hat d}-\hat r\,\,\text{ for }\,\,s\geq c_{g}.
\end{equation}

To this end, we shall adopt the same strategy  as in Step 1 by constructing suitable viscosity super- and sub-solutions of \eqref{hatrho0}.
For any $\lambda> \sqrt{\hat{r}/\hat{d}}$, we define $\underline{\rho}_\lambda\in C(0,\infty)$ by
\begin{equation}\label{rholambda}
  \underline{\rho}_\lambda:=
\begin{cases}
\smallskip
\lambda s-(\hat d\lambda^2+\hat r) & \text{ for } s\geq 2\hat{d}\lambda,\\
\smallskip
\frac{s^2}{4\hat d}-\hat r& \text{ for } 2\sqrt{\hat{d}\hat{r}} \leq s< 2\hat{d}\lambda,\\
0& \text{ for }0\leq s<2\sqrt{\hat{d}\hat{r}},
\end{cases}
\end{equation}
which can be verified directly to be a viscosity sub-solution of \eqref{hatrho0}. In view of $\underline{\rho}_\lambda(0)=0=\hat\rho(0)$ and
$\limsup\limits_{s\to\infty}\frac{\underline{\rho}_\lambda(s)}{s}=\lambda<\infty=\limsup\limits_{s\to\infty}\frac{\hat\rho(s)}{s}$, we apply Lemma \ref{lemA2rho} with $c_{b}=\infty$ 
to deduce 
$$\begin{array}{l}
\underline{\rho}_\lambda\leq \hat\rho\quad\text{ in }[0,\infty)\quad\text{ for all }\,\,\,\lambda> \sqrt{\hat{r}/\hat{d}},
\end{array}
$$
where letting $\lambda\to\infty$, together with the expression of $\underline{\rho}_\lambda$ in \eqref{rholambda}, gives
\begin{equation}\label{goal1}
  \hat\rho(s)\geq
\frac{s^2}{4\hat d}-\hat r\quad \text{ for }\,\,s\geq c_{g}.
\end{equation}

To proceed further, for any $\epsilon>0$ and $s_0>c_{g}$, we define $\overline{\rho}_{\epsilon,s_0}\in C([0,s_0])$ by
\begin{equation}\label{rhos0}
  \overline{\rho}_{\epsilon,s_0}:=
\begin{cases}
\medskip
\frac{s^2}{4\hat d}-\hat r+\frac{\epsilon}{s_0-s}& \text{ for } c_{g}\leq s<s_0,\\
\medskip
\hat{\lambda}_1 s-(\hat d\hat{\lambda}_1^2+\hat r-g_{\max}-\frac{\epsilon}{s_0-c_{g}})& \text{ for } 0 \leq s< c_{g},
\end{cases}
\end{equation}
where $\hat{\lambda}_1 =\frac{c_{g}}{2\hat d}-\sqrt{\frac{g_{\max}}{\hat d}}$ is defined in Step 1. Similar to Step 1, we can verify that
 $\overline{\rho}_{\epsilon,s_0}$  defines a viscosity super-solution of  \eqref{hatrho0} in $(0,s_0)$ for each $s_0>c_{g}$.
By \eqref{calculation0}, one can check $\overline{\rho}_{\epsilon,s_0}(0)\geq 0=\hat{\rho}(0)=0$. Moreover, since
\begin{equation*}
\frac{\hat\rho(s_0)}{s_0}< \infty=\liminf\limits_{s\to s_0}\frac{\overline{\rho}_{\epsilon,s_0}(s)}{s},
\end{equation*}
 we apply Lemma \ref{lemA2rho} with $c_{b}=s_0$ to get
$\hat\rho(s)\leq\overline{\rho}_{\epsilon,s_0}(s)$ for $s\in[0,s_0]$.
Letting $\epsilon \to 0$ and then $s_0\to\infty$, we have
$
\hat\rho(s)\leq\frac{s^2}{4\hat{d}}-\hat{r}$ for $s\in[c_{g},\infty)$, 
which together with \eqref{goal1} implies \eqref{goal}.

Finally, we apply Lemma \ref{lemA2rho} to show that $\hat\rho$ is also uniquely determined in
$[0,c_{g}]$. Thus $\hat\rho$ is unique, and 
the proof of Lemma \ref{lemA1rho} is now complete.
\end{proof}

Next, we prove Proposition \ref{lemA3rho} for the remaining case $\hat\lambda = \infty$.
\begin{proof}[Proof of Proposition {\rm\ref{lemA3rho}} for case $\hat\lambda = \infty$]
In this case, $\tilde{w}(t,x)$ is a viscosity super-solution (resp. sub-solution) of the equation
\begin{align}\label{hatwinfty}
\left\{\begin{array}{ll}
\smallskip
\min\{\partial_tw+\hat d|\partial_xw|^2+\hat r-g\left(\frac{x}{t}\right),w\}=0 \,\,\text{ in }(0,\infty)\times(0,\infty),\\
\smallskip
 w(0,x)=
 \left\{\begin{array}{ll}
 0 &\text{ for } x=0,\\
\infty &\text{ for } x\in  (0,\infty),
\end{array}
\right.\quad
w(t,0)=0 \quad \mathrm{ on }\,\,\, [0,\infty).
\end{array}
\right.
\end{align}
The initial condition is  understood in the sense that  $\tilde{w}(t,x) \to \infty$ if $(t,x) \to (0,x_0)$ for $x_0>0$.

\smallskip
 \noindent {\bf Step 1.}  Let  $\tilde{w}(t,x)$ be a viscosity super-solution of \eqref{hatwinfty}. We show
$\tilde{w}(t,x)\geq t\hat\rho\left(\frac{x}{t}\right) $ in $(0,\infty)\times(0,\infty)$, where $\hat\rho$ is the unique viscosity solution of \eqref{hatrho0}. 

 We first prove $\tilde{w}(t,x)\geq t\hat\rho\left(\frac{x}{t}\right)$ for $x\geq c_{g}t$, where $c_{g}$ is given in {$\rm(H_{g})$}.
Recall from   Step 2 in the proof of Lemma \ref{lemA1rho} that $\underline{\rho}_\lambda$ defined by \eqref{rholambda} is a viscosity sub-solution of
  \begin{equation*}
    \min\{\rho-s\rho'+\hat d|\rho'|^2+\hat r-g(s),\rho\}=0 \,\,\text{ in } (0,\infty),
  \end{equation*}
for all $\lambda>\sqrt{\hat{r}/\hat{d}}$, whence, by a standard verification as in Lemma \ref{lemA2rho}, we may conclude that $t\underline{\rho}_\lambda\left(\frac{x}{t}\right)$ is a viscosity sub-solution to \eqref{hatwinfty}. Observe that 
$$t\underline{\rho}_\lambda\left(0\right)=0\leq \tilde{w}(t,0)\,\,\,\,\text{ and } \,\,\lim_{t\to 0}\left[t\underline{\rho}_\lambda\left(\frac{x}{t}\right)\right]=\lambda x\leq  \tilde{w}(0,x).$$
We apply \cite[Theorem A.1]{LLL20192} to deduce that for all $\lambda>\sqrt{\hat{r}/\hat{d}}$,
\begin{equation}\label{comparisonresult}
  \tilde{w}(t,x)\geq t\underline{\rho}_\lambda\left(\frac{x}{t}\right)\,\,\text{ in }(0,\infty)\times(0,\infty).
\end{equation}
By Lemma \ref{lemA1rho},  we deduce that
$\underline{\rho}_\lambda(s)\to \frac{s^2}{4\hat d}-\hat r=\hat{\rho}(s)$  as $\lambda\to\infty$ for $s\in[c_{g},\infty)$, 
so that letting $\lambda\to\infty$ in \eqref{comparisonresult} gives $ \tilde{w}(t,x)\geq t \hat\rho\left(\frac{x}{t}\right)$ for $x\geq c_{g}t$.

To complete Step 1, it remains to show $\tilde{w}(t,x)\geq t\hat\rho\left(\frac{x}{t}\right)$ for $ 0\leq x\leq c_{g}t$.
Note that $\tilde{w}$ is a viscosity super-solution of the problem
 \begin{align}\label{boundedproblem}
\left\{\begin{array}{ll}
\medskip
\min\{\partial_tw+\hat d|\partial_xw|^2+\hat r-g\left(x/t\right),w\}=0 & \text{  for }0< x< c_{g}t,\\
w(t,0)=0, \,\,\,\, w(t,c_{g}t)= t\hat\rho\left(c_{g}\right) & \text{ for } t\geq 0,
\end{array}
\right.
\end{align}
while,  by direct verification,  $t\hat\rho\left(\frac{x}{t}\right)$ defines a viscosity solution to \eqref{boundedproblem}. Once again we apply \cite[Theorem A.1]{LLL20192} to derive that $\tilde{w}(t,x)\geq t\hat\rho\left(\frac{x}{t}\right)$ for $ 0\leq x\leq c_{g}t$, which completes Step 1.

\smallskip
 \noindent {\bf Step 2.} Let $\tilde{w}(t,x)$ be a viscosity sub-solution of \eqref{hatwinfty}. We show that
 $\tilde{w}(t,x)\leq t\hat\rho\left(\frac{x}{t}\right)$ in $(0,\infty)\times(0,\infty)$.
For any $\epsilon>0$ and $s_0>c_{g}$,  we see that $\overline{\rho}_{\epsilon,s_0}$ given by \eqref{rhos0} 
is a viscosity super-solution of
$$\min\left\{\rho-s\rho'+\hat d|\rho'|^2+\hat r-g(s),\rho\right\}=0 \,\,\text{ in } (0,s_0),$$
from which we can verify  that $t\overline{\rho}_{\epsilon,s_0}\left(\frac{x}{t}\right)$ is a viscosity super-solution to \eqref{hatwinfty} for $0<x<s_0 t$. By \cite[Theorem A.1]{LLL20192} again, we arrive at
 \begin{equation}\label{eq:liu001}
  \tilde{w}(t,x)\leq t\overline{\rho}_{\epsilon,s_0}\left(\frac{x}{t}\right) \quad\text{for } \,\,0<x<s_0 t.
\end{equation}
Letting $\epsilon\to0$ and then $s_0\to\infty$ in \eqref{eq:liu001} (as in the proof of Lemma \ref{lemA1rho}), and noting that $\overline{\rho}_{\epsilon,s_0}(s)\to \frac{s^2}{4\hat d}-\hat r=\hat{\rho}(s)$ for $s\in[c_{g},\infty)$, we deduce 
  $\tilde{w}(t,x)\leq t \hat\rho\left(\frac{x}{t}\right)$ for $x\geq c_{g}t$.
Finally, the fact that $\tilde{w}(t,x)\leq t\hat\rho\left(\frac{x}{t}\right)$ for $ 0\leq x\leq c_{g}t$ can be proved by the same arguments as in Step 1. Step 2 is now complete and Proposition \ref{lemA3rho} is proved.
\end{proof}

\section{Explicit formula for $s_{{\rm nlp}}(c_1,c_2,\lambda)$}
\label{appendix_C}

This section is devoted to determining $s_{{\rm nlp}}(c_1,c_2,\lambda)$ defined in Definition \ref{def:snlp} and proving {Theorem C}. 
We recall that $\rho_{{\rm nlp}}$ is the unique viscosity solution of \eqref{eq:w3nlp} and 
$s_{{\rm nlp}} = \sup\{s \geq 0:\rho_{{\rm nlp}}(s) = 0\}$.
By Lemma \ref{lemA1rho} (with  $c_g=c_1$) one can rewrite $\rho_{\rm nlp}$ explicitly as
\begin{equation}\label{rhosigma_1}
  \rho_{\rm{nlp}}(c_1) =\zeta_1 c_1 - d_3 \zeta^2_1 - r_3(1-a_{31}),
\end{equation}
where $\alpha_3=2\sqrt{d_3r_3}$ and 
$\zeta_1$ is defined in \eqref{lambda3}.
In this way, we can also regard $\rho_{\rm nlp}$ as the unique viscosity solution of
\begin{align}\label{eq:hj}
\begin{cases}
\medskip
\min\{\rho -s\rho'+ d_3 |\rho'|^2 +\mathcal{R}(s), \rho\} =0 \,\,\text{ in } (0,c_1),\\
\rho(0)=0,\quad \rho(c_1)=\zeta_1 c_1 - d_3 \zeta^2_1 - r_3(1-a_{31}),
\end{cases}
\end{align}
where $\mathcal{R}(s)=r_3(1-a_{31}\chi_{\{c_2\leq s\leq c_1\}}-a_{32}\chi_{\{s\leq c_2\}})$.

To prove Theorem C, we first present some sufficient and necessary conditions for $s_{{\rm nlp}} > \alpha_3\sqrt{1-a_{32}}$. Recall the definition of $\zeta_1$ in \eqref{lambda3}.

\begin{proposition}\label{prop:A}
If $\zeta_1 \leq \frac{c_2}{2d_3}$, then
$s_{{\rm nlp}} > \alpha_3\sqrt{1-a_{32}}$ if and only if
\begin{equation}\label{eq:propA1}
\zeta_1 c_2 - d_3 \zeta_1^2 - r_3(1-a_{31}) < c_2 \sqrt{\frac{r_3(1-a_{32})}{d_3}} - 2r_3(1-a_{32}).
\end{equation}
In this case, the unique viscosity solution $\rho_{\rm{nlp}}$ of \eqref{eq:hj} is given by
\begin{equation}\label{eq:propA2}
\rho_{\rm{nlp}} =
\begin{cases}
\smallskip
\zeta_1s - d_3 \zeta_1^2 - r_3(1-a_{31})&\text{ for }c_2 <s\leq c_1,\\ 
\smallskip
\lambda_{{\rm nlp2}}(s- s_{{\rm nlp}}) &\text{ for } s_{{\rm nlp}} < s\leq c_2,\\
0 &\text{ for }0\leq s \leq s_{{\rm nlp}},
\end{cases}
\end{equation}
where 
$
\lambda_{{\rm nlp2}} = \frac{c_2-\sqrt{\left(c_2-2d_3\zeta_1\right)^2+\alpha_3^2(a_{32}-a_{31})}}{2d_3}$ and $s_{{\rm nlp}} = d_3 \lambda_{{\rm nlp2}} + \frac{r_3(1-a_{32})}{\lambda_{{\rm nlp2}}}.
$
\end{proposition}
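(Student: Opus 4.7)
The plan is to construct the unique viscosity solution of \eqref{eq:hj} explicitly by patching affine pieces across the discontinuities of $\mathcal{R}$ and an additional free boundary, and to identify \eqref{eq:propA1} as the algebraic condition under which this patched solution satisfies $s_{{\rm nlp}}>\alpha_3\sqrt{1-a_{32}}$.

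I would first define a candidate $\tilde\rho$ by the right-hand side of \eqref{eq:propA2}. Each of its three pieces solves the corresponding reduced equation classically: on $(c_2,c_1]$ the affine function $\zeta_1 s-d_3\zeta_1^2-r_3(1-a_{31})$ solves the equation with $\mathcal{R}\equiv r_3(1-a_{31})$ and matches the right boundary value \eqref{rhosigma_1} at $s=c_1$; on $(s_{{\rm nlp}},c_2]$ the form $\lambda(s-s_0)$ solves the equation with $\mathcal{R}\equiv r_3(1-a_{32})$ iff $s_0=d_3\lambda+r_3(1-a_{32})/\lambda$, so taking $\lambda=\lambda_{{\rm nlp}2}$ and $s_0=s_{{\rm nlp}}$ as in the statement yields a classical solution vanishing at $s_{{\rm nlp}}$; on $[0,s_{{\rm nlp}}]$ the zero function trivially works. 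Imposing continuity at $s=c_2$ reduces to the quadratic
\begin{equation*}
d_3\lambda^2-c_2\lambda+\zeta_1 c_2-d_3\zeta_1^2-r_3(a_{32}-a_{31})=0,
\end{equation*}
whose smaller root gives the formula for $\lambda_{{\rm nlp}2}$. Algebraically, condition \eqref{eq:propA1} is equivalent to $\lambda^*:=\sqrt{r_3(1-a_{32})/d_3}$ lying strictly between the two roots of this quadratic, so $\lambda_{{\rm nlp}2}<\lambda^*$, and strict convexity of $\lambda\mapsto d_3\lambda+r_3(1-a_{32})/\lambda$ at $\lambda^*$ then yields $s_{{\rm nlp}}>\alpha_3\sqrt{1-a_{32}}$.

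The substantive work is to verify that $\tilde\rho$ is a viscosity solution of \eqref{eq:hj}; by the uniqueness part of Lemma \ref{lemA1rho}, this identifies $\tilde\rho$ with $\rho_{{\rm nlp}}$ and both formulas in the statement follow. On each open subinterval $\tilde\rho$ is already classical, so only the two interface points need attention. The free-boundary transition at $s=s_{{\rm nlp}}$ is handled exactly as for $\underline\rho_\lambda$ in the proof of Lemma \ref{lemA1rho}. The hardest point is the interior corner at $s=c_2$, where both $\mathcal{R}$ and $\tilde\rho'$ jump. The argument splits according to the sign of $a_{32}-a_{31}$: when $a_{32}\geq a_{31}$, the quadratic formula together with $\zeta_1\leq c_2/(2d_3)$ gives $\lambda_{{\rm nlp}2}\leq\zeta_1$, making the corner convex, the sub-solution inequality automatic, and the super-solution inequality reducible, for any admissible test slope $p\in[\lambda_{{\rm nlp}2},\zeta_1]$, to
\begin{equation*}
\tilde\rho(c_2)-c_2 p+d_3 p^2+r_3(1-\min\{a_{31},a_{32}\})\geq 0,
\end{equation*}
which is verified by completing the square; the case $a_{31}>a_{32}$ is analogous with the roles of sub- and super-solution tests swapped.

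For the converse, suppose $s_{{\rm nlp}}>\alpha_3\sqrt{1-a_{32}}$. Then on $(s_{{\rm nlp}},c_2]$, $\rho_{{\rm nlp}}$ is smooth and positive, and the characteristic ODE forced by $\rho_{{\rm nlp}}(s_{{\rm nlp}})=0$ admits precisely two real slope candidates, both differing from $\lambda^*$; matching at $s=c_2$ then forces one of these slopes to solve the above quadratic with value strictly less than $\lambda^*$, which is algebraically equivalent to \eqref{eq:propA1}. The main obstacle throughout is the corner verification at $c_2$, where the discontinuity of the Hamiltonian in $s$ combined with the kink in $\tilde\rho'$ requires careful case-by-case analysis; the remainder of the proof consists of algebraic manipulations and direct appeals to the uniqueness part of Lemma \ref{lemA1rho}.
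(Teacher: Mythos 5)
Your forward direction is essentially the paper's own argument: you build the candidate from the right-hand side of \eqref{eq:propA2}, check the free boundary at $s_{\rm nlp}$ and the corner at $s=c_2$ in the viscosity sense (your case split on the sign of $a_{32}-a_{31}$ and the factorization $d_3(p-\zeta_1)(p+\zeta_1-c_2/d_3)$ match the paper's computation), identify the solution by uniqueness, and observe that \eqref{eq:propA1} is equivalent to $\lambda_{\rm nlp2}<\sqrt{r_3(1-a_{32})/d_3}$, whence $s_{\rm nlp}>\alpha_3\sqrt{1-a_{32}}$. That half is sound.

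The converse is where you have a genuine gap. You assume $s_{\rm nlp}>\alpha_3\sqrt{1-a_{32}}$ and then assert that on $(s_{\rm nlp},c_2]$ the unknown solution $\rho_{\rm nlp}$ is smooth and equal to an affine characteristic $\lambda(s-s_{\rm nlp})$ with $s_{\rm nlp}=d_3\lambda+r_3(1-a_{32})/\lambda$, that on $(c_2,c_1]$ it is the affine function with slope $\zeta_1$, and that ``matching at $c_2$'' yields your quadratic. None of this is established: a viscosity solution of \eqref{eq:hj} is a priori only Lipschitz, and where it is positive the reduced equation admits not only the affine family but also the envelope parabola $\frac{s^2}{4d_3}-r_3(1-a_{32})$ (resp. $\frac{s^2}{4d_3}-r_3(1-a_{31})$ to the right of $c_2$) and solutions obtained by switching between a characteristic and the envelope --- exactly such a parabolic piece appears in the companion case $\zeta_1>\frac{c_2}{2d_3}$ (Proposition \ref{prop:B}) and in Lemma \ref{lemA1rho}(b). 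So ``precisely two real slope candidates'' and the matching relation are structural claims about the unknown solution that your argument does not prove; justifying them would require the optimal-control representation or, in effect, redoing the very construction you are trying to infer. There is also an algebraic slip: a root of your quadratic lying strictly below $\lambda^*=\sqrt{r_3(1-a_{32})/d_3}$ is not by itself equivalent to \eqref{eq:propA1}; one also needs the other root to exceed $\lambda^*$ (here this would follow from $c_2>\alpha_3\geq 2d_3\lambda^*$, but you never invoke it). The paper avoids all of this by proving the contrapositive: when \eqref{eq:propA1} fails it exhibits the explicit sub-solution ${\underline{\rho}_3}$ of \eqref{eq:propA3}, whose middle piece has slope $\sqrt{r_3(1-a_{32})/d_3}$ emanating from $\alpha_3\sqrt{1-a_{32}}$ and whose upper piece has slope $\tilde\zeta_1\leq\zeta_1$ chosen so that ${\underline{\rho}_3}(c_1)\leq\rho_{\rm nlp}(c_1)$, and then compares from below via Lemma \ref{lemA2rho} to get $\rho_{\rm nlp}>0$ on $(\alpha_3\sqrt{1-a_{32}},c_1]$, i.e. $s_{\rm nlp}\leq\alpha_3\sqrt{1-a_{32}}$. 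You should replace your converse with a one-sided comparison argument of this kind.
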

\begin{remark}\label{rmk:prop1}
Suppose {that} \eqref{eq:propA1} holds. It is straightforward to check that
$\zeta_1 c_2 - d_3 \zeta_1^2- r_3(1-a_{31}) <  \frac{c^2_2}{4d_3}-r_3(1-a_{32}),$
which implies
\begin{equation}\label{welldefineof}
  \left(c_2-2d_3\zeta_1\right)^2+\alpha^2_3(a_{32}-a_{31})>0,
\end{equation}
 so that $\lambda_{{\rm nlp2}}$ is well defined.
By direct calculation, we can verify that \eqref{eq:propA1} is equivalent to \eqref{welldefineof} and $\lambda_{{\rm nlp2}}<\sqrt{\frac{r_3(1-a_{32})}{d_3}}$, whence $s_{{\rm nlp}}>\alpha_3\sqrt{1-a_{32}}$.
\end{remark}

\begin{proof}[Proof of Proposition {\rm\ref{prop:A}}]
We divide the proof into the following two steps.

\smallskip
\textbf{Step 1.} We assume \eqref{eq:propA1}  holds and show $s_{{\rm nlp}} > \alpha_3\sqrt{1-a_{32}}$.
Denote by $\hat{\rho}$  the right hand of \eqref{eq:propA2}. We prove that $\hat{\rho}$ is a viscosity solution of \eqref{eq:hj}.
By construction, $\hat{\rho}$ is continuous in $[0,c_1]$.  Indeed, $\hat{\rho}$ is a classical solution for \eqref{eq:hj} whenever $s \not \in \{c_2,s_{\rm{nlp}}\}$. We claim that $\hat{\rho}$ is a viscosity super-solution of \eqref{eq:hj}. For this purpose, suppose $\hat{\rho} - \phi$ attains a strict local minimum at $s_0 \in \{c_2,s_{\rm{nlp}}\}$.  If $s_0 = s_{\rm nlp}$, then $0 \leq \phi'(s_{\rm nlp})\leq \lambda_{\rm nlp2}$, and therefore  at $s = s_{\rm nlp}$,
$$
\hat\rho( s_{\rm nlp}) - s_{\rm nlp}\phi' + d_3 |\phi'|^2 + \mathcal{R}^*(c_2) \geq  d_3(\phi' - \lambda_{\rm nlp2}) \left( \phi' - \frac{r_3(1-a_{32})}{d_3 \lambda_{\rm nlp2}}\right) \geq 0,
$$
where we used $\hat\rho(s_{\rm nlp} ) = 0$ for the first inequality and the last inequality is a consequence of $\phi' - \frac{r_3(1-a_{32})}{d_3 \lambda_{\rm nlp2}}\leq \phi' - \lambda_{\rm nlp2} \leq  0$ (by Remark \ref{rmk:prop1}).

In case  $s_0 = c_2$, we have $\phi'(c_2) \leq  \zeta_1$, so that  when evaluated at $s = c_2$,
\begin{align*}
\hat\rho(c_2) - c_2 \phi'(c_2) + d_3 |\phi'(c_2)|^2 + \mathcal{R}^*(c_2) &\geq \zeta_1 c_2 - d_3 \zeta_1^2 - c_2 \phi'(c_2) + d_3 |\phi'(c_2)|^2\\
&= d_3 \left(\phi'(c_2) + \zeta_1 - \frac{c_2}{d_3}\right)(\phi'(c_2) - \zeta_1)  \geq 0,
\end{align*}
where we used $\phi'(c_2) \leq \zeta_1$ and $\phi'(c_2) + \eta_1 - \frac{c_2}{d_3} \leq 2 \zeta_1 - \frac{c_2}{d_3} \leq 0$ for last inequality.

It remains to show that $\hat{\rho}$ is also a viscosity sub-solution of \eqref{eq:hj}.  We assume $\hat{\rho} - \phi$ attains a strict local maximum at $s= c_2$ for some test function $\phi\in C^1(0,\infty)$, and $\hat{\rho}(c_2)>0$.
Observe that $(\hat{\rho}-\phi)(s) \leq (\hat{\rho}-\phi)(c_2)$ for $s\approx c_2$, so that
$\zeta_1\leq  \phi'(c_2) \leq \lambda_{\rm nlp2}$. Therefore,  evaluating at $s=c_2$, by \eqref{eq:propA2} we calculate that 
\begin{align*}
\hat{\rho} -c_2\phi'+ d_3 |\phi'|^2 +\mathcal{R}_*(c_2)&=\hat{\rho}-c_2\phi' +d_3|\phi'|^2 + r_3(1 -\max\{a_{31},a_{32}\})\\
&\leq\zeta_1c_2-d_3\zeta_1^2-c_2\phi' +d_3 | \phi'|^2 \\
&= d_3\left(\phi'+\zeta_1-\frac{c_2}{d_3}\right)(\phi'-\zeta_1) \leq 0,
\end{align*}
where the last inequality follows from $\zeta_1\leq \phi'(c_2)\leq\lambda_{\rm{nlp2}}\leq \frac{c_2}{2d_3}$. Hence, $\hat{\rho}$, defined by the right hand side of \eqref{eq:propA2}, is a viscosity super- and sub-solution, and thus a viscosity solution of \eqref{eq:hj}.
Since $\rho_{\rm{nlp}}(c_1)=\hat\rho(c_1)=0$ and $\rho_{\rm{nlp}}(0)=\hat\rho(0)$,  by uniqueness of viscosity solution (from Lemma \ref{lemA1rho}), we deduce
$\rho_{\rm{nlp}}(s)=\hat\rho(s)$ for $[0,c_1)$.
By Remark  \ref{rmk:prop1},  we deduce that $s_{\rm{nlp}}>\alpha_3\sqrt{1-a_{32}}$. Step 1 is completed.

\textbf{ Step 2.} We assume \eqref{eq:propA1} fails and  show $s_{{\rm nlp}} \leq \alpha_3\sqrt{1-a_{32}}$.
Suppose that \eqref{eq:propA1} fails, i.e.
\begin{equation}\label{eq:propA1'}
  \zeta_1 c_2 - d_3 \zeta_1^2 - r_3(1-a_{31}) \geq c_2 \sqrt{\frac{r_3(1-a_{32})}{d_3}} - 2r_3(1-a_{32}).
\end{equation}
Since $\zeta_1 \leq  \frac{c_2}{2d_3}$, the left hand side of \eqref{eq:propA1'} is increasing in $\zeta_1$ in $\left[0,\frac{c_2}{2d_3}\right]$, and the right hand side of \eqref{eq:propA1'} is greater than equal to $-r_3(1-a_{32})$ (since $c_2 \geq \alpha_3 \sqrt{1-a_{32}}$),
we  choose $0 < \tilde\zeta_1\leq \zeta_1\leq  \frac{c_2}{2d_3}$ to satisfy
$
\tilde\zeta_1 c_2 - d_3 \tilde\zeta_1^2 - r_3(1-a_{31})= c_2 \sqrt{\frac{r_3(1-a_{32})}{d_3}} - 2r_3(1-a_{32}).
$
Define 
\begin{equation}\label{eq:propA3}
{\underline{\rho}_3}(s): =
\begin{cases}
\smallskip
\tilde\zeta_1s - d_3 \tilde\zeta_1^2 - r_3(1-a_{31}) &\text{ for }c_2 < {s} \leq c_1,\\
\smallskip
s\sqrt{\frac{r_3(1-a_{32})}{d_3}} - 2r_3(1-a_{32}) &\text{ for } \alpha_3\sqrt{1-a_{32}} < {s} \leq c_2,\\
0 &\text{ for } 0\leq {s} \leq \alpha_3\sqrt{1-a_{32}}.
\end{cases}
\end{equation}
By the choice of $\tilde\zeta_1$, we have $\rho_{\rm nlp}(c_1) \geq {\underline\rho_3(c_1)}$.
Using the same arguments given in Step 1, we may check that ${\underline{\rho}_3}$ is a viscosity sub-solution of \eqref{eq:hj}.
Together with $\rho_{\rm{nlp}}(0) ={\underline{\rho}_3}(0)$, by applying Lemma \ref{lemA2rho} once again, we get
$\rho_{\rm{nlp}}(s)\geq {\underline{\rho}_3}(s)$ for $ s\in [0,c_1]$.
Since $\rho_{\rm{nlp}}(s)\geq{\underline{\rho}_3}(s) >0$ for $s > \alpha_3\sqrt{1-a_{32}}$, we deduce $s_{\rm{nlp}}\leq \alpha_3\sqrt{1-a_{32}}$.
%
\end{proof}

\begin{proposition}\label{prop:B}
If $\zeta_1 > \frac{c_2}{2d_3}$, then
$s_{{\rm nlp}} > \alpha_3\sqrt{(1-a_{32})}$ if and only if
\begin{equation}\label{eq:propB1}
\frac{(c_2)^2}{4d_3} - r_3(1-a_{31})< c_2 \sqrt{\frac{r_3(1-a_{32})}{d_3}} - 2r_3(1-a_{32}).
\end{equation}
In this case, the unique viscosity solution $\rho_{\rm{nlp}}$ of \eqref{eq:hj} is given by
\begin{equation}\label{eq:propB2}
\rho_{\rm{nlp}} =
\begin{cases}
\smallskip
\zeta_1s- d_3 \zeta_1^2 - r(1-a_{31}) &\text{ for }2d_3\zeta_1 < {s} \leq c_1,\\
\smallskip
\frac{s^2}{4d_3} - r_3(1-a_{31})&\text{ for }c_2 < {s} \leq 2d_3\zeta_1,\\
\smallskip
\lambda_{{\rm nlp1}}(s - s_{{\rm nlp}}) &\text{ for } s_{{\rm nlp}} < {s} \leq  c_2,\\
0 &\text{ for } 0\leq {s} \leq s_{{\rm nlp}},
\end{cases}
\end{equation}
where
$
\lambda_{{\rm nlp1}} = \frac{c_2}{2d_3} - \sqrt{\frac{r_3(a_{32} - a_{31})}{d_3}}$ and $ s_{{\rm nlp}} = d_3 \lambda_{{\rm nlp1}} + \frac{r_3(1-a_{32})}{\lambda_{{\rm nlp1}}}.
$
\end{proposition}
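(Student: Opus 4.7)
The plan is to parallel the two-step structure of Proposition~\ref{prop:A}. The only substantive difference is that, when $\zeta_1 > c_2/(2d_3)$, the linear profile $\zeta_1 s - d_3\zeta_1^2 - r_3(1-a_{31})$ fixed at $s = c_1$ by Lemma~\ref{lemA1rho} cannot be extended all the way down to $s = c_2$ as a viscosity solution of \eqref{eq:hj}; its natural continuation on $(c_2, 2d_3\zeta_1)$ is the parabolic profile $s^2/(4d_3) - r_3(1-a_{31})$, and the two pieces meet $C^1$ at $s = 2d_3\zeta_1$, so that point contributes no new viscosity condition.

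For the sufficiency direction (Step 1), I would assume \eqref{eq:propB1} and define $\hat\rho$ by the right-hand side of \eqref{eq:propB2}, with the goal of showing that $\hat\rho$ is the unique viscosity solution of \eqref{eq:hj}. A preliminary algebraic step, completing the square with $\tau := \sqrt{r_3(1-a_{32})/d_3}$, rewrites \eqref{eq:propB1} as
\[
d_3\left(\tfrac{c_2}{2d_3} - \tau\right)^2 < r_3(a_{32}-a_{31}),
\]
which in particular forces $a_{31} < a_{32}$ (so $\lambda_{\rm nlp1}$ is real) and is equivalent to $\lambda_{\rm nlp1} < \tau$; the latter yields $s_{\rm nlp} = d_3\lambda_{\rm nlp1} + r_3(1-a_{32})/\lambda_{\rm nlp1} > \alpha_3\sqrt{1-a_{32}}$ by AM–GM. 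Continuity of $\hat\rho$ on $[0,c_1]$ is then reduced to the identity $\lambda_{\rm nlp1}(c_2 - s_{\rm nlp}) = c_2^2/(4d_3) - r_3(1-a_{31})$, which follows from $d_3\lambda_{\rm nlp1}^2 - \lambda_{\rm nlp1} c_2 + r_3(a_{32}-a_{31}) = 0$.

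Away from the singular set $\{s_{\rm nlp},\, c_2,\, 2d_3\zeta_1\}$, $\hat\rho$ solves \eqref{eq:hj} classically; the gluing at $s = 2d_3\zeta_1$ is $C^1$ and hence automatically valid in the viscosity sense. The verifications at $s_{\rm nlp}$ and at $s = c_2$ are then word-for-word parallel to Proposition~\ref{prop:A} Step 1, with one relabelling: at $s = c_2$ the relevant range of $\phi'(c_2)$ is now $[\lambda_{\rm nlp1},\, c_2/(2d_3)]$, the right endpoint being the slope of the parabolic piece on $(c_2, 2d_3\zeta_1)$ rather than $\zeta_1$. The sub- and super-solution inequalities again factor as products of two terms of matching sign, giving the required $\le 0$ and $\ge 0$. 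Uniqueness from Lemma~\ref{lemA1rho} then identifies $\rho_{\rm nlp} = \hat\rho$.

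For the necessity direction (Step 2), suppose \eqref{eq:propB1} fails. By monotonicity in $\tilde\zeta_1 \in [c_2/(2d_3),\, \zeta_1]$ of the parabolic boundary value $c_2^2/(4d_3) - r_3(1-a_{31})$ versus the linear value $c_2\tau - 2r_3(1-a_{32})$, I would choose $\tilde\zeta_1 \in [c_2/(2d_3),\, \zeta_1]$ realising equality at $s = c_2$, and then build the four-piece subsolution
\[
\underline{\rho}_4(s) := \begin{cases} \tilde\zeta_1 s - d_3\tilde\zeta_1^2 - r_3(1-a_{31}) & \text{for } 2d_3\tilde\zeta_1 < s \le c_1,\\ \frac{s^2}{4d_3} - r_3(1-a_{31}) & \text{for } c_2 < s \le 2d_3\tilde\zeta_1,\\ s\tau - 2r_3(1-a_{32}) & \text{for } \alpha_3\sqrt{1-a_{32}} < s \le c_2,\\ 0 & \text{for } 0 \le s \le \alpha_3\sqrt{1-a_{32}}, \end{cases}
\]
the direct analogue of $\underline{\rho}_3$ in Proposition~\ref{prop:A}. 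The kink checks at $s = 2d_3\tilde\zeta_1$ (again $C^1$), $s = c_2$, and $s = \alpha_3\sqrt{1-a_{32}}$ all produce subsolution inequalities of the same form as in Proposition~\ref{prop:A}, and by $\tilde\zeta_1 \le \zeta_1$ the boundary condition $\underline{\rho}_4(c_1) \le \rho_{\rm nlp}(c_1)$ is secured. Lemma~\ref{lemA2rho} then forces $\rho_{\rm nlp}(s) \ge \underline{\rho}_4(s) > 0$ for $s > \alpha_3\sqrt{1-a_{32}}$, giving $s_{\rm nlp} \le \alpha_3\sqrt{1-a_{32}}$.

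The main obstacle is the bookkeeping at the kink $s = c_2$: in the sufficiency direction, ensuring that the subdifferential interval $[\lambda_{\rm nlp1},\, c_2/(2d_3)]$ is nonempty (equivalent to $a_{31} \le a_{32}$, which is forced by \eqref{eq:propB1}) and that the subsolution inequality factors as a product of two nonpositive terms; in the necessity direction, confirming that the choice of $\tilde\zeta_1 \in [c_2/(2d_3),\, \zeta_1]$ simultaneously realises the equality at $c_2$ and respects $\underline\rho_4(c_1)\le \rho_{\rm nlp}(c_1)$, which boils down to monotonicity of $\tilde\zeta_1 \mapsto \tilde\zeta_1 c_1 - d_3\tilde\zeta_1^2$ on the relevant range. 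Once these algebraic checks are recorded, the remaining arguments are mechanical adaptations of Proposition~\ref{prop:A}.
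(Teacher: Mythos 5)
Your Step 1 is fine and is essentially what the paper does (the paper simply invokes the verification of Step 1 of Proposition \ref{prop:A}); your reformulation of \eqref{eq:propB1} as $d_3\left(\frac{c_2}{2d_3}-\tau\right)^2<r_3(a_{32}-a_{31})$ with $\tau=\sqrt{r_3(1-a_{32})/d_3}$ is exactly the content of Remark \ref{rmk:prop2}. The gap is in Step 2, and it is twofold. First, the root $\tilde\zeta_1\geq\frac{c_2}{2d_3}$ of $c_2\tilde\zeta_1-d_3\tilde\zeta_1^2-r_3(1-a_{31})=c_2\tau-2r_3(1-a_{32})$ need \emph{not} lie in $[\frac{c_2}{2d_3},\zeta_1]$: failure of \eqref{eq:propB1} only says that $\zeta\mapsto c_2\zeta-d_3\zeta^2$ is above the target at $\zeta=\frac{c_2}{2d_3}$, and whether $\tilde\zeta_1\leq\zeta_1$ depends on the sign of $c_2\zeta_1-d_3\zeta_1^2-r_3(1-a_{31})-\bigl(c_2\tau-2r_3(1-a_{32})\bigr)$, which can go either way. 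When $\tilde\zeta_1>\zeta_1$ your construction has no analogue and your "monotonicity of $\tilde\zeta_1\mapsto\tilde\zeta_1 c_1-d_3\tilde\zeta_1^2$" remark does not help (it only handles the boundary value at $c_1$ \emph{given} $\tilde\zeta_1\leq\zeta_1$). The paper treats $\tilde\zeta_1>\zeta_1$ as a separate case: the parabola is discarded and the subsolution uses the minimum of the two lines of slopes $\zeta_1$ and $\tilde\zeta_1$, which cross at $s=d_3(\zeta_1+\tilde\zeta_1)$; since a minimum of subsolutions is not automatically a subsolution, this concave kink requires an extra viscosity check (using $\zeta_1,\tilde\zeta_1\geq\frac{c_2}{2d_3}$), and this is precisely the one step that is not a mechanical copy of Proposition \ref{prop:A}. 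It is absent from your plan.

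Second, even when $\tilde\zeta_1\leq\zeta_1$, the four-piece function you display is not admissible: the piece adjacent to $c_2$ from the right is the parabola, whose value $\frac{c_2^2}{4d_3}-r_3(1-a_{31})$ at $s=c_2$ does not involve $\tilde\zeta_1$, so no choice of $\tilde\zeta_1$ "realises equality at $c_2$"; when \eqref{eq:propB1} fails strictly, your $\underline\rho_4$ jumps upward at $c_2$, is not upper semicontinuous there, and an u.s.c.\ correction is still not a subsolution at $c_2$ (test functions touching from above at an upward jump may have arbitrarily large slope, making $d_3|\phi'|^2-c_2\phi'$ arbitrarily large), so Lemma \ref{lemA2rho} cannot be applied. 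The pieces must be ordered as in the paper's case $\tilde\zeta_1\leq\zeta_1$: the $\tilde\zeta_1$-line on $(c_2,2d_3\tilde\zeta_1]$ (this is where the choice of $\tilde\zeta_1$ produces continuity with $s\tau-2r_3(1-a_{32})$ at $c_2$), the parabola on $(2d_3\tilde\zeta_1,2d_3\zeta_1]$, and the $\zeta_1$-line on $(2d_3\zeta_1,c_1]$; the last piece cannot be dropped, since letting the parabola reach $c_1$ would give the value $\frac{c_1^2}{4d_3}-r_3(1-a_{31})\geq\rho_{\rm nlp}(c_1)$ and destroy the boundary comparison in \eqref{eq:hj}. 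With these two repairs (the reordered five-piece subsolution, plus the separate $\tilde\zeta_1>\zeta_1$ case with the kink check at $d_3(\zeta_1+\tilde\zeta_1)$), your argument becomes the paper's proof.
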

\begin{remark}\label{rmk:prop2}
The condition \eqref{eq:propB1} is equivalent to
\begin{equation}\label{equivalent00}
  a_{32} > a_{31}\,\,\text{ and }\,\,\lambda_{{\rm nlp1}}<\sqrt{\tfrac{r_3(1-a_{32})}{d_3}},
\end{equation}
which implies that $\lambda_{{\rm nlp1}}$ is well defined and $s_{{\rm nlp}} = d_3 \lambda_{{\rm nlp1}} + \frac{r_3(1-a_{32})}{\lambda_{{\rm nlp1}}}>\alpha_3\sqrt{1-a_{32}}$.
\end{remark}

\begin{proof}[Proof of Proposition {\rm\ref{prop:B}}]
Under \eqref{eq:propB1}, by the same arguments as in Step 1 of Proposition \ref{prop:A}, we can verify  $\rho_{\rm{nlp}}$ given by \eqref{eq:propB2} defines the unique viscosity solution of \eqref{eq:hj}. Then $s_{{\rm nlp}} >\alpha_3\sqrt{1-a_{32}}$ follows from Remark \ref{rmk:prop2}.
It remains to assume \eqref{eq:propB1} fails and to show $s_{{\rm nlp}} \leq \alpha_3\sqrt{1-a_{32}}$. In this case,
 $\frac{c_2^2}{4d_3} - r_3(1-a_{31}) \geq c_2 \sqrt{\frac{r_3(1-a_{32})}{d_3}} - 2r_3(1-a_{32}). 
 $
 Since $\zeta\mapsto c_2 \zeta - d_3 \zeta^2$ attains maximum value $\frac{c^2_2}{4d_3}$ at $\zeta =\frac{c_2}{2d_3}$,
 we may choose
 $\tilde\zeta_1\geq\frac{c_2}{2d_3}$ to satisfy
$
c_2\tilde\zeta_1-d_3\tilde\zeta^2_1 -r_3(1-a_{31})= c_2 \sqrt{\frac{r_3(1-a_{32})}{d_3}} - 2r_3(1-a_{32}).
$

Now, we define ${\underline{\rho}_4}\in C([0,c_1])$ as follows.
\begin{itemize}
 \item [($\mathrm{i}$)] If $\tilde\zeta_1\leq \zeta_1$, then
\begin{equation*}
{\underline{\rho}_4}{(s)}:= 
\begin{cases}
\smallskip
\zeta_1s- d_3 \zeta_1^2 - r_3(1-a_{31}) &\text{ for }2d_3\zeta_1 < {s} \leq c_1,\\
\smallskip
\frac{s^2}{4d_3} - r_3(1-a_{31})&\text{ for }2d_3\tilde\zeta_1 <{s}\leq 2d_3\zeta_1,\\
\smallskip
\tilde\zeta_1s - d_3 \tilde\zeta_1^2 - r_3(1-a_{31}) &\text{ for }c_2 <{s} \leq 2d_3\tilde\zeta_1,\\
s\sqrt{\frac{r_3(1-a_{32})}{d_3}} - 2r_3(1-a_{32}) &\text{ for } \alpha_3\sqrt{1-a_{32}} < {s} \leq  c_2,\\
0 &\text{ for } 0\leq {s} \leq  \alpha_3\sqrt{1-a_{32}};
\end{cases}
\end{equation*}
  \item [($\mathrm{ii}$)] If $\tilde\zeta_1> \zeta_1$, then
  \begin{equation*}
{\underline{\rho}_4}{(s)} := 
\begin{cases}
\smallskip
\zeta_1s- d_3 \zeta_1^2 -r_3(1-a_{31})  &\text{ for } \min\{d_3(\zeta_1+ \tilde\zeta_1),c_1\} <{s} \leq  c_1,\\
\smallskip
\tilde\zeta_1s- d_3 \tilde\zeta_1^2 -r_3(1-a_{31})&\text{ for }c_2 < {s} \leq \min\{d_3(\zeta_1+ \tilde\zeta_1),c_1\},\\
s\sqrt{\frac{r_3(1-a_{32})}{d_3}} - 2r_3(1-a_{32})&\text{ for } \alpha_3\sqrt{1-a_{32}} < {s} \leq  c_2,\\
0 &\text{ for } 0\leq {s}  \leq \alpha_3\sqrt{1-a_{32}}.
\end{cases}
\end{equation*}
\end{itemize}

Let us show that ${\underline{\rho}_4}$ defined above is a viscosity sub-solution of \eqref{eq:hj}.
 Indeed, ${\underline{\rho}_4}$ is a classical solution for \eqref{eq:hj} whenever ${s} \not \in \{c_2,\alpha_3\sqrt{1-a_{32}} \}$ in  case \rm{(i)} or ${s} \not \in \{d_3(\zeta_1+\tilde\zeta_1),c_2,\alpha_3\sqrt{1-a_{32}} \}$ in case {\rm{(ii)}}.
In both cases, in some small neighborhood of $s= c_2$, ${\underline{\rho}_4}{(s)}$ can be rewritten by
$${\underline{\rho}_4}{(s)}=\max\left\{\tilde\zeta_1s - d_3 \tilde\zeta_1^2 - r_3(1-a_{31}) ,s\sqrt{\tfrac{r_3(1-a_{32})}{d_3}} - 2r_3(1-a_{32}) \right\}. $$
Observe that $\tilde\zeta_1s - d_3 \tilde\zeta_1^2 - r_3(1-a_{31})$ and $s\sqrt{\frac{r_3(1-a_{32})}{d_3}} - 2r_3(1-a_{32}) $ are both viscosity sub-solutions to \eqref{eq:hj}.  Thus ${\underline{\rho}_4}$ is a viscosity sub-solution of \eqref{eq:hj} in this region.

It remains to consider case {\rm{(ii)}} and assume    ${\underline{\rho}_4} - \phi$ attains its strict local maximum at $\hat{s} =d_3(\zeta_1+\tilde\zeta_1)$ for any test function $ \phi\in C^1(0,\infty)$,  and $\rho(\hat{s})>0$.
 In this  case, 
  we can check $\zeta_1\leq  \phi'(\hat{s}) \leq \tilde \zeta_1$, whence, at $\hat{s} =d_3(\zeta_1+\tilde\zeta_1)$,  we deduce
\begin{align*}
{\underline{\rho}_4}(\hat s) -c_2\phi'+ d_3 |\phi'|^2 +\mathcal{R}_*(\hat s)&={\underline{\rho}_4}(\hat s)-\hat s\phi' +d_3|\phi'|^2 + r_3(1 -a_{31})\\
&=\zeta_1c_2-d_3\zeta_1^2-d_3(\zeta_1+\tilde\zeta_1)\phi' +d_3 | \phi'|^2 \\
&\leq d_3(\phi'-\zeta_1)(\phi'-\tilde\zeta_1) \leq 0,
\end{align*}
where we used $\zeta_1, \tilde{\zeta}_1 \geq \frac{c_2}{2d_3}$ for the first inequality. Thus, ${\underline{\rho}_4}$ is a viscosity sub-solution of \eqref{eq:hj}.

In view of $\rho_{\rm{nlp}}(c_1)={\underline{\rho}_4} (c_1)$ and $\rho_{\rm{nlp}}(0)={\underline{\rho}_4}(0)$,  Lemma \ref{lemA2rho} says that
$\rho_{\rm{nlp}}{(s)}\geq {\underline{\rho}_4}{(s)} $ for $ s\in[0,c_1].$
Since $ \rho_{\rm nlp}(s) \geq {\underline{\rho}_4}(s)>0$ for $s >\alpha_3\sqrt{1-a_{32}}$, we have $s_{\rm{nlp}}\leq \alpha_3\sqrt{1-a_{32}}$. 
\end{proof}

We are now in  a position to prove Theorem C.

\begin{proof}[Proof of Theorem {\rm C}]
By   Propositions \ref{charactsnlpc}, \ref{prop:A}, and \ref{prop:B}, we conclude that $s_{{\rm nlp}}(c_1,c_2,\lambda)$ defined by \eqref{charact_snlp} can be expressed by
\begin{equation}\label{snlp00000}
s_{\rm{nlp}}(c_1,c_2,\lambda)= \begin{cases}
\medskip
 d_3 \lambda_{{\rm nlp1}} + \frac{r_3(1-a_{32})}{\lambda_{{\rm nlp1}}}  &\text{ for } \zeta_1> \frac{{c_2}}{2d_3} \text { and } \eqref{eq:propB1} \text { holds}, \\
 \medskip
  d_3 \lambda_{{\rm nlp2}} + \frac{r_3(1-a_{32})}{\lambda_{{\rm nlp2}}}  &\text{ for } \zeta_1\leq\frac{{c_2}}{2d_3} \text { and } \eqref{eq:propA1}\text { holds},\\
  \alpha_3\sqrt{1-a_{32}} & \text {otherwise}.
\end{cases}
\end{equation}
 Observe from Remark \ref{rmk:prop2} that \eqref{eq:propB1} is equivalent to \eqref{equivalent00},
and from Remark \ref{rmk:prop1} that \eqref{eq:propA1} is equivalent to \eqref{welldefineof} and $\lambda_{{\rm nlp2}}<\sqrt{\frac{r_3(1-a_{32})}{d_3}}$.
On the other hand, \eqref{welldefineof} turns out to be equivalent to 
 $a_{31}<a_{32}$, or $a_{31}\geq a_{32}$ and $\zeta_1+\zeta_2<\frac{c_2}{d_3}$,
where we used the definition of $\zeta_2$ in \eqref{lambda3}. Therefore, \eqref{snlp00000} is consistent with \eqref{eq:c_3nlp}, so that \eqref{eq:c_3nlp} in Theorem C follows. 
\end{proof}

\end{appendices}

\baselineskip 18pt
\renewcommand{\baselinestretch}{1.2}




\end{document}